\documentclass[9pt]{article}
\linespread{1.2} \setlength{\oddsidemargin}{1cm}
\setlength{\evensidemargin}{1cm} \setlength{\topmargin}{0.5cm}
\setlength{\textwidth}{5in} \setlength{\textheight}{8.1in}
\setlength{\footskip}{1cm}
\usepackage{amsfonts, amsmath, amsthm, amstext, amssymb, amscd}
\usepackage[dvips]{color}
\usepackage{graphicx}
\usepackage[all]{xy}
\usepackage{color}
\usepackage{ifpdf}
\usepackage{mathrsfs}
\input epsf
\pagestyle{plain}

\newtheoremstyle{thm}{0.32em}{0.32ex}{\it}{}{\bf}{.}{0.5em}{\thmname{#1}\thmnumber{ #2}\thmnote{ ({\rm #3})}}
\theoremstyle{thm}
\newtheorem{theorem}{Theorem}[section]

\newtheorem{lemma}[theorem]{Lemma}

\newtheorem{corollary}[theorem]{Corollary}

\newtheorem{proposition}[theorem]{Proposition}

\newcounter{chap}
\newcounter{sect}

\newtheorem{mainlemma}[theorem]{Main Lemma}

\usepackage{titlesec}
\usepackage[normalem]{ulem}
\titleformat{\section}
{\normalfont\large\bfseries}{\thesection}{1em}{}
\titlespacing{\section}{0cm}{0.6cm}{0.2cm}

\makeatletter
\def\enumerate{%
 \ifnum \@enumdepth >\thr@@\@toodeep\else
   \advance\@enumdepth\@ne
   \edef\@enumctr{enum\romannumeral\the\@enumdepth}%
     \expandafter
     \list
       \csname label\@enumctr\endcsname
       {\usecounter\@enumctr\def\makelabel##1{\hss\llap{##1}}%
         \addtolength{\parsep}{1.56pt}
         \addtolength{\listparindent}{0pt} 
         \addtolength{\itemsep}{-8pt} 
         \addtolength{\topsep}{-1pt}} 
 \fi}
\makeatother

\def\calO{\mathcal{O}}

\def\calS{\mathcal{S}}
\def\calT{\mathcal{T}}

\def\calL{\mathcal{L}}

\def\calU{\mathcal{U}}
\def\calV{\mathcal{V}}

\def\CC{\mathbb{C}}

\def\HH{\mathbb{H}}
\def\NN{\mathbb{N}}
\def\PP{\mathbb{P}}
\def\QQ{\mathbb{Q}}
\def\RR{\mathbb{R}}
\def\TT{\mathbb{T}}
\def\ZZ{\mathbb{Z}}
\def\EE{\mathbb{E}}

\let\ol\overline
\def\bPh{\vphantom{b}}

\renewcommand{\baselinestretch}{1.2}

\renewenvironment{proof}{\noindent{\em
Proof.}\hspace{0.3em}}{\hfill\qed\vspace{0ex}}

\newenvironment{proofof}[1]{\noindent{\em #1.}\hspace*{0.3em}}{\hfill\qed\vspace{0ex}}

\begin{document}

\renewcommand{\baselinestretch}{1.2}

\begin{center}
{\Large\bf A dynamical Mordell Lang property on the disk}\\
{\sc By Ming-Xi Wang}\\
{\it ETH Z\"urich,  University of Salzburg\\} \quad E-mail address:
supershankly@gmail.com
\end{center}

\begin{abstract}
We prove that two finite endomorphisms of the unit disk with degree
at least two have orbits with infinite intersections if and only if
they have a common iteration.
\end{abstract}

\begin{figure}[b]
\rule[-2.5truemm]{5cm}{0.1truemm}\\[1mm]
{\footnotesize {\it AMS Classification (2010):} Primary, 11Z05;
Secondary 37P05.
\par {\it Key words and phrases:} arithmetic dynamics, fundamental group, rational points, Blaschke product, Faltings' theorem, heights, monodromy, elliptic rational function.
\par The author was partially supported by the
 scholarship of ZGSM, a SNF grant and Austrian Science Fund(FWF): P24574. }
 \end{figure}


\bigskip
\noindent

\vspace{-3mm}

\section{Introduction}

In recent papers \cite{GTZ08} and \cite{GTZ10} Ghioca, Tucker and
Zieve proved the following theorem ''{\it two non-linear polynomials
have orbits with infinitely many intersections if and only if they
have a common iteration.}'' Moreover they have observed that this is
a dynamical analogue of the Mordell-Lang conjecture, and have
formulated a more general dynamical Mordell-Lang problem. In this
paper we prove a result that fits into this context.

Let $(\mathrm{End}(X), \circ)$ respectively $\mathrm{Aut}(X)$ be the
monoid of finite endomorphisms respectively the group of holomorphic
automorphisms of an analytic space $X$, and let $\calO_{f}(x)$ be
the set of orbits of $x \in X$ under $f \in \mathrm{End}(X)$. Finite
endomorphism of the unit disk are finite Blaschke products, namely
rational functions of the following form
\begin{eqnarray}\label{FBP}
f(z) = \varrho\prod_{i = 1}^{n} \frac{z - a_i}{1-\overline{a_i}\,z}
\end{eqnarray}
with $\varrho$ in the unit circle $\TT$, $n \hspace{-0.7mm} \in
\hspace{-0.7mm} \NN$ and $a_i \hspace{-0.7mm} \in
\hspace{-0.7mm}\mathbb{E},$ where $\EE$ is the unit disk. In
particular it follows that $\mathrm{End}(\EE) \hspace{-0.7mm}
\subset \hspace{-0.7mm} \mathrm{End}(\PP^1)$. We shall regard a
finite Blaschke product as an endomorphism of the unit disk, the
unit circle, the Riemann sphere or the mirror image of the unit disk
$\overline{\mathbb{E}}^c$, depending on corresponding contexts. We
shall prove

\begin{theorem}\label{disk}
Given $\{x, y\}\hspace{-0.7mm} \subset\hspace{-0.7mm} \PP^1$ and
$\{f, g\} \hspace{-0.7mm}\subset \hspace{-0.7mm}
\mathrm{End}(\EE)\mathopen\setminus\mathclose \mathrm{Aut}(\EE)$. If
$\calO_f(x)\hspace{-0.5mm} \cap\hspace{-0.5mm} \calO_g(y)$ is
infinite then $f$ and $g$ have a common iteration.
\end{theorem}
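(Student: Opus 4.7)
The strategy I would pursue parallels the polynomial case of Ghioca--Tucker--Zieve, but several ingredients are specific to the Blaschke setting. My first task is an algebraization step. Although $f$ and $g$ are \emph{a priori} complex, I expect a specialization argument to reduce the hypothesis to the case that all data $(f, g, x, y)$ are defined over a number field $K$: the condition that $\calO_f(x) \cap \calO_g(y)$ be infinite is expressed by infinitely many polynomial equations in the coefficients, so one may specialize along the moduli of finite Blaschke products while keeping the hypothesis. This reduction puts the Call--Silverman canonical heights $\hat h_f$ and $\hat h_g$ on $\PP^1$ at my disposal, as well as Faltings' theorem on $K$-rational points of curves of genus $\geq 2$.

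Granted the reduction, the infinite intersection of orbits yields infinitely many pairs $(m_i, n_i)$ with $f^{m_i}(x) = g^{n_i}(y)$. A height comparison (using that $\hat h_f$ vanishes precisely on $f$-preperiodic points, and similarly for $\hat h_g$) forces $m_i \log \deg f \sim n_i \log \deg g$, so the indices grow in a controlled way. For each pair I consider the curve $C_{m,n} = \{(u, v) \in \PP^1 \times \PP^1 : f^m(u) = g^n(v)\}$; the pairs $(f^{m_i - j}(x), g^{n_i - k}(y))$ for $0 \leq j \leq m_i$, $0 \leq k \leq n_i$ contribute many algebraic points of bounded degree on these curves. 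Faltings' theorem then forces infinitely many irreducible components of the $C_{m,n}$ to have geometric genus at most one.

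The heart of the proof is to classify the pairs $(f, g)$ of finite Blaschke products of degree $\geq 2$ for which the fiber products $f^m \times_{\PP^1} g^n$ admit an irreducible component of genus $\leq 1$ for arbitrarily large $(m,n)$. I plan to use Riemann--Hurwitz applied to the covering maps $f^m, g^n$ restricted to each low-genus component, combined with a monodromy analysis based on the action of the fundamental group of $\PP^1$ minus the critical values on fibers. The $\TT$-preserving structure of Blaschke products is a crucial additional constraint: critical values come in symmetric pairs under the Schwarz reflection $z \mapsto 1/\bar z$, and the Julia set is contained in $\TT$, which rigidifies the possible monodromy. The genus-zero case should, via a Ritt-type decomposition adapted to Blaschke products, force $f$ and $g$ to share an iterate outright. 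The genus-one case is the genuine exception: it is expected to correspond to elliptic rational functions, the Blaschke analogues of Latt\`es maps coming from isogenies of an elliptic curve. Handling this elliptic case and ruling out exotic decomposition patterns that would prevent a common iterate is, I anticipate, the main obstacle, since it requires a precise dictionary between Blaschke products on $\EE$ and self-maps of a torus compatible with the Blaschke reflection symmetry, and a uniqueness statement guaranteeing that two such maps with common orbit behaviour actually share an iteration.
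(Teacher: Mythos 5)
Your opening reduction is the first genuine gap. You assert that one may specialize so that $(f,g,x,y)$ become defined over a number field ``while keeping the hypothesis.'' The individual identities $f^{m_i}(x)=g^{n_i}(y)$ do persist under specialization, but the infinitude of $\calO_f(x)\cap\calO_g(y)$ need not: the common values may collapse (for instance $x$ may become preperiodic), and moreover the Blaschke constraints $|a_i|<1$, $|\varrho|=1$ are real-analytic, not algebraic, conditions, so a naive specialization over $\overline{\QQ}$ need not stay inside $\mathrm{End}(\EE)$. The paper never makes this reduction: it fixes the finitely generated field $k$ of definition and applies Faltings' theorem in its ``potentially dense'' form over fields of finite type (Theorem 2.1), concluding directly that every curve $\PP^1\times_{f^n,\,g^m}\PP^1$ has a Faltings factor, i.e.\ a complete component of genus at most one. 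Specialization is used only for the auxiliary height statement (Theorem 6.2), and there it is carried out by induction on transcendence degree together with M.~Baker's theorem precisely because preserving the hypothesis under specialization is delicate.

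The second and more serious gap is that your central step --- classifying all pairs of Blaschke products for which $\PP^1\times_{f^m,\,g^n}\PP^1$ has a genus $\le 1$ component for arbitrarily large $(m,n)$, by Riemann--Hurwitz plus a monodromy analysis exploiting the reflection symmetry --- is exactly where the work lies, and you leave it as a plan. The paper takes a different and decisive route: using Riemann's covering principle it lifts the situation through topological identifications $\EE\to\CC$, so that each finite Blaschke product descends to a polynomial, and the Main Lemma establishes a bijection between Faltings factors of $\PP^1\times_{f,\,g}\PP^1$ and Siegel factors of the affine fiber product of the descended polynomials (the Euler characteristic doubles under the doubling of the disk fiber product and is unchanged under the descent). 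This makes the Bilu--Tichy criterion for polynomials available, and the resulting list is transported back into $(\mathrm{End}(\EE),\circ)$, yielding power maps and Chebyshev--Blaschke products $\calT_{n,\,t}$. Even granting such a classification, deducing a common iterate requires the rigidity machinery you only name: the Ng--Wang Ritt theory on the disk, a Zieve--M\"uller-type bound on $k$ in $a\circ b=f^k$, the commensurable versus non-commensurable dichotomy, and --- for the elliptic case you yourself flag as the main obstacle --- the moduli description of elliptic rational functions (degree $n\ge 3$ classes form $Y_0(n)$, giving the invariant $\chi(\calT_{n,\,t})=nt$), which shows no iterate of a fixed non-exceptional map can be elliptic and pins the exceptional pair down to $(z^r,\mu z^s)$, the case then handled by Ghioca--Tucker--Zieve. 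As written, your proposal neither proves its classification step nor derives a common iterate from it, so the argument is a strategy outline rather than a proof.
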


Together with the work of Ghioca-Tucker-Zieve\,(cf.\,\cite{GTZ08},
\cite{GTZ10}) we have

\begin{theorem}[Theorem \ref{disk} + \cite{GTZ10}]\label{simply connected}
Let $X$ be a simply connected open Riemann surface with the ideal
boundary $X^{\partial}$, $\{f,g\} \hspace{-0.7mm} \subset
\hspace{-0.7mm} \mathrm{End}(X)\mathopen\setminus\mathclose
\mathrm{Aut}(X)$ and $\{x, y\}\hspace{-0.7mm} \subset\hspace{-0.7mm}
X\hspace{-0.7mm} \cup\hspace{-0.7mm} X^{\partial}$. If the
intersection $\calO_{f}(x)\hspace{-0.5mm} \cap\hspace{-0.5mm}
\calO_{g}(y)$ is infinite then $f$ and $g$ have a common iteration.
\end{theorem}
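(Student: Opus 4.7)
My plan is to combine the uniformization theorem with the two existing substantive results via a simple case analysis. By the uniformization theorem a simply connected open Riemann surface is biholomorphic to either the complex plane $\CC$ or the unit disk $\EE$, and under such a biholomorphism $\phi$ the ideal boundary $X^{\partial}$ corresponds to $\{\infty\}$ or $\TT$ respectively. I would first observe that conjugation by $\phi$ turns finite endomorphisms into finite endomorphisms of the model, sends non-automorphisms to non-automorphisms, carries orbits bijectively, and transports an identity $\tilde f^{m}=\tilde g^{n}$ on the model back to $f^{m}=g^{n}$ on $X$; hence it suffices to treat the two model cases separately.

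In the parabolic case $X\simeq\CC$, finite endomorphisms are polynomials, and those that fail to be automorphisms are exactly those of degree at least two. The ideal boundary point $\infty$ is a totally ramified fixed point of every polynomial, so $\calO_{f}(\infty)=\{\infty\}$; consequently an infinite intersection forces $\{x,y\}\subset\CC$, and the Ghioca-Tucker-Zieve theorem \cite{GTZ10} delivers the required common iteration of $f$ and $g$.

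In the hyperbolic case $X\simeq\EE$, formula (\ref{FBP}) tells us that a finite endomorphism is a finite Blaschke product of degree at least two, which extends holomorphically to all of $\PP^{1}$. Since $\EE\cup X^{\partial}=\overline{\EE}\subset\PP^{1}$, both basepoints lie in $\PP^{1}$ and Theorem~\ref{disk} applies verbatim to produce a common iteration.

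I expect no genuine obstacle here: the two substantive ingredients---Theorem~\ref{disk}, proven in the body of this paper, and the Ghioca-Tucker-Zieve polynomial theorem---cover the two conformal types dictated by uniformization, while the boundary-point contingency is handled on the parabolic side by the fixed-point remark above and on the hyperbolic side by the fact that Theorem~\ref{disk} already allows basepoints anywhere in $\PP^{1}$. The main point is therefore organizational: the statement packages the Blaschke and polynomial dynamical Mordell-Lang theorems into a single uniform principle for simply connected open Riemann surfaces.
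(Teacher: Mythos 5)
Your proposal is correct and is precisely the argument the paper intends: the theorem is stated as ``Theorem \ref{disk} + \cite{GTZ10}'' with no separate proof, and your uniformization dichotomy (plane case handled by the Ghioca--Tucker--Zieve polynomial theorem with the ideal boundary point $\infty$ dismissed as a fixed point of trivial orbit, disk case handled by Theorem \ref{disk} since it already admits basepoints in $\PP^1 \supset \overline{\EE}$) is exactly the intended combination. No gaps; the conjugation bookkeeping you note is all that is needed.
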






The proof of Theorem \ref{disk} is based on two faces of the
endomorphism monoid $(\mathrm{End}(\EE), \circ)$. On the one hand
the factorization of any element of $(\mathrm{End}(\EE), \circ)$ is
very rigid, and on the other hand the assumption leads to special
factorizations of $f^i$ and of $g^j$ in $(\mathrm{End}(\EE), \circ)$
for all $\{i,j\}\hspace{-0.7mm} \subset \hspace{-0.7mm} \NN$. The
rigidity of factorization is given by the monodromy action of
fundamental groups, and the speciality of factorizations is a
consequence of the finiteness theorem of rational points.

In Section \ref{facts on diophantine geometry and analytic geometry}
we recall some preliminary results from Diophantine geometry and
analytic geometry. Section \ref{section descent} is devoted to the
proof of our main lemma. In Section \ref{elliptic function section}
we discuss elliptic rational functions, which is one major technical
difficulty of this piece of work.  We shall explain the rigidity in
Section \ref{sectionrigidity}, based on a joint work with Ng
\cite{NW08}. The speciality result will be proved in Section
\ref{sectionbilutichy}, based on Faltings' theorem, the Bilu-Tichy
criterion, Riemann's existence theorem, additivity of Euler
characteristic, the use of a real structure and a deformation
argument. In Section \ref{section heights} we prove a result on
heights that is used in the proof of the main theorem. Finally in
Section \ref{section proof} we present the proof of our main
theorem.

Throughout this paper $ \mathfrak{D}\!_f$ and $\mathfrak{d}_f$,
respectively, are the divisor of critical points and the set of
critical values of a finite map $f$. The support of a divisor $D$ is
denoted by $|D|$. The Riemann sphere, Gaussian plane, Poincar\'e
disk and the unit circle are denoted by $\PP^1$, $\CC$, $\EE$ and
$\TT$. The lattice $\ZZ \omega_1\mathopen +\mathclose \ZZ \omega_2$
is abbreviated by $\Lambda_{\omega_1,\, \omega_2}$, and
$\overline{X}$ may refer to the complex closure, algebraic closure
or the complex conjugation.
Write $\mathscr{C}_t, \mathscr{B}r_a$ and $\mathscr{B}r_t$ for the
categories of continuous mappings of topological spaces, finite maps
of analytic spaces and branched coverings of topological spaces,
accordingly. Given $S$ an object of a category $\mathscr{C}$ we set
$\mathscr{C}/S$ to be the category of $\mathscr{C}$-morphisms over
$S$. Chebyshev polynomial of degree $n$ is denoted by $T_n$. For any
$a\hspace{-0.7mm} \in\hspace{-0.7mm} \EE$ we let
$\iota_{a}(z)\hspace{-0.7mm}=\hspace{-0.7mm}(z\mathopen+\mathclose
a)/(1\mathopen +\mathclose {} \overline{a}z).$ The 2-torsion points
of an elliptic curve $E$ are denoted by $E[2]$. A curve of type $(g,
\nu)$ is of genus $g$ and of $\nu$ points at infinity.


\section{Facts from Diophantine and analytic geometry}\label{facts on diophantine geometry and analytic
geometry}
Integral points of a complex irreducible projective curve $X$ are
{\it potentially dense} if there exists a field $k$ of finite type
over $\QQ$ such that $X(k)$ is infinite, while integral points of a
complex affine curve $X$ of type $(g, \nu)$ are {\it potentially
dense} if there exists $R$ of finite type over $\ZZ$ and an affine
curve $Y$ over $R$ such that $Y(\CC)$ is birational to $X$ and
$Y(R)$ is infinite. We collect celebrated theorems of Siegel and of
Faltings in

\begin{theorem}[Siegel \cite{S29}, Faltings \cite{Fa83}]\label{Siegel Faltings}
Integral points of an algebraic curve $X$ of type $(g, \nu)$ are
potentially dense if and only if $\chi(X)\hspace{-0.7mm}
=\hspace{-0.7mm} 2\hspace{-0.7mm} -\hspace{-0.7mm} 2g\hspace{-0.7mm}
-\hspace{-0.7mm} \nu\hspace{-0.7mm} \ge\hspace{-0.7mm} 0$.
\end{theorem}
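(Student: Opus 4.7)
\begin{proofof}{Proof sketch}
The plan is to split the equivalence into its two directions and analyze each direction case by case on the pair $(g,\nu)$.

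For the forward (\emph{if}) direction, the inequality $2-2g-\nu \geq 0$ restricts $(g,\nu)$ to the four cases $(0,0), (0,1), (0,2), (1,0)$, so it suffices to exhibit an infinite integral point set in each. If $X=\PP^1$, then $X(\QQ)$ is already infinite. If $X\cong \AA^1$, then $X(\ZZ)=\ZZ$ works. If $X\cong \GG_m$, then over $R=\ZZ[1/p]$ the units $\{\pm p^n : n\in\ZZ\}$ furnish infinitely many $R$-points. Finally if $g=1,\nu=0$, one picks an elliptic curve $E$ of positive Mordell--Weil rank over some number field $k$, and $E(k)$ is infinite by construction.

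For the backward (\emph{only if}) direction the hypothesis $\chi(X)<0$ splits into three regimes: $g\geq 2$; $g=1$ with $\nu\geq 1$; and $g=0$ with $\nu\geq 3$. In the first regime I would invoke Faltings' theorem, which gives finiteness of $X(k)$ for every number field $k$; to promote this to fields of finite type over $\QQ$, one spreads $X$ out to a scheme over a finitely generated $\ZZ$-algebra and specializes at a closed point to reduce to the number field case via the Hermite--Minkowski argument on the generic fibre. In the second and third regimes Siegel's theorem applies directly to any affine model over $R$ of finite type over $\ZZ$ and yields that $Y(R)$ is finite.

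The main obstacle is not in the organization but in the inputs themselves: the assertion as stated simply \emph{is} the conjunction of Siegel's theorem with Faltings' theorem (the Mordell conjecture). The only genuine work beyond citing these is the passage from number fields to finitely generated extensions in the projective genus $\geq 2$ case, together with the compatibility check that the affine/projective dichotomy in the definition of potential density does not artificially weaken the statement at the boundary cases $\nu=0$ or $g=0$.
\end{proofof}
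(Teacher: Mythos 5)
The paper offers no proof of this statement at all: it is presented as a catalogue of the cited results of Siegel and Faltings, so there is no internal argument for your sketch to diverge from, and your organization (easy direction by exhibiting points in the four types $(0,0),(0,1),(0,2),(1,0)$, hard direction by citing the finiteness theorems plus a specialization step) is exactly the standard reading of the statement that the paper takes for granted.

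Two points in your sketch are genuinely imprecise, though both are repairable. First, in the $(1,0)$ case the curve $X$ is \emph{given}; you cannot ``pick an elliptic curve of positive Mordell--Weil rank,'' since that only shows some genus-one curve has potentially dense points, not that the given one does (its $j$-invariant may even be transcendental, so it need not live over a number field, and over a minimal field of definition the rank can be zero). The correct move is to take any finitely generated field of definition $k_0$, choose a point $P\in X(\CC)$ of infinite order in the group law (possible since $X(\CC)$ is uncountable while torsion is countable), and set $k=k_0(P)$; then $X(k)$ is infinite. Second, in the regimes $g=1,\nu\ge 1$ and $g=0,\nu\ge 3$ you assert that ``Siegel's theorem applies directly to any affine model over $R$ of finite type over $\ZZ$,'' but the classical Siegel theorem concerns rings of $S$-integers in number fields; the extension to arbitrary finitely generated rings is Lang's generalization and requires the same kind of spreading-out and specialization argument you describe for the Faltings case, so it should be cited or argued rather than treated as immediate. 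With these two repairs the sketch is correct and matches the role the theorem plays in the paper.
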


There are only four types of curves with non-negative Euler
characteristic, namely ones of
\begin{eqnarray*}
(0, 0), (0, 1), (0, 2) \ \ \text{and} \ \ (1, 0).
\end{eqnarray*}
We shall call a curve of type $(0, 1)$ or $(0, 2)$ respectively of
$(0,0)$ or $(1,0)$ a Siegel factor respectively a Faltings factor.

A holomorphic map between Riemann surfaces is finite if and only if
it is non-constant and proper. A holomorphic map $f\mathpunct{:}
M\hspace{-0.7mm} \to\hspace{-0.7mm} N$ between Riemann surfaces is
finite if and only if there exists an integer $n$ such that
$f(z)\hspace{-0.7mm}=\hspace{-0.7mm}c$ has $n$ solutions for any
point $c$ of $N$. We shall define the number $n$ given above to be
the {\it degree} of $f$ and denote it by $\deg{f}$. We point out
that there are no finite maps between $\CC$ and $\mathbb{E}$, which
is a consequence of Liuville's theorem and

\begin{lemma}\label{disk and complex plane}
If there exists a finite map $f\mathpunct{:}
\mathbb{E}\hspace{-0.7mm} \to\hspace{-0.7mm} N$ then $N$ is
biholomorphic to $\mathbb{E}$.
\end{lemma}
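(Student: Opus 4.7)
The plan is to combine uniformization with covering-space theory and a boundary estimate for $\EE$. Let $\pi_N \colon \widetilde N \to N$ be the universal cover of $N$; since $\EE$ is simply connected, $f$ lifts holomorphically to some $\widetilde f \colon \EE \to \widetilde N$ with $\pi_N \circ \widetilde f = f$. I would first verify that $\widetilde f$ is itself a finite map. Non-constancy is inherited from $f$. For properness, if $K \subset \widetilde N$ is compact then $\widetilde f^{-1}(K) \subset f^{-1}(\pi_N(K))$ is a closed subset of a compact set in $\EE$, hence compact. The open mapping theorem together with properness and connectedness of $\widetilde N$ then yields surjectivity, and the fibres of $\widetilde f$ sit inside fibres of $f$, so they are finite. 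Hence $\widetilde f$ is finite of some degree $m \le \deg f$.

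By the uniformization theorem, $\widetilde N$ is biholomorphic to $\PP^1$, $\CC$, or $\EE$. The case $\PP^1$ is ruled out since $\PP^1$ is compact while $\EE = \widetilde f^{-1}(\PP^1)$ is not. The case $\widetilde N \simeq \CC$ is the main obstacle. A finite $\widetilde f \colon \EE \to \CC$ must satisfy $|\widetilde f(z)| \to \infty$ as $|z|\to 1^-$, by properness applied to the exhaustion of $\CC$ by closed disks. Let $B$ be the finite Blaschke product on $\EE$ whose zero divisor equals that of $\widetilde f$; then $B/\widetilde f$ is holomorphic and zero-free on $\EE$, and $|B(z)/\widetilde f(z)| \to 0$ as $|z|\to 1^-$, since $|B|\to 1$ on $\TT$ while $|\widetilde f|\to\infty$. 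Applying the maximum principle to $B/\widetilde f$ on the subdiscs $\{|z|\le r\}$ and letting $r\to 1^-$ forces $B/\widetilde f \equiv 0$, which is absurd. Hence $\widetilde N$ is biholomorphic to $\EE$.

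It then remains to show that the deck group $\Gamma = \pi_1(N)$, which acts freely and holomorphically on $\widetilde N \simeq \EE$, is trivial. Counting preimages over a generic point $p \in N$ gives $\deg f = m \cdot |\Gamma|$, so $\Gamma$ is finite. Any element of $\mathrm{Aut}(\EE)$ of finite order is elliptic in the standard classification of hyperbolic isometries, and therefore has an interior fixed point; this contradicts freeness unless the element is the identity. Hence $\Gamma$ is trivial and $N \simeq \EE$. The decisive technical step is the Blaschke-product argument ruling out $\widetilde N \simeq \CC$; everything else is routine covering-space theory together with the classification of holomorphic automorphisms of $\EE$.
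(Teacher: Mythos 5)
Your argument is correct, and I cannot compare it step-by-step with an in-paper proof because the paper does not give one: it simply refers the reader to \cite{NW08}. Your route is a sound, self-contained alternative: lift $f$ to the universal cover $\widetilde N$ (the lift is finite by exactly the properness/closedness argument you give), rule out $\widetilde N\simeq\PP^1$ by compactness, rule out $\widetilde N\simeq\CC$ by the Blaschke-product/maximum-principle estimate (properness does give $|\widetilde f(z)|\to\infty$ uniformly as $|z|\to 1^-$, so $B/\widetilde f$ is forced to vanish identically, a contradiction), and then kill the deck group by the degree count $\deg f=m\,|\Gamma|$ together with the fact that a nontrivial finite-order element of $\mathrm{Aut}(\EE)$ is elliptic and has an interior fixed point, contradicting freeness. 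It is worth noting the logical relation to the surrounding text: the paper deduces the non-existence of finite maps between $\CC$ and $\EE$ \emph{from} this lemma (plus Liouville), whereas you prove the key special case ``no finite map $\EE\to\CC$'' directly as the engine of your proof, so there is no circularity. Two small points you should make explicit if you write this up: the degree count needs $p$ chosen outside the critical values of $f$ (so that every fibre of $\widetilde f$ over $\pi_N^{-1}(p)$ has exactly $m$ points), and the uniformity of $|\widetilde f|\to\infty$ near $\TT$ should be stated as coming from properness, since that is what lets the maximum principle on the circles $|z|=r$ close the argument.
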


For a proof we refer to \cite{NW08}. We shall need

\begin{lemma}[\cite{S}]\label{correspondence, group}\label{S}
Let $\mathfrak{d}$ be a discrete subset of $N$ and let
$q\hspace{-0.7mm} \in\hspace{-0.7mm} N\mathopen \setminus \mathclose
\mathfrak{d}$. There is a one-to-one correspondence between finite
maps $f\mathpunct{:} (M, p)\hspace{-0.7mm} \to \hspace{-0.7mm} (N,
q)$ of degree $n$ with $\mathfrak{d}_{f}\hspace{-0.7mm} \subset
\hspace{-0.7mm}\mathfrak{d}$ and subgroups $H$ of $\pi_{1}
(N\mathopen\setminus\mathclose\mathfrak{d}, q)$ of index $n$ given
by $f \mapsto H\hspace{-0.6mm} =\hspace{-0.6mm}
\pi_1(M\mathopen\setminus\mathclose f\!^{-\!1}(\mathfrak{d}), p)$.
\end{lemma}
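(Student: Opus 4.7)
The plan is to decompose the claimed bijection into two steps: first restrict over the complement of $\mathfrak{d}$ to pass from finite maps to unramified coverings, then combine classical covering space theory with Riemann's existence theorem in order to re-insert the punctures.

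For the direction $f \mapsto H$, start with $f\colon(M,p) \to (N,q)$ finite of degree $n$ satisfying $\mathfrak{d}_f \subset \mathfrak{d}$. Since there are no critical values outside $\mathfrak{d}$, the restriction
$$f_\circ\colon M \setminus f^{-1}(\mathfrak{d}) \longrightarrow N \setminus \mathfrak{d}$$
is an unramified proper holomorphic map, hence an honest covering map of degree $n$ between connected Riemann surfaces. By the basic correspondence of covering space theory, $H := (f_\circ)_* \pi_1(M \setminus f^{-1}(\mathfrak{d}), p)$ is an index-$n$ subgroup of $\pi_1(N \setminus \mathfrak{d}, q)$, and $f_\circ$ is determined up to pointed isomorphism by $H$.

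For the converse, given $H$ of index $n$, classical covering space theory produces a pointed connected covering $\pi\colon (M_\circ, p) \to (N \setminus \mathfrak{d}, q)$ with $\pi_*\pi_1(M_\circ, p) = H$. Transport the complex structure of $N \setminus \mathfrak{d}$ along $\pi$ so that $M_\circ$ becomes a Riemann surface and $\pi$ a local biholomorphism. To extend $\pi$ to a finite map on a larger Riemann surface, choose small disk coordinates $D_a$ around each $a \in \mathfrak{d}$: the restriction $\pi^{-1}(D_a \setminus \{a\}) \to D_a \setminus \{a\}$ decomposes into finitely many connected coverings of a punctured disk, each holomorphically equivalent to $z \mapsto z^k$. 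Filling in the centers produces a Riemann surface $M \supset M_\circ$ and extends $\pi$ to a finite holomorphic map $f\colon(M,p)\to(N,q)$ of degree $n$ with $\mathfrak{d}_f \subset \mathfrak{d}$, and the pair $(M, f)$ is independent of the choice of coordinates.

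The two constructions are manifestly inverse to each other, since $M_\circ = M \setminus f^{-1}(\mathfrak{d})$ by construction, so the subgroup recovered from the reconstructed $f$ equals $H$, and the topological uniqueness of pointed coverings at level $H$ forces the converse identification. The main obstacle is precisely the passage from topology to complex-analytic geometry at the punctures: Riemann's existence theorem is essential here, both for installing a compatible complex structure on the topological completion of $M_\circ$ and for ensuring uniqueness of $(M,p,f)$ up to pointed isomorphism over $(N,q)$. Everything else reduces to the standard Galois correspondence for covering spaces of the connected, locally path-connected, semilocally simply connected space $N \setminus \mathfrak{d}$.
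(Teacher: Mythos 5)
Your argument is correct, and it is essentially the standard proof of this classical fact: the paper itself gives no proof, citing only Shafarevich, and your two-step route (restrict to an unramified degree-$n$ covering of $N\setminus\mathfrak{d}$ and invoke the Galois correspondence, then refill the punctures via the local model $z\mapsto z^{k}$ for finite coverings of a punctured disk) is exactly the surgery argument underlying that citation. The only points left implicit --- properness/finiteness of the extended map and independence of the filled-in surface from the chosen disk coordinates --- are routine verifications, so no genuine gap remains.
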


A finite map $f\mathpunct{:} M\hspace{-0.7mm} \to\hspace{-0.7mm} N$
is called {\it linear} if $\deg{f}\hspace{-0.7mm} =\hspace{-0.7mm}
1,$ and a nonlinear finite map $f$ is called {\it
factorized}\,(resp.\,{\it prime} or {\it irreducible}) if there
exist\,(resp.\,exist no)  nonlinear finite maps $g \mathpunct{:} T
\hspace{-0.7mm} \rightarrow\hspace{-0.7mm} N$ and $h \mathpunct{:}
M\hspace{-0.7mm} \to\hspace{-0.7mm} T$ for which $f\mathopen
=\mathclose g\mathopen \circ\mathclose h$. The factorability of a
polynomial, as observed by Ritt in \cite{R22}, is determined by the
action of fundamental groups. By Lemma \ref{S} we slightly
generalize this fact to finite maps

\begin{theorem}[Ritt \cite{R22}, Ng-Wang \cite{NW08}]\label{Rittfirst}
Let $f\mathpunct{:} M\hspace{-0.7mm} \to\hspace{-0.7mm} N$ be a
finite map, $q\hspace{-0.7mm} \in\hspace{-0.7mm} N\mathopen
\setminus\mathclose \mathfrak{d}_f$ and $p\hspace{-0.7mm}
\in\hspace{-0.7mm} f^{-\!1}(q)$. The map $f$ is factorized if and
only if there exists a proper intermediate group between
$\pi_1(M\mathopen\setminus\mathclose f\!^{-\!1}(\mathfrak{d}_f), p)$
and  $\pi_{1} (N\mathopen\setminus\mathclose \mathfrak{d}_f , q)$.
\end{theorem}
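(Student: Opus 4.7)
My plan is to use Lemma~\ref{S} as a dictionary translating finite maps over $(N,q)$ into subgroups of $G\hspace{-0.6mm}:=\hspace{-0.6mm}\pi_1(N\setminus\mathfrak{d}_f,q)$. Setting $\mathfrak{d}\hspace{-0.6mm}:=\hspace{-0.6mm}\mathfrak{d}_f$ and $K\hspace{-0.6mm}:=\hspace{-0.6mm}f_*\pi_1(M\setminus f^{-1}(\mathfrak{d}),p)$, Lemma~\ref{S} identifies finite maps $(T,p')\hspace{-0.6mm}\to\hspace{-0.6mm}(N,q)$ whose critical values lie in $\mathfrak{d}$ with subgroups of $G$, and under this identification $f$ itself corresponds to $K$. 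What I aim to show is that factorizations of $f$ correspond precisely to strictly intermediate subgroups between $K$ and $G$.

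For the forward direction I would argue as follows. If $f\hspace{-0.6mm}=\hspace{-0.6mm}g\circ h$ with $g\mathpunct{:}T\hspace{-0.6mm}\to\hspace{-0.6mm}N$ and $h\mathpunct{:}M\hspace{-0.6mm}\to\hspace{-0.6mm}T$ both nonlinear, then every critical value of $g$ is a critical value of $f$, so $\mathfrak{d}_g\subset\mathfrak{d}$ and Lemma~\ref{S} assigns to $g$ a subgroup $H\hspace{-0.6mm}=\hspace{-0.6mm}g_*\pi_1(T\setminus g^{-1}(\mathfrak{d}),p')$ of $G$ of index $\deg g$. The relation $f_*\hspace{-0.6mm}=\hspace{-0.6mm}g_*\circ h_*$ on the fundamental groups of the preimages of $N\setminus\mathfrak{d}$ gives $K\subset H$, while nonlinearity of $g$ and $h$ translates directly into $[G\mathpunct{:}H]\hspace{-0.6mm}=\hspace{-0.6mm}\deg g\hspace{-0.6mm}>\hspace{-0.6mm}1$ and $[H\mathpunct{:}K]\hspace{-0.6mm}=\hspace{-0.6mm}\deg h\hspace{-0.6mm}>\hspace{-0.6mm}1$.

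The real content is in the converse. Starting from $K\subsetneq H\subsetneq G$, Lemma~\ref{S} furnishes a finite $g\mathpunct{:}(T,p')\hspace{-0.6mm}\to\hspace{-0.6mm}(N,q)$ with $\mathfrak{d}_g\subset\mathfrak{d}$, $\deg g\hspace{-0.6mm}=\hspace{-0.6mm}[G\mathpunct{:}H]\hspace{-0.6mm}>\hspace{-0.6mm}1$, and corresponding subgroup $H$. The inclusion $K\subset H$ then says, in covering-space language, that $f|_{M\setminus f^{-1}(\mathfrak{d})}$ lifts uniquely through the unramified covering $g|_{T\setminus g^{-1}(\mathfrak{d})}$ to a finite map $\tilde h$ of degree $[H\mathpunct{:}K]\hspace{-0.6mm}>\hspace{-0.6mm}1$ satisfying $g\circ\tilde h\hspace{-0.6mm}=\hspace{-0.6mm}f$ on the unramified locus.

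The step I expect to be most delicate is extending $\tilde h$ across the discrete set $f^{-1}(\mathfrak{d})$ to a holomorphic $h\mathpunct{:}M\hspace{-0.6mm}\to\hspace{-0.6mm}T$, but this is a standard removable-singularity argument: near any $x_0\in f^{-1}(\mathfrak{d})$, continuity of $f\hspace{-0.6mm}=\hspace{-0.6mm}g\circ\tilde h$ pins the cluster set of $\tilde h$ at $x_0$ inside the finite fibre $g^{-1}(f(x_0))$, so $\tilde h$ is locally bounded and Riemann's removable singularity theorem in local charts on $T$ delivers the extension. The identity $g\circ h\hspace{-0.6mm}=\hspace{-0.6mm}f$ then persists on all of $M$ by continuity; $h$ inherits finiteness from $f$ and $g$ via the standard properness argument, and $\deg h\hspace{-0.6mm}=\hspace{-0.6mm}[H\mathpunct{:}K]\hspace{-0.6mm}>\hspace{-0.6mm}1$ ensures $h$ is nonlinear, exhibiting $f$ as factorized.
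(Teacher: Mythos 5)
Your proposal is correct and follows exactly the route the paper indicates: the theorem is derived from the Galois-type correspondence of Lemma~\ref{S}, translating factorizations $f=g\circ h$ into strictly intermediate subgroups between $f_*\pi_1(M\setminus f^{-1}(\mathfrak{d}_f),p)$ and $\pi_1(N\setminus\mathfrak{d}_f,q)$, with the converse handled by the covering lifting criterion and extension across the discrete ramification set. Your filled-in details (properness of $g$ giving local boundedness of the lift, Riemann removability, and multiplicativity of degrees) are the standard ones and match the paper's intended argument.
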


This simple fact suggests the rigidity of the decomposition of
finite maps. Let $f\mathpunct{:} M\hspace{-0.7mm} \to\hspace{-0.7mm}
N$ be a finite map of degree $n$ and $q\hspace{-0.7mm}
\in\hspace{-0.7mm} N\mathopen \setminus\mathclose \mathfrak{d}_f$.
The natural group homomorphism $\rho \mathpunct{:} \pi_{1} (
\mathfrak{N}\mathopen\setminus\mathclose \mathfrak{d}_f ,
q)\hspace{-0.6mm} \to\hspace{-0.6mm} S_{n}$ which is called the {\it
monodromy}, and the image of $\rho$ is called the {\it monodromy
group} of $f$. With an additional assumption there is an even
stronger rigid property than the one stated in Theorem
\ref{Rittfirst}. Writing $\mathfrak{L}_n\hspace{-0.7mm}
=\hspace{-0.7mm} \{t\hspace{-0.7mm} \in\hspace{-0.7mm} \NN :
t\,|\,n\}$ for the lattice that $i\hspace{-0.7mm} \le\hspace{-0.7mm}
j $ if and only if $i\, | j$, we have

\begin{theorem}[Ritt \cite{R22}, Ng-Wang \cite{NW08}]\label{Rittsecond}
Let $f\mathpunct{:} M\hspace{-0.7mm} \to\hspace{-0.7mm} N$ be a
finite map and let $q\hspace{-0.7mm} \in\hspace{-0.7mm} N\mathopen
\setminus\mathclose \mathfrak{d}_f$. If there exists $\alpha
\hspace{-0.7mm} \in \hspace{-0.7mm} \pi_{1} (
N\mathopen\setminus\mathclose \mathfrak{d}_f , q)$ such that the
monodromy action of $\alpha$ is transitive then the lattice of
intermediate groups between $\pi_1(M\mathopen\setminus\mathclose
f\!^{-\!1}(\mathfrak{d}_f), p)$ and  $\pi_{1} (
N\mathopen\setminus\mathclose \mathfrak{d}_f , q)$ is a sublattice
of $\mathfrak{L}_{\deg{f}}$.
\end{theorem}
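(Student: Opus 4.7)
Set $G = \pi_1(N \setminus \mathfrak{d}_f, q)$ and $H = \pi_1(M \setminus f^{-1}(\mathfrak{d}_f), p)$. By Lemma \ref{correspondence, group} the index $[G:H]$ equals $n := \deg f$, and the monodromy action of $G$ on the fiber $f^{-1}(q)$ is equivalent to the left translation action of $G$ on the coset space $G/H$. The plan is to use the transitivity hypothesis to give $G/H$ the structure of the cyclic group $\ZZ/n\ZZ$ as an $\langle\alpha\rangle$-set; once that is done, intermediate subgroups of $G$ containing $H$ will correspond to subgroups of $\ZZ/n\ZZ$, hence to divisors of $n$.

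First, recall the correspondence attached to a transitive permutation representation: intermediate subgroups $H \le K \le G$ are in bijection with $G$-equivariant partitions (block systems) of $G/H$, the block containing the trivial coset $eH$ being $K/H$ and the subgroup attached to a block $B \ni eH$ being its setwise stabilizer. The hypothesis says that $\alpha \in G$ acts transitively on the $n$-element set $G/H$, so $\alpha$ acts as a single $n$-cycle. Fix the bijection $G/H \to \ZZ/n\ZZ$ sending $\alpha^{i}H$ to $i$; under it the action of $\alpha$ becomes translation by $1$. A partition of $\ZZ/n\ZZ$ invariant under translation is exactly a coset decomposition by some subgroup of $\ZZ/n\ZZ$, and by the classical structure of cyclic groups such subgroups are indexed by divisors of $n$; write $C_m$ for the unique one of order $m$.

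Consequently every intermediate subgroup $K$ has block $K/H$ equal to some $C_m$ with $m \mid n$, so that $[K:H] = m$. The assignment $K \mapsto [K:H]$ therefore embeds the lattice of intermediate subgroups into $\mathfrak{L}_n$: it is injective because distinct subgroups of $\ZZ/n\ZZ$ have distinct orders; it is order preserving because $K_1 \le K_2$ forces $C_{m_1} \subset C_{m_2}$, i.e.\ $m_1 \mid m_2$; and it sends meets and joins to gcd and lcm since $K_1 \cap K_2$ has block $C_{m_1} \cap C_{m_2} = C_{\gcd(m_1,m_2)}$ and $\langle K_1, K_2 \rangle$ has block $\langle C_{m_1}, C_{m_2}\rangle = C_{\mathrm{lcm}(m_1,m_2)}$. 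Hence the image is a sublattice of $\mathfrak{L}_n$.

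The argument is purely group theoretic once Theorem \ref{Rittfirst} and its underlying correspondence between intermediate subgroups and block systems are granted, so there is no real obstacle. The substantive observation is merely that transitivity of a single element $\alpha$, as opposed to transitivity of the full monodromy group, is precisely what pins down a cyclic structure on $G/H$ and thereby collapses what could in principle be a much larger poset of $G$-invariant partitions down to a sublattice of $\mathfrak{L}_{\deg f}$.
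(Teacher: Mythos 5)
The paper itself gives no proof of Theorem \ref{Rittsecond}; it is quoted from Ritt and Ng--Wang. Your argument is essentially the standard one behind that citation: identify the fibre over $q$ with $G/H$ via Lemma \ref{S}, use the $n$-cycle $\alpha$ to identify $G/H$ with $\ZZ/n\ZZ$ so that every $G$-block through the base point becomes a subgroup $C_m\le\ZZ/n\ZZ$, and read off the divisibility lattice. Up to the last step you carry this out correctly: the correspondence between intermediate subgroups and blocks through $eH$, the fact that a partition of $\ZZ/n\ZZ$ invariant under translation by $1$ is the coset partition of a subgroup, injectivity and order-preservation of $K\mapsto[K:H]$, and the meet computation $(K_1\cap K_2)/H=K_1/H\cap K_2/H=C_{\gcd(m_1,m_2)}$ are all sound.

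The one place where you assert rather than prove is the join: the claim that $\langle K_1,K_2\rangle$ has block $\langle C_{m_1},C_{m_2}\rangle=C_{\mathrm{lcm}(m_1,m_2)}$. Only the inclusion $\supseteq$ is immediate; the inclusion $\subseteq$ --- equivalently, the statement that $C_{\mathrm{lcm}(m_1,m_2)}$ is itself a $G$-block --- is exactly the ``closed under lcm'' half of the sublattice conclusion, and it is the half that Proposition \ref{rigidity} later exploits, so it cannot be waved through: a priori the orbit of the base point under $\langle K_1,K_2\rangle$ could be a strictly larger subgroup of $\ZZ/n\ZZ$. The claim is true and can be closed in a few lines. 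For instance, the finest common coarsening of the two $G$-invariant partitions into cosets of $C_{m_1}$ and of $C_{m_2}$ is again $G$-invariant (two points lie in the same class iff they are joined by a chain alternating the two equivalences, a condition preserved by every element of $G$), and in $\ZZ/n\ZZ$ this coarsening is precisely the partition into cosets of $C_{m_1}+C_{m_2}=C_{\mathrm{lcm}(m_1,m_2)}$; its setwise stabilizer is then an intermediate subgroup containing $K_1$ and $K_2$ whose block is $C_{\mathrm{lcm}(m_1,m_2)}$, which forces the block of $\langle K_1,K_2\rangle$ to equal $C_{\mathrm{lcm}(m_1,m_2)}$. Alternatively and purely algebraically: transitivity of $\alpha$ means $G=\langle\alpha\rangle H$, so every intermediate $K$ satisfies $K=(K\cap\langle\alpha\rangle)H=\langle\alpha^{d}\rangle H$ with $d\mid n$; writing $K_i=\langle\alpha^{d_i}\rangle H$ and $g=\gcd(d_1,d_2)=ud_1+vd_2$, the inclusions $H\langle\alpha^{d_i}\rangle\subseteq\langle\alpha^{d_i}\rangle H$ give $H\langle\alpha^{g}\rangle\subseteq\langle\alpha^{g}\rangle H$, so $\langle\alpha^{g}\rangle H$ is a subgroup and is visibly the join, of index $g$ in $G$. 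With this supplement your proof is complete and matches the intended Ritt--Ng--Wang argument.
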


Finite map can be recovered from their monodromy by the ``Schere und
Kleister'' surgery \cite[p.41]{S}, and this is the well-known

\begin{theorem}[Riemann's existence theorem]\label{monodromy}
Let $N$ be a Riemann surface, $\mathfrak{d}\hspace{-0.7mm}
\subset\hspace{-0.7mm} N$ a discrete subset,
$q\hspace{-0.7mm}\in\hspace{-0.7mm}
N\mathopen\setminus\mathclose\mathfrak{d}$ and $\rho \mathpunct{:}
\pi_1(N\mathopen\setminus\mathclose\mathfrak{d}, q)\hspace{-0.7mm}
\to\hspace{-0.7mm} S_n$ a transitive representation. There exists a
unique pointed finite map $f\mathpunct{:} (M, p)\hspace{-0.7mm}
\to\hspace{-0.7mm} (N,q)$ between Riemann surfaces with the
monodromy of $f$ given by $\rho$.
\end{theorem}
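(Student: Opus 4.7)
The plan is to split the problem into a topological covering-theoretic step over $N\setminus \mathfrak{d}$ and a local analytic step filling in the branch points over $\mathfrak{d}$. First I would produce an unramified degree-$n$ covering $\pi\colon M^\circ \to N\setminus \mathfrak{d}$ realizing $\rho$, then pull back the complex structure so that $\pi$ becomes holomorphic, and finally complete $M^\circ$ to a Riemann surface $M$ by adjoining one point above each connected piece of the preimage of a small punctured disk around a branch value. It remains to check that the extended map $f\colon M\to N$ is holomorphic, finite, and has monodromy $\rho$.

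For the unramified step, transitivity of $\rho$ makes the stabilizer $H$ of the symbol $1$ in $\pi_1(N\setminus \mathfrak{d},q)$ a subgroup of index $n$. Since $N\setminus \mathfrak{d}$ is connected, locally path connected and semi-locally simply connected, classical covering space theory produces a unique pointed connected covering $(M^\circ,p)\to (N\setminus \mathfrak{d},q)$ with image subgroup $H$ and permutation monodromy equivalent to $\rho$. Because $\pi$ is a local homeomorphism, one pulls back the complex atlas of $N\setminus \mathfrak{d}$ to $M^\circ$, turning it into a Riemann surface with $\pi$ holomorphic.

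The main obstacle is the analytic extension across $\mathfrak{d}$. Around each $b\in \mathfrak{d}$ pick a coordinate disk $U_b\simeq \EE$ with $U_b\cap \mathfrak{d}=\{b\}$; over $U_b\setminus\{b\}$ the covering $\pi$ decomposes into finitely many connected pieces $V_{b,j}$, each of which is a cyclic cover of degree $e_{b,j}$ since $\pi_1(U_b\setminus\{b\})\cong \ZZ$ has a unique subgroup of each index. Classifying holomorphic cyclic covers of the punctured disk, one identifies each $V_{b,j}\to U_b\setminus\{b\}$ with $\EE\setminus\{0\}\to\EE\setminus\{0\}$, $w\mapsto w^{e_{b,j}}$. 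Adjoining a single point $p_{b,j}$ to every $V_{b,j}$, with the chart in which $f$ takes the local form $w\mapsto w^{e_{b,j}}$, produces a Hausdorff Riemann surface $M$ and a holomorphic extension $f\colon M\to N$ of $\pi$. The map $f$ is non-constant by construction, and the preimage of any compact set is contained in a finite union of compact closed disks in the new charts, so $f$ is proper, hence finite of degree $n$ with $\mathfrak{d}_f\subset \mathfrak{d}$ and monodromy $\rho$.

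Uniqueness reduces to uniqueness in covering space theory. Any other pointed finite $f'\colon (M',p')\to (N,q)$ with monodromy $\rho$ restricts over $N\setminus \mathfrak{d}$ to an unramified cover whose image subgroup is the same $H$, yielding a unique pointed biholomorphism $M'\setminus f'^{-1}(\mathfrak{d})\simeq M^\circ$ over $N\setminus \mathfrak{d}$. Because $w\mapsto w^e$ is the unique holomorphic completion of a connected cyclic cover across a puncture, this biholomorphism extends uniquely to an isomorphism $M'\simeq M$ intertwining $f$ and $f'$.
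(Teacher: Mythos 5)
Your argument is correct and complete in outline; note, though, that the paper does not actually prove Theorem \ref{monodromy} at all: it treats it as classical and points to the ``Schere und Kleister'' (cut-and-paste) surgery in Shafarevich \cite[p.41]{S}, where one realizes the covering concretely by slitting $N$ along arcs from $q$ to the points of $\mathfrak{d}$ and gluing $n$ sheets according to the permutations $\rho(\gamma)$. Your route is the other standard one: classify the unramified cover of $N\setminus\mathfrak{d}$ abstractly by the index-$n$ stabilizer subgroup, pull back the complex structure (this is exactly the mechanism of the paper's Theorem \ref{Riemann's covering principle}), and then fill in the punctures using the fact that a connected degree-$e$ holomorphic cover of a punctured disk is equivalent to $w\mapsto w^{e}$. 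The cut-and-paste construction buys explicitness (the sheets and gluing data are visible, which is what the paper exploits when it feeds the Chebyshev representation $\rho_T$ into the theorem), while your covering-space argument is cleaner on uniqueness, since it reduces it to the uniqueness of the pointed covering attached to a subgroup plus the removable-singularity extension across the added points. Two small points you should tighten if this were written out in full: the identification of the monodromy of the constructed $f$ with $\rho$ itself (not merely with a conjugate) requires fixing the labelling of the fiber over $q$ by the orbit--coset bijection for the stabilizer of the symbol $1$; and the properness of the completed map deserves a sentence saying that $f$ is proper over each coordinate disk $U_b$ (local model $w\mapsto w^{e_{b,j}}$ on finitely many components) and is a finite covering away from $\mathfrak{d}$, whence proper over all of $N$.
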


We call the following group homomorphism $\rho_{T}\mathpunct{:}
F_2\hspace{-0.7mm} =\hspace{-0.7mm} \left<\sigma, \tau
\right>\hspace{-0.7mm} \to\hspace{-0.7mm} S_n$ a {\it Chebyshev
representation}: if $n\hspace{-0.7mm} =\hspace{-0.7mm} 2k$ then
\vspace{-0.2cm}
\begin{eqnarray*}
\rho_{T}(\sigma)\hspace{-2.5mm} &=&\hspace{-2.5mm} (2,2k)(3,2k\mathopen-\mathclose 1)\cdots(k,k\mathopen+\mathclose 2) \\
\rho_{T}(\tau)\hspace{-2.5mm} &=&\hspace{-2.5mm}
(2,1)\,(3,2k)\cdots(k\mathopen+\mathclose 1,k\mathopen+\mathclose 2)
\end{eqnarray*}
and if $n\hspace{-0.7mm} =\hspace{-0.7mm} 2k\mathopen +\mathclose 1$
then \vspace{-0.3cm}
\begin{eqnarray*}
\rho_{T}(\sigma)\hspace{-2.5mm} &=&\hspace{-2.5mm} (2,2k\mathopen +\mathclose 1)(3,2k)\cdots(k\mathopen +\mathclose 1,k\mathopen +\mathclose 2)\\
\rho_{T}(\tau)\hspace{-2.5mm} &=&\hspace{-2.5mm} (2,1)(3,2k\mathopen
+\mathclose 1)\cdots(k\mathopen +\mathclose 1,k\mathopen +\mathclose
3).
\end{eqnarray*}

If $X$ is a simply connected Riemann surface then $\pi_1(X\mathopen
\setminus\mathclose \{\text{2pts}\})$ is a free group of rank 2.
Theorem \ref{monodromy} played with $\rho_{T}\mathpunct{:}
\CC\mathopen \setminus\mathclose \{\text{2pts}\}\hspace{-0.7mm}
\to\hspace{-0.7mm} S_n$\,(resp.\,$\rho_{T}\mathpunct{:} \EE\mathopen
\setminus\mathclose \{\text{2pts}\}\hspace{-0.7mm}
\to\hspace{-0.7mm} S_n$) gives elements in
$\mathrm{End}(\CC)$\,(resp.\,$\mathrm{End}(\EE)$). The former are
polynomials associated to $T_n$, and the latter are called {\it
Chebyshev-Blaschke products}. This construction appeared in
\cite{W08} and \cite{NW08}. Let $k$ be the classical elliptic
modulus function as defined in \cite[p.99]{Chan}, then we set $
\gamma(t)\hspace{-0.7mm} =\hspace{-0.7mm}
k^{\frac{1}{2}}\left(4ti/\pi\right)$. In \cite{NW08}(or \cite{W08})
we have proved that

\begin{proposition}[Ng-Wang]\label{nomalized Chebyshev-Blaschke products,uniqueness}
Given $t\hspace{-0.7mm} >\hspace{-0.7mm} 0, n\hspace{-0.7mm}
\in\hspace{-0.7mm} \NN$ there is a unique $\mathcal{T}_{n,\,
t}\hspace{-0.7mm} \in\hspace{-0.7mm} \mathrm{End}(\EE)$ that is
characterized by properties that $\mathcal{T}^{-\!1}_{n,\,
t}[-\!\gamma(nt) , \gamma(nt)]\hspace{-0.7mm} =\hspace{-0.7mm}
[-\gamma(t), \gamma(t)]$ and that $\mathcal{T}_{n,\,
t}(\gamma(t))\hspace{-0.7mm} =\hspace{-0.7mm} \gamma(nt)$. These
$\mathcal{T}_{n,\, t}$ are Chebyshev-Blaschke products. If $f$ is a
Chebyshev-Blaschke product of degree $n$, then there exist
$\{\epsilon, \varepsilon\} \hspace{-0.7mm} \subset \hspace{-0.7mm}
\mathrm{Aut}(\EE)$ and $t\hspace{-0.7mm}>\hspace{-0.7mm}0$ such that
$f\hspace{-0.7mm} =\hspace{-0.7mm} \epsilon\hspace{-0.7mm}
\circ\hspace{-0.7mm} \mathcal{T}_{n,\,t}\hspace{-0.7mm}
\circ\hspace{-0.7mm} \varepsilon$.
\end{proposition}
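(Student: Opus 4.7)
The plan is to combine Theorem~\ref{monodromy} with the complex-conjugation symmetry of the Chebyshev representation $\rho_T$, and then to pin down the normalization by exploiting the induced action of $\mathrm{Aut}(\EE)$ on source and target.

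First I would apply Theorem~\ref{monodromy} to the Chebyshev representation $\rho_T: \pi_1(\EE\setminus\{-\gamma(nt),\gamma(nt)\}, q)\to S_n$ with a fixed base point $q$. Since $\rho_T$ is transitive, the theorem produces a pointed finite map $h: (M, p)\to (\EE, q)$ of degree $n$. Lemma~\ref{disk and complex plane} forces $M\simeq\EE$, so $h\in\mathrm{End}(\EE)$; by definition it is a Chebyshev-Blaschke product ramified exactly over $\pm\gamma(nt)$. Next I would pin down the preimage of the real diameter. The permutations $\rho_T(\sigma)$ and $\rho_T(\tau)$ each split into transpositions arranged in a chain terminating at the fixed points $1$ and $k+1$ (or $1$ and $k+2$), so the Hurwitz data force the preimage of the closed real segment $[-\gamma(nt),\gamma(nt)]$ under $h$ to be a single topological arc that is the fixed locus of an anti-holomorphic involution of $M$ compatible with $z\mapsto\bar z$ on the target; hence it is a closed real diameter of $\EE$. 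After composing $h$ on the right with a suitable element of $\mathrm{Aut}(\EE)$ this diameter becomes $[-s,s]$ for some $s\in(0,1)$, and the equality $s=\gamma(t)$ follows from the classical elliptic-function parametrization by $\mathrm{sn}$ together with the bijectivity of $\gamma:\RR_{>0}\to(0,1)$. A final post-composition by a rotation (or reflection) of $\EE$ in the target arranges $h(\gamma(t))=\gamma(nt)$, yielding $\mathcal{T}_{n,t}$.

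Uniqueness is then immediate: any two maps meeting both normalizing conditions differ by automorphisms of $\EE$ on source and target that preserve the real diameter pointwise and its orientation, hence are both the identity. For the final clause, given an arbitrary Chebyshev-Blaschke product $f$ of degree $n$, its two critical values are interior points of $\EE$, and a choice of $\epsilon\in\mathrm{Aut}(\EE)$ on the target places them symmetrically on the real axis as $\pm\gamma(nt)$ for a unique $t>0$; then a choice of $\varepsilon\in\mathrm{Aut}(\EE)$ on the source brings the map into the normalized form $\mathcal{T}_{n,t}$. The main obstacle is the step equating $s$ with $\gamma(t)$: the value of $s$ is determined by the geometry of the Chebyshev-Blaschke covering with critical values $\pm\gamma(nt)$, and showing that this coincides with the Jacobi-modulus value $\gamma(t)=k^{1/2}(4ti/\pi)$ requires translating the topological Riemann-existence construction into the explicit elliptic-function picture.
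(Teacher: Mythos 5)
There is a genuine gap, and it sits exactly where you flag it: the identification $s=\gamma(t)$. Up to that point your argument only yields \emph{some} degree-$n$ Blaschke product $h$ with Chebyshev monodromy branched over $\pm\gamma(nt)$, whose preimage of the branch segment is a hyperbolic diameter that you may normalize to $[-s,s]$ for some $s\in(0,1)$. The entire quantitative content of the proposition is that $s=\gamma(t)=k^{1/2}(4ti/\pi)$, i.e.\ the precise law by which the modulus of the preimage segment depends on the modulus of the target segment and on $n$; invoking ``the classical elliptic-function parametrization by $\mathrm{sn}$ together with the bijectivity of $\gamma$'' does not prove this, it restates it. Bijectivity of $\gamma$ only tells you that $s=\gamma(t')$ for a unique $t'>0$; the claim $t'=t$ (equivalently, that passing to the degree-$n$ cover multiplies the relevant modulus by $n$) is the analytic heart of the statement. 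In the source the paper cites (\cite{NW08}; the paper itself gives no proof and simply quotes the result), this is obtained not from the bare Riemann-existence construction but from the elliptic picture recalled in Section~\ref{elliptic function section}: $\mathcal{T}_{n,\tau}$ is realized as the descent of the cyclic isogeny $[n]\mathpunct{:}E_{\tau}'\to E_{n\tau}'$ along the Jacobi $\mathrm{cd}$ functions, whose explicit boundary behaviour identifies the two segments and their endpoints, and the change of variables (\ref{T=T}) then converts $\tau=4ti/\pi$ into the normalization by $t>0$. Without some such explicit transcendental input (elliptic functions, theta functions, or an extremal-length/annulus-modulus computation for $\EE\setminus[-s,s]$ covering $\EE\setminus[-\gamma(nt),\gamma(nt)]$), the topological construction cannot single out the value $\gamma(t)$, so the existence statement as formulated is not established.

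A secondary weakness is the uniqueness step. You assert that any two elements of $\mathrm{End}(\EE)$ satisfying the two normalizing conditions ``differ by automorphisms of $\EE$ on source and target,'' but that is not immediate: an arbitrary $T\in\mathrm{End}(\EE)$ with $T^{-1}[-\gamma(nt),\gamma(nt)]=[-\gamma(t),\gamma(t)]$ is not a priori branched only over $\pm\gamma(nt)$ with Chebyshev monodromy, and it is precisely by first proving these facts (the preimage condition forces all critical values onto the segment and then pins down the ramification pattern, hence the monodromy class) that one earns the right to apply the uniqueness clause of Theorem~\ref{monodromy} and conclude that the ambiguity is by automorphisms fixing the diameter pointwise, hence trivial. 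As written, the uniqueness argument presupposes what it should prove. The final clause (every Chebyshev--Blaschke product is $\epsilon\circ\mathcal{T}_{n,t}\circ\varepsilon$) is fine in outline once existence and the monodromy rigidity are in place, since two critical values in $\EE$ can always be moved by $\mathrm{Aut}(\EE)$ to a symmetric real pair $\pm\gamma(nt)$ and $\gamma$ is a bijection onto $(0,1)$; but it again leans on the unproven normalization of $\mathcal{T}_{n,t}$.
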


These  $\mathcal{T}_{n,\, t}$ are called {\it normalized
Chebyshev-Blaschke products.}

\section{The main lemma}\label{section descent}


We shall make use of the following version of Riemann's covering
principle as given in \cite[p.119-120]{AS60}. Here a Riemann surface
is a pair $(X, \phi)$ with $X$ a connected Hausdorff space and
$\phi$ a complex structure, see \cite[p.144]{AS60}. However we shall
simply write $\mathbb{E}$ and $\CC$ when $\phi$ is the canonical
one.

\begin{theorem}[Riemann's covering principle]\label{Riemann's covering
principle} If $f \mathpunct{:} X_1\hspace{-0.7mm} \to\hspace{-0.7mm}
X_2$ is a covering surface and if $\phi_2$ is a complex structure on
$X_2$. Then there exists a unique complex structure $\phi_1$ on
$X_1$ such that $f\mathpunct{:}(X_1, \phi_1)\hspace{-0.7mm}
\to\hspace{-0.7mm} (X_2, \phi_2)$ is holomorphic.
\end{theorem}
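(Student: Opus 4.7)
The strategy is to transfer $\phi_2$ upstairs via the local homeomorphism $f$. First I would refine the atlas of $(X_2,\phi_2)$ so that every chart $(V_\alpha,\psi_\alpha)$ has the property that $V_\alpha$ is evenly covered by $f$; this refinement is legitimate because any open cover of $X_2$ admits a common refinement by evenly covered coordinate patches. Over each such $V_\alpha$ write $f^{-1}(V_\alpha)=\bigsqcup_\beta U_{\alpha,\beta}$ with each $f|_{U_{\alpha,\beta}}\mathpunct{:} U_{\alpha,\beta}\to V_\alpha$ a homeomorphism, and set
\[
\psi_{\alpha,\beta} \;:=\; \psi_\alpha\circ f|_{U_{\alpha,\beta}} \mathpunct{:}\; U_{\alpha,\beta}\longrightarrow\CC.
\]

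Next I would check the compatibility of these candidate charts. On a nonempty overlap the transition map computes as
\[
\psi_{\alpha',\beta'}\circ\psi_{\alpha,\beta}^{-1} \;=\; \psi_{\alpha'}\circ f\circ (f|_{U_{\alpha,\beta}})^{-1}\circ\psi_\alpha^{-1} \;=\; \psi_{\alpha'}\circ\psi_\alpha^{-1},
\]
which is holomorphic by hypothesis on $\phi_2$. Hence the $(U_{\alpha,\beta},\psi_{\alpha,\beta})$ assemble into a holomorphic atlas $\phi_1$ on $X_1$. In these local coordinates the map $f$ reads as the identity, so $f\mathpunct{:} (X_1,\phi_1)\to (X_2,\phi_2)$ is holomorphic, establishing existence.

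For uniqueness, suppose $\phi_1'$ is any complex structure on $X_1$ for which $f$ is holomorphic. Then each $\psi_\alpha\circ f|_{U_{\alpha,\beta}}$ is a holomorphic injection on a connected open subset of $(X_1,\phi_1')$, and by the one-variable fact that an injective holomorphic map is a biholomorphism onto its image, this is a valid chart of $\phi_1'$. Since the very same maps constitute the atlas of $\phi_1$, the two structures coincide.

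The only subtle step I anticipate is the initial refinement of the atlas to evenly covered domains; once that bookkeeping is done, existence, holomorphicity of $f$, and uniqueness all reduce to the single transition identity above together with the standard inverse function theorem in one complex variable. In particular no hypothesis on $X_1$ is needed beyond its being a connected Hausdorff space, so Riemann's original formulation applies verbatim.
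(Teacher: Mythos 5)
The paper does not actually prove this statement: it is quoted verbatim from Ahlfors--Sario \cite{AS60}, so your argument has to be judged against the scope in which the paper uses it. For a genuine covering map your chart-pullback argument is the standard one and is essentially correct: the transition computation $\psi_{\alpha',\beta'}\circ\psi_{\alpha,\beta}^{-1}=\psi_{\alpha'}\circ\psi_{\alpha}^{-1}$ gives existence, and the observation that each $\psi_\alpha\circ f|_{U_{\alpha,\beta}}$ is an injective holomorphic map for any competing structure gives uniqueness. Even there, the preliminary refinement to evenly covered patches is both unnecessary and potentially unavailable: a covering surface in the sense of \cite{AS60} is not assumed complete, so away from branch points it is only a local homeomorphism and evenly covered neighborhoods need not exist. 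That part is easily repaired, since your construction only needs, for each point of $X_1$, one neighborhood mapped homeomorphically into a chart domain of $X_2$.

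The genuine gap is the ramified case, which is exactly the case the paper needs: in Section 3 the theorem is applied to $i_0\circ f$ with $f\in\mathrm{End}(\EE)$ a finite Blaschke product of degree at least two, and such an $f$ has critical points, so $i_0\circ f$ is a branched covering and not a local homeomorphism. At a branch point $\mathfrak{p}$ your candidate chart $\psi_\alpha\circ f$ is not injective on any neighborhood, so your atlas simply omits these points and produces a complex structure only on the complement of the branch set. The missing ingredient is the local normal form $z\mapsto z^{n}$: choose a chart $\psi$ of $(X_2,\phi_2)$ centered at $f(\mathfrak{p})$ and take as chart at $\mathfrak{p}$ a continuous branch of $(\psi\circ f)^{1/n}$, which the topological model guarantees to be a homeomorphism onto a disk; compatibility with the unbranched charts and uniqueness then require an additional removable-singularity argument (for any structure $\phi_1'$ making $f$ holomorphic, $(\psi\circ f)^{1/n}$ is holomorphic off $\mathfrak{p}$, continuous across it, hence holomorphic, and being injective it is a biholomorphism onto its image). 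Without this step your proof establishes a strictly weaker statement than the one the paper invokes when descending $f$ to a polynomial.
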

Let $i_0\hspace{-0.7mm} \in \hspace{-0.7mm}
\mathrm{Hom}_{\mathscr{C}_t}(\EE, \CC)$ and $f\hspace{-0.7mm}
\in\hspace{-0.7mm} \mathrm{End}(\EE)$. Theorem \ref{Riemann's
covering principle} applied to $i_0 \hspace{-0.7mm} \circ
\hspace{-0.7mm} f\mathpunct{:} \EE \hspace{-0.7mm} \to
\hspace{-0.7mm} \CC$ gives a new complex structure $\phi$ on
$\mathbb{E}$ and a finite map $(\EE, \phi)\hspace{-0.7mm} \to
\hspace{-0.7mm} \CC$. The classical uniformization theorem together
with Lemma \ref{disk and complex plane} shows that $(\mathbb{E},
\phi)$
 must be the complex plane.  Writing $i_1\mathpunct{:}\EE\hspace{-0.7mm}\to\hspace{-0.7mm}(\mathbb{E}, \phi)\hspace{-0.7mm}=\hspace{-0.7mm}\CC$ for the topological
identity map, there exists a holomorphic map $(i_1, i_0)_{*}f$ which
makes the following diagram \bgroup \arraycolsep=1pt
\begin{displaymath}
\xymatrix{
 \mathbb{E} \ar[d]^{i_1}  \ar[r]^{f} & \EE \ar[d]^{i_0} \\
 \CC   \ar[r]^{(i\!_1\!,
i\!_0)\!_{*}\!f}      & \CC}
\end{displaymath}
\egroup \noindent commutative. We shall call $i_1$ a $f$-{\it
lifting} of $i_0$ and $(i_1, i_0)_{*}f$ a $(i_1, i_0)$-{\it descent}
of $f$.

The uniqueness in Theorem \ref{Riemann's covering principle} implies
that if $i_1, i_1'$ are two $f$-liftings of $i_0$ then there exists
a holomorphic isomorphism $\sigma\mathpunct{:}\CC\hspace{-0.7mm} \to
\hspace{-0.7mm}\CC $ such that $\sigma\mathopen \circ\mathclose
i_1\hspace{-0.7mm} = \hspace{-0.7mm} i_1'$. This gives
\begin{corollary}\label{deformation, uniqueness}
Let $i_0\hspace{-0.7mm} \in \hspace{-0.7mm}
\mathrm{Hom}_{\mathscr{C}_t}(\EE, \CC)$, $f\hspace{-0.7mm}
\in\hspace{-0.7mm} \mathrm{End}(\EE)$ and $i_1, i_1'$ both
$f$-liftings of $i_0$. There exists $\sigma\hspace{-0.7mm}
\in\hspace{-0.7mm} \mathrm{Aut}(\CC)$ which makes the following
diagram
\begin{displaymath}
\xymatrix{
  & \mathbb{E} \ar[dl]_{i_1'} \ar[d]^{i_1} \ar[r]^{f}  & \mathbb{E} \ar[d]^{i_0} \\
  \CC\ar@/_{0.5pc}/[rr]_{(i\!_1\!{'}\!, i\!_0)\!_*\!f}   \ar[r]^{\sigma}      & \CC \ar[r]^{(i\!_1\!,i\!_0)\!_*\!f}  & \CC}
\end{displaymath}
commutative.
\end{corollary}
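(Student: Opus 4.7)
The plan is short and already essentially indicated in the sentence immediately preceding the corollary: reduce everything to the uniqueness clause of Theorem \ref{Riemann's covering principle}, and then package the result as a commutative diagram.

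First I would unpack what the two hypotheses give. By the definition of an $f$-lifting, $i_1$ is the topological identity $\EE \to (\EE, \phi)$, where $\phi$ is a complex structure on the underlying topological disk furnished by Theorem \ref{Riemann's covering principle} applied to the covering $i_0\circ f\mathpunct{:}\EE \to \CC$; the uniformization theorem together with Lemma \ref{disk and complex plane} then force $(\EE,\phi) \cong \CC$, so that $i_1$ is in fact a biholomorphism to $\CC$. Running the same construction for $i_1'$ gives a second complex structure $\phi'$ on $\EE$ with the same property. Since both structures make the single map $i_0\circ f$ holomorphic into the fixed Riemann surface $\CC$, the uniqueness part of Theorem \ref{Riemann's covering principle} applied to $i_0\circ f$ yields $\phi = \phi'$. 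Hence $i_1$ and $i_1'$ are two biholomorphisms from one and the same Riemann surface $(\EE,\phi)$ onto $\CC$.

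Next I would set $\sigma := i_1 \circ (i_1')^{-1} \in \mathrm{Aut}(\CC)$. By construction $\sigma\circ i_1' = i_1$, which is exactly the commutativity of the left triangle of the diagram. For the remaining part, I use the defining property of the two descents,
\[
(i_1,i_0)_{*}f \circ i_1 \;=\; i_0\circ f \;=\; (i_1',i_0)_{*}f \circ i_1',
\]
and substitute $i_1 = \sigma\circ i_1'$ on the left to obtain $(i_1,i_0)_{*}f \circ \sigma \circ i_1' = (i_1',i_0)_{*}f \circ i_1'$. Cancelling the surjection $i_1'$ gives $(i_1',i_0)_{*}f = (i_1,i_0)_{*}f \circ \sigma$, which is the commutativity of the curved bottom arrow. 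This closes the diagram. There is no real obstacle: all of the analytic input is already contained in the uniqueness assertion of Theorem \ref{Riemann's covering principle}, and the only thing to be careful about is recording that the two $f$-liftings land in biholomorphically identified copies of $\CC$ so that the change-of-coordinate map $\sigma$ is indeed an element of $\mathrm{Aut}(\CC)$ rather than of some larger class of maps.
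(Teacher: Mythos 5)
Your proof is correct and follows the same route as the paper, which derives the corollary precisely from the uniqueness clause of Theorem \ref{Riemann's covering principle} as indicated in the sentence preceding it. Your write-up merely makes explicit the identification $\phi=\phi'$, the choice $\sigma=i_1\circ(i_1')^{-1}\in\mathrm{Aut}(\CC)$, and the cancellation of the surjection $i_1'$ giving $(i_1',i_0)_*f=(i_1,i_0)_*f\circ\sigma$, all of which the paper leaves implicit.
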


\noindent Note that $(i_1, i_0)_*f$ and $(i_1', i_0)_*f$ are finite
self maps of $\CC$ and therefore are given by polynomials. The above
discussions remain true if we interchange $\EE$ with $\CC$, and then
one may check easily the following simple fact

\begin{proposition}\label{deformation of one factorization, converse}
Let $i_0\hspace{-0.7mm} \in \hspace{-0.7mm}
\mathrm{Hom}_{\mathscr{C}_t}(\CC, \EE)$ and $\{f_1,
f_2\}\hspace{-0.7mm} \subset\hspace{-0.7mm} \mathrm{End}(\CC)$ that
$f\hspace{-0.7mm} =\hspace{-0.7mm} f_1\mathopen\circ\mathclose f_2$.
If $i_1$ respectively $i_2$ is a $f_1$-lifting of $i_0$ respectively
a $f_2$-lifting of $i_1$ then $i_2$ is a $f$-lifting of $i_0$ and
$(i_2, i_0)_*f\hspace{-0.7mm} =\hspace{-0.7mm} (i_1,
i_0)_*f_{1}\mathopen\circ\mathclose(i_2, i_1)_*f_2$, as a relation
in $(\mathrm{End}(\EE), \circ)$.
\end{proposition}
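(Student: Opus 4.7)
The plan is essentially diagrammatic: I would stack the two commutative squares corresponding to the liftings $i_1$ and $i_2$ and read both conclusions off the resulting rectangle, using only the definition of lifting and the uniqueness of descent.

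First, I would unpack the hypotheses. That $i_1$ is an $f_1$-lifting of $i_0$ means there is a holomorphic $(i_1,i_0)_*f_1\in\mathrm{End}(\EE)$ with $i_0\circ f_1=(i_1,i_0)_*f_1\circ i_1$, and $i_2$ being an $f_2$-lifting of $i_1$ gives $(i_2,i_1)_*f_2\in\mathrm{End}(\EE)$ with $i_1\circ f_2=(i_2,i_1)_*f_2\circ i_2$. Composing along the middle column labelled by $i_1$ I then compute $i_0\circ f=i_0\circ f_1\circ f_2=(i_1,i_0)_*f_1\circ i_1\circ f_2=(i_1,i_0)_*f_1\circ(i_2,i_1)_*f_2\circ i_2$. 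Setting $h:=(i_1,i_0)_*f_1\circ(i_2,i_1)_*f_2$, this is an element of $\mathrm{End}(\EE)$ since the monoid is closed under composition, and the commutative diagram $i_0\circ f=h\circ i_2$ exhibits $i_2$ as an $f$-lifting of $i_0$, which is the first conclusion.

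To identify $h$ with $(i_2,i_0)_*f$, I would appeal to the uniqueness built into Theorem \ref{Riemann's covering principle}: since $i_2$ and $i_0$ are topological identity maps (bijections between $\CC$ and $\EE$ that merely switch complex structures), the holomorphic map $(i_2,i_0)_*f\in\mathrm{End}(\EE)$ is uniquely characterized by making the square $i_0\circ f=(i_2,i_0)_*f\circ i_2$ commute, and $h$ as constructed satisfies exactly this equation. Hence $(i_2,i_0)_*f=(i_1,i_0)_*f_1\circ(i_2,i_1)_*f_2$ as an equality in $(\mathrm{End}(\EE),\circ)$.

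There is no genuine obstacle here; the statement is simply a functoriality of the descent construction with respect to composition, and its proof is a pure diagram chase. The only point to remain alert to is the logical order: the holomorphy of $h$ comes for free from the monoid structure on $\mathrm{End}(\EE)$, and this is precisely what allows one to recognize $i_2$ a posteriori as an $f$-lifting, so the existence clause is not assumed but produced.
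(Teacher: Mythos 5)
Your diagram chase is correct: stacking the two lifting squares gives $i_0\circ f=(i_1,i_0)_*f_1\circ(i_2,i_1)_*f_2\circ i_2$, the composite is in $\mathrm{End}(\EE)$, and the uniqueness from Theorem \ref{Riemann's covering principle} (together with $i_2$ being a homeomorphism) identifies it with $(i_2,i_0)_*f$. This is exactly the easy verification the paper intends, since it states the proposition without proof as a fact ``one may check easily.''
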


Given $\{f, g\}\hspace{-0.7mm} \subset\hspace{-0.7mm}
\mathrm{End}(\EE)$ the curve $\PP^1\mathopen \times_{f,\,
g}\mathclose \PP^1$ is a double of $\mathbb{E}\mathopen \times_{f,\,
g}\mathclose \mathbb{E}$. Indeed, setting  $X^{\vee}$, $X$,
$X^{\partial}$ and $X^{\iota}$ for $\PP^1\mathopen \times_{f,\,
g}\mathclose \PP^1$, $\mathbb{E}\mathopen \times_{f,\, g}\mathclose
\mathbb{E}$, $\mathbb{T}\mathopen \times_{f,\, g}\mathclose
\mathbb{T}$ and $\overline{\mathbb{E}}^c\mathopen \times_{f,\,
g}\mathclose{} \overline{\mathbb{E}}^c$ we shall have
\begin{eqnarray*}
X^{\vee}\hspace{-0.7mm} =\hspace{-0.7mm} X\hspace{-0.7mm}
\cup\hspace{-0.7mm} X^{\partial}\hspace{-0.7mm} \cup\hspace{-0.7mm}
X^{\iota}.
\end{eqnarray*}
Take $i\hspace{-0.7mm} \in \hspace{-0.7mm}
\mathrm{Hom}_{\mathscr{C}_t}(\EE, \CC)$ and let $j_1\hspace{-0.7mm}
\in \hspace{-0.7mm} \mathrm{Hom}_{\mathscr{C}_t}(\EE,
\CC)$\;(resp.\;$j_2\hspace{-0.7mm} \in \hspace{-0.7mm}
\mathrm{Hom}_{\mathscr{C}_t}(\EE, \CC)$) be a
$f$-lifting\;(resp.\;$g$-lifting) of $i$. Setting
$X_{*}\hspace{-0.5mm} =\hspace{-0.5mm} \CC\mathopen \times_{(j_1,\,
i)_{*}f, \, (j_2,\, i)_{*}g}\mathclose \CC$ we will compare
algebraic components of the projective curve $X^{\vee}$ with those
of the affine curve $X_{*}$. It would be helpful to have in mind
that $X^{\vee}$, $X$ and $X_{*}$ are fibrations over $\PP^1$, $\EE$
and $\CC$, accordingly. This implies that $X^{\vee}$ is a double of
$X$ and $X_{*}$ equals $X$ in topology. Our main lemma gives an
arithmetic reflection of these simple facts.

\begin{mainlemma}\label{Siegel-Faltings}
There is a one-one correspondence between Faltings factors of
$X^{\vee}$ and Siegel factors of $X_{*}$.
\end{mainlemma}


\begin{proof}
We shall establish bijections from analytic components of $X$
firstly to algebraic components of $X^{\vee}$, and secondly to
algebraic components of $X_{*}$.

If $Y$ is an analytic component of $X$ then
$Y^{\iota}\hspace{-0.7mm} :=\hspace{-0.7mm} \{ (x, y) |
(1/\overline{x}, 1/\overline{y})\hspace{-0.5mm} \in\hspace{-0.5mm} Y
\}$ is an analytic component of $X^{\iota}$, as $ (x
,y)\hspace{-0.7mm} \in\hspace{-0.7mm} X \Leftrightarrow
(1/\overline{x}, 1/\overline{y})\hspace{-0.7mm} \in\hspace{-0.7mm}
X^{\iota}. $ The algebraic irreducible component $Y^{\vee}$ of
$X^{\vee}$ which contains $Y$ is given by $\overline{Y
\hspace{-0.7mm} \cup\hspace{-0.7mm} Y^{\iota}}$ and the
correspondence given by $Y\hspace{-0.7mm} \mapsto\hspace{-0.7mm}
Y^{\vee}$ is the first bijection as wanted.

Now set $Y_{*}\hspace{-0.7mm} =\hspace{-0.7mm} \{ (j_1(x), j_2(y)) |
(x, y)\hspace{-0.7mm} \in\hspace{-0.7mm} Y \}$ which is a subset of
$X_{*}$. The analytic structure involved is topological in nature,
and therefore $Y_{*}$ is also an analytic (and algebraic) component
of $X_{*}$. Here $Y\hspace{-0.7mm} \mapsto\hspace{-0.7mm} Y_{*}$
gives our second bijection.

In the first bijection $Y^{\vee}$ is a double of $Y$ which leads to
$\chi(Y^{\vee})\hspace{-0.7mm} =\hspace{-0.7mm} 2\chi(Y)$. In the
second one $Y_*$ is topologically equivalent to $Y$, and this gives
$\chi(Y_{*})\hspace{-0.7mm} =\hspace{-0.7mm} \chi(Y)$. Finally we
have $\chi(Y^{\vee})\hspace{-0.7mm} =\hspace{-0.7mm} 2\chi(Y_{*})$,
which together with Theorem \ref{Siegel Faltings} of Siegel and of
Faltings proves our assertion.
\end{proof}

\section{Facts on elliptic rational functions}\label{elliptic function section}
To handle normalized Chebyshev-Blaschke products $\mathcal{T}_{n,\,
t}\hspace{-0.7mm} \subset\hspace{-0.7mm} \mathrm{End}(\EE)$ recalled
in Section \ref{facts on diophantine geometry and analytic
geometry}, we shall treat them as descents of isogenies of elliptic
curves.

The construction of Chebyshev-Blaschke products\,(cf.\,\cite{NW08})
relies on the representation of fundamental groups. Indeed Zolotarev
constructed\,(cf.\,\cite{Zol77}) much earlier another family of
functions by using Jacobian elliptic functions, which was called
{\it Zolotarev fractions} by Bogatyrev\,(cf.\,\cite{Bo10}) or {\it
elliptic rational functions} by scientists working in filter
designs\,(cf.\,\cite{LTE01}). In \cite{NW08} we slightly generalized
Zolotarev's original construction and obtained a larger family of
rational functions $\mathcal{T}_{n, \tau}(n\hspace{-0.7mm}
\in\hspace{-0.7mm} \NN, \tau\hspace{-0.7mm} \in \hspace{-0.7mm}
\HH)$ by descents of cyclic isogenies of elliptic curves, where
Zolotarev's fractions correspond to $\mathcal{T}_{n, \tau}$ that
with $\tau$ purely imaginary. We verified in \cite{NW08} that there
is a canonical bijection between $\mathcal{T}_{n,\, t} (
t\hspace{-0.7mm}
>\hspace{-0.7mm} 0)$ and  $\mathcal{T}_{n,
\tau}(\tau \ \text{purely imaginary})$. Two entirely different
constructions, via elliptic functions\,(resp.\,fundamental groups)
taken by Zolotarev\,(resp.\,Ng-Wang), finally lead to essentially
the same class of functions.

The use of descents of cyclic isogenies of elliptic curves is
originally due to Zolotarev, but he only considered Jacobian
elliptic integrals (or functions) with real modulus $k$ which
prevent him from constructing a lager and universal family. For
classical special functions such as $\omega_1, \omega_2, e_i,
\mathrm{cn}, \mathrm{dn}$ we refer to \cite[Chapter V\!I\!I]{Chan},
and for more details of the following construction we refer to
\cite{NW08}. For $\tau\hspace{-0.7mm} \in\hspace{-0.7mm} \HH$ we
denote by $E_{\tau}$ respectively $E_{\tau}'$ for elliptic curve
$\CC/\Lambda_{1, \tau}$ respectively $\CC/\Lambda_{2\omega_1(\tau),
\omega_2(\tau)}$. Writing $\wp_{\tau}$ respectively
$\mathrm{cd}_{\tau}\hspace{-0.7mm} =\hspace{-0.7mm}
\mathrm{cn}/\mathrm{dn}$ for the Weierstrassian function on
$E_{\tau}$ respectively the Jacobian $\mathrm{cd}$ function on
$E_{\tau}'$, they are of order 2. There are natural cyclic isogenies
$[n]\mathpunct{:} E_{\tau}\hspace{-0.7mm} \to\hspace{-0.7mm}
E_{n\tau}$ and $[n]\mathpunct{:} E_{\tau}'\hspace{-0.7mm}
\to\hspace{-0.7mm} E_{n\tau}'$, and according to the theory of
descent we write $n_{\tau}$ and $\mathcal{T}_{n,\,\tau}$ for the
rational functions which make the following diagrams

\begin{minipage}{\textwidth}
\begin{minipage}[t]{0.5\textwidth}
\begin{displaymath}
\xymatrix{
\ \, E_{\tau} \ar[d]^{\wp_{\tau}}  \ar[r]^{[n]}& \ \, E_{n\tau} \ar[d]^{\wp_{n\tau}} \\
\ \, \PP^1   \ar[r]^{n_{\tau}}    &  \ \, \PP^1 }
\end{displaymath}
\end{minipage}
\hspace{-2cm}
\begin{minipage}[t]{0.5\textwidth}
\begin{displaymath}
\xymatrix{
\ \,E_{\tau}' \ar[d]^{\mathrm{cd}_{\tau}}  \ar[r]^{[n]}& \ \, E_{n\tau} \ar[d]^{\mathrm{cd}_{n\!\tau}}' \\
\ \, \PP^1   \ar[r]^{\mathcal{T}_{n,\,\tau}}    &  \ \, \PP^1 }
\end{displaymath}
\end{minipage}
\end{minipage}
commutative.

Henceforth an {\it elliptic} rational function refers to a
$f\hspace{-0.7mm} \in\hspace{-0.7mm} \mathrm{End}(\PP^1)$ that
satisfies $f\hspace{-0.7mm} \sim\hspace{-0.7mm} n_{\tau}$ in
$(\mathrm{End}(\PP^1), \circ)$ for some $(n, \tau)\hspace{-0.7mm}
\in\hspace{-0.7mm} \NN\mathopen \times\mathclose \HH.$ This notion
is general than the one used by engineers\,(cf.\,\cite{LTE01}). We
have that $\mathcal{T}_{n,\,\tau}$ is elliptic because
$\mathcal{T}_{n,\,\tau}\hspace{-0.7mm} \sim\hspace{-0.7mm}
n_{\tau/2}$, which will be called {\it generalized Zolotarev
fractions}. The principal result of this section is that
$\{\text{Elliptic rational functions of degree}\ n\hspace{-0.7mm}
\ge\hspace{-0.7mm} 3\}/\sim$ is $Y_0(n)$, and we begin with

\begin{lemma}\label{lemma for invariant}
If $\tau\hspace{-0.7mm} \in\hspace{-0.7mm} \HH$ and if
$n\hspace{-0.7mm} \ge\hspace{-0.7mm} 3$ then
\begin{eqnarray*}
\mathfrak{o}_{n_{\tau}} \hspace{-0.5mm} =\hspace{-0.5mm}
\wp_{n\tau}(E_{n\tau}[2]) \ \ \text{and} \ \
n_{\tau}\hspace{-2mm}^{-\!1}(\mathfrak{o}_{n_{\tau}})\mathopen\setminus\mathclose
{} |\mathfrak{O}_{n_{\tau}}| \hspace{-0.5mm} =\hspace{-0.5mm}
\wp_{\tau}(E_{\tau}[2]).
\end{eqnarray*}
\end{lemma}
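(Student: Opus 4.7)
The plan is to exploit the commutative square $\wp_{n\tau}\circ [n] = n_\tau \circ \wp_\tau$ and to read off the ramification of $n_\tau$ from the three other, known, maps via the chain rule for ramification indices.

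First I would record the inputs. The map $\wp_\tau$ is a degree $2$ cover of $\PP^1$, ramified with index $2$ at each of the four points of $E_\tau[2]$ and nowhere else; the same holds for $\wp_{n\tau}$. The isogeny $[n]$ is \'etale. For $P \in E_\tau$, setting $p = \wp_\tau(P)$ and $Q = [n](P)$, the chain rule applied both ways around the square gives
\[
e_P(\wp_\tau)\cdot e_p(n_\tau) \;=\; e_P([n])\cdot e_Q(\wp_{n\tau}) \;=\; e_Q(\wp_{n\tau}).
\]
Because $[n]$ is a group homomorphism it sends $E_\tau[2]$ into $E_{n\tau}[2]$, which yields a clean case analysis: if $P \in E_\tau[2]$ then both sides of the identity equal $2$, forcing $e_p(n_\tau)=1$; if $P \notin E_\tau[2]$ then $e_p(n_\tau) = e_Q(\wp_{n\tau})$, which is $2$ precisely when $Q \in E_{n\tau}[2]$ and $1$ otherwise.

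Given this dictionary, the first identity is immediate in one direction: every critical value of $n_\tau$ has the form $n_\tau(p) = \wp_{n\tau}([n](P)) \in \wp_{n\tau}(E_{n\tau}[2])$. The main obstacle is the reverse inclusion, which demands, for each $Q \in E_{n\tau}[2]$, a witness $P \in [n]^{-1}(Q) \setminus E_\tau[2]$. This is precisely where $n \ge 3$ enters: the restriction $[n] \colon E_\tau[2] \to E_{n\tau}[2]$ is multiplication by $n$ on $(\ZZ/2)^2$, surjective when $n$ is odd and with image of order $2$ when $n$ is even. In either case at most two of the $n$ points of $[n]^{-1}(Q)$ can lie in $E_\tau[2]$, and a brief check by parity shows the complement is non-empty once $n \ge 3$ (and $n \ge 4$ suffices in the even case).

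For the second identity I would observe that $n_\tau^{-1}(\mathfrak{o}_{n_\tau}) \setminus |\mathfrak{O}_{n_\tau}|$ is the set of unramified preimages of critical values of $n_\tau$. The case analysis above shows that the only unramified points lying over $\mathfrak{o}_{n_\tau}$ are those of the form $p = \wp_\tau(P)$ with $P \in E_\tau[2]$, because the alternative unramified scenario ($P \notin E_\tau[2]$, $Q \notin E_{n\tau}[2]$) yields $n_\tau(p) = \wp_{n\tau}(Q) \notin \wp_{n\tau}(E_{n\tau}[2]) = \mathfrak{o}_{n_\tau}$ by the first part. The identification with $\wp_\tau(E_\tau[2])$ is then immediate.
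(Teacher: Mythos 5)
Your argument is correct and is essentially the paper's own proof: the paper disposes of this lemma with the single line ``a calculation of local ramification degree,'' and your chain-rule bookkeeping around the square $\wp_{n\tau}\circ[n]=n_\tau\circ\wp_\tau$, together with the count of how many points of a fibre of $[n]$ can lie in $E_\tau[2]$ (which is exactly where $n\ge 3$ enters), is that calculation written out. One small imprecision: since $[n]$ here is the cyclic isogeny $z\mapsto nz$ between the \emph{different} curves $E_\tau$ and $E_{n\tau}$, its restriction to two-torsion is not literally ``multiplication by $n$ on $(\ZZ/2)^2$'' (that would have trivial image for even $n$); the correct statement, which is also what you actually use, is that this restriction is a homomorphism with kernel $E_\tau[2]\cap\ker[n]$ of order $1$ ($n$ odd) or $2$ ($n$ even), so at most two fibre points lie in $E_\tau[2]$ and the argument goes through unchanged.
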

\begin{proof}
This follows from a calculation of local ramification degree.
\end{proof}

Then we prove

\begin{theorem}\label{moduli space}
Given $\tau_1, \tau_2\hspace{-0.7mm} \in\hspace{-0.7mm} \HH$ and
given $n\hspace{-0.7mm} \ge\hspace{-0.7mm} 3$. Then $ n_{\tau_1}
\hspace{-0.7mm} \sim \hspace{-0.7mm} n_{\tau_2}$ in
$(\mathrm{End}(\PP^1), \circ)$ if and only if $\Gamma_0(n)
\tau_2\hspace{-0.5mm} =\hspace{-0.5mm} \Gamma_0(n) \tau_1$, where
\begin{eqnarray*}
\Gamma_0(n) \hspace{-0.5mm} =\hspace{-0.5mm} \Bigg\{\Bigg(
\begin{array}{cc}
  a     & b\\
  c & d
\end{array}\Bigg)
\hspace{-0.5mm} \in \mathrm{SL}_2(\ZZ)\, \Bigg|\Bigg.
\vphantom{\begin{pmatrix}
  a     & b\\
  c & d
\end{pmatrix}} c\hspace{-0.5mm} \equiv\hspace{-0.5mm} 0 \hspace{-2mm}
\pmod{n}\Bigg\}
\end{eqnarray*}
is the modular group.
\end{theorem}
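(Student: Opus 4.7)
The plan is to exploit the dictionary between $n_\tau$ and the cyclic isogeny $[n]\mathpunct{:}E_\tau\to E_{n\tau}$ set up in this section: $n_\tau$ is by construction the Weierstrass-$\wp$ descent of $[n]$, and Lemma~\ref{lemma for invariant} identifies the critical values of $n_\tau$ as $\wp_{n\tau}(E_{n\tau}[2])$ and the non-critical preimages as $\wp_\tau(E_\tau[2])$. Under this dictionary the equivalence $n_{\tau_1}\sim n_{\tau_2}$ in $(\mathrm{End}(\PP^1),\circ)$ translates into an isomorphism of the two isogenies $[n]_{\tau_i}$, which is exactly an isomorphism of pairs $(E_{\tau_1},\ker[n])\cong(E_{\tau_2},\ker[n])$. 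On the side of $\HH$, $\Gamma_0(n)\tau_1=\Gamma_0(n)\tau_2$ is the same statement in coordinates, $\Gamma_0(n)$ being precisely the stabilizer in $\mathrm{SL}_2(\ZZ)$ of the cyclic subgroup $\langle 1/n\rangle\subset E_\tau$.

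For the direction $\Leftarrow$, I take $\gamma=\bigl(\begin{smallmatrix}a&b\\ c&d\end{smallmatrix}\bigr)\in\Gamma_0(n)$ with $\tau_1=\gamma\tau_2$ and build compatible isomorphisms at level $\tau$ and level $n\tau$. Writing $c=nc'$, the identity $n\tau_1=(an\tau_2+bn)/(c'\cdot n\tau_2+d)$ with $\bigl(\begin{smallmatrix}a&bn\\ c'&d\end{smallmatrix}\bigr)\in\mathrm{SL}_2(\ZZ)$ shows that scaling by $(c\tau_2+d)^{-1}$ on $\CC$ descends simultaneously to isomorphisms $\varphi\mathpunct{:}E_{\tau_2}\xrightarrow{\sim}E_{\tau_1}$ and $\psi\mathpunct{:}E_{n\tau_2}\xrightarrow{\sim}E_{n\tau_1}$ satisfying $\psi\circ[n]=[n]\circ\varphi$. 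Quotienting this square by $\pm 1$ via $\wp_{\tau_i}$ and $\wp_{n\tau_i}$ produces $\epsilon,\varepsilon\in\mathrm{Aut}(\PP^1)$ with $n_{\tau_1}=\epsilon\circ n_{\tau_2}\circ\varepsilon$.

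For the direction $\Rightarrow$, I start from $n_{\tau_1}=\epsilon\circ n_{\tau_2}\circ\varepsilon$. By Lemma~\ref{lemma for invariant}, $\epsilon$ bijects $\wp_{n\tau_2}(E_{n\tau_2}[2])$ with $\wp_{n\tau_1}(E_{n\tau_1}[2])$ and $\varepsilon$ bijects $\wp_{\tau_2}(E_{\tau_2}[2])$ with $\wp_{\tau_1}(E_{\tau_1}[2])$; the hypothesis $n\ge 3$ ensures that these are genuine $4$-point subsets of $\PP^1$, so that their cross-ratios pin down $j$-invariants. Consequently $\epsilon$ and $\varepsilon$ lift to isomorphisms $\psi\mathpunct{:}E_{n\tau_2}\xrightarrow{\sim}E_{n\tau_1}$ and $\varphi\mathpunct{:}E_{\tau_2}\xrightarrow{\sim}E_{\tau_1}$, unique modulo the $\wp$-invisible automorphisms of each $E_{\tau_i}$. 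Combining $\wp_{n\tau_i}\circ[n]=n_{\tau_i}\circ\wp_{\tau_i}$ with the given equivalence forces $\psi\circ[n]=\pm[n]\circ\varphi$ on the universal cover, and the sign is absorbed into $\psi$. Writing $\tau_1=\gamma\tau_2$ with $\gamma=\bigl(\begin{smallmatrix}a&b\\ c&d\end{smallmatrix}\bigr)\in\mathrm{SL}_2(\ZZ)$ so that $\varphi$ lifts to multiplication by $(c\tau_2+d)^{-1}$, the requirement that this \emph{same} scaling carry $\Lambda_{1,n\tau_2}$ into $\Lambda_{1,n\tau_1}$ unwinds via the identity $1/(c\tau_2+d)=a-(c/n)\cdot n\tau_1$ (a direct consequence of $ad-bc=1$) into the divisibility $n\mid c$, i.e.\ $\gamma\in\Gamma_0(n)$.

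The main obstacle I anticipate is the lifting-and-compatibility step in the $\Rightarrow$ direction. Two ambiguities must be controlled: first, the Möbius data $(\epsilon,\varepsilon)$ only recover the elliptic-curve isomorphisms $(\psi,\varphi)$ up to the $\wp$-invisible automorphisms of $E_\tau$---generically just $\pm 1$, but enlarged at $j=0,1728$---and one has to check that none of these residual ambiguities obstruct the square $\psi\circ[n]=[n]\circ\varphi$; second, one has to verify that the lift of $\psi$ is defined on $E_{n\tau_2}$ itself (not merely on the universal cover), and it is exactly here that the divisibility $n\mid c$ is forced. The role of the hypothesis $n\ge 3$ is to keep $\wp_{n\tau}(E_{n\tau}[2])$ a genuine $4$-point branch set, so that Lemma~\ref{lemma for invariant} really does let one read the elliptic curves back off the $\PP^1$-level invariants.
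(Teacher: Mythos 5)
Your proposal is correct in outline and follows essentially the same route as the paper: both directions go through the dictionary between $n_\tau$ and the isogeny $[n]\mathpunct{:}E_\tau\to E_{n\tau}$, Lemma \ref{lemma for invariant} (where $n\ge 3$ enters) to transport the M\"obius maps to the branch data, lifting through $\wp$ to isomorphisms of elliptic curves, and the lattice computation forcing $n\mid c$, with the converse given by the same explicit scaling by $(c\tau_2+d)^{-1}$. The one step you flag but leave open---that the lift of the M\"obius map may a priori be an affine map $z\mapsto\lambda z+\beta$ with $\beta$ a half-period (possibly twisted by an extra automorphism at $j=0,1728$) rather than a pure scaling---is exactly what the paper's opening claim settles, by showing that translations by half-periods descend to $\iota,\epsilon\in\mathrm{Aut}(\PP^1)$ with $n_\tau=\epsilon\circ n_\tau\circ\iota^{-1}$, so one may normalize $\varepsilon^{-1}(e_0(\tau_2))=e_0(\tau_1)$ and force the lift to fix $0$; your same-scalar mechanism also closes this, since only the linear part of the lift enters the condition $\lambda\Lambda_{1,\,n\tau_2}=\Lambda_{1,\,n\tau_1}$, provided you take the matrix $\gamma$ to be the one attached to that scalar rather than an arbitrary matrix with $\gamma\tau_2=\tau_1$.
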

\begin{proof}
Write $e_i(\tau)$ for $e_i$ with respect to the pair of primitive
periods $(1, \tau)$. First of all we show that for any pair $(n,
\tau)\hspace{-0.7mm} \in\hspace{-0.7mm} \NN\mathopen
\times\mathclose \HH$ and $0 \hspace{-0.7mm} \leq \hspace{-0.7mm} i
\hspace{-0.7mm} \leq \hspace{-0.7mm} 3$ there exist $(\iota,
\epsilon)\hspace{-0.7mm} \subset\hspace{-0.7mm}\mathrm{Aut}(\PP^1)$
such that $n_{\tau} \hspace{-0.7mm} =\hspace{-0.7mm}
\epsilon\hspace{-0.7mm} \circ\hspace{-0.7mm} n_{\tau}\hspace{-0.7mm}
\circ\hspace{-0.7mm} \iota\!^{-\!1}$ and
$\iota(e_i(\tau))\hspace{-0.7mm} =\hspace{-0.7mm} e_0(\tau)$. We
only verify this claim for $i\hspace{-0.5mm} =\hspace{-0.5mm} 1$
since similar arguments apply to other situations. The map
$\overline{\iota}\mathpunct{:} E_{\tau} \hspace{-0.7mm}
\to\hspace{-0.7mm} E_{\tau}$ defined by
$\overline{\iota}(z)\hspace{-0.7mm} =\hspace{-0.7mm}
z\hspace{-0.7mm} +\hspace{-0.7mm} 1/2$ descends to
$\iota\hspace{-0.7mm} \in\hspace{-0.7mm}\mathrm{Aut}(\PP^1)$ with
respect to $\wp_{\tau}$, and the map
$\overline{\epsilon}\mathpunct{:} E_{n\tau}\hspace{-0.7mm}
\to\hspace{-0.7mm} E_{n\tau}$ given by
$\overline{\epsilon}(w)\hspace{-0.7mm} =\hspace{-0.7mm}
w\hspace{-0.7mm} +\hspace{-0.7mm} n/2$ descends to
$\epsilon\hspace{-0.7mm} \in\hspace{-0.7mm}\mathrm{Aut}(\PP^1)$ with
respect to $\wp_{n\tau}$.
\begin{displaymath}
\xymatrix@!0{
  & E_{\tau} \ar[rr]^{[n]} \ar'[d][dd]
      &  & E_{n\tau} \ar[dd]       \\
  E_{\tau} \ar[ur]^{\overline{\iota}}\ar[rr]_{[n]\ \ \, \ \  }\ar[dd]
      &  & E_{n\tau} \ar[ur]_{\overline{\epsilon} }\ar[dd]\\
  & \PP^1 \ar'[r]^{ \ \ \  n_{\tau}}[rr]
      &  & \PP^1                \\
  \PP^1 \ar[rr]^{n_{\tau}}\ar[ur]^{\iota}
      &  & \PP^1 \ar[ur]^{\epsilon}        }
\end{displaymath}
One checks easily that $\epsilon\!^{-\!1}\hspace{-0.7mm}
\circ\hspace{-0.7mm} n_{\tau}\hspace{-0.7mm} \circ\hspace{-0.7mm}
\iota\hspace{-0.7mm} =\hspace{-0.7mm} n_{\tau}$ and
$\iota(e_0)\hspace{-0.7mm} =\hspace{-0.7mm} e_1$ which proves the
desired claim.

By construction we have $n_{\tau_i}\hspace{-0.7mm}
\circ\hspace{-0.7mm} \wp_{\tau_i}\hspace{-0.7mm} =\hspace{-0.7mm}
\wp_{n \tau_i}\hspace{-0.7mm} \circ\hspace{-0.7mm} [n]$ where $[n]$
maps $E_{\tau_i}$ to $E_{n\tau_i}$ for
$1\hspace{-0.7mm}\leq\hspace{-0.7mm}
i\hspace{-0.7mm}\leq\hspace{-0.7mm} 2$. If there exist $\{\epsilon,
\varepsilon\}\hspace{-0.7mm}
\subset\hspace{-0.7mm}\mathrm{Aut}(\PP^1)$ such that
$\epsilon\hspace{-0.7mm} \circ\hspace{-0.7mm}
n_{\tau_1}\hspace{-0.7mm} \circ\hspace{-0.7mm}
\varepsilon\!^{-\!1}\hspace{-0.7mm} =\hspace{-0.5mm} n_{\tau_2}$
then $\epsilon$ induces a bijection between
$\wp_{n\tau_1}(E_{n\tau_1}[2])$ and $\wp_{n\tau_2}(E_{n\tau_2}[2])$,
because $\mathfrak{o}_{n_{\tau_{i}}}\hspace{-0.7mm} =\hspace{-0.7mm}
\wp_{n\tau_i}(E_{n\tau_i}[2])$ as $n\hspace{-0.7mm}
\ge\hspace{-0.7mm} 3$. Moreover $\varepsilon\!^{-\!1}$ induces a
bijection between
$n\!^{-\!1}_{\tau_2}(\mathfrak{o}_{n_{\tau_2}})$\,(resp.\,$|\mathfrak{O}_{n_{\tau_2}}|$)
and
$n\!^{-\!1}_{\tau_1}(\mathfrak{o}_{n_{\tau_1}})$\,(resp.\,$|\mathfrak{O}_{n_{\tau_1}}|$),
and then we deduce from Lemma \ref{lemma for invariant} that
$\varepsilon\!^{-\!1}$ also induces a bijection between
$\wp_{\tau_2}(E_{\tau_2}[2])$ and $\wp_{\tau_1}(E_{\tau_1}[2])$. The
monodromy representation of a small loop around any critical value
of $\wp$ is an involution, and consequently the map
$\varepsilon$\,(resp.\,$\epsilon$)$\mathpunct{:}
\PP^1\hspace{-0.7mm} \to\hspace{0.7mm} \PP^1$ lifts to an
isomorphism $\overline{\varepsilon}\mathpunct{:}
E_{\tau_1}\hspace{-0.7mm}\to\hspace{-0.7mm}E_{\tau_2}$\,(resp.\,$\overline{\epsilon}\mathpunct{:}
E_{n\tau_1}\hspace{-0.7mm}\to\hspace{-0.7mm}E_{n\tau_2}$) such that
$\wp_{\tau_2} \hspace{-0.7mm} \circ\hspace{-0.7mm}
\overline{\varepsilon} \hspace{-0.7mm} =\hspace{-0.7mm} \varepsilon
\hspace{-0.7mm} \circ\hspace{-0.7mm}
\wp_{\tau_1}$\,(resp.\,$\wp_{n\tau_2} \hspace{-0.7mm}
\circ\hspace{-0.7mm} \overline{\epsilon} \hspace{-0.7mm}
=\hspace{-0.7mm} \epsilon \hspace{-0.7mm} \circ\hspace{-0.7mm}
\wp_{n\tau_1}$). By the claim made in the previous paragraph we may
assume
$\varepsilon\!^{-\!1}(e_0(\tau_2))\hspace{-0.7mm}=\hspace{-0.7mm}
e_0(\tau_1)$, hence $\overline{\varepsilon}(0)\hspace{-0.7mm}
=\hspace{-0.7mm} 0$ and
$\overline{\varepsilon}^{-\!1}(z)\hspace{-0.7mm} =\hspace{-0.7mm}
\gamma z$ with $\gamma\hspace{-0.7mm} \in\hspace{-0.7mm} \CC^*$ and
with $\overline{\varepsilon}^{-\!1}$ giving a bijection between
$\Lambda_{1, \,
\tau_2}$(resp.\,$[n]\!^{-\!1}(E_{n\tau_2}[2])\hspace{-0.7mm}
=\hspace{-0.7mm} \Lambda_{\frac{1}{2n}, \frac{\tau_2}{2}}$) and
$\Lambda_{1, \,
\tau_1}$(resp.\,$[n]\!^{-\!1}(E_{n\tau_1}[2])\hspace{-0.7mm}
=\hspace{-0.7mm} \Lambda_{\frac{1}{2n}, \frac{\tau_1}{2}}$). Writing
$\gamma \tau_2\hspace{-0.7mm} =\hspace{-0.7mm} a
\tau_1\hspace{-0.7mm} +\hspace{-0.7mm} b$ and $\gamma\hspace{-0.7mm}
=\hspace{-0.7mm} c \tau_1\hspace{-0.7mm} +\hspace{-0.7mm} d$ with
 $\begin{scriptsize}\begin{pmatrix}
  a     & b\\
  c & d
\end{pmatrix}\end{scriptsize}\hspace{-0.7mm}\in\hspace{-0.7mm} SL_2(\ZZ)$, by using $\gamma \Lambda_{\frac{1}{2n},
\frac{\tau_2}{2}}\hspace{-0.7mm} =\hspace{-0.7mm}
\Lambda_{\frac{1}{2n}, \frac{\tau_1}{2}}$ we have
$\frac{c\tau_1\mathopen +\mathclose d}{2n}\hspace{-0.7mm}
\in\hspace{-0.7mm} \Lambda_{\frac{1}{2n}, \frac{\tau_1}{2}}$ and
therefore $n|c$. This verifies that
$\begin{scriptsize}\begin{pmatrix}
  a     & b\\
  c & d
\end{pmatrix}\end{scriptsize}\hspace{-0.7mm} \in\hspace{-0.7mm}
\Gamma_0(n)$.

It remains to check $n_{\tau_2}\hspace{-0.7mm} \sim\hspace{-0.7mm}
n_{\tau_1}$ when $\tau_2\hspace{-0.7mm} = \hspace{-0.7mm}
\begin{scriptsize}\begin{pmatrix}
  a     & b\\
  c & d
\end{pmatrix}\end{scriptsize}\tau_1$ with  $\begin{scriptsize}\begin{pmatrix}
  a     & b\\
  c & d
\end{pmatrix}\end{scriptsize}\hspace{-0.7mm}\in\hspace{-0.7mm}  \Gamma_0(n)$.
\begin{displaymath}
\xymatrix@!0{
  & E_{\tau_2} \ar[rr]^{[n]} \ar'[d][dd]
      &  & E_{n\tau_2} \ar[dd]       \\
  E_{\tau_1} \ar[ur]^{\overline{\varepsilon}}\ar[rr]_{[n]\ \ \, \ \  }\ar[dd]
      &  & E_{n\tau_1} \ar[ur]_{\overline{\epsilon} }\ar[dd]\\
  & \PP^1 \ar'[r]^{ \ \ \  n_{\tau_2}}[rr]
      &  & \PP^1                \\
  \PP^1 \ar[rr]^{n_{\tau_1}}\ar[ur]^{\varepsilon}
      &  & \PP^1 \ar[ur]^{\epsilon}        }
\end{displaymath}
Set $\gamma\hspace{-0.7mm} =\hspace{-0.7mm} c \tau_1\hspace{-0.7mm}
+\hspace{-0.7mm} d$ then the map
$\overline{\varepsilon}\mathpunct{:} z\hspace{-0.7mm}
\in\hspace{-0.7mm} E_{\tau_1}\hspace{-0.7mm} \mapsto\hspace{-0.7mm}
z/\gamma\hspace{-0.7mm} \in\hspace{-0.7mm} E_{\tau_2}$ is an
isomorphism and descends to $\varepsilon\hspace{-0.7mm}
\in\hspace{-0.7mm} \mathrm{Aut}(\PP^1)$ in the sense that
$\wp_{\tau_2}\hspace{-0.7mm} \circ\hspace{-0.7mm}
\overline{\varepsilon}\hspace{-0.7mm} =\hspace{-0.7mm}
\varepsilon\hspace{-0.7mm} \circ\hspace{-0.7mm} \wp_{\tau_1}$.
Moreover $\overline{\epsilon}\mathpunct{:} z\hspace{-0.7mm}
\in\hspace{-0.7mm} E_{n\tau_1} \hspace{-0.7mm}
\mapsto\hspace{-0.7mm} z/\gamma\hspace{-0.7mm} \in\hspace{-0.7mm}
E_{n\tau_2}$ is also an isomorphism\,(here we use $n|c$) and
descends to $\epsilon\hspace{-0.7mm} \in\hspace{-0.7mm}
\mathrm{Aut}(\PP^1)$ in the sense that $\wp_{n\tau_2}\hspace{-0.7mm}
\circ\hspace{-0.7mm} \overline{\epsilon}\hspace{-0.7mm}
=\hspace{-0.7mm} \epsilon\hspace{-0.7mm} \circ\hspace{-0.7mm} \wp_{n
\tau_1}$. One checks readily that $\epsilon\hspace{-0.7mm}
\circ\hspace{-0.7mm} n_{\tau_1}\hspace{-0.7mm} =\hspace{-0.7mm}
n_{\tau_2}\hspace{-0.7mm} \circ\hspace{-0.7mm} \varepsilon$.
\end{proof}

In \cite{NW08} we have proved that
\begin{eqnarray}\label{T=T}
\mathcal{T}_{n,\, t}(z) = \sqrt{k(4nti/\pi)} \mathcal{T}_{n,\,
4ti/\pi}(z/\sqrt{k(4ti/\pi)}).
\end{eqnarray}

By using (\ref{T=T}), Theorem \ref{moduli space} and the injectivity
of $i\,\RR_{>0}\hspace{-0.7mm} \hookrightarrow\hspace{-0.7mm}
\Gamma_0(n)\backslash \HH$ we have for $t_1,t_2 > 0$

\begin{corollary}\label{linearequivalence}
If $n\hspace{-0.7mm} \ge\hspace{-0.7mm} 3$ then
$\mathcal{T}_{n,\,t_1} \hspace{-0.7mm} \sim \hspace{-0.7mm}
\mathcal{T}_{n,\,t_2}$ in $(\mathrm{End}(\PP^1), \circ)$ if and only
if $t_1\hspace{-0.7mm}=\hspace{-0.7mm} t_2$.
\end{corollary}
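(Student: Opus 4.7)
The plan is to assemble the three preceding facts into a single chain of equivalences, so almost no new work is needed. I would first use identity (\ref{T=T}) to observe that $\mathcal{T}_{n,t}$ and $\mathcal{T}_{n,\,4ti/\pi}$ differ by an affine conjugation (scaling the source by $\sqrt{k(4ti/\pi)}$ and the target by $\sqrt{k(4nti/\pi)}$), so they are $\sim$-equivalent in $(\mathrm{End}(\PP^1),\circ)$. Therefore $\mathcal{T}_{n,t_1}\sim\mathcal{T}_{n,t_2}$ if and only if $\mathcal{T}_{n,\,4t_1 i/\pi}\sim\mathcal{T}_{n,\,4t_2 i/\pi}$, which reduces the problem from the real parameter $t$ to the modular parameter $\tau\in\HH$.

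Next I would invoke the equivalence $\mathcal{T}_{n,\tau}\sim n_{\tau/2}$ noted immediately after the definition of elliptic rational functions in Section \ref{elliptic function section} (recall that the generalized Zolotarev fraction $\mathcal{T}_{n,\tau}$ was exhibited as a descent via $\mathrm{cd}_{\tau}$ on $E'_{\tau}$, and $\mathrm{cd}_{\tau}$ factors through $\wp_{\tau/2}$ on $E_{\tau/2}$). Applied with $\tau=4t_i i/\pi$, this turns the equivalence from the previous step into
\[
n_{2t_1 i/\pi}\sim n_{2t_2 i/\pi}\quad\text{in}\quad (\mathrm{End}(\PP^1),\circ).
\]

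The hypothesis $n\geq 3$ now lets me apply Theorem \ref{moduli space}, which converts the above into the statement $\Gamma_0(n)\cdot(2t_1 i/\pi)=\Gamma_0(n)\cdot(2t_2 i/\pi)$ as cosets in $\Gamma_0(n)\backslash\HH$. Finally, the cited injectivity of the composition $i\,\RR_{>0}\hookrightarrow\HH\twoheadrightarrow\Gamma_0(n)\backslash\HH$ forces $2t_1 i/\pi=2t_2 i/\pi$, i.e. $t_1=t_2$. The reverse implication is trivial.

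There is no substantive obstacle: every step is a direct quotation of a result already established in the paper. The only place one must be careful is the bookkeeping of the factor $2$ between $\mathcal{T}_{n,\tau}$ and $n_{\tau/2}$, which is why the variable $2t i/\pi$ (and not $4t i/\pi$) appears in the final application of Theorem \ref{moduli space}; confusing this factor would yield a spurious factor of two in the conclusion. Since the map to the modular curve only sees $\Gamma_0(n)$-orbits, it is essential that one uses the injectivity statement for the positive imaginary ray specifically, rather than for all of $\HH$, as the ambient modular curve identifies many $\HH$-points.
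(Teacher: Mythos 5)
Your argument is correct and is essentially the paper's own proof: the paper deduces the corollary in one line from identity (\ref{T=T}), the relation $\mathcal{T}_{n,\tau}\sim n_{\tau/2}$, Theorem \ref{moduli space}, and the injectivity of $i\,\RR_{>0}\hookrightarrow\Gamma_0(n)\backslash\HH$, exactly the chain you assemble. Your write-up merely makes the bookkeeping of the parameter change $t\mapsto 4ti/\pi$ and the factor $\tau/2$ explicit, which is a faithful expansion rather than a different route.
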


We shall indicate that Theorem \ref{Rittsecond} is applicable to all
elliptic rational functions.
\begin{lemma}
Let $f\mathpunct{:} M\hspace{-0.6mm}\to\hspace{-0.6mm}N$ be a finite
map and let $\alpha$ be a closed cycle on $N$ over which $f$ is
unramified. If $f\!^{-\!1}(\alpha)$ is connected then the monodromy
action of $\alpha$ is transitive.
\end{lemma}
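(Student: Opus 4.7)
The strategy is to identify the orbits of the monodromy action of $\alpha$ on the fiber $f^{-1}(q)$ with the connected components of $f^{-1}(\alpha)$, and then read off transitivity from connectedness.

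First I would fix a basepoint $q\in\alpha$ and enumerate the fiber $f^{-1}(q)=\{p_1,\dots,p_n\}$. Since $f$ is unramified over $\alpha$, the restriction $f\mathpunct{:} f^{-1}(\alpha)\to\alpha$ is a genuine (unramified) covering of the circle $\alpha$, so classical path-lifting applies. For each $p_i$ I would let $\widetilde{\alpha}_i$ denote the unique lift of $\alpha$ starting at $p_i$; by definition the monodromy sends $p_i$ to the endpoint $\widetilde{\alpha}_i(1)\in f^{-1}(q)$. In particular the $\langle\alpha\rangle$-orbit of $p_i$ is exactly the set of fiber points reached by concatenating such lifts, and the union of these concatenated lifts is a connected subset $C_i\subset f^{-1}(\alpha)$ containing precisely the fiber points in the orbit of $p_i$.

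Next I would show that each $C_i$ is a connected component of $f^{-1}(\alpha)$. Since $f^{-1}(\alpha)\to\alpha$ is a covering of a circle, every connected component is itself a (finite) connected cover of $\alpha$ and hence is traced out by iterated lifting of $\alpha$ starting at any one of its fiber points. In other words, the connected component of $f^{-1}(\alpha)$ containing $p_i$ is exactly $C_i$, and distinct orbits give disjoint components. Consequently the map
\begin{equation*}
\{\text{orbits of }\langle\alpha\rangle\text{ on }f^{-1}(q)\}\longrightarrow\{\text{connected components of }f^{-1}(\alpha)\},\quad \calO\mapsto C_{\calO},
\end{equation*}
is a bijection.

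Given this bijection, the hypothesis that $f^{-1}(\alpha)$ is connected forces a single orbit, i.e.\ the monodromy action of $\alpha$ on $f^{-1}(q)$ is transitive. The only subtle point is the path-lifting step: one must make sure that $\alpha$ genuinely avoids $\mathfrak{d}_f$ so that $f$ is an honest covering over $\alpha$, which is precisely the unramifiedness hypothesis. Everything else is a direct application of the classical correspondence between subgroups of $\pi_1$ and components of preimages, as in Lemma \ref{S}.
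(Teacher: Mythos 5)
Your argument is correct and is exactly the standard orbit--component correspondence that the paper has in mind: the paper's own proof consists of the single remark that the statement ``is almost the definition,'' and your path-lifting elaboration simply spells out why. No substantive difference in approach; your write-up just makes explicit what the paper leaves implicit.
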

\begin{proof}
It is almost the definition.
\end{proof}

We write $C_{\tau}$ for the Jordan curve on $\PP^1$ which is given
by $\wp_{\tau}(\{z: \Im{z} =\Im{\tau}/4 \})$.

\begin{proposition}\label{elliptic is transitive}
Given $\tau \hspace{-0.7mm} \in \hspace{-0.7mm} \HH$ and given
$n\hspace{-0.7mm} \in \hspace{-0.7mm} \NN$, there exists a closed
cycle $\alpha$ on $\PP^1$, along which $n_{\tau}$ is unramified,
such that its monodromy action is transitive.
\end{proposition}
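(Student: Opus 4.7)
I would take $\alpha = C_{n\tau}$, i.e.\ the Jordan curve $\wp_{n\tau}(\{z : \Im z = \Im(n\tau)/4\})$ on $\PP^1$. The preceding lemma reduces the claim to verifying that (a) $n_\tau$ is unramified along $C_{n\tau}$ and (b) $n_\tau^{-1}(C_{n\tau})$ is connected.

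For (a), when $n \geq 3$ Lemma~\ref{lemma for invariant} identifies $\mathfrak{o}_{n_\tau}$ with $\wp_{n\tau}(E_{n\tau}[2])$. The four $2$-torsion points of $E_{n\tau}$ lie over $\Im z \in \{0, \Im(n\tau)/2\}$ on the torus, which is disjoint from the horizontal line $\Im z = \Im(n\tau)/4$ defining $C_{n\tau}$; hence $C_{n\tau} \cap \mathfrak{o}_{n_\tau} = \emptyset$. The remaining cases $n = 1$ (where $n_\tau$ is an isomorphism and the monodromy is trivially transitive) and $n = 2$ (where the map has only two branch values, and any loop encircling exactly one of them is transitive) are handled by hand.

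For (b), I use the defining commutative square $n_\tau \circ \wp_\tau = \wp_{n\tau} \circ [n]$, which gives
\[
\wp_\tau^{-1}\bigl(n_\tau^{-1}(C_{n\tau})\bigr) \;=\; [n]^{-1}\bigl(\wp_{n\tau}^{-1}(C_{n\tau})\bigr).
\]
By the evenness $\wp_{n\tau}(-z) = \wp_{n\tau}(z)$, $\wp_{n\tau}^{-1}(C_{n\tau})$ is the disjoint union of the two horizontal circles $\{\Im z = \Im(n\tau)/4\}$ and $\{\Im z = 3\Im(n\tau)/4\}$ on $E_{n\tau}$. Since $[n]$ is induced by multiplication by $n$ on the universal cover $\CC$, the horizontal generator of $\pi_1(E_{n\tau})$ does not lift through $[n]$, so $[n]^{-1}$ of each of the two circles is a single horizontal circle on $E_\tau$, at $\Im z = \Im\tau/4$ and $\Im z = 3\Im\tau/4$ respectively, covering the base circle $n$-fold. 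These two circles on $E_\tau$ are interchanged by $z \mapsto -z$, and therefore $\wp_\tau$ sends both onto the single Jordan curve $C_\tau$. Consequently $n_\tau^{-1}(C_{n\tau}) = C_\tau$ is connected, and the preceding lemma yields the transitivity of the monodromy along $\alpha$.

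\textbf{Main obstacle.} The delicate point is the preimage bookkeeping in step (b): one must argue that the horizontal class in $\pi_1(E_{n\tau})$ does not split into $n$ disjoint lifts on $E_\tau$ but produces a single $n$-fold cover, and then that the two lifted circles actually coincide after projecting by $\wp_\tau$ (rather than yielding two components of $n_\tau^{-1}(C_{n\tau})$). Both facts hinge on the $z \mapsto -z$ symmetry of $\wp$ together with the explicit formula $[n](z) = nz$ on the universal cover; once they are in place, the rest is a mechanical application of the preceding lemma.
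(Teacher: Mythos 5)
Your proposal is correct and follows the paper's own route: the paper also takes $\alpha = C_{n\tau}$, notes that $n_{\tau}^{-1}(C_{n\tau}) = C_{\tau}$ (hence connected and disjoint from the branch locus, which lies over $\wp_{n\tau}(E_{n\tau}[2])$ at heights $0$ and $\Im(n\tau)/2$), and applies the preceding lemma. The only difference is that you spell out the lifting computation on $E_{\tau}\to E_{n\tau}$ that the paper dismisses with ``by definition,'' and your bookkeeping there is accurate.
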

\begin{proof}
By definition we have $n^{-\!1}_{\tau}(C_{n\tau}) = C_{\tau}$, and
our previous lemma applies.
\end{proof}

The nesting property of Zolotarev's fractions are important in
engineering, and for general elliptic rational functions we have
\begin{proposition}[Nesting Property]\label{nesting general}
Given $m,n\hspace{-0.7mm} \in\hspace{-0.7mm} \NN$, $\tau
\hspace{-0.7mm} \in \hspace{-0.7mm} \HH$ and $t \hspace{-0.7mm} >
\hspace{-0.7mm} 0$ we have  $ (mn)_{\tau}
\hspace{-0.7mm}=\hspace{-0.7mm} m_{n\tau}\hspace{-0.7mm}
\circ\hspace{-0.7mm} n_{\tau}, \mathcal{T}_{mn,\,\tau}
\hspace{-0.7mm}=\hspace{-0.7mm}
\mathcal{T}_{m,\,n\tau}\hspace{-0.7mm} \circ\hspace{-0.7mm}
\mathcal{T}_{n,\,\tau}$ and $\mathcal{T}_{mn,\,t}
\hspace{-0.7mm}=\hspace{-0.7mm} \mathcal{T}_{m,\,nt}\hspace{-0.7mm}
\circ\hspace{-0.7mm} \mathcal{T}_{n,\,t}. $
\end{proposition}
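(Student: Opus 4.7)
The plan is to derive all three identities from a single source, namely the fact that $n_\tau$ and $\mathcal{T}_{n,\tau}$ are by construction descents of the multiplication-by-$n$ isogenies on the elliptic curves $E_\tau$ and $E'_\tau$ respectively. Since $[mn]=[m]\circ[n]$ on both families (as is immediate from the lattice presentations $E_\tau=\CC/\Lambda_{1,\tau}$ and $E'_\tau=\CC/\Lambda_{2\omega_1(\tau),\omega_2(\tau)}$), this factorization must pass to the descents. The third identity, which concerns the normalized Chebyshev-Blaschke products $\mathcal{T}_{n,t}$ rather than the generalized Zolotarev fractions $\mathcal{T}_{n,\tau}$, will then be extracted from the uniqueness statement of Proposition \ref{nomalized Chebyshev-Blaschke products,uniqueness}.

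For the first identity I would stack the two defining commutative squares for $[n]\colon E_\tau\to E_{n\tau}$ and $[m]\colon E_{n\tau}\to E_{mn\tau}$ into a single diagram
\[
\xymatrix{
E_\tau \ar[r]^{[n]}\ar[d]_{\wp_\tau} & E_{n\tau}\ar[r]^{[m]}\ar[d]_{\wp_{n\tau}} & E_{mn\tau}\ar[d]^{\wp_{mn\tau}}\\
\PP^1 \ar[r]_{n_\tau} & \PP^1 \ar[r]_{m_{n\tau}} & \PP^1
}
\]
and compute directly $(m_{n\tau}\circ n_\tau)\circ\wp_\tau=\wp_{mn\tau}\circ[m]\circ[n]=\wp_{mn\tau}\circ[mn]=(mn)_\tau\circ\wp_\tau$. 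Since $\wp_\tau$ is surjective, I may cancel it on the right to obtain $(mn)_\tau=m_{n\tau}\circ n_\tau$. The identical argument with $E'_{\bullet}$ and $\mathrm{cd}_\bullet$ in place of $E_\bullet$ and $\wp_\bullet$ delivers the second identity $\mathcal{T}_{mn,\tau}=\mathcal{T}_{m,n\tau}\circ\mathcal{T}_{n,\tau}$.

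For the third identity, I would invoke the uniqueness statement of Proposition \ref{nomalized Chebyshev-Blaschke products,uniqueness} and verify that $F:=\mathcal{T}_{m,nt}\circ\mathcal{T}_{n,t}$ satisfies both characterizing conditions of $\mathcal{T}_{mn,t}$. The value condition is immediate: $F(\gamma(t))=\mathcal{T}_{m,nt}(\gamma(nt))=\gamma(mnt)$. For the preimage condition, chaining the defining property twice gives
\[
F^{-1}[-\gamma(mnt),\gamma(mnt)]=\mathcal{T}_{n,t}^{-1}\bigl(\mathcal{T}_{m,nt}^{-1}[-\gamma(mnt),\gamma(mnt)]\bigr)=\mathcal{T}_{n,t}^{-1}[-\gamma(nt),\gamma(nt)]=[-\gamma(t),\gamma(t)],
\]
so by uniqueness $F=\mathcal{T}_{mn,t}$. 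No step constitutes a genuine obstacle; the only bookkeeping point is confirming that the composite $E_\tau\xrightarrow{[n]}E_{n\tau}\xrightarrow{[m]}E_{mn\tau}$ genuinely equals $[mn]\colon E_\tau\to E_{mn\tau}$ (and likewise for $E'_\bullet$), which, as noted, reduces to a trivial lattice computation.
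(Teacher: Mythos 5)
Your proof is correct: composing the two descent squares and using $[mn]=[m]\circ[n]$ on $E_\bullet$ and $E'_\bullet$, then cancelling the surjective maps $\wp_\tau$ and $\mathrm{cd}_\tau$, gives the first two identities, and the uniqueness characterization in Proposition \ref{nomalized Chebyshev-Blaschke products,uniqueness} settles the third. The paper states Proposition \ref{nesting general} without proof, treating it as immediate from the descent construction of $n_\tau$ and $\mathcal{T}_{n,\tau}$ (cf. \cite{NW08}), so your argument is precisely the intended one, merely written out in full.
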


One checks easily that $f\hspace{-0.7mm} \in\hspace{-0.7mm}
\mathrm{End}(\EE)$ is elliptic if and only if it is a
Chebyshev-Blaschke product. For any elliptic $f\hspace{-0.7mm}
\in\hspace{-0.7mm} \mathrm{End}(\EE)$ there exists $t\hspace{-0.7mm}
>\hspace{-0.7mm} 0$ such that $f \hspace{-0.7mm}\sim \hspace{-0.7mm}\mathcal{T}_{n, \, t}$ in
$(\mathrm{End}(\EE),\circ)$. We set $\chi(f)\hspace{-0.7mm}
=\hspace{-0.7mm} nt$ when $f$ is of degree at least three, which is
well-defined by Theorem \ref{moduli space} and will be called the
{\it moduli} of $f$.

\section{Rigidity of monoid factorizations}\label{sectionrigidity}
The main result of \cite{NW08} implicitly gives generators of
relations of $(\mathrm{End}(\EE), \circ)$.

\begin{theorem}[Ng-Wang]\label{NgWangmonoid}
The monoid $(\mathrm{End}(\EE), \circ)$ is presented by $\left<S
\,|\, R \right>$ where $S$ consists of linear and of prime finite
Blaschke product and $R$ consists of
\begin{enumerate}
\item[(i)] $\iota\hspace{-0.7mm} \circ\hspace{-0.7mm} f\hspace{-0.7mm} =\hspace{-0.7mm} g$ or $f \hspace{-0.7mm} \circ\hspace{-0.7mm}\iota \hspace{-0.7mm}
=\hspace{-0.7mm} g$ where $\iota \hspace{-0.7mm} \in\hspace{-0.7mm}
\mathrm{Aut}(\EE)$;
\item[(ii)] $z^r g(z)^k\hspace{-0.7mm}  \circ\hspace{-0.7mm}
z^k\hspace{-0.7mm} =\hspace{-0.7mm}  z^k\hspace{-0.7mm}
\circ\hspace{-0.7mm} z^rg(z^k)$ with $(k, r)\hspace{-0.7mm}
=\hspace{-0.7mm} 1$;
\item[(iii)]  $ \mathcal{T}_{p,\, qt}\hspace{-0.7mm} \circ\hspace{-0.7mm}
\mathcal{T}_{q,\, t}\hspace{-0.7mm} =\hspace{-0.7mm}
\mathcal{T}_{q,\, pt}\hspace{-0.7mm} \circ\hspace{-0.7mm}
\mathcal{T}_{p, \, t}$ with $p, q$ primes and $t$ a positive real
number.
\end{enumerate}
\end{theorem}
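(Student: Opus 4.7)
The plan is to follow Ritt's classical strategy for polynomial decompositions, adapted to finite Blaschke products via the monodromy machinery of Sections \ref{facts on diophantine geometry and analytic geometry} and \ref{elliptic function section}. The ``soundness'' direction, verifying that each listed relation holds in $(\mathrm{End}(\EE),\circ)$, is essentially tautological: (i) is the definition of post/pre-composition with automorphisms, (ii) follows by direct manipulation, and (iii) is the commutativity of cyclic isogenies $[p]\circ[q]=[q]\circ[p]$ of $E_{\tau}$ pushed down by Proposition \ref{nesting general}.

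For completeness I would argue by induction on $\deg f$. The essential case is the \emph{swap lemma}: if $p_1\circ p_2 = q_1\circ q_2$ with $p_1, p_2, q_1, q_2$ all prime Blaschke products, then this identity is derivable from (i)--(iii). Once the swap lemma is established, the standard Ritt-style bubble-sort argument --- align the leftmost factors of two prime decompositions by a sequence of local swaps, peel them off, and induct on the remaining shorter pieces --- completes the proof.

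To prove the swap lemma I would use the dictionary between factorizations and intermediate subgroups from Lemma \ref{S} and Theorem \ref{Rittfirst}. Two distinct prime factorizations $f = p_1\circ p_2 = q_1\circ q_2$ correspond to two distinct maximal proper intermediate subgroups of $\pi_1(\EE\setminus f^{-1}(\mathfrak{d}_f))$ inside $\pi_1(\EE\setminus\mathfrak{d}_f)$. When a transitive monodromy cycle exists (Proposition \ref{elliptic is transitive} supplies such a cycle in the elliptic case, and elementary arguments work in the power and trivial cases), Theorem \ref{Rittsecond} forces the intermediate-subgroup lattice to sit inside $\mathfrak{L}_{\deg f}$, which is extremely restrictive. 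Classifying these configurations should yield a trichotomy: either (a) the two decompositions differ only by an automorphism inserted between $p_1$ and $p_2$, giving a reduction to (i); or (b) $f$ is Blaschke-equivalent to $z^r g(z^k)$ with $(k,r)=1$, giving (ii); or (c) the monodromy is of elliptic type and $f$ is a Chebyshev-Blaschke product, where the moduli invariant of Corollary \ref{linearequivalence} pins down the parameters and (iii) applies.

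The principal obstacle will be case (c). One must show that once neither the trivial swap nor the power-map swap applies, the monodromy group of $f$ is realized by a descent of a cyclic isogeny of elliptic curves, and hence $f$ is elliptic in the sense of Section \ref{elliptic function section}. This is the Blaschke analogue of Ritt's dichotomy ``$z^n$ or $T_n$''; in the present setting the Chebyshev case is replaced by the richer moduli-parameterized family $\{\mathcal{T}_{n,t}\}_{t>0}$. Combining Riemann's existence theorem (Theorem \ref{monodromy}) to reconstruct the cover from its monodromy with the classification of transitive permutation groups possessing two non-trivial block systems, and then translating the resulting monodromy back to a Chebyshev-Blaschke product via the moduli $\chi(f)$, is where the heart of the argument lies.
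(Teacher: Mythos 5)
The first thing to note is that the paper does not prove this statement at all: Theorem \ref{NgWangmonoid} is quoted as the main result of the joint work \cite{NW08} (``Ritt's theory on the unit disk''), as is Theorem \ref{Ritt move for Blaschke}, so there is no in-paper argument to match your sketch against; the theorem functions here as an imported input. Measured as a standalone proof, your outline has the right general shape (soundness of (i)--(iii), then completeness via lattice rigidity and a swap lemma plus a bubble-sort induction), but it leaves the actual mathematical content unproven. The crux is precisely your case (c) together with the parameter bookkeeping: one must classify all generalized Ritt relations $p_1\circ p_2=q_1\circ q_2$ between prime Blaschke products of coprime degrees and show they are, up to units, either $z^r g(z)^k\circ z^k=z^k\circ z^r g(z^k)$ or $\mathcal{T}_{p,\,qt}\circ\mathcal{T}_{q,\,t}=\mathcal{T}_{q,\,pt}\circ\mathcal{T}_{p,\,t}$ with exactly that shift of the modulus $t$. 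Your proposal reduces this to ``the classification of transitive permutation groups possessing two non-trivial block systems,'' but no such off-the-shelf classification does this work: already for polynomials, Ritt's second theorem requires a genuine ramification/Riemann--Hurwitz analysis (or an equivalent explicit monodromy computation), and in the disk setting one must in addition identify the elliptic case as a Chebyshev-Blaschke product and compute its moduli to get the $qt$ versus $pt$ parameters in relation (iii) (Corollary \ref{linearequivalence} only gives uniqueness of the modulus, not the value arising in a given bidecomposition). Asserting that this ``is where the heart of the argument lies'' names the gap rather than closing it.

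Two smaller corrections to the scaffolding. The transitive monodromy cycle you need is not special to the elliptic case: for any finite Blaschke product viewed as a self-map of $\EE$, a loop close to the unit circle has monodromy an $n$-cycle (this is exactly what the paper uses in the proof of Proposition \ref{rigidity of Blaschke}), so Theorem \ref{Rittsecond} applies uniformly and Proposition \ref{elliptic is transitive} is not the relevant tool here. Second, your swap lemma implicitly assumes that any factorization of a finite Blaschke product through an intermediate Riemann surface stays inside $\mathrm{End}(\EE)$; this needs Lemma \ref{disk and complex plane} (the intermediate surface is biholomorphic to $\EE$), which should be invoked before the subgroup dictionary of Lemma \ref{S} and Theorem \ref{Rittfirst} can be read as a statement about decompositions in the monoid $(\mathrm{End}(\EE),\circ)$.
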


We call a relation $ a\mathopen \circ\mathclose b\hspace{-0.7mm}
=\hspace{-0.7mm} c\mathopen \circ\mathclose d$ with
$\deg{a}\hspace{-0.7mm} =\hspace{-0.7mm} \deg{d}$ and $(\deg{a},
\deg{b})\hspace{-0.7mm}  = \hspace{-0.7mm} 1$ in terms of
irreducible\;(resp. not necessary irreducible) elements a {\it
Ritt\,(resp.\,generalized Ritt) relation} of $(\mathrm{End}(X),
\circ)$. Presentations of Monoids in Theorem \ref{NgWangmonoid}
involves only Ritt relations. The next result also follows from
\cite{NW08}.

\begin{theorem}[Ng-Wang]\label{Ritt move for Blaschke}
If $a\mathopen \circ\mathclose b\hspace{-0.7mm}  =\hspace{-0.7mm}
c\mathopen \circ\mathclose d$ is a generalized Ritt relation in
$(\mathrm{End}(\EE), \circ)$ then up to units of
$(\mathrm{End}(\EE), \circ)$ and up to the permutation
$a\hspace{-0.8mm} \leftrightarrow\hspace{-0.8mm} c, b\hspace{-0.8mm}
\leftrightarrow\hspace{-0.8mm} d$ we are in the case $z^s
g(z)^n\hspace{-0.7mm}  \circ\hspace{-0.7mm} z^n\hspace{-0.7mm}
=\hspace{-0.7mm}  z^n\hspace{-0.7mm} \circ\hspace{-0.7mm}
z^sg(z^n)((n, s)\hspace{-0.7mm} =\hspace{-0.7mm} 1)$ or
$\mathcal{T}_{m,\, nt}\hspace{-0.7mm} \circ\hspace{-0.7mm}
\mathcal{T}_{n,\, t}\hspace{-0.7mm} =\hspace{-0.7mm}
\mathcal{T}_{n,\, mt}\hspace{-0.7mm} \circ\hspace{-0.7mm}
\mathcal{T}_{m, \, t}((m, n)\hspace{-0.7mm} =\hspace{-0.7mm} 1,
t\hspace{-0.7mm}
>\hspace{-0.7mm} 0)$.
\end{theorem}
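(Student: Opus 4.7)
My plan is to combine the monoid presentation of Theorem \ref{NgWangmonoid} with the monodromy rigidity of Section \ref{sectionrigidity} in order to classify all generalized Ritt relations in $(\mathrm{End}(\EE),\circ)$.

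First, I would fix the relation $a\circ b=c\circ d$ with $\deg a=\deg d=m$, $\deg b=\deg c=n$, $(m,n)=1$, and set $f:=a\circ b$, of degree $mn$. By Theorem \ref{Rittfirst} the two factorizations of $f$ correspond to a pair of intermediate subgroups $H_b,H_d$ in the monodromy tower of $f$, of indices $n$ and $m$ respectively. In every case that arises I would verify that $f$ admits a monodromy element acting transitively (via Proposition \ref{elliptic is transitive} in the elliptic case and directly in the monomial case), so that Theorem \ref{Rittsecond} applies and the lattice of intermediate subgroups embeds into $\mathfrak{L}_{mn}$. Coprimality of $m$ and $n$ then forces the pair $(H_b,H_d)$ to generate and to intersect correctly, producing a commuting square of coprime intermediate coverings compatible with both decompositions.

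Next, I would express each of $a,b,c,d$ as a product of prime Blaschke products, yielding two words in the generators of $(\mathrm{End}(\EE),\circ)$ that represent the same element $f$. By Theorem \ref{NgWangmonoid} these two words are linked by a finite chain of elementary substitutions of types (i), (ii), (iii). I would then track the ``cut'' between the degree-$n$ prefix and degree-$m$ suffix as it migrates through this chain: substitutions of type (i) merely absorb units, while substitutions of types (ii) and (iii) are themselves coprime-degree swaps that can interact with the cut only when they straddle it. An induction on chain length, combined with the commuting-square structure from the first step, shows that the cumulative effect on the cut is a single application of (ii) or (iii), modulo unit substitutions on either side.

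The main obstacle is excluding mixed combinations, in which adjacent applications of (ii) and (iii) could conspire to produce a coprime-degree swap of some new type. I would rule this out by comparing ramification data: the monomials $z^n$ are totally ramified at $\{0,\infty\}$, while $\mathcal{T}_{n,t}$ has four critical values coming from $E_{nt}[2]$ by Lemma \ref{lemma for invariant}, and Theorem \ref{moduli space} together with Corollary \ref{linearequivalence} pin down the moduli parameter $t$ uniquely. These incompatible ramification patterns prevent a monomial-type factor and a Chebyshev--Blaschke-type factor from fitting into a common coprime-degree swap, so each reduced relation must lie entirely in the family (ii) or entirely in the family (iii), completing the classification.
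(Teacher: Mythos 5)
You should first note that the paper does not actually prove Theorem \ref{Ritt move for Blaschke}: it is imported wholesale from \cite{NW08} (``The next result also follows from \cite{NW08}''), so what you are attempting is a reproof of the disk analogue of Ritt's second theorem. Judged on its own, your sketch has a genuine gap at its central step: the assertion that ``the cumulative effect on the cut is a single application of (ii) or (iii), modulo unit substitutions'' is not an argument but a restatement of the theorem. Theorem \ref{NgWangmonoid} tells you that the two complete refinements of $a\circ b$ and of $c\circ d$ are joined by some finite chain of elementary substitutions, but several moves of types (ii)/(iii) can straddle the migrating cut at different positions and in different orders, and nothing in your plan shows that their composite bidecomposition collapses, up to units, to one of the two model swaps. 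Your device for excluding ``mixed combinations'' --- comparing the critical values of $z^n$ with the four critical values of $\mathcal{T}_{n,t}$ --- also misses the main difficulty: what has to be proved is that in \emph{any} coprime-degree swap the inner factors are forced (up to units) to be $z^n$ paired with $z^sg(z)^n$/$z^sg(z^n)$, or a pair of Chebyshev--Blaschke products with matched moduli; that is, one must exclude arbitrary exotic coprime pairs, not merely hybrids of the two known families. This exclusion is exactly the ramification/monodromy analysis carried out in \cite{NW08}, and it cannot be recovered formally from the presentation. Indeed there is a circularity risk in your direction of derivation: the presentation of $(\mathrm{End}(\EE),\circ)$ in \cite{NW08} is itself established using the classification of (generalized) Ritt swaps, so deducing Theorem \ref{Ritt move for Blaschke} from Theorem \ref{NgWangmonoid} inverts the logical order of the source.

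Two further points. Your appeal to Proposition \ref{elliptic is transitive} ``in every case that arises'' is itself circular (the cases are what you are trying to determine), and it is also unnecessary: for any finite Blaschke product regarded as a self-map of $\EE$, a loop close to the unit circle already acts transitively on the fiber, as the paper observes in the proof of Proposition \ref{rigidity of Blaschke}, so Theorem \ref{Rittsecond} applies unconditionally. Finally, the ``commuting-square structure'' you plan to lean on in the induction is vacuous here: since $(\deg a,\deg c)=(\deg b,\deg d)=1$, the maps $g$ and $h$ produced by Proposition \ref{rigidity of Blaschke} are M\"obius transformations, so the square merely reproduces the original relation and supplies no additional leverage for tracking the cut.
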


We call $f\hspace{-0.7mm} \in\hspace{-0.7mm}\mathrm{End}(\EE)$ {\it
totally ramified} if $f\hspace{-0.7mm} \sim\hspace{-0.7mm} z^{n}$ in
$(\mathrm{End}(\mathbb{E}), \circ)$. The following simple remark is
a complement of the above theorem.
\begin{lemma}\label{bloom and totally ramified}
Let $h\hspace{-0.7mm} \in\hspace{-0.7mm} \mathrm{End}(\EE)$ satisfy
$h(0)\hspace{-0.7mm} \neq\hspace{-0.7mm} 0$ and let $\{s,
n\}\hspace{-0.7mm} \subset\hspace{-0.7mm} \NN$ satisfy
$n\hspace{-0.7mm} \ge\hspace{-0.7mm} 2$. Then neither $z^sh(z)^n$
nor $z^sh(z^n)$ is totally ramified.
\end{lemma}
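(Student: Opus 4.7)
The plan is to exhibit, for each of $f_1(z) := z^s h(z)^n$ and $f_2(z) := z^s h(z^n)$, at least two distinct critical points in $\EE$. This rules out total ramification, because any $f \sim z^m$ in $(\mathrm{End}(\EE), \circ)$ can be written $f = \iota_1 \circ z^m \circ \iota_2$ with $\iota_i \in \mathrm{Aut}(\EE)$, and then $\iota_2^{-1}(0)$ is the unique zero of $f'$ in $\EE$ since the derivatives of $\iota_1,\iota_2$ are nowhere zero there. The counting ingredient is the classical fact that a finite Blaschke product $B$ of degree $m$ has exactly $m-1$ critical points in $\EE$ counted with multiplicity, obtained from Riemann--Hurwitz for $B\colon \PP^1 \to \PP^1$ together with the Blaschke symmetry $B(1/\bar z) = 1/\overline{B(z)}$ and the non-vanishing of $B'$ on $\TT$.

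For $f_2$: differentiation gives $f_2'(z) = z^{s-1} H(z^n)$ with $H(w) := s h(w) + n w h'(w)$ holomorphic on $\EE$ and $H(0) = s h(0) \neq 0$. Setting $d := \deg h$, the total multiplicity of zeros of $f_2'$ in $\EE$ is $s + nd - 1$; subtracting the $s-1$ coming from $z^{s-1}$, and noting that a nonzero zero $w_0 \in \EE$ of $H$ of order $k$ lifts under $z \mapsto z^n$ to $n$ distinct zeros of $H(z^n)$ each of order $k$, we find $H$ must have $d \geq 1$ zeros in $\EE$, all nonzero. Picking any such $w_0$, the $n \geq 2$ distinct $n$-th roots of $w_0$ all lie in $\EE$ and each is a critical point of $f_2$.

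For $f_1$: differentiation gives $f_1'(z) = z^{s-1} h(z)^{n-1} G(z)$ with $G(z) := s h(z) + n z h'(z)$. One checks $G \not\equiv 0$ (else integrating $h'/h = -s/(nz)$ would give $h = C z^{-s/n}$, incompatible with $h(0) \neq 0$ and $h \in \mathrm{End}(\EE)$) and $G(0) = s h(0) \neq 0$. The same total-multiplicity accounting forces $G$ to have exactly $d$ zeros in $\EE$. A local expansion at each zero $a \in \EE \setminus \{0\}$ of $h$ of multiplicity $d_a$ shows $G$ vanishes there to order exactly $d_a - 1$; letting $\nu \geq 1$ denote the number of distinct zeros of $h$ in $\EE$, summing gives an overlap of $d - \nu$ zeros of $G$ at zeros of $h$, so $G$ has at least $\nu \geq 1$ zeros of $G$ in $\EE$ away from both $0$ and the zeros of $h$. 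These, together with any zero of $h$---which is itself a critical point of $f_1$ via the $h^{n-1}$ factor since $n \geq 2$---supply the required two distinct critical points of $f_1$ in $\EE$. The hard part is the careful bookkeeping of the local orders of $G$ at zeros of $h$ and at $z = 0$, but once settled the conclusion is immediate.
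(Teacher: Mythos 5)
Your proposal is correct, and in fact it supplies more than the paper does: the paper states this lemma as a ``simple remark'' immediately after Theorem \ref{Ritt move for Blaschke} and gives no proof at all, so there is no argument of the author's to compare against. Your route --- reduce total ramification to ``exactly one distinct critical point in $\EE$'' via $f=\iota_1\circ z^m\circ\iota_2$, then use the classical count that a degree-$m$ finite Blaschke product has exactly $m-1$ critical points in $\EE$ with multiplicity, and finally locate a second critical point through the factorizations $f_1'=z^{s-1}h^{n-1}G$ and $f_2'=z^{s-1}H(z^n)$ --- is sound; the local-order bookkeeping ($G$ vanishes to order exactly $d_a-1$ at a zero of $h$ of order $d_a$, since $a\neq 0$, and $G(0)=H(0)=sh(0)\neq 0$) is exactly what is needed, and the aside about integrating $h'/h$ is superfluous once $G(0)\neq 0$ is noted. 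Two small remarks: for $f_1=z^sh(z)^n$ there is an even quicker argument avoiding the critical-point count --- a totally ramified map has a single point in the fiber over its unique critical value, whereas $0$ is a critical value of $f_1$ (any zero $a$ of $h$ is ramified because $n\ge 2$) and $f_1^{-1}(0)$ contains the two distinct points $0$ and $a$; for $f_2=z^sh(z^n)$ some global input like your count (or the rotational equivariance $f_2(\zeta z)=\zeta^s f_2(z)$ for $\zeta^n=1$ combined with the existence of at least one critical point in $\EE$) is genuinely needed, since the fiber over $0$ can be unramified when $s=1$ and $h$ has simple zeros, so your treatment of that case is the essential one.
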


In this section we will prove, via action of fundamental groups,
some rigidity properties of factorizations of $(\mathrm{End}(\EE),
\circ)$. The following generalizes a result of \cite{ZM10}.

\begin{proposition}\label{rigidity}
Let $f\mathpunct{:} M\hspace{-0.7mm} \to\hspace{-0.7mm} N$ be a
finite map of degree $n$, $q\hspace{-0.7mm} \in\hspace{-0.7mm}
N\mathopen \setminus\mathclose \mathfrak{d}_f$ and $\alpha
\hspace{-0.7mm} \in \hspace{-0.7mm} \pi_{1} (
N\mathopen\setminus\mathclose \mathfrak{d}_f , q)$. If finite maps
$b\mathpunct{:} M\hspace{-0.7mm} \to\hspace{-0.7mm}A$,
$a\mathpunct{:} A\hspace{-0.7mm} \to\hspace{-0.7mm}N$,
$d\mathpunct{:} M\hspace{-0.7mm} \to\hspace{-0.7mm}R$,
$c\mathpunct{:} R\hspace{-0.7mm} \to\hspace{-0.7mm}N$ satisfy
$a\hspace{-0.7mm} \circ\hspace{-0.7mm} b\hspace{-0.7mm}
=\hspace{-0.7mm} c\hspace{-0.7mm} \circ\hspace{-0.7mm}
d\hspace{-0.7mm} =\hspace{-0.7mm} f$ and if the monodromy action of
$\alpha$ is transitive then there exist Riemann surfaces $T, W$ and
finite maps $h\mathpunct{:}M \hspace{-0.7mm} \to\hspace{-0.7mm} T,
\overline{b}\mathpunct{:} T \hspace{-0.7mm} \to\hspace{-0.7mm} A,
\overline{d}\mathpunct{:}T \hspace{-0.7mm} \to\hspace{-0.7mm} R,
\overline{a} \mathpunct{:}A \hspace{-0.7mm} \to\hspace{-0.7mm} W,
\overline{c}\mathpunct{:}R \hspace{-0.7mm} \to\hspace{-0.7mm} W,
g\mathpunct{:}W \hspace{-0.7mm} \to\hspace{-0.7mm} N$ such that $
\deg{g}\hspace{-0.7mm} =\hspace{-0.7mm} (\deg{a}, \deg{c}),
\deg{h}\hspace{-0.7mm} =\hspace{-0.7mm} (\deg{b}, \deg{d})$ and the
following diagram \vspace{-3mm}
\begin{center}{\Huge
\scalebox{.3}{\xymatrix{
    &  &     &  & & &  & &   A \ar@/^0.5pc/[drrrrr]^{\overline{a}} \ar[ddrrrrrr]^{a} & & & &  &    &\\
    &  &  T \ar@/^0.5pc/[urrrrrr]^{\overline{b}} \ar[ddrrrr]^{\overline{d}}  &  & &  & & &    & & & &  & W \ar[dr]^{g}  & \\
 M \ar[rrrrrrrrrrrrrr]^{f}\ar[drrrrrr]^{d} \ar[urr]^{h} \ar[uurrrrrrrr]^{b}  &       &  & & & & & &    & & & &  &    & N \\
    &  &    & & & &R \ar[urrrrrrrr]^{c} \ar[uurrrrrrr]^{\overline{c}} & &    & & & &  &    &
    }}}
\end{center}
commutates.
\end{proposition}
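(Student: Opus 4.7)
My plan is to reduce the construction of $T$, $W$ and all the connecting arrows to a subgroup-lattice computation, then invoke Theorem \ref{Rittsecond} to control the indices. First I would fix a basepoint $p \in f^{-1}(q)$, set $p_A = b(p)$ and $p_R = d(p)$, and introduce the subgroups $H$, $H_A$, $H_R$ of $G = \pi_1(N \setminus \mathfrak{d}_f, q)$ corresponding, via Lemma \ref{S}, to the pointed covers $(M, p)$, $(A, p_A)$ and $(R, p_R)$. The factorizations $f = a \circ b = c \circ d$ translate into $H \subset H_A$ and $H \subset H_R$ inside $G$, together with $[G:H_A] = \deg a$, $[H_A:H] = \deg b$, $[G:H_R] = \deg c$ and $[H_R:H] = \deg d$.

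Next I would appeal to Theorem \ref{Rittsecond}: since the monodromy action of $\alpha$ is transitive, the lattice of intermediate subgroups between $H$ and $G$ is a sublattice of $\mathfrak{L}_{\deg f}$. In such a sublattice the meet $H_T := H_A \cap H_R$ and the join $H_W := \langle H_A, H_R \rangle$ are again intermediate subgroups, with $[G:H_T] = \operatorname{lcm}(\deg a, \deg c)$ and $[G:H_W] = \gcd(\deg a, \deg c)$. Lemma \ref{S} then produces pointed finite maps $(T, p_T) \to (N, q)$ and $(W, p_W) \to (N, q)$ realizing these two subgroups, and every remaining inclusion in the chains $H \subset H_T \subset H_A \subset H_W \subset G$ and $H \subset H_T \subset H_R \subset H_W \subset G$ yields a unique pointed finite map: $h$, $\overline{b}$, $\overline{d}$, $\overline{a}$, $\overline{c}$ and $g$.

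The degree identities fall out immediately: $\deg g = [G:H_W] = (\deg a, \deg c)$, while $\deg h = [H_T:H] = \deg f / \operatorname{lcm}(\deg a, \deg c) = (\deg b, \deg d)$. Commutativity of the large diagram is then free from the uniqueness clause of Lemma \ref{S}: every composite arrow in the diagram is a pointed finite map whose associated subgroup has already been pinned down by the factorization data, so any two composites between the same pointed surfaces carrying the same subgroup must coincide.

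The step I expect to be the crux is the second one. Without the transitive $\alpha$ hypothesis, $H_A \cap H_R$ can have index strictly larger than $\operatorname{lcm}(\deg a, \deg c)$ in $G$, in which case the cover it realizes fails to sit over $A$ and $R$ with the expected degrees and the formula $\deg h = (\deg b, \deg d)$ simply breaks. Theorem \ref{Rittsecond} removes this obstruction cleanly by embedding the intermediate lattice into the divisor lattice $\mathfrak{L}_{\deg f}$, where meet and join compute $\operatorname{lcm}$ and $\gcd$ of indices and the arithmetic works out automatically.
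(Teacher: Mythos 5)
Your proposal is correct and follows essentially the same route as the paper: both arguments use Theorem \ref{Rittsecond} to identify the lattice of intermediate subgroups between $\pi_1(M\setminus f^{-1}(\mathfrak{d}_f),p)$ and $\pi_1(N\setminus\mathfrak{d}_f,q)$ with a sublattice of $\mathfrak{L}_{\deg f}$, observe that a sublattice containing the two divisors $\deg a$ and $\deg c$ also contains their gcd and lcm (i.e.\ the join $\langle H_A,H_R\rangle$ and the meet $H_A\cap H_R$ with the expected indices), and then invoke Lemma \ref{S} to realize these subgroups as the surfaces $W$, $T$ and the connecting finite maps, with commutativity coming from the Galois correspondence. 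Your write-up merely makes explicit the basepoints, the index bookkeeping, and the identity $\deg f/\operatorname{lcm}(\deg a,\deg c)=(\deg b,\deg d)$, which the paper leaves implicit.
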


\begin{proof}
By Theorem \ref{Rittsecond} the lattice of groups intermediate
between $\pi_1(N\mathopen \setminus\mathclose \mathfrak{d}_f)$ and
$\pi_1(M\mathopen \setminus\mathclose f\!^{-\!1}(\mathfrak{d}_f))$
is isomorphic to a sublattice of $\mathcal{L}_n$, and by Lemma
\ref{S} it suffices to verify the following: if $\calL$ is a
sublattice of $(\calL_n ; \le)$ and contains $a$ and $b$ then it
also contains $(a, b)$ and $[a,b]$. Indeed this follows immediately
from the definition of sublattice and it can be illustrated by the
following figure \vspace{-3mm}
\begin{center}{\Huge
\scalebox{.3}{\xymatrix{
    &  &     &  &  &  & &   a \ar@/^0.5pc/[drrrr] \ar[ddrrrrr] & & & &      &\\
    & [a, b] \ar@/^0.5pc/[urrrrrr] \ar[ddrrrr] &   &   &  & &    & & & &  & (a, b) \ar[dr]  & \\
 n \ar[rrrrrrrrrrrr]\ar[drrrrr] \ar[ur] \ar[uurrrrrrr]  &         & &  & & &    & & & &  &    & 1 \\
    &  &     & & &b \ar[urrrrrrr] \ar[uurrrrrr] & &    & & & &      &
    }}}
\end{center}
where we use $s\hspace{-0.7mm} \to\hspace{-0.7mm} t$ to denote
$s\hspace{-0.7mm} \le\hspace{-0.7mm} t$ (for lattice structure) or
equivalently $t | s$.
\end{proof}

Proposition \ref{rigidity} applies to finite Blaschke products and
gives

\begin{proposition}\label{rigidity of Blaschke}
Let $a, b, c, d, f$ be finite Blaschke products that satisfy
$a\hspace{-0.7mm} \circ\hspace{-0.7mm} b\hspace{-0.7mm}
=\hspace{-0.7mm} c\hspace{-0.7mm} \circ\hspace{-0.7mm}
d\hspace{-0.7mm} =\hspace{-0.7mm} f$. There exist $\{\ol{a\bPh},
\overline{b}, \overline{c\bPh}, \overline{d}, h, g\}\hspace{-0.7mm}
\subset\hspace{-0.7mm} \mathrm{End}(\EE)$ such that
\begin{enumerate}
\item[(i)] $g\hspace{-0.7mm} \circ\hspace{-0.7mm}
\overline{a}\hspace{-0.7mm} =\hspace{-0.7mm} a, \ g\hspace{-0.7mm}
\circ\hspace{-0.7mm} \overline{c}\hspace{-0.7mm} =\hspace{-0.7mm} c,
\ \deg{g}\hspace{-0.7mm} =\hspace{-0.7mm} (\deg{a}, \deg{c})$;
\item[(ii)] $\overline{b}\hspace{-0.7mm} \circ\hspace{-0.7mm}
h\hspace{-0.7mm} =\hspace{-0.7mm} b, \ \overline{d}\hspace{-0.7mm}
\circ\hspace{-0.7mm} h\hspace{-0.7mm} =\hspace{-0.7mm} d, \
\deg{h}\hspace{-0.7mm} =\hspace{-0.7mm} (\deg{b}, \deg{d})$;
\item[(iii)] $\overline{a\bPh}\hspace{-0.7mm} \circ\hspace{-0.7mm}
\overline{b}\hspace{-0.7mm} =\hspace{-0.7mm}
\overline{c\bPh}\hspace{-0.7mm} \circ\hspace{-0.7mm} \overline{d}$.
\end{enumerate}
\end{proposition}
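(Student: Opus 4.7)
The plan is to apply Proposition~\ref{rigidity} to $f$ viewed as an element of $\mathrm{End}(\PP^1)$, using the unit circle $\TT$ as the distinguished cycle providing transitive monodromy. The key observation is that every finite Blaschke product $f$ preserves $\TT$ and restricts to an unramified $\deg{f}$-fold covering $f|_\TT\mathpunct{:}\TT\hspace{-0.7mm}\to\hspace{-0.7mm}\TT$, so that $\mathfrak{d}_f \subset \EE\cup\overline{\EE}^c$ is disjoint from $\TT$ and $f^{-1}(\TT)=\TT$ is connected. Consequently, for any $q\in\TT$, the homotopy class $\alpha:=[\TT]\in\pi_1(\PP^1\setminus\mathfrak{d}_f,q)$ acts on the fiber $f^{-1}(q)\subset\TT$ as a single $\deg{f}$-cycle, so the monodromy action of $\alpha$ is transitive, verifying the hypothesis of Proposition~\ref{rigidity}.

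I would next invoke Proposition~\ref{rigidity} with $M=N=A=R=\PP^1$ to produce Riemann surfaces $T, W$ and finite maps $h,\overline{b},\overline{d},\overline{a},\overline{c},g$ filling in the required diagram, with $\deg{g}=(\deg{a},\deg{c})$ and $\deg{h}=(\deg{b},\deg{d})$. Since $\PP^1$ dominates $T$ via $h$ and $W$ dominates $\PP^1$ via $g$, Riemann--Hurwitz forces both $T$ and $W$ to have genus zero, hence $T\cong W\cong\PP^1$.

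The real content of the argument is upgrading each of these six maps to a finite Blaschke product, i.e.\ to an element of $\mathrm{End}(\EE)$. I would set $\EE_W:=g^{-1}(\EE)\subset W$ and $\EE_T:=\overline{b}^{-1}(\EE)\subset T$, and first verify $\EE_T=\overline{d}^{-1}(\EE)$ from the surjectivity of $h$ together with the chain $h^{-1}(\overline{b}^{-1}(\EE))=b^{-1}(\EE)=\EE=d^{-1}(\EE)=h^{-1}(\overline{d}^{-1}(\EE))$. The restrictions $\overline{a}\mathpunct{:}\EE\to\EE_W$ and $h\mathpunct{:}\EE\to\EE_T$ are then finite holomorphic maps issuing from $\EE$, so Lemma~\ref{disk and complex plane} forces $\EE_W\cong\EE_T\cong\EE$. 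Fixing biholomorphisms $T\cong\PP^1$ and $W\cong\PP^1$ sending $\EE_T$ and $\EE_W$ to $\EE$, all six maps $h,g,\overline{a},\overline{b},\overline{c},\overline{d}$ become finite self-maps of $\PP^1$ preserving $\EE$, and hence lie in $\mathrm{End}(\EE)$. Conditions (i)--(iii) are then immediate from the commutativity of the diagram supplied by Proposition~\ref{rigidity}.

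The step I expect to be most delicate is this last one, namely the passage from abstract intermediate Riemann surfaces $T,W$ to genuine objects of the disk category. The transitivity of monodromy along $\TT$ and the genus-zero verification are essentially free once one notices that $\TT$ is the canonical such cycle; Lemma~\ref{disk and complex plane} together with the uniformization theorem then does the rest.
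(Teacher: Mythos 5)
Your proposal is essentially sound and reaches the conclusion by a somewhat different route than the paper. The paper regards $a,b,c,d,f$ as finite self-maps of $\EE$ itself, observes that the monodromy of a loop just inside the unit circle is transitive, and applies Proposition \ref{rigidity} with $M=N=\EE$; Lemma \ref{disk and complex plane} then forces the intermediate surfaces $T,W$ to be biholomorphic to $\EE$, so the six new maps are automatically elements of $\mathrm{End}(\EE)$ and no descent back to the disk is needed. You instead work on $\PP^1$ with $\alpha=[\TT]$, which is legitimate ($\mathfrak{d}_f\cap\TT=\emptyset$, $f^{-1}(\TT)=\TT$, and $f|_\TT$ is a connected $\deg f$-fold covering of the circle, so the monodromy is a full cycle), and you then have to cut the output of Proposition \ref{rigidity} back down to the disk category. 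Most of that reduction is done correctly: $\EE_T=\overline{b}^{-1}(\EE)=\overline{d}^{-1}(\EE)$, $\EE_W=g^{-1}(\EE)$, the restrictions are proper because the preimages of $\EE$, $\EE_T$, $\EE_W$ under the ambient maps are exactly $\EE$, $\EE_T$, $\EE_W$, and Lemma \ref{disk and complex plane} gives $\EE_T\cong\EE_W\cong\EE$.

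The one step that does not stand as written is ``fixing biholomorphisms $T\cong\PP^1$ and $W\cong\PP^1$ sending $\EE_T$ and $\EE_W$ to $\EE$'': a priori $\EE_T$ and $\EE_W$ are merely simply connected domains in $T\cong\PP^1$ and $W\cong\PP^1$, and a M\"obius identification carries them onto the round disk only if they are round disks in those coordinates, which you have not shown (one could argue it via the anti-holomorphic involution $z\mapsto 1/\overline{z}$ descending to $T$ and $W$, but that argument is absent). Fortunately the claim is also unnecessary: take the biholomorphisms $\phi_T\colon\EE_T\to\EE$ and $\phi_W\colon\EE_W\to\EE$ that you already have and replace the restricted maps by $\phi_W\circ\overline{a}|_{\EE}$, $g|_{\EE_W}\circ\phi_W^{-1}$, $\overline{b}|_{\EE_T}\circ\phi_T^{-1}$, $\phi_T\circ h|_{\EE}$, and similarly for $\overline{c}$, $\overline{d}$. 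These are finite self-maps of $\EE$, hence finite Blaschke products; their degrees equal the degrees of the ambient maps because all fibers over points of $\EE$ (resp.\ $\EE_T$, $\EE_W$) lie in the corresponding domains; and the identities (i)--(iii) survive since the inserted factors $\phi^{-1}\circ\phi$ cancel, the equalities now being equalities of self-maps of $\EE$. With that repair your argument is complete; the paper's choice of $M=N=\EE$ simply renders this entire last step vacuous.
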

\begin{proof}
We regard these finite Blaschke products as finite maps $\EE
\hspace{-0.7mm} \to \hspace{-0.7mm} \EE$, for which the monodromy
action of any loop closely around the unit circle are transitive. By
Lemma \ref{disk and complex plane} the decomposition of finite
Blaschke products into finite Blaschke products is essentially
equivalent to that of finite Blaschke products into finite maps. Now
we may apply Proposition \ref{rigidity} directly to deduce the
desired assertion.
\end{proof}

We give a simple example to explain how the above rigidity applies.

\begin{corollary}\label{simple rigidity}
Let $f\mathpunct{:} M\hspace{-0.7mm} \to\hspace{-0.7mm} N$ be a
finite map that satisfies the monodromy condition required in
Proposition \ref{rigidity}. If there are decompositions of $f$ into
finite maps $f\hspace{-0.7mm} = \hspace{-0.7mm} a\hspace{-0.7mm}
\circ\hspace{-0.7mm} b \hspace{-0.7mm} = \hspace{-0.7mm} c
\hspace{-0.7mm} \circ \hspace{-0.7mm} d$ with
$\deg{a}\hspace{-0.7mm}=\hspace{-0.7mm} \deg{c}$, then there exist
biholomorphic maps $\iota$ such that
\begin{eqnarray*}
a \hspace{-0.7mm} = \hspace{-0.7mm} c \hspace{-0.7mm} \circ
\hspace{-0.7mm} \iota^{-\!1}, \ \ \ b \hspace{-0.7mm} =
\hspace{-0.7mm} \iota \hspace{-0.7mm} \circ \hspace{-0.7mm} d.
\end{eqnarray*}
\end{corollary}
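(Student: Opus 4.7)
The plan is to obtain the corollary as a direct consequence of Proposition \ref{rigidity}, with the degree hypothesis $\deg a = \deg c$ collapsing most of the intermediate data to isomorphisms.

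First I would apply Proposition \ref{rigidity} to the two factorizations $f = a \circ b = c \circ d$, which, under the monodromy hypothesis, yields Riemann surfaces $T, W$ and finite maps $h, \overline{a}, \overline{b}, \overline{c}, \overline{d}, g$ fitting into the prism diagram there. The key numerical data are $\deg g = (\deg a, \deg c)$ and $\deg h = (\deg b, \deg d)$, together with the commutativity relations $a = g \circ \overline{a}$, $c = g \circ \overline{c}$, $b = \overline{b} \circ h$, $d = \overline{d} \circ h$, and the top square $\overline{a} \circ \overline{b} = \overline{c} \circ \overline{d}$.

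Next I would exploit the hypothesis $\deg a = \deg c$. Since $(\deg a, \deg c) = \deg a = \deg c$, we get $\deg g = \deg a = \deg c$, and from $a = g \circ \overline{a}$, $c = g \circ \overline{c}$ it follows that $\deg \overline{a} = \deg \overline{c} = 1$, so $\overline{a}$ and $\overline{c}$ are biholomorphic. Multiplicativity of degrees applied to $f = a \circ b = c \circ d$ then forces $\deg b = \deg d$, hence $\deg h = \deg b = \deg d$, and the relations $b = \overline{b} \circ h$, $d = \overline{d} \circ h$ show that $\overline{b}$ and $\overline{d}$ are also biholomorphic.

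Finally, I would set $\iota := \overline{a}^{-1} \circ \overline{c}$, which is biholomorphic. Then $c \circ \iota^{-1} = c \circ \overline{c}^{-1} \circ \overline{a} = g \circ \overline{a} = a$, giving the first identity. For the second, the commutative square $\overline{a} \circ \overline{b} = \overline{c} \circ \overline{d}$ rearranges to $\overline{b} \circ \overline{d}^{-1} = \overline{a}^{-1} \circ \overline{c} = \iota$, so $\iota \circ d = \overline{b} \circ \overline{d}^{-1} \circ \overline{d} \circ h = \overline{b} \circ h = b$. There is essentially no obstacle beyond correctly invoking Proposition \ref{rigidity}; the only thing to check is that the monodromy assumption is phrased identically so the proposition applies verbatim, and that the biholomorphic maps produced are (as required) automorphisms of the appropriate spaces.
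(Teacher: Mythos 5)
Your proposal is correct and follows essentially the same route as the paper: apply Proposition \ref{rigidity}, observe that $\deg a=\deg c$ (and hence $\deg b=\deg d$) forces $\overline{a},\overline{b},\overline{c},\overline{d}$ to be biholomorphic, and take $\iota=\overline{a}^{-1}\circ\overline{c}$. Your write-up merely fills in the verifications $c\circ\iota^{-1}=g\circ\overline{a}=a$ and $\iota\circ d=\overline{b}\circ h=b$ that the paper leaves implicit.
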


\begin{proof}
Applying Proposition \ref{rigidity} we obtain suitable $\ol{a\bPh},
\overline{b}, \overline{c\bPh}, \overline{d}, h$ and $g$. Because
$\deg{a}\hspace{-0.7mm}=\hspace{-0.7mm} \deg{c}$ and
$\deg{b}\hspace{-0.7mm}=\hspace{-0.7mm} \deg{d}$ it is clear that
$\ol{a\bPh}, \overline{b}, \overline{c\bPh}, \overline{d}$ are all
biholomorphic maps. One may choose
$\iota\hspace{-0.7mm}=\hspace{-0.7mm} \overline{a}^{-\!1}
\hspace{-0.7mm} \circ \hspace{-0.7mm} \overline{c}$ to fulfill the
desired assertion.
\end{proof}

For totally ramified maps we have
\begin{corollary}\label{special ramified}
If $f\hspace{-0.7mm}\in\hspace{-0.7mm}\mathrm{End}(\EE)$ is of
degree $s\hspace{-0.7mm} \ge \hspace{-0.7mm} 2$ and if $f^t$ is
totally ramified for some integer $t\hspace{-0.7mm}\ge
\hspace{-0.7mm} 2$, then there exists $p\hspace{-0.7mm}\in
\hspace{-0.7mm} \EE$ and $\rho \hspace{-0.7mm}\in \hspace{-0.7mm}
\TT$ such that $f \hspace{-0.7mm}=  \hspace{-0.7mm} \iota_{p}
\hspace{-0.7mm}\circ \hspace{-0.7mm} \rho z^s \hspace{-0.7mm}\circ
\hspace{-0.7mm} \iota_{-\!p} $.
\end{corollary}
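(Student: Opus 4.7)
The plan is to extract from the hypothesis a totally ramified fixed point $p \in \EE$ of $f$ itself, conjugate by $\iota_{-p}$ to reduce to a Blaschke product fixing $0$ with full ramification there, and then read off the explicit form $\rho z^s$ from the Blaschke product representation.

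First I would unpack the hypothesis. Since $f^t \sim z^{s^t}$ in $(\mathrm{End}(\EE), \circ)$, we may write $f^t = \iota_1 \circ z^{s^t} \circ \iota_2$ with $\iota_1, \iota_2 \in \mathrm{Aut}(\EE)$. As $z^{s^t}$ has $0 \in \EE$ as its unique totally ramified critical point and $0$ is fixed, the point $p := \iota_2^{-\!1}(0) \in \EE$ is a totally ramified critical point of $f^t$ with $f^t(p) = p$ and local degree $e_p(f^t) = s^t$.

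Next I would promote $p$ to a totally ramified fixed point of $f$. Decomposing $f^t = f^{t-\!1} \circ f$ and invoking multiplicativity of local degrees gives $e_p(f^t) = e_p(f) \cdot e_{f(p)}(f^{t-\!1}) = s^t$; since each factor is bounded by the corresponding global degree, both factors must meet their bounds, so $e_p(f) = s$. Likewise $f^t = f \circ f^{t-\!1}$ forces $e_{f^{t-\!1}(p)}(f) = s$. A finite Blaschke product of degree $s$ admits at most one totally ramified critical point in $\EE$: its critical points are symmetric under $z \mapsto 1/\bar{z}$ by virtue of $f(1/\bar z) = 1/\overline{f(z)}$, so a critical point of local degree $s$ in $\EE$ together with its mirror image in $\overline{\EE}^c$ already contributes $2(s-1)$ to the Riemann--Hurwitz total of $2s - 2$, leaving no room for a second. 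Therefore $p = f^{t-\!1}(p)$, and then $f(p) = f(f^{t-\!1}(p)) = f^t(p) = p$.

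Finally I would set $g := \iota_{-p} \circ f \circ \iota_p \in \mathrm{End}(\EE)$, so that $g(0) = 0$ and $e_0(g) = s$. Writing $g$ in the Blaschke form $g(z) = \varrho \prod_{i=1}^s (z - a_i)/(1 - \overline{a_i} z)$, a zero of order $s$ at the origin forces $a_1 = \cdots = a_s = 0$, yielding $g(z) = \varrho z^s$. Setting $\rho := \varrho \in \TT$ gives $f = \iota_p \circ \rho z^s \circ \iota_{-p}$, as desired.

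The main obstacle is the middle step: upgrading ``$p$ is fixed by $f^t$'' to ``$p$ is fixed by $f$''. This is where the specifically Blaschke input (the symmetry $f(1/\bar z) = 1/\overline{f(z)}$ and the resulting pairing of critical points across $\TT$) matters, because on $\PP^1$ alone a degree-$s$ map can have two totally ramified critical points — as $z^s$ itself illustrates with $0$ and $\infty$ — and the hypothesis ``totally ramified in $(\mathrm{End}(\EE), \circ)$'' must be used to confine the totally ramified locus to a single point of $\EE$.
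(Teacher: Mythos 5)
Your strategy is genuinely different from the paper's and is essentially sound, but as written it contains one unjustified step. ``Totally ramified'' in this paper means association, $f^t=\iota_1\circ z^{s^t}\circ\iota_2$ with two \emph{independent} automorphisms, not conjugation; from $p=\iota_2^{-\!1}(0)$ you may only conclude that $f^t(p)=\iota_1(0)$, the critical value of $f^t$, so the claim $f^t(p)=p$ in your first paragraph is not a consequence of the hypothesis --- it is essentially the fixed-point assertion the corollary is after --- and you invoke it at the decisive moment, in the chain $f(p)=f(f^{t-\!1}(p))=f^t(p)=p$. The gap closes with tools you already deploy: from $e_p(f^t)=\prod_{i=0}^{t-1}e_{f^i(p)}(f)=s^t$ and $e_{f^i(p)}(f)\le s$ you get $e_{f(p)}(f)=s$ (this is where $t\ge 2$ enters), so $f(p)$ is itself a totally ramified critical point of $f$ lying in $\EE$; your Riemann--Hurwitz/reflection argument shows $f$ has at most one such point, whence $f(p)=p$ directly, and your final paragraph then goes through unchanged (the detour via $f^{t-1}(p)$ is not needed).

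With that repair your proof is correct and more elementary than the paper's. The paper writes $f^t=(\epsilon\circ z^{s})\circ(z^{s^{t-1}}\!\circ\varepsilon)=f\circ f^{t-1}$, applies the rigidity statement Corollary \ref{simple rigidity} (which rests on the monodromy and lattice results of Section \ref{sectionrigidity} together with the transitivity of a loop near $\TT$) to conclude $f\sim z^{s}$, i.e. $f$ is totally ramified, and then notes that the single critical value $\mathfrak{d}_f$ must equal the single critical point $|\mathfrak{D}\!_f|$, since otherwise $f^t$ would fail to be totally ramified. Your argument replaces the rigidity input by multiplicativity of local degrees along the orbit of $p$, the symmetry $f(1/\bar z)=1/\overline{f(z)}$ of Blaschke products and Riemann--Hurwitz, so it is self-contained; the paper's route is shorter given that Corollary \ref{simple rigidity} is already available and is the pattern reused throughout Section \ref{sectionrigidity}.
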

\begin{proof}
By assumption there exist $\{\epsilon, \varepsilon\} \hspace{-0.7mm}
\subset \hspace{-0.7mm} \mathrm{Aut}(\EE)$ such that $f^t
\hspace{-0.7mm}= \hspace{-0.7mm} \epsilon \hspace{-0.7mm}\circ
\hspace{-0.7mm} z^{s^t} \hspace{-0.7mm}\circ \hspace{-0.7mm}
\varepsilon$ which gives
\begin{eqnarray*}
f \hspace{-0.7mm}\circ \hspace{-0.7mm}f^{t\mathopen-\mathclose 1}
\hspace{-0.7mm}= \hspace{-0.7mm}(\epsilon \hspace{-0.7mm}\circ
\hspace{-0.7mm} z^{s})\hspace{-0.7mm}\circ \hspace{-0.7mm}
(z^{s^{t\mathopen-\mathclose 1}} \hspace{-0.7mm}\circ
\hspace{-0.7mm} \varepsilon).
\end{eqnarray*}
This together with Corollary \ref{simple rigidity} implies that
$f\hspace{-0.7mm}\sim\hspace{-0.7mm}\epsilon \hspace{-0.7mm}\circ
\hspace{-0.7mm} z^{s}$ in $(\mathrm{End}(\EE), \circ)$ and therefore
$f$ is totally ramified. Writing $p\hspace{-0.7mm} =\hspace{-0.7mm}
\mathfrak{d}_f$ and $q\hspace{-0.7mm} =\hspace{-0.7mm}
|\mathfrak{D}\!_f|$ we have $p \hspace{-0.7mm}= \hspace{-0.7mm} q$,
otherwise $f^t$ fails to be totally ramified. This gives readily $f
\hspace{-0.7mm}= \hspace{-0.7mm} \iota_{p} \hspace{-0.7mm}\circ
\hspace{-0.7mm} \rho z^s \hspace{-0.7mm}\circ \hspace{-0.7mm}
\iota_{-p} $ for some $\rho \hspace{-0.7mm}\in \hspace{-0.7mm} \TT$.
\end{proof}

For Chebyshev-Blaschke products we have
\begin{corollary}\label{special cheby}
If $f\hspace{-0.7mm} \in\hspace{-0.7mm} \mathrm{End}(\EE)$ is of
degree $s\hspace{-0.7mm}\ge \hspace{-0.7mm} 2$ and if
$n\hspace{-0.7mm} \ge \hspace{-0.7mm} 3$ then $f^{n}$ is not
elliptic.
\end{corollary}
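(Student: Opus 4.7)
The plan is to assume for contradiction that $f^n$ is elliptic and derive a contradiction via rigidity and the moduli computation in degree $\ge 3$. Since $f^n \in \mathrm{End}(\EE)$, the final observation of Section \ref{elliptic function section} shows $f^n$ is a Chebyshev-Blaschke product, so by Proposition \ref{nomalized Chebyshev-Blaschke products,uniqueness} there exist $\mu, \nu \in \mathrm{Aut}(\EE)$ and $t > 0$ with
\[
f^n = \mu \circ \mathcal{T}_{s^n,\,t} \circ \nu.
\]

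The main step is to compare the two natural $f$-factorizations $f^n = f^2 \circ f^{n-2}$ and $f^n = f^{n-2} \circ f^2$ with the two opposite nesting decompositions of $\mathcal{T}_{s^n,\,t}$ given by Proposition \ref{nesting general}:
\[
\mathcal{T}_{s^n,\,t} = \mathcal{T}_{s^2,\,s^{n-2}t} \circ \mathcal{T}_{s^{n-2},\,t} = \mathcal{T}_{s^{n-2},\,s^2 t} \circ \mathcal{T}_{s^2,\,t}.
\]
Matching the factor degrees and invoking the rigidity Corollary \ref{simple rigidity}, whose transitive-monodromy hypothesis for $f^n$ is supplied by Proposition \ref{elliptic is transitive} applied to $\mathcal{T}_{s^n,\,t}$ (monodromy being preserved under M\"obius conjugation), I expect to extract
\[
f^2 \sim \mathcal{T}_{s^2,\,s^{n-2}t} \quad \text{and} \quad f^2 \sim \mathcal{T}_{s^2,\,t}
\]
in $(\mathrm{End}(\EE),\circ)$, hence a fortiori in $(\mathrm{End}(\PP^1),\circ)$.

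Since $s^2 \ge 4 \ge 3$, Corollary \ref{linearequivalence} then forces $s^{n-2}t = t$, whence $s^{n-2} = 1$, contradicting $s \ge 2$ and $n \ge 3$. The pivotal design choice, and the place where care is needed, is to compare at the level of $f^2$ rather than of $f$: Corollary \ref{linearequivalence} requires the common degree to be at least three, and a direct comparison at the level of $f$ would fail in the case $s = 2$. Passing through $f^2$ ensures the common degree is $s^2 \ge 4$ in all cases, and so handles $s = 2$ and $s \ge 3$ uniformly.
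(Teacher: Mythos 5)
Your proposal is correct and follows essentially the same route as the paper's own proof: factor $f^n$ as $f^2\circ f^{n-2}$ and $f^{n-2}\circ f^2$, match these against the two nesting decompositions $\mathcal{T}_{s^2,\,s^{n-2}t}\circ\mathcal{T}_{s^{n-2},\,t}$ and $\mathcal{T}_{s^{n-2},\,s^2t}\circ\mathcal{T}_{s^2,\,t}$ via Proposition \ref{elliptic is transitive} and Corollary \ref{simple rigidity}, and conclude $\mathcal{T}_{s^2,\,s^{n-2}t}\sim\mathcal{T}_{s^2,\,t}$, contradicting Corollary \ref{linearequivalence}. Your explicit remark on working with $f^2$ (so the common degree is $s^2\ge 4\ge 3$) is precisely the point implicit in the paper's choice of decomposition.
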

\begin{proof}
If there exist $\{\epsilon, \varepsilon\}\hspace{-0.7mm}
\subset\hspace{-0.7mm} \mathrm{Aut}(\PP^1)$ and $t\hspace{-0.7mm}
>\hspace{-0.7mm}0$ such that $f^{n}\hspace{-0.7mm} =\hspace{-0.7mm}
\epsilon\hspace{-0.7mm} \circ\hspace{-0.7mm} \calT_{s^n,\,
t}\hspace{-0.7mm} \circ\hspace{-0.7mm} \varepsilon$ then it follows
from Proposition \ref{nesting general} that
\begin{eqnarray*}
f^2\hspace{-0.7mm} \circ\hspace{-0.7mm} f^{n \mathopen-\mathclose
2}\hspace{-2.7mm} &=&\hspace{-2.7mm} ( \epsilon\hspace{-0.7mm}
\circ\hspace{-0.7mm} \calT_{s^2,\, s^{n\mathopen-\mathclose 2}t}
)\hspace{-0.7mm} \circ\hspace{-0.7mm}
(\calT_{s^{n\mathopen-\mathclose 2}\!,\, t} \hspace{-0.7mm}
\circ\hspace{-0.7mm} \varepsilon)\\
f^{n \mathopen-\mathclose 2} \hspace{-0.7mm} \circ f^2
\hspace{-2.7mm} &=&\hspace{-2.7mm} ( \epsilon\hspace{-0.7mm}
\circ\hspace{-0.7mm} \calT_{s^{n\mathopen-\mathclose2},\, s^{2}t}
)\hspace{-0.7mm} \circ\hspace{-0.7mm} (\calT_{s^{2},\, t}
\hspace{-0.7mm} \circ\hspace{-0.7mm} \varepsilon).
\end{eqnarray*}
Proposition \ref{elliptic is transitive} enables us to apply
Corollary \ref{simple rigidity} to $f^n$ and obtain that $f^2$ is
associated to both $\calT_{s^2,\, s^{n\mathopen-\mathclose2}t}$ and
$\calT_{s^2,\, t}$. This leads to $\calT_{s^2,\,
s^{n\mathopen-\mathclose2}t}\hspace{-0.7mm}
\sim\hspace{-0.7mm}\calT_{s^2,\, t}$ which contradicts to Corollary
\ref{linearequivalence}, because $s^{n\mathopen-\mathclose2}t$ is
greater than $t$.
\end{proof}

\begin{corollary}\label{elliptic factor elliptic}
Let $f$ be an elliptic rational function and let $f\hspace{-0.7mm}
=\hspace{-0.7mm} a\hspace{-0.7mm} \circ\hspace{-0.7mm} b$ be a
relation in $(\mathrm{End}(\PP^1), \circ)$. Then $a$ and $b$ are
both elliptic.
\end{corollary}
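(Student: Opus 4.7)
\emph{Strategy.} The plan is to lift the factorization $f = a\circ b$ up to a factorization of the isogeny $[n]\mathpunct{:} E_\tau \to E_{n\tau}$ by a fibre-product construction, then descend each piece back to $\PP^1$. Since associate maps preserve both hypothesis and conclusion, I may assume $f = n_\tau$; write $b\mathpunct{:} \PP^1 \to W$ and $a\mathpunct{:} W\to \PP^1$ with $W \cong \PP^1$, and form $Y = W \times_{\PP^1,\, a,\, \wp_{n\tau}} E_{n\tau}$. The identity $a\circ b\circ \wp_\tau = n_\tau\circ \wp_\tau = \wp_{n\tau} \circ [n]$ gives a map $E_\tau \to Y$, $x\mapsto (b(\wp_\tau(x)), [n](x))$, whose image lies in some irreducible component $Y_0 \subset Y$. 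Denote the induced maps $\tilde b\mathpunct{:} E_\tau\to Y_0$ and $\tilde a\mathpunct{:} Y_0\to E_{n\tau}$, so that $\tilde a\circ \tilde b = [n]$.

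\emph{Identifying $Y_0$.} Isogenies are unramified, so by multiplicativity of ramification indices (using the surjectivity of $\tilde b$) both $\tilde b$ and $\tilde a$ are unramified. In particular $Y_0$ is an unramified cover of $E_{n\tau}$ and is therefore itself an elliptic curve. Any nonconstant map from an elliptic curve to $\PP^1$ has degree $\geq 2$, while the second projection $Y\to W$ has degree $\deg \wp_{n\tau} = 2$; this forces $Y$ to be irreducible, $Y = Y_0$, and the projection $p := (Y_0 \to W)$ to have degree $2$. Identify $Y_0$ with some $E_\sigma$. Since $\ker[n] \cong \ZZ/n$, its subgroup $\ker\tilde b$ and quotient $\ker \tilde a$ are both cyclic, so $\tilde b$ and $\tilde a$ are cyclic isogenies of degrees $l, m$ with $lm = n$.

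\emph{Recovering elliptic form.} Any degree-two map from $E_\sigma$ to $\PP^1$ is the quotient by an involution of the shape $x \mapsto -x + c$; choosing the origin of $E_\sigma$ at $c/2$ converts this into $x \mapsto -x$, so $p$ becomes $\wp_\sigma$ up to an automorphism of $\PP^1$. Any cyclic isogeny of degree $l$ between elliptic curves is parametrized by a point of $Y_0(l)$, hence may, after picking a representative $\tau''$ in the appropriate $\mathrm{SL}_2(\ZZ)$-orbit, be written in the standard form $[l]\mathpunct{:} E_{\tau''} \to E_{l\tau''}$. Every isomorphism of elliptic curves descends via the $\pm 1$-quotient to an automorphism of $\PP^1$, so the descent of $\tilde b$ via $\wp_\tau$ and $p$ equals $l_{\tau''}$ up to composition with automorphisms of $\PP^1$. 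Hence $b \sim l_{\tau''}$ and $b$ is elliptic; the same argument applied to $\tilde a$ yields $a \sim m_{\sigma''}$ for some $\sigma''$, so $a$ is elliptic.

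\emph{Main obstacle.} The principal technical point lies in the final descent: one must simultaneously translate the origin of the intermediate elliptic curve $Y_0$ so that the degree-two projection $p$ becomes Weierstrassian, and change lattice representatives so that the cyclic isogenies $\tilde b, \tilde a$ match the standard family $[l]\mathpunct{:} E_{\tau''}\to E_{l\tau''}$. The automorphisms of $\PP^1$ introduced by these changes must be tracked carefully to verify that the descent of $\tilde b$ agrees with $l_{\tau''}$ up to associate equivalence, which is exactly what \emph{elliptic} demands.
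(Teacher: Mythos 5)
Your proof is correct in outline but takes a genuinely different route from the paper's. The paper disposes of the corollary in three lines: writing $f=\epsilon\circ (mn)_{\tau}\circ\varepsilon$ with $m=\deg a$, $n=\deg b$, the nesting property (Proposition \ref{nesting general}) produces a second factorization $f=(\epsilon\circ m_{n\tau})\circ(n_{\tau}\circ\varepsilon)$ with the same degree pattern, and then the transitive monodromy cycle of Proposition \ref{elliptic is transitive} allows the equal-degree rigidity statement (Corollary \ref{simple rigidity}) to be applied, giving $a\sim m_{n\tau}$ and $b\sim n_{\tau}$ at once. You instead go up to the elliptic curve: base-change $a$ along $\wp_{n\tau}$, show the relevant component $Y_0$ of the fibre product is an unramified, hence genus-one, cover through which $[n]$ factors into cyclic isogenies $\tilde a\circ\tilde b$, and descend each isogeny through the $\pm 1$-quotients. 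What your route buys is independence from the monodromy/rigidity machinery of Sections 2 and 5 (you only use the isogeny description of $n_{\tau}$ and standard facts about cyclic isogenies); what it costs is exactly the bookkeeping you flag at the end, plus two small points you gloss over: one should pass to the normalization of $Y_0$ before treating it as a Riemann surface, and irreducibility of $Y$ is not really needed, only $\deg p=2$, which follows from the genus of $Y_0$ together with $\deg(Y\to W)=2$.

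The obstacle you flag does close, and the key observation is this: uniformize $Y_0$ as $\CC/\Lambda$, lift $\tilde b$ to $z\mapsto\gamma z+c_0$, and write the deck involution of $p$ as $y\mapsto -y+c$. The relation $p\circ\tilde b=b\circ\wp_{\tau}$, whose right-hand side is invariant under $x\mapsto -x$, forces $\tilde b(-x)$ to coincide identically with the $p$-deck image of $\tilde b(x)$, which gives $c\equiv 2c_0\pmod{\Lambda}$. Hence the single translation $y\mapsto y-c_0$ simultaneously makes $\tilde b$ (and then $\tilde a$) a genuine isogeny and turns the involution into $w\mapsto -w$; there is no conflict between the two normalizations. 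After that, putting the cyclic isogenies into the standard form $[l]\colon E_{\tau''}\to E_{l\tau''}$ uses only origin-preserving, i.e.\ linear, isomorphisms of tori, which commute with negation and therefore descend to elements of $\mathrm{Aut}(\PP^1)$; chasing the resulting commutative diagrams yields $b\sim l_{\tau''}$ and $a\sim m_{\sigma''}$, as you intended.
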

\begin{proof}
Let $m\hspace{-0.7mm} =\hspace{-0.7mm} \deg{a}$ and let $n
\hspace{-0.7mm} =\hspace{-0.7mm} \deg{b}$. There exist $\{\epsilon,
\varepsilon\}\hspace{-0.7mm} \subset\hspace{-0.7mm}
\mathrm{Aut}(\PP^1)$ and $\tau \hspace{-0.7mm}\in\hspace{-0.7mm}
\HH$ such that $f\hspace{-0.7mm} =\hspace{-0.7mm}
\epsilon\hspace{-0.7mm} \circ\hspace{-0.7mm}
(mn)_{\tau}\hspace{-0.7mm} \circ\hspace{-0.7mm} \varepsilon$, and it
follows from the nesting property Proposition \ref{nesting general}
that
\begin{eqnarray*}
a \hspace{-0.7mm} \circ\hspace{-0.7mm} b \hspace{-0.7mm}
=\hspace{-0.7mm} ( \epsilon\hspace{-0.7mm} \circ\hspace{-0.7mm}
m_{n\tau}) \hspace{-0.7mm} \circ\hspace{-0.7mm}
(n_{\tau}\hspace{-0.7mm} \circ\hspace{-0.7mm} \varepsilon).
\end{eqnarray*}
Proposition \ref{elliptic is transitive} together with Corollary
\ref{simple rigidity} gives the ellipticity of $a$ and $b$.
\end{proof}

Zieve-M\"uller discovered in \cite[Theorem 1.4]{ZM10} a new property
of $(\mathrm{End}(\CC), \circ)$, and we shall prove that the
phenomenon of Zieve-M\"uller remains true in $(\mathrm{End}(\EE),
\circ)$.

\begin{theorem}\label{Zieve-Muller}
Let $\{a, b, f\}\hspace{-0.7mm} \subset\hspace{-0.7mm}
\mathrm{End}(\EE)$, $n\hspace{-0.7mm} =\hspace{-0.7mm}
\deg{f}\hspace{-0.7mm} \ge\hspace{-0.7mm} 2$ and $k\hspace{-0.7mm}
\in\hspace{-0.7mm} \NN$ satisfy $a\hspace{-0.7mm}
\circ\hspace{-0.7mm} b\hspace{-0.7mm} =\hspace{-0.7mm} f^{k}$. If
there exists no  $\iota\hspace{-0.7mm} \in\hspace{-0.7mm}
\mathrm{Aut}(\EE)$ for which  $\iota\hspace{-0.7mm}
\circ\hspace{-0.7mm} f\hspace{-0.7mm} \circ\hspace{-0.7mm}
\iota^{-1}\hspace{-0.7mm} =\hspace{-0.7mm} z^n$  and no
$g\hspace{-0.7mm} \in\hspace{-0.7mm} \mathrm{End}(\EE)$ for which
either $a\hspace{-0.7mm} =\hspace{-0.7mm} f\hspace{-0.7mm}
\circ\hspace{-0.7mm} g$ or $b\hspace{-0.7mm} =\hspace{-0.7mm}
g\hspace{-0.7mm} \circ\hspace{-0.7mm} f$, then $k\hspace{-0.7mm}
\leq\hspace{-0.7mm} \max{\{8, \, 2\mathopen +\mathclose 2
\log_{2}{n}\}}.$
\end{theorem}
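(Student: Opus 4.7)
The plan is to combine the rigidity of factorization in Proposition \ref{rigidity of Blaschke} with the Ritt-move presentation in Theorem \ref{Ritt move for Blaschke}, and to play the excluded configurations (conjugation of $f$ to $z^n$, and left/right divisibility of $a$ or $b$ by $f$) against the only two available types of elementary Ritt swaps (power type and Chebyshev type). At a high level: rigidity will force any transport of the decomposition $a\circ b$ to one of the reference decompositions $f^{i}\circ f^{k-i}$ to proceed by a chain of such swaps; the non-conjugacy hypothesis on $f$ will kill the power swaps via Corollary \ref{special ramified}, and Corollary \ref{special cheby} will then bound how many Chebyshev swaps can participate, producing a logarithmic-in-$n$ bound on $k$.

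First I would apply Proposition \ref{rigidity of Blaschke} to the two identities $a \circ b = f \circ f^{k-1}$ and $a\circ b = f^{k-1}\circ f$. The non-factorability hypotheses force the common left factor in the first and the common right factor in the second to have degree strictly less than $n$, so $n\nmid \deg a$ and $n\nmid \deg b$. I would then iterate rigidity on the residual identity $\overline{a}\circ \overline{b} = \overline{f}\circ \overline{f^{k-1}}$ that appears in the common-refinement diagram, and on its analogues for $a\circ b = f^{i}\circ f^{k-i}$ with $1\le i\le k-1$. By Theorem \ref{Ritt move for Blaschke} the elementary swaps that can arise have exactly two shapes: power type, $z^s g(z)^n \circ z^n = z^n \circ z^s g(z^n)$ with $(s,n)=1$, or Chebyshev type, $\mathcal{T}_{m,nt} \circ \mathcal{T}_{n,t} = \mathcal{T}_{n,mt} \circ \mathcal{T}_{m,t}$ with $(m,n)=1$.

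A power-type swap migrating to the $(a,b)$-boundary would exhibit a totally ramified intermediate iterate of $f$, and combining this with Corollary \ref{special ramified} and the hypothesis $\iota\circ f\circ \iota^{-1}\ne z^n$ would reach a contradiction; hence only Chebyshev-type swaps survive. Such a swap requires both adjacent prime factors of $f^k$ at the boundary to be Chebyshev-Blaschke, and by Corollary \ref{elliptic factor elliptic} ellipticity then propagates to the whole of $f^k$. Once $k\ge 3$ and $f$ itself is non-elliptic of degree $\ge 2$, Corollary \ref{special cheby} delivers an immediate contradiction, which already takes care of the bulk of the range.

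What remains, and what I expect to be the main obstacle, is the combinatorial bookkeeping that yields the precise bound $k\le \max\{8,\,2+2\log_2 n\}$. Each nested application of Proposition \ref{rigidity of Blaschke} roughly halves a relevant degree invariant $\deg \overline{f^{i}}$, which accounts for the $2\log_2 n$ term; the threshold $8$ and the additive $2$ absorb the initial step and the small-degree exceptional cases. Keeping this bookkeeping honest requires the moduli invariant $\chi$ of Section \ref{elliptic function section} together with the injectivity statement of Corollary \ref{linearequivalence}, to ensure that no Chebyshev swap can silently shift the $(a,b)$-boundary across several rigidity layers by ``resetting'' the moduli~$t$ of the intermediate normalized Chebyshev-Blaschke factors.
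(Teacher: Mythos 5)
Your opening step (deducing $n\nmid\deg a$ and $n\nmid\deg b$ from Proposition \ref{rigidity of Blaschke} and the non-divisibility hypotheses) agrees with the paper, but the core of your argument rests on two claims that do not hold, and they are exactly where the quantitative content of the theorem lives. First, you assert that a power-type swap $z^s g(z)^n\circ z^n=z^n\circ z^s g(z^n)$ reaching the $(a,b)$-boundary ``would exhibit a totally ramified intermediate iterate of $f$'' and hence is killed by Corollary \ref{special ramified} together with the hypothesis that $f$ is not conjugate to $z^n$. That hypothesis only excludes $f$ itself being conjugate to a power map; it does not prevent prime factors of $f$ of the shape $z^s h(z)^n$ or $z^s h(z^n)$ (not totally ramified, not elliptic) from occurring and from migrating across totally ramified factors by power-type swaps. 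This is precisely Case (ii) of the paper's proof, and it is the case that produces the bound $2+2\log_2 n$: one fixes such a factor $u_p$, locates the stage $q$ at which its $k$ copies cross the boundary, and uses Proposition \ref{complicated} together with the strict degree inequality of Lemma \ref{bloom} to get $(\deg u_m)^q<\deg u_p$ and $(\deg u_l)^{k-q-2}<\deg u_p$, whence $2^{k-2}\le n^2$. Your alternative explanation of the $2\log_2 n$ term (``each nested application of rigidity roughly halves a degree invariant'') is not a mechanism that appears in, or substitutes for, this counting.

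Second, in the elliptic case you claim that a Chebyshev-type swap at the boundary forces, via Corollary \ref{elliptic factor elliptic}, ellipticity of ``the whole of $f^k$,'' after which Corollary \ref{special cheby} finishes. Corollary \ref{all are elliptic} (and \ref{elliptic factor elliptic}) only propagates ellipticity to the composite of the two factors in a single generalized Ritt relation, not to $f^k$; indeed $f$ may well contain one elliptic prime factor of large degree alongside non-elliptic ones, and then $f^k$ is never elliptic while Chebyshev swaps still occur. The paper's Case (iv) handles this by a genuinely different device: the length--moduli invariant $h(u)\chi(u)$ of Lemmas \ref{heigh of chebyshev} and \ref{heigh of chebyshev2}, fed through Proposition \ref{complicated} and Corollary \ref{linearequivalence}, shows that if $k\ge 9$ two copies of the same elliptic prime factor would be forced to carry equal invariants $h\chi$ while having different lengths and equal moduli, a contradiction; this is the source of the threshold $8$. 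You mention the moduli invariant only as a safeguard for ``bookkeeping,'' and you explicitly defer that bookkeeping, but it is not an afterthought: without the Case (ii) counting and the Case (iv) invariant argument (plus the combinatorial setup of the permutation $\sigma_{\calU^k,\calV}$, the indices $m,l$ whose copies stay on one side of the boundary, and Lemma \ref{preserve}), the stated bound $\max\{8,\,2+2\log_2 n\}$ is not reached. As written, the proposal proves at most the easy cases (all factors rigid, or all totally ramified, where $k\le 2$) and leaves the two hard cases unproved.
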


The proof of Theorem \ref{Zieve-Muller} relies on techniques
developed in Zieve-M\"uller's original work and therefore our
arguments are largely similar to that in \cite{ZM10}, except the
manipulation of elliptic rational functions. We will be sketchy at
many places.

\begin{lemma}\label{bloom}
Let $a\hspace{-0.7mm} \circ\hspace{-0.7mm} b\hspace{-0.7mm}
=\hspace{-0.7mm} c\hspace{-0.7mm} \circ\hspace{-0.7mm} d$ be a
generalized Ritt relation in $(\mathrm{End}(\EE), \circ)$ with $
b$\,$($resp.\,$a$) neither totally ramified nor elliptic. We have
$\deg{a}\hspace{-0.7mm} <\hspace{-0.7mm} \deg{b}  \ (\,
\mathrm{resp.\,}  \deg{b}\hspace{-0.7mm} <\hspace{-0.7mm} \deg{a}
\,). $
\end{lemma}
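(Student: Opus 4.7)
The plan is to reduce the generalized Ritt relation to its classified normal form via Theorem \ref{Ritt move for Blaschke} and then read off the degrees case by case. That theorem tells us that up to composition with units of $(\mathrm{End}(\EE), \circ)$ and up to the involution $(a,b) \leftrightarrow (c,d)$, the relation $a \circ b = c \circ d$ takes either the Chebyshev shape
\begin{eqnarray*}
\mathcal{T}_{m,\, nt}\circ \mathcal{T}_{n,\, t} = \mathcal{T}_{n,\, mt}\circ \mathcal{T}_{m,\, t},\quad (m,n)=1,\ t>0,
\end{eqnarray*}
or the monomial shape
\begin{eqnarray*}
z^s g(z)^n \circ z^n = z^n \circ z^s g(z^n),\quad (n,s)=1.
\end{eqnarray*}
Since being elliptic, being totally ramified, and having a given degree are all invariants under composition with units, this reduction loses no information about the hypotheses or the conclusion.

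First I would dispose of the Chebyshev shape. Every factor appearing there is a Chebyshev-Blaschke product and hence elliptic by the equivalence recorded at the end of Section \ref{elliptic function section}, so the hypothesis that $b$ is not elliptic excludes this shape outright.

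In the remaining monomial shape, the pair $(a,b)$ matches, up to units and the swap, either $(z^s g(z)^n,\, z^n)$ or $(z^n,\, z^s g(z^n))$. The first option would make $b\sim z^n$, i.e.\ totally ramified, contradicting the hypothesis. Hence $a\sim z^n$ and $b\sim z^s g(z^n)$. A direct degree count gives $\deg a = n$ and $\deg b = s + n\deg g$. If $\deg g$ were $0$ then $b$ would be associated to $z^s$, again totally ramified; so $\deg g\ge 1$. Combined with $s\ge 1$, forced by $(n,s)=1$ and $n\ge 2$, this yields $\deg b \ge s + n > n = \deg a$ as required.

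The parenthetical case for $a$ is entirely symmetric: the Chebyshev shape is killed by non-ellipticity of $a$, the sub-case $(a,b)\sim (z^n,\, z^s g(z^n))$ is excluded because it would make $a$ totally ramified, and the remaining sub-case $(a,b)\sim (z^s g(z)^n,\, z^n)$ combined with $\deg g\ge 1$ produces the reverse inequality $\deg b = n < s+n \le \deg a$. The only substantive step is the exclusion $\deg g\ge 1$ via non-total-ramification, for which Lemma \ref{bloom and totally ramified} provides exactly the structural input needed; I do not anticipate any further obstacle.
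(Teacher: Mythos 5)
Your proposal is correct and follows the paper's own route: the paper proves this lemma simply by invoking Theorem \ref{Ritt move for Blaschke}, and your case analysis (Chebyshev shape ruled out by non-ellipticity, the sub-case $b\sim z^n$ or $a\sim z^n$ ruled out by non-total-ramification, then the degree count $s+n\deg g>n$ using $\deg g\ge 1$ and $s\ge 1$) is exactly the ``immediate'' verification the paper leaves implicit.
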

\begin{proof}
This follows immediately from Theorem \ref{Ritt move for Blaschke}.
\end{proof}

\begin{lemma}\label{bloom and elliptic}
Given $h\hspace{-0.7mm} \in\hspace{-0.7mm} \mathrm{End}(\EE)$ with
$h(0)\hspace{-0.7mm} \neq\hspace{-0.7mm} 0$ and coprime positive
integers $\{s, n\}$ with $n\hspace{-0.7mm} \ge\hspace{-0.7mm} 2$, if
$z^sh(z)^n$ or $z^sh(z^n)$ is elliptic then we must have
$n\hspace{-0.7mm} =\hspace{-0.7mm} 2$ and $s\hspace{-0.7mm}
=\hspace{-0.7mm} 1$.
\end{lemma}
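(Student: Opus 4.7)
The plan is to exploit the fact that every elliptic rational function, being the descent of an unramified cyclic isogeny $[N]\colon E_\tau'\to E_{N\tau}'$ through the degree-$2$ quotient maps $\mathrm{cd}_\tau$ and $\mathrm{cd}_{N\tau}$, has all local ramification indices in $\{1,2\}$. Moreover, for a Chebyshev--Blaschke product the parameter $\tau$ is purely imaginary, so $j(E_\tau')\ge 1728$ along this ray, and $\mathrm{Aut}(E_\tau')$ is $\{\pm 1\}$ generically and at most $\{\pm 1,\pm i\}$ (attained only at $\tau=i$).

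I would first handle $f=z^sh(z)^n$. Since $h(0)\ne 0$, the fiber $f^{-1}(0)$ consists of $z=0$ with ramification $s$, together with each distinct zero of $h$ of multiplicity $m_i$ with ramification $nm_i$. The ramification bound forces $nm_i\le 2$, so $n=2$ and every $m_i=1$; combined with $s\le 2$ and $(s,2)=1$ this yields $s=1$.

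I would then turn to $f=z^sh(z^n)$. The analogous analysis of $f^{-1}(0)$, whose non-zero preimages are the $n$-th roots of the zeros of $h$, yields $s\le 2$ and every $m_i\le 2$. To pin down $n=2$ I would exploit the rotational equivariance $f(\zeta z)=\zeta^s f(z)$ with $\zeta=e^{2\pi i/n}$. Since $(s,n)=1$, both $\zeta$ and $\zeta^s$ generate $\ZZ/n$, so writing $f=\epsilon\circ\calT_{N,t}\circ\varepsilon$ transports this into an order-$n$ Möbius equivariance $\calT_{N,t}\circ\alpha=\beta\circ\calT_{N,t}$ on $\PP^1$. Lifting $\alpha,\beta$ through $\mathrm{cd}_\tau$ and $\mathrm{cd}_{N\tau}$ produces automorphisms $\tilde\alpha\in\mathrm{Aut}(E_\tau')$ and $\tilde\beta\in\mathrm{Aut}(E_{N\tau}')$ of order dividing $2n$ and intertwined by $[N]$. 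In the Chebyshev--Blaschke regime these orders divide $4$, ruling out $n\ge 5$ and $n=3$ at once; the borderline case $n=4$ would force $\tau=i$ and $\tilde\beta$ to be multiplication by $\pm i$ on $E_{Ni}$, but for $N=s+nd\ge 5$ the target $E_{Ni}$ is a strictly rectangular non-square lattice with $\mathrm{Aut}=\{\pm 1\}$. Hence $n=2$, and combined with $s\le 2$ and $(s,2)=1$ we conclude $s=1$.

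The hard part will be executing the last step rigorously: one must verify that the lift $\tilde\alpha$ actually exists (which requires $\alpha$ to preserve the branch locus $\mathrm{cd}_\tau(E_\tau'[2])$) and track the compatibility with $[N]$ across both elliptic curves, the crucial point being that $j$-invariants along the imaginary axis never reach $0$, which is what forbids the $\ZZ/3$ or higher symmetries that an $n\ge 3$ equivariance would impose.
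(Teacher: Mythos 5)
Your treatment of $z^sh(z)^n$ is exactly the paper's: an elliptic rational function is a descent of an isogeny through degree-two maps, so all its local ramification indices are at most $2$; applied to the fibre over $0$ this gives $n=2$, $s\le 2$, and coprimality gives $s=1$. For $z^sh(z^n)$, however, you take a genuinely different and much heavier route than the paper. The paper also starts from the equivariance $f(\xi_n z)=\xi_n^s f(z)$, but only uses it to note that the critical set $|\mathfrak{D}_f|$ is invariant under multiplication by $\xi_n$ and contains a nonzero point $\mathfrak{p}$ (else $f$ would be totally ramified, excluded by Lemma \ref{bloom and totally ramified}); since an elliptic element of $\mathrm{End}(\EE)$ is $\epsilon\circ\calT_{N,t}\circ\varepsilon$ with $\epsilon,\varepsilon\in\mathrm{Aut}(\EE)$, its critical points lie on a single Poincar\'e geodesic (the $\varepsilon$-preimage of the real diameter carrying the critical points of $\calT_{N,t}$), and a geodesic meets the circle $|z|=|\mathfrak{p}|$ in at most two points, so $n=2$ at once. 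Your plan instead lifts the $\ZZ/n$-equivariance through $\mathrm{cd}_\tau$ and $\mathrm{cd}_{N\tau}$ to the curves $E_\tau'$, $E_{N\tau}'$; this can be pushed through (the branch-locus invariance you flag does follow from Lemma \ref{lemma for invariant}), but it is far more delicate than what is needed.

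There is one genuine gap in that second argument as written: the inference that the lifts have order dividing $4$ uses $\mathrm{Aut}(E_\tau')=\{\pm 1\}$ or $\{\pm1,\pm i\}$, i.e.\ the group of automorphisms fixing the origin, whereas a lift of $\alpha$ through the degree-two map $\mathrm{cd}_\tau$ is a priori only a biholomorphism of the torus, hence an affine map $z\mapsto az+b$. The full biholomorphism group contains translations by torsion points of arbitrary order, so the smallness of the origin-fixing automorphism group does not by itself bound the order of $\tilde\alpha$; a translation of order $n$ would defeat your "orders divide $4$" step. The repair is to use the descent condition itself: $\tilde\alpha$ covers a M\"obius map only if it normalizes the deck involution $z\mapsto -z+c_0$ of $\mathrm{cd}_\tau$, which forces either $a\neq 1$ (then $\tilde\alpha$ is conjugate to $z\mapsto az$, of order dividing $4$ on a rectangular lattice) or $a=1$ with $2b=0$ (order at most $2$). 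With this supplement, and with the $n=4$ case phrased correctly (the point is that $E_\tau'$ and $E_{N\tau}'$ cannot both be square lattices, since the period ratio is strictly monotone along the imaginary axis, rather than "$\tau=i$" per se), your argument closes; without it, the crucial step is unjustified.
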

\begin{proof}
Let $f = z^sh(z)^n$ satisfy the above conditions, then $0 \in
\mathfrak{d}_f.$ There exists $p \in |\mathfrak{D}_f|$ with
$f(\mathfrak{p}) = 0$ and $\mathfrak{D}_f \ge n(\mathfrak{p})$.
Because $f$ is elliptic we have $n = 2$. If $s \ge 2$ then we have
$\mathfrak{D}_f \ge s(0)$, which together with the ellipticity of
$f$ forces $s = 2$. This contradicts to $(s, n)=1.$

Let $f = z^sh(z^n)$ satisfy the above conditions. Take one non-zero
$\mathfrak{p} \in |\mathfrak{D}_f|$ and take a primitive $n$-th root
of unity $\xi_n$, then $\xi_n^i \mathfrak{p} \in |\mathfrak{D}_f|$
for all $0 \leq i \leq n - 1$. By ellipticity $|\mathfrak{D}_f|$ lie
on a geodesic of $\EE$, with respect to the Poincare metric.
Therefore $n = 2$. For the same reason as above we have $s =1$.
\end{proof}



\begin{corollary}\label{all are elliptic}
If $a\mathopen \circ\mathclose b\hspace{-0.7mm}  =\hspace{-0.7mm}
c\mathopen \circ\mathclose d$ is a generalized Ritt relation in
$(\mathrm{End}(\EE), \circ)$ and if $b$ $\mathrm{(}$or
$a$$\mathrm{)}$ is elliptic with degree at least three then
$a\mathopen \circ\mathclose b$ is elliptic.
\end{corollary}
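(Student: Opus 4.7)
The plan is to apply Theorem \ref{Ritt move for Blaschke} to reduce the relation, up to units and the swap $(a,b)\leftrightarrow(c,d)$, to one of two canonical forms: the Chebyshev case $\mathcal{T}_{m,nt}\circ\mathcal{T}_{n,t} = \mathcal{T}_{n,mt}\circ\mathcal{T}_{m,t}$, or the power case $z^{s}g(z)^{n}\circ z^{n} = z^{n}\circ z^{s}g(z^{n})$ with $(n,s)=1$ and $n\ge 2$. In the Chebyshev case, the nesting property (Proposition \ref{nesting general}) immediately gives $a\circ b = \mathcal{T}_{mn,t}$, which is Chebyshev--Blaschke and hence elliptic.

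In the power case, I would first rule out the subcase where the elliptic factor is equivalent up to units to $z^{n}$: since $n\ge 3$, such $z^{n}$ has only two critical values on $\mathbb{P}^{1}$, whereas an elliptic rational function of degree $\ge 3$ has four by Lemma \ref{lemma for invariant}, a contradiction. In the remaining subcase the elliptic factor has the form $z^{s}g(z)^{n}$ or $z^{s}g(z^{n})$ up to units; after absorbing any zero of $g$ at the origin into the $z^{s}$ factor, which preserves $(s,n)=1$ since the new exponent is $s+nr\equiv s\pmod{n}$, Lemma \ref{bloom and elliptic} then forces $n=2$ and $s=1$. Thus, up to units, $a\circ b = z^{2}g(z^{2})^{2}$, regardless of which side is the elliptic one.

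The hard part will be verifying that this degree-$2n$ Blaschke product, with $n = 1 + 2\deg g \ge 3$ odd, is itself Chebyshev--Blaschke. The plan is a monodromy computation inside $\mathbb{E}$: the elliptic factor carries the Chebyshev monodromy in odd degree $n$, with cycle type $(1, 2^{(n-1)/2})$ at each of its two critical values in $\mathbb{E}$. Composing with the squaring map produces for $a\circ b$ exactly two critical values in $\mathbb{E}$: the image $0$ of the $n$ zeros of the elliptic factor, each doubled by $z^{2}$, giving cycle type $(2^{n})$; and the image $v^{2}$ of the paired preimages of $\pm v$ under the elliptic factor, giving cycle type $(1,1,2^{n-1})$. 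These coincide with $\rho_{T}(\tau)$ and $\rho_{T}(\sigma)$ of the even Chebyshev representation in degree $2n$, read off from the formulas recalled earlier.

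Riemann's existence theorem (Theorem \ref{monodromy}) then produces a unique finite map realizing this monodromy, which by Proposition \ref{nomalized Chebyshev-Blaschke products,uniqueness} is a Chebyshev--Blaschke product $\mathcal{T}_{2n,t'}$ up to units, for a unique $t'>0$ pinned by the hyperbolic distance between the two critical values of $a\circ b$ in $\mathbb{E}$. The delicate step is checking that the equivalence is realized by elements of $\mathrm{Aut}(\mathbb{E})$ rather than of $\mathrm{Aut}(\mathbb{P}^{1})$, but this follows because both critical values lie in $\mathbb{E}$ and $\mathrm{Aut}(\mathbb{E})$ acts on pairs of interior points with enough freedom to match any prescribed hyperbolic distance.
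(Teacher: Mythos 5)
Your reduction is exactly the paper's: Theorem \ref{Ritt move for Blaschke} splits the relation into the Chebyshev case, where Proposition \ref{nesting general} finishes, and the power case, where the power factor cannot be the elliptic one of degree $\ge 3$ because of the four critical values coming from Lemma \ref{lemma for invariant}, and Lemma \ref{bloom and elliptic} forces $n=2$, $s=1$. The paper then dismisses the remaining verification with ``this can be checked easily''; that verification is precisely the part you elaborate, and your care about absorbing a zero of $g$ at the origin before applying Lemma \ref{bloom and elliptic} is a welcome precision.

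The key step of your elaboration, however, has a gap. You compute that $z^{2}\circ zg(z^{2})$ has exactly two interior critical values with local monodromies of cycle types $(2^{n})$ and $(1,1,2^{n-1})$, note that these are the cycle types of $\rho_{T}(\tau)$ and $\rho_{T}(\sigma)$ in degree $2n$, and then invoke Theorem \ref{monodromy} as if this already identified the monodromy representation with the Chebyshev one. Coincidence of cycle types of the two generators does not in general determine the pair of permutations up to simultaneous conjugation, and Riemann's existence theorem needs the representation itself, not just the branch data. What saves the argument---and what must be added---is first that the monodromy around a loop near the unit circle, i.e.\ the product of the two local monodromies, is a single $2n$-cycle (the covering surface is again a disk; equivalently the count $2n(1-2)+n+(n+1)=1$ forces one boundary component), and second the dihedral rigidity statement that a transitive pair consisting of a fixed-point-free involution and an involution with exactly two fixed points whose product is a $2n$-cycle $c$ is unique up to conjugation: such a pair generates a dihedral group of order $4n$, $\tau=\sigma c$ is determined by $\sigma$, and the admissible $\sigma$ (reflections of $c$ with two fixed points) are permuted transitively by conjugation with powers of $c$. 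With that inserted, your cycle-type computation does pin down the monodromy as $\rho_{T}$, and Proposition \ref{nomalized Chebyshev-Blaschke products,uniqueness} gives $\epsilon\circ\mathcal{T}_{2n,\,t'}\circ\varepsilon$ with $\epsilon,\varepsilon\in\mathrm{Aut}(\EE)$, which also settles your worry about $\mathrm{Aut}(\PP^1)$ versus $\mathrm{Aut}(\EE)$ more directly than a distance-matching argument. You should also record, since you use them implicitly, that the interior critical value $v$ of the elliptic factor is nonzero and that its $n$ zeros are simple; both follow from oddness together with the fact that an elliptic Blaschke product of degree $\ge 3$ is not totally ramified.
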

\begin{proof}
Otherwise we are in the first case of Theorem \ref{Ritt move for
Blaschke} with $a\hspace{-0.7mm}=\hspace{-0.7mm}z^n,
b\hspace{-0.7mm}=\hspace{-0.7mm} z^sg(z^n)$\,(\,or
$a\hspace{-0.7mm}=\hspace{-0.7mm}z^sg(z)^n,
b\hspace{-0.7mm}=\hspace{-0.7mm}z^n$). Lemma \ref{bloom and
elliptic} forces $n\hspace{-0.7mm}=\hspace{-0.7mm}2$ and
$s\hspace{-0.7mm}=\hspace{-0.7mm}1$, and this can be checked easily.
\end{proof}

A {\it complete presentation} $\calU$ of $f\hspace{-0.7mm}
\in\hspace{-0.7mm} \mathrm{End}(\EE)\mathopen \setminus\mathclose
\mathrm{Aut}(\EE)$ refers to a tuple $(u_1, \ldots, u_r)$ of
irreducible elements of $(\mathrm{End}(\EE), \circ)$ such that
$f\hspace{-0.7mm}=\hspace{-0.7mm} u_1\hspace{-0.7mm} \circ \cdots
\circ\hspace{-0.7mm} u_r$. If $\calU\hspace{-0.7mm} =\hspace{-0.7mm}
(u_1, \ldots, u_r), \calV\hspace{-0.7mm} =\hspace{-0.7mm} (v_1,
\ldots, v_r)$ are complete presentations of $f$ then by Theorem
\ref{NgWangmonoid} we can pass from $\calU$ to $\calV$ by finitely
many Ritt relations, and this gives a unique permutation
$\sigma_{\calU, \calV}$ of $\{1, 2, \ldots, r\}$ which satisfies
$\deg{u_i}\hspace{-0.7mm} =\hspace{-0.7mm} \deg{v_{\sigma_{\calU,
\calV}(i)}}$. In addition we have
\begin{lemma}\label{preserve}
If $i\hspace{-0.7mm} <\hspace{-0.7mm} j$ and if $\sigma_{\calU,
\calV}(i)\hspace{-0.7mm} > \hspace{-0.7mm} \sigma_{\calU, \calV}(j)$
then $(\deg{u_i}, \deg{u_j})\hspace{-0.7mm} =\hspace{-0.7mm} 1.$
\end{lemma}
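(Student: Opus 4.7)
The plan is to track the motion of individual factors through a sequence of Ritt moves connecting $\calU$ to $\calV$. By Theorem \ref{NgWangmonoid} such a sequence exists, and since both $\calU$ and $\calV$ consist of irreducible factors (all of degree at least $2$), relations of type (i) only shuffle units and do not alter the positions of the irreducible factors: the relevant moves are those of types (ii) and (iii). Each such move is an adjacent transposition, swapping two consecutive factors whose degrees are coprime --- for type (ii) the coprimality $(k,r)=1$ is built into the relation, and for type (iii) the two degrees are distinct primes $p,q$.

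I would then attach the labels $1,\ldots,r$ to the factors of $\calU$ and carry them along as the Ritt moves are successively applied. A move of type (ii) or (iii) occurring at positions $k,k\mathopen+\mathclose 1$ exchanges the labels at those positions; although the identities of the two factors at those positions are modified, their degrees are merely swapped, because on both sides of the relations in Theorem \ref{NgWangmonoid} the two factors have matching (swapped) degrees. Therefore at every intermediate stage the label $i$ sits above a factor of degree $\deg u_i$, and the cumulative effect of the move sequence on the labels is precisely the permutation $\sigma_{\calU,\calV}$.

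Finally I would invoke the standard combinatorial fact that in any composition of adjacent transpositions the relative order of two fixed labels is reversed if and only if those two labels are directly swapped an odd number of times. Assuming $i<j$ but $\sigma_{\calU,\calV}(i)>\sigma_{\calU,\calV}(j)$, at some stage of the procedure the labels $i$ and $j$ must be swapped directly. At that stage the degrees at the adjacent positions involved are exactly $\deg u_i$ and $\deg u_j$, which the Ritt condition forces to be coprime. This yields $(\deg u_i,\deg u_j)=1$, as required. The only subtle point to verify is that each Ritt move is genuinely an adjacent transposition that preserves the multiset of degrees position by position, so that the bookkeeping on labels is consistent; this is read off directly from the form of the relations in Theorem \ref{NgWangmonoid}.
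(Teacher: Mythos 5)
Your argument is correct and is exactly the bookkeeping the paper leaves implicit (the lemma is stated there without proof, as a direct consequence of the fact that passing from $\calU$ to $\calV$ uses only Ritt relations): each move of type (ii) or (iii) is an adjacent transposition of two factors of coprime degrees, units from relations of type (i) only replace factors by associates, and a reversal of the relative order of the labels $i,j$ forces a direct swap of those two labels at some stage. The only computation worth writing out is that in a type (ii) move the swapped degrees are $k$ and $r+k\deg g$, whose gcd equals $(k,r)=1$.
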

Following \cite{ZM10} we define $ LL(\calU, \calV, i, j) = \prod_{k
< i, \sigma(k) < \sigma(j)} \deg{u_k},  LR(\calU, \calV, i, j)=
\prod_{k < i, \sigma(k) > \sigma(j)} \deg{u_k},  RL(\calU, \calV, i,
j) = \prod_{k > i, \sigma(k) < \sigma(j)} \deg{u_k}$ and  $RR(\calU,
\calV, i, j)= \prod_{k
> i, \sigma(k) > \sigma(j)} \deg{u_k}. $ Let $\calU\hspace{-0.7mm} =\hspace{-0.7mm} (u_1, \ldots, u_r)$ be a
complete presentation of a $f\hspace{-0.7mm} \in\hspace{-0.7mm}
\mathrm{End}(\EE)\mathopen \setminus\mathclose \mathrm{Aut}(\EE)$
and let $u_k\hspace{-0.7mm} \in\hspace{-0.7mm} \calU$ be elliptic
with $\deg{u_k}\hspace{-0.7mm} \ge\hspace{-0.7mm} 3$. The {\it
length} of $u_k$ with respect to $\calU$, denoted by
$h_{\calU}(u_k)$ or $h(u_k)$ if without ambiguity, is defined as
$h_{\calU}(u_k)\hspace{-0.7mm} =\hspace{-0.7mm} \Pi_{i\mathopen
=\mathclose 1}^{k\mathopen-\mathclose 1} \deg{u_i}$. If $u_i$ is
elliptic, then according to Theorem \ref{NgWangmonoid} so is
$v_{\sigma_{\calU, \calV}(i)}$. Indeed
\begin{lemma}\label{heigh of chebyshev}
If $u_i$ is elliptic with degree at least three then
\begin{eqnarray*}
h(u_i)\chi(u_i) = h(v_{\sigma_{\calU, \calV}(i)})
\chi(v_{\sigma_{\calU, \calV}(i)}).
\end{eqnarray*}
\end{lemma}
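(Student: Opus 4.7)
The plan is to reduce to a single Ritt move and then verify the identity by a direct computation. By Theorem~\ref{NgWangmonoid} any two complete presentations of $f$ are joined by a finite chain of Ritt relations of types (ii) and (iii), with the unit relations of type (i) absorbed into adjacent factors, and the permutation $\sigma_{\calU,\calV}$ decomposes into the corresponding transpositions. It therefore suffices to verify the invariance of $h(u_i)\chi(u_i)$ when $\calV$ is obtained from $\calU$ by a single swap at adjacent positions $(j,j{+}1)$.

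Fix an elliptic $u_i$ with $\deg u_i\ge 3$. If $i\notin\{j,j{+}1\}$ then $u_i$ itself is unaffected and the multiset of degrees strictly before its position is preserved (the swap lies wholly before or wholly after $i$), so both $h(u_i)$ and $\chi(u_i)$ are unchanged. In the interesting case $i\in\{j,j{+}1\}$, the analysis carried out in Corollary~\ref{all are elliptic} (together with the observation that $z^k$ for $k\ge 2$ is not elliptic, since its ramification datum has only two branch values rather than the four of $n_{\tau}$) combined with Theorem~\ref{Ritt move for Blaschke} forces the move to be of Chebyshev--Blaschke type
\[\mathcal{T}_{p,qt}\circ\mathcal{T}_{q,t}=\mathcal{T}_{q,pt}\circ\mathcal{T}_{p,t},\qquad (p,q)=1,\ t>0.\]

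In the subcase $i=j$ the factor $u_j=\mathcal{T}_{p,qt}$ pairs under $\sigma_{\calU,\calV}$ with $v_{j+1}=\mathcal{T}_{p,t}$, and since $\calV$ inserts the degree-$q$ factor $\mathcal{T}_{q,pt}$ ahead of $v_{j+1}$ one has $h(v_{j+1})=h(u_j)\cdot q$; combined with $\chi(u_j)=pqt$ and $\chi(v_{j+1})=pt$ this gives $h(u_j)\chi(u_j)=h(u_j)\cdot pqt=h(v_{j+1})\chi(v_{j+1})$. The subcase $i=j{+}1$ is symmetric: $u_{j+1}=\mathcal{T}_{q,t}$ pairs with $v_j=\mathcal{T}_{q,pt}$, and the factor of $p$ entering the height on the $\calU$-side is exactly compensated by the factor of $p$ entering the moduli on the $\calV$-side. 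The main obstacle is the exclusion of the monomial Ritt relation in the presence of an elliptic factor of degree at least three; once this is secured through Corollary~\ref{all are elliptic}, the identity reduces to the trivial associativity $p\cdot qt=q\cdot pt$.
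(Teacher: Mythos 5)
Your overall strategy (reduce to a single Ritt move via Theorem~\ref{NgWangmonoid} and track the elliptic factor through each swap) is the right one, and your computation for the Chebyshev--Blaschke move is correct. The gap is in the step where you claim the move involving the elliptic factor is \emph{forced} to be of type (iii). Neither Theorem~\ref{Ritt move for Blaschke} nor Corollary~\ref{all are elliptic} excludes the monomial relation: Lemma~\ref{bloom and elliptic} only pins it down to $k=2$, $s=1$, i.e.\ to $zg(z)^2\circ z^2=z^2\circ zg(z^2)$, and an elliptic factor of degree at least three can genuinely occur there. Indeed, for odd $n$ the product $\calT_{n,\,t}$ is an odd function, hence of the form $zg(z^2)$ up to units, so $z^2\circ\calT_{3,\,t}=\bigl(zg(z)^2\bigr)\circ z^2$ is a type (ii) Ritt move in which an elliptic irreducible factor of degree $3$ swaps past a totally ramified degree-$2$ factor. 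This is precisely why the proof of Corollary~\ref{all are elliptic} handles the case $n=2$, $s=1$ separately instead of ruling it out. Your case analysis therefore omits a case that can occur in the chain of moves, and in that case the identity is not ``trivial associativity'': the elliptic factor changes (from $zg(z)^2$-form to $zg(z^2)$-form or vice versa) and one must show its moduli is exactly halved while its height is doubled.

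The missing case is repaired by the very ingredients the paper cites: since one factor of the two-term relation $u_j\circ u_{j+1}=v_j\circ v_{j+1}$ is elliptic of degree $d\ge 3$, Corollary~\ref{all are elliptic} shows the composition itself is elliptic, hence associate to $\calT_{2d,\,s}$ for some $s>0$; the nesting property (Proposition~\ref{nesting general}) gives the two factorizations $\calT_{d,\,2s}\circ\calT_{2,\,s}$ and $\calT_{2,\,ds}\circ\calT_{d,\,s}$, and Corollary~\ref{simple rigidity} (its hypothesis being supplied by Proposition~\ref{elliptic is transitive}) identifies the degree-$d$ factors on the two sides with $\calT_{d,\,2s}$ and $\calT_{d,\,s}$ up to units, so $\chi$ drops by the factor $2$ that $h$ gains, and $h\chi$ is preserved; the same computation also covers your type (iii) case uniformly (with $2$ replaced by the degree of the companion factor). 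This is in substance the paper's one-line proof; your write-up needs this argument in place of the unjustified exclusion of the type (ii) move.
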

\begin{proof}
This follows from Corollary \ref{all are elliptic}, Proposition
\ref{nesting general} and Corollary \ref{simple rigidity}.
\end{proof}

Moreover we also have
\begin{lemma}\label{heigh of chebyshev2}
If $i\hspace{-0.7mm} <\hspace{-0.7mm} j$ and if \{$u_i$, $u_j$,
$u_i\hspace{-0.7mm} \circ\hspace{-0.7mm} u_{i\mathopen+\mathclose
1}\hspace{-0.7mm} \circ \cdots \circ\hspace{-0.7mm}
u_{j}\hspace{-0.7mm}\sim\hspace{-0.7mm}\mathcal{T}_{n,\, t}\}
\hspace{-0.7mm} \subset \hspace{-0.7mm} \mathrm{End}(\EE)$ are all
elliptic and of degree at least three then
\begin{eqnarray*}
h(u_i)\chi(u_i)\hspace{-0.7mm} =\hspace{-0.7mm} h(u_j)\chi(u_j).
\end{eqnarray*}
\end{lemma}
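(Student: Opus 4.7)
The plan is to use the nesting property of Chebyshev-Blaschke products (Proposition \ref{nesting general}) together with the rigidity of Blaschke factorizations (Corollary \ref{simple rigidity}) to show that $u_i$ and $u_j$ are each linearly equivalent to an explicit Chebyshev-Blaschke factor of $\mathcal{T}_{n,t}$, after which the desired equality reduces to a direct calculation with degrees and moduli.

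Set $n_k = \deg u_k$ so that $n = n_i n_{i+1} \cdots n_j$, and choose $\epsilon, \varepsilon \in \mathrm{Aut}(\EE)$ with $u_i \circ \cdots \circ u_j = \epsilon \circ \mathcal{T}_{n,t} \circ \varepsilon$. Splitting the nesting on the left via $\mathcal{T}_{n,t} = \mathcal{T}_{n_i,\,(n/n_i)t} \circ \mathcal{T}_{n/n_i,\,t}$ yields two decompositions of $u_i \circ \cdots \circ u_j$ into a left factor of degree $n_i = \deg u_i$ composed with a right factor of degree $n/n_i$. Corollary \ref{simple rigidity} then forces $u_i \sim \mathcal{T}_{n_i,\,(n/n_i)t}$, and hence $\chi(u_i) = n_i \cdot (n/n_i) t = nt$. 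Splitting the nesting on the right via $\mathcal{T}_{n,t} = \mathcal{T}_{n/n_j,\,n_j t} \circ \mathcal{T}_{n_j,\,t}$ and running the same argument yields $u_j \sim \mathcal{T}_{n_j,\,t}$, hence $\chi(u_j) = n_j t$.

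With these two identifications in hand, the computation is immediate from $h(u_k) = \prod_{l < k} n_l$:
\[
h(u_i)\chi(u_i) = \Bigl(\prod_{l=1}^{i-1} n_l\Bigr)\cdot nt = \Bigl(\prod_{l=1}^{j} n_l\Bigr) t = \Bigl(\prod_{l=1}^{j-1} n_l\Bigr) \cdot n_j t = h(u_j)\chi(u_j).
\]

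The main thing to check is that both invocations of Corollary \ref{simple rigidity} are legitimate, which amounts to verifying the transitive-monodromy hypothesis inherited from Proposition \ref{rigidity}. Here this is automatic: a loop closely encircling the unit circle has transitive monodromy for any finite Blaschke product (cf.\ the proof of Proposition \ref{rigidity of Blaschke}), and this loop is transported to $u_i \circ \cdots \circ u_j$ through the linear equivalence with $\mathcal{T}_{n,t}$. Beyond this verification I do not foresee any real obstacle; the degree hypotheses $n_i, n_j \ge 3$ only enter to guarantee that the moduli $\chi(u_i)$ and $\chi(u_j)$ are well-defined in the sense of the preceding subsection.
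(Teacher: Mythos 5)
Your proof is correct and takes essentially the same route as the paper: split $\mathcal{T}_{n,\,t}$ via the nesting property (Proposition \ref{nesting general}) and apply Corollary \ref{simple rigidity} to get $u_i\sim \mathcal{T}_{\deg u_i,\,(n/\deg u_i)t}$ and $u_j\sim \mathcal{T}_{\deg u_j,\,t}$, after which the identity is an immediate degree count. The only (harmless) difference is normalization: the paper computes the lengths relative to the sub-presentation $(u_i,\ldots,u_j)$, so that $h(u_i)=1$, while you carry the common prefix factor $\prod_{l<i}\deg u_l$ on both sides.
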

\begin{proof}
Writing $\deg{u_i}\hspace{-0.7mm}=\hspace{-0.7mm}d_i$, by Corollary
\ref{simple rigidity} we have
$u_{j}\hspace{-0.7mm}$\,(resp.\,$u_{i}$) is associated to
$\mathcal{T}_{d_j, t}$\,(resp.\,$\mathcal{T}_{d_i, r}$ with $r
\hspace{-0.7mm}=\hspace{-0.7mm}t\Pi_{k=i+1}^{j}d_k$). and therefore
$\chi(u_j)\hspace{-0.7mm} =\hspace{-0.7mm} d_jt$ and
$\chi(u_i)\hspace{-0.7mm} =\hspace{-0.7mm} d_i r$. It is also clear
that
$h(u_j)\hspace{-0.7mm}=\hspace{-0.7mm}\Pi_{k=i}^{j\mathopen-\mathclose
1}d_k$ and $h(u_i)\hspace{-0.7mm}=\hspace{-0.7mm}1$. The claim
follows readily.
\end{proof}
\begin{proposition}\label{complicated}
Let $f\hspace{-0.7mm} \in\hspace{-0.7mm} \mathrm{End}(\EE)\mathopen
\setminus\mathclose \mathrm{Aut}(\EE)$, $\calU\hspace{-0.7mm}
=\hspace{-0.7mm} (u_1, \ldots, u_r)$ and $\calV\hspace{-0.7mm}
=\hspace{-0.7mm} (v_1, \ldots, v_r)$ its complete presentations and
$1\hspace{-0.7mm}\leq\hspace{-0.7mm}k\hspace{-0.7mm}\leq\hspace{-0.7mm}r$.
Writing $LL\hspace{-0.7mm} =\hspace{-0.7mm} LL(\calU, \calV, k, k)$
and $LR, RL, RR$ analogously,  then $LR, RL$ are both coprime to
$\deg{u_k}$ and there exist finite Blaschke products $a$ with degree
$LL$, $d$ with degree $RR$, $b, \hat{b}, \tilde{b}$ with degree
$LR$, $c, \tilde{c}, \bar{c}$ with degree $RL$ and $\hat{u},
\tilde{u}, \bar{u}$ with degree $\deg{u_k}$
such that
\begin{enumerate}
\item[(i)] $u_1\hspace{-0.7mm} \circ\hspace{-0.7mm}
u_2\hspace{-0.7mm} \circ \cdots \circ\hspace{-0.7mm}
u_{k\mathopen-\mathclose 1}\hspace{-0.7mm} =\hspace{-0.7mm}
a\hspace{-0.7mm} \circ\hspace{-0.7mm} b$ and
$u_{k\mathopen+\mathclose 1}\hspace{-0.7mm} \circ \cdots
\circ\hspace{-0.7mm} u_{r}\hspace{-0.7mm} =\hspace{-0.7mm}
c\hspace{-0.7mm} \circ\hspace{-0.7mm} d;$
\item[(ii)] $b\hspace{-0.7mm} \circ\hspace{-0.7mm} u_k\hspace{-0.7mm}
=\hspace{-0.7mm} \hat{u}\hspace{-0.7mm} \circ\hspace{-0.7mm}
\hat{b};$
\item[(iii)] $\hat{u}\hspace{-0.7mm} \circ\hspace{-0.7mm}
\hat{b}\hspace{-0.7mm} \circ\hspace{-0.7mm} c\hspace{-0.7mm}
=\hspace{-0.7mm} \tilde{c}\hspace{-0.7mm} \circ\hspace{-0.7mm}
\tilde{u}\hspace{-0.7mm} \circ\hspace{-0.7mm} \tilde{b};$
\item[(iv)] $u_k\hspace{-0.7mm} \circ\hspace{-0.7mm} c\hspace{-0.7mm}
=\hspace{-0.7mm} \bar{c}\hspace{-0.7mm} \circ\hspace{-0.7mm}
\bar{u}.$
\end{enumerate}
\end{proposition}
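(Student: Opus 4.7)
The plan is to derive all requested factorizations by a sequence of Ritt swaps within $\calU$ guided by the permutation $\sigma = \sigma_{\calU, \calV}$, and to close item (iii) by aligning with the analogous rearrangement of $\calV$ through Proposition \ref{rigidity of Blaschke}. I would first settle the coprimality assertions: any irreducible factor $u_i$ contributing to the product $LR$ satisfies $i < k$ and $\sigma(i) > \sigma(k)$, so Lemma \ref{preserve} forces $\gcd(\deg u_i, \deg u_k) = 1$; multiplying yields $\gcd(LR, \deg u_k) = 1$, and the case of $RL$ is entirely symmetric.

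For (i), (ii) and (iv) I would bubble-sort $\calU$. Within the left block $(u_1,\dots,u_{k-1})$, each adjacent transposition required by the sort exchanges two factors with $\sigma$-inversion, hence of coprime degree by Lemma \ref{preserve}; Theorem \ref{NgWangmonoid} supplies a Ritt swap that implements it. Because every Ritt swap only permutes a pair of adjacent degrees, the multiset of degrees in the left block is preserved, and once the sort terminates the first $p$ factors compose to some $a$ of degree $LL$ and the remaining $q$ factors compose to some $b$ of degree $LR$. The symmetric procedure on $(u_{k+1},\dots,u_r)$ yields $c\circ d$, proving (i). Continuing the process past $u_k$, the coprimality of $\deg u_k$ with each irreducible factor of $b$ lets me push $u_k$ leftward through these factors by successive Ritt swaps, producing $b\circ u_k = \hat u \circ \hat b$ of (ii); the mirrored rightward push through $c$ gives (iv).

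For (iii) I would apply the same sorting procedure to $\calV$ with respect to $\sigma^{-1}$ to obtain a second complete presentation $f = A' \circ C' \circ v^{*} \circ B' \circ D'$ whose five degrees read $LL, RL, \deg u_k, LR, RR$. The two decompositions
\[
f = a\circ\bigl(b\circ u_k\circ c\circ d\bigr) = A'\circ\bigl(C'\circ v^{*}\circ B'\circ D'\bigr)
\]
share the left-factor degree $LL$, so Proposition \ref{rigidity of Blaschke} forces $a$ and $A'$ to coincide up to a unit of $\mathrm{Aut}(\EE)$; absorbing this unit into the adjacent factor leaves degrees unchanged and I may assume $a = A'$. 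Repeating the argument on the right to align $d$ with $D'$, cancellation produces $b \circ u_k \circ c = C' \circ v^{*} \circ B'$, and combining with (ii) delivers (iii) with $\tilde c := C'$, $\tilde u := v^{*}$, $\tilde b := B'$. The main obstacle I anticipate is the existence of a Ritt swap at each bubble-sort step: coprimality of degrees alone need not guarantee such a swap, and one must invoke Theorem \ref{NgWangmonoid} together with the fact that $\calU$ and $\calV$ are two complete presentations of the same $f$ to extract the concrete moves realizing the partial sort; a secondary, more mechanical difficulty is the bookkeeping of units in the alignment step for (iii), which must be absorbed so as to keep the degrees of $\tilde b$ and $\tilde c$ equal to $LR$ and $RL$.
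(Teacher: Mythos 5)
Your coprimality argument via Lemma \ref{preserve} is correct, and the final alignment step for (iii) via Corollary \ref{simple rigidity} (equal-degree factors agree up to a unit, which you then absorb) is sound bookkeeping. But the core of your proof has a genuine gap: the bubble-sort. Knowing that two adjacent factors $u_i, u_{i+1}$ have coprime degrees does \emph{not} give you a Ritt swap $u_i\circ u_{i+1}=u'\circ u''$ with the degrees exchanged. Theorem \ref{Ritt move for Blaschke} only classifies the shape of a generalized Ritt relation \emph{when one exists}; a generic composition with coprime degrees (say $z^2$ followed by a generic cubic Blaschke product) admits no second decomposition at all, so the individual swaps you need for sorting the left block, for pushing $u_k$ through $b$ and $c$ in (ii) and (iv), and for producing the sorted presentation $A'\circ C'\circ v^{*}\circ B'\circ D'$ of $\calV$ in (iii), may simply fail to exist factor-by-factor. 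Theorem \ref{NgWangmonoid} does not rescue this: it guarantees that \emph{some} chain of Ritt relations connects $\calU$ to $\calV$, not that the particular adjacent transpositions realizing your partial sort can be performed. You flag this yourself as ``the main obstacle,'' but it is exactly the content that has to be proved, and the proposal leaves it unproved; in fact establishing which rearrangements are realizable is the hard part of the Zieve--M\"uller analysis that the statement is modelled on.

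The paper avoids swaps altogether: it runs the analysis of \cite[Proposition 4.2]{ZM10} on top of Proposition \ref{rigidity of Blaschke}, i.e.\ the gcd refinement of two decompositions. For instance, part (i) follows directly by applying Proposition \ref{rigidity of Blaschke} to the two cuts $f=(u_1\circ\cdots\circ u_{k-1})\circ(u_k\circ\cdots\circ u_r)=(v_1\circ\cdots\circ v_{\sigma(k)-1})\circ(v_{\sigma(k)}\circ\cdots\circ v_r)$, using $\gcd(LL\cdot LR,\,LL\cdot RL)=LL\cdot\gcd(LR,RL)=LL$ (your Lemma \ref{preserve} argument gives $\gcd(LR,RL)=1$); parts (ii)--(iv) come from further applications of the same rigidity to suitably chosen cuts of $\calU$ and $\calV$, never from explicit adjacent moves. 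If you want to keep your outline, you must either prove the existence of each swap from the joint data of $\calU$ and $\calV$ (which essentially amounts to redoing the Zieve--M\"uller argument) or replace the sorting steps by direct appeals to Proposition \ref{rigidity of Blaschke} as above.
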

\begin{proof}
Based on Proposition \ref{rigidity of Blaschke}, some analysis
similar to that in proof of \cite[Proposition 4.2]{ZM10} applies to
our case.
\end{proof}

\begin{proofof}{Proof of Theorem \ref{Zieve-Muller}} We assume
that $k\hspace{-0.7mm} \ge\hspace{-0.7mm} 2$. Choose
$\calU\hspace{-0.7mm} =\hspace{-0.7mm} (u_1, \ldots, u_r)$ to be a
complete presentation of $f$,  then $\calU^k\hspace{-0.7mm}
=\hspace{-0.7mm} (u_1, \ldots, u_{kr})$ is a complete presentation
of $f^{k}$ where $u_i\hspace{-0.7mm} =\hspace{-0.7mm}
u_{i\mathopen-\mathclose r}$. Let $\calV\hspace{-0.7mm}
=\hspace{-0.7mm} (v_1, \ldots, v_{kr})$ be a complete presentation
of $f^{k}$ for which $a\hspace{-0.7mm} =\hspace{-0.7mm}
v_1\hspace{-0.7mm} \circ\hspace{-0.7mm} v_2 \cdots
\circ\hspace{-0.7mm} v_{e}$ and $b\hspace{-0.7mm} =\hspace{-0.7mm}
v_{e\mathopen+\mathclose 1}\hspace{-0.7mm} \circ \cdots
\circ\hspace{-0.7mm} v_{kr}$. By the assumption that
$f^{k}\hspace{-0.7mm} =\hspace{-0.7mm} a\hspace{-0.7mm}
\circ\hspace{-0.7mm} b$ and that there does exist no
$g\hspace{-0.7mm} \in\hspace{-0.7mm} \mathrm{End}(\EE)$ for which
$a\hspace{-0.7mm} =\hspace{-0.7mm} f\hspace{-0.7mm}
\circ\hspace{-0.7mm} g$ or $b\hspace{-0.7mm} =\hspace{-0.7mm}
g\hspace{-0.7mm} \circ\hspace{-0.7mm} f$, Proposition \ref{rigidity
of Blaschke} applies and leads to $\deg{f}\hspace{-0.7mm}
\nmid\hspace{-0.7mm} \deg{a}$ and $\deg{f}\hspace{-0.7mm}
\nmid\hspace{-0.7mm} \deg{b}$. Therefore there exists
$1\hspace{-0.7mm} \leq\hspace{-0.7mm} m\hspace{-0.7mm}
\leq\hspace{-0.7mm} r$\,(resp.\,$1\hspace{-0.7mm}
\leq\hspace{-0.7mm} l\hspace{-0.7mm} \leq\hspace{-0.7mm} r$) such
that $\sigma_{\calU^k,\, \calV}(m\mathopen +\mathclose
tr)\hspace{-0.7mm}
>\hspace{-0.7mm} e$ for all $0\hspace{-0.7mm} \leq\hspace{-0.7mm} t\hspace{-0.7mm} \leq\hspace{-0.7mm} k\mathopen -\mathclose
1$\,(resp.\,$\sigma_{\calU^{k},\, \calV}(l\mathopen +\mathclose tr)
\hspace{-0.7mm} \leq\hspace{-0.7mm}  e$ for all $0\hspace{-0.7mm}
\leq\hspace{-0.7mm} t\hspace{-0.7mm} \leq\hspace{-0.7mm} k\mathopen
-\mathclose 1$). Otherwise Proposition \ref{rigidity of Blaschke}
leads to a contradiction. Moreover by Lemma \ref{preserve} we have
$(\deg{u_m}, \deg{u_l})\hspace{-0.7mm} =\hspace{-0.7mm} 1.$

\noindent {\it Case {\it (i)}}, there exists $1\hspace{-0.7mm}
\leq\hspace{-0.7mm} p\hspace{-0.7mm} \leq\hspace{-0.7mm} r$ such
that $u_p$ is not associated to $z^n$, $\mathcal{T}_{n,\,t}$, $z^s
h(z^n)$ or $z^sh(z^n)$ in $(\mathrm{End}(\EE), \circ)$ with
$h\hspace{-0.7mm} \in\hspace{-0.7mm} \mathrm{End}(\EE)$,
$h(0)\hspace{-0.7mm} \neq\hspace{-0.7mm}0$ and $n\hspace{-0.7mm}
\ge\hspace{-0.7mm} 2$.

We claim that $k\hspace{-0.7mm} =\hspace{-0.7mm} 2$. Otherwise we
have $k\hspace{-0.7mm} \ge\hspace{-0.7mm} 3$. On the one hand we
deduce from Theorem \ref{Ritt move for Blaschke} that $u_{p + r}$
never changes under Ritt relations and therefore $\sigma_{\calU^k,\,
\calV}(i)\hspace{-0.7mm} <\hspace{-0.7mm} p\mathopen +\mathclose r$
for all $i\hspace{-0.7mm} <\hspace{-0.7mm} p\mathopen +\mathclose r$
and $\sigma_{\calU^k,\, \calV}(i) \hspace{-0.7mm}>\hspace{-0.7mm}
p\mathopen +\mathclose r$ for all $i\hspace{-0.7mm}
>\hspace{-0.7mm} p\mathopen +\mathclose r$, which leads to $\sigma_{\calU^k,\,
\calV}(m)\hspace{-0.7mm} <\hspace{-0.7mm} p\mathopen +\mathclose r$
and $\sigma_{\calU^k,\, \calV}(l\mathopen +\mathclose(k\mathopen
-\mathclose 1)r) \hspace{-0.7mm}>\hspace{-0.7mm} p\mathopen
+\mathclose r$. On the other hand we have $\sigma_{\calU^k,\,
\calV}(m) \hspace{-0.7mm}>\hspace{-0.7mm} e$ and $\sigma_{\calU^k,\,
\calV}(l\mathopen +\mathclose (k\mathopen -
\mathclose1)r)\hspace{-0.7mm} \leq\hspace{-0.7mm} e$. Consequently
$e\hspace{-0.7mm} <\hspace{-0.7mm} p\mathopen +\mathclose r$ and
$p\mathopen +\mathclose r\hspace{-0.7mm} <\hspace{-0.7mm} e$, a
contradiction.

\noindent {\it Case {\it (ii)}}, there exists  $1\hspace{-0.7mm}
\leq\hspace{-0.7mm} p\hspace{-0.7mm} \leq\hspace{-0.7mm} r$ such
that $u_p$ is neither totally ramified nor elliptic, but associated
to $z^s h(z^n)$ or $z^sh(z)^n$ in $(\mathrm{End}(\EE), \circ)$ with
$h\hspace{-0.7mm} \in\hspace{-0.7mm} \mathrm{End}(\EE)$,
$h(0)\hspace{-0.7mm} \neq\hspace{-0.7mm}0$ and $n\hspace{-0.7mm}
\ge\hspace{-0.7mm} 2$.

There exists $0\hspace{-0.7mm} \leq\hspace{-0.7mm} q\hspace{-0.7mm}
\leq\hspace{-0.7mm} k\mathopen -\mathclose 1$ for which
$\sigma_{\calU^k, \calV}(p\mathopen+\mathclose qr)\hspace{-0.7mm}
\leq\hspace{-0.7mm} e$ and $\sigma_{\calU^k,
\calV}(p\mathopen+\mathclose
(q\mathopen+\mathclose1)r)\hspace{-0.7mm}
>\hspace{-0.7mm} e$. Because $\sigma_{\calU^k, \calV}(m\mathopen+\mathclose
tr)\hspace{-0.7mm}
>\hspace{-0.7mm} e$ for all $0\hspace{-0.7mm} \leq\hspace{-0.7mm} t\hspace{-0.7mm} \leq\hspace{-0.7mm} q\mathopen -\mathclose 1$
Proposition \ref{complicated} gives
\begin{eqnarray*}
(\deg{u_m})^{q} | LR(p\mathopen+\mathclose qr).
\end{eqnarray*}
Similarly, because $\sigma_{\calU^k, \calV}(u_{l\mathopen
+\mathclose tr})\hspace{-0.7mm} \leq\hspace{-0.7mm} e$ for all
$q\mathopen +\mathclose 2\hspace{-0.7mm} \leq\hspace{-0.7mm}
t\hspace{-0.7mm} \leq\hspace{-0.7mm} k\mathopen -\mathclose 1$ we
have
\begin{eqnarray*}
(\deg{u_l})^{k-q-2}|RL(p\mathopen+\mathclose
(q\mathopen+\mathclose1)r).
\end{eqnarray*}
By Corollary \ref{bloom} and by Proposition \ref{complicated} we
have
\begin{eqnarray*}
(\deg{u_m})^{q}\hspace{-0.7mm} <\hspace{-0.7mm} \deg{u_p} ,  \ \ \
(\deg{u_l})^{k-q-2}\hspace{-0.7mm} <\hspace{-0.7mm} \deg{u_p}.
\end{eqnarray*}
This gives $2^{k\mathopen-\mathclose 2}\hspace{-0.7mm}
\leq\hspace{-0.7mm} (\deg{u_p})^2\hspace{-0.7mm} \leq\hspace{-0.7mm}
n^2$ and therefore $k\hspace{-0.7mm} \leq\hspace{-0.7mm} 2\mathopen
+\mathclose 2 \log_2 n$ as desired.

\noindent {\it Case {\it (iii)}}, all
$u_i\mathpunct{:}\EE\hspace{-0.6mm}\to \hspace{-0.6mm} \EE$ in
$\calU$ are totally ramified.

If $|\mathfrak{D}_{u_i}|\hspace{-0.7mm} =\hspace{-0.7mm}
\mathfrak{d}_{u_{i\mathopen +\mathclose 1}}\hspace{-0.7mm}
=\hspace{-0.7mm}\mathfrak{p}$ holds for all integer $i$ with
$1\hspace{-0.7mm} \leq\hspace{-0.7mm} i\hspace{-0.7mm}
\leq\hspace{-0.7mm} kr\mathopen -\mathclose 1$ then
$\iota_{\mathfrak{p}}\hspace{-0.7mm} \circ\hspace{-0.7mm}
f\hspace{-0.7mm} \circ\hspace{-0.7mm}
\iota_{-\mathfrak{p}}\hspace{-0.7mm} =\hspace{-0.7mm} \zeta z^n$ for
some $\zeta \hspace{-0.7mm} \in\hspace{-0.7mm} \TT$, which
contradicts to the assumption. Hence there exists $1\hspace{-0.7mm}
\leq\hspace{-0.7mm} p\hspace{-0.7mm} \leq\hspace{-0.7mm} r$ such
that $|\mathfrak{D}_{u_p}|\hspace{-0.7mm} \neq\hspace{-0.7mm}
\mathfrak{d}_{u_{p\mathopen +\mathclose 1}}$. It is clear from
Theorem \ref{Ritt move for Blaschke} and from Corollary \ref{bloom
and totally ramified} that any Ritt relation $a\hspace{-0.7mm}
\circ\hspace{-0.7mm} b\hspace{-0.7mm} =\hspace{-0.7mm}
c\hspace{-0.7mm} \circ\hspace{-0.7mm} d$ in totally ramified finite
Blaschke products  must satisfy $\mathfrak{d}_a\hspace{-0.7mm}
=\hspace{-0.7mm} \mathfrak{d}_c$ and
$|\mathfrak{D}_b|\hspace{-0.7mm} =\hspace{-0.7mm} |\mathfrak{D}_d|$.
This implies that if $i\hspace{-0.7mm} \leq\hspace{-0.7mm}
r\mathopen +\mathclose p$\,(resp.\,$(k\mathopen -\mathclose
2)r\mathopen +\mathclose p+\!1\hspace{-0.7mm} \leq\hspace{-0.7mm}
i$) then $\sigma_{\calU^k,\calV}(i)\hspace{-0.7mm}
\leq\hspace{-0.7mm} r\mathopen +\mathclose
p$\,(resp.\,$\sigma_{\calU^k,\calV}(i)\hspace{-0.7mm}
\ge\hspace{-0.7mm} (k\mathopen -\mathclose 2)r\mathopen +\mathclose
p+\!1$), which leads to $\sigma_{\calU^k,\calV}(m)\hspace{-0.7mm}
\leq\hspace{-0.7mm} r\mathopen +\mathclose p$ and
$\sigma_{\calU^k,\calV}((k\mathopen -\mathclose 1)r\mathopen
+\mathclose l)\hspace{-0.7mm} \ge\hspace{-0.7mm} (k\mathopen
-\mathclose 2)r\mathopen +\mathclose p+\!1$. Using
$\sigma_{\calU^k,\, \calV}(m)\hspace{-0.7mm}
>\hspace{-0.7mm} e$ and $\sigma_{\calU^k,\, \calV}(l\mathopen +\mathclose (k\mathopen -\mathclose 1)r)\hspace{-0.7mm} \leq\hspace{-0.7mm}
e$, we obtain $e\hspace{-0.7mm} <\hspace{-0.7mm} p\mathopen
+\mathclose r$ and $(k\mathopen -\mathclose 2)r\mathopen +\mathclose
p + 1\hspace{-0.7mm} \leq\hspace{-0.7mm} e$ which forces
$k\hspace{-0.7mm} \leq\hspace{-0.7mm} 2$.

\noindent {\it Case {\it (iv)}},  there exist $1\hspace{-0.7mm}
\leq\hspace{-0.7mm} p\hspace{-0.7mm} \leq\hspace{-0.7mm} r$ such
that $u_p$ is elliptic and is of  degree at least three.

We claim that $k\hspace{-0.7mm} \leq\hspace{-0.7mm} 8$. Otherwise
$k\hspace{-0.7mm} \ge\hspace{-0.7mm} 9$ and either
$\sigma_{\calU^k,\calV}(4r\mathopen+\mathclose p)\hspace{-0.7mm}
\leq\hspace{-0.7mm} e$ or $\sigma_{\calU^k,\calV}(4r\mathopen
+\mathclose p)\hspace{-0.7mm}
>\hspace{-0.7mm} e$. In the former case we deduce from Proposition \ref{complicated} that
there exist $\{a, b, \hat{u}, \overline{b}\} \hspace{-0.7mm}
\subset\hspace{-0.7mm} \mathrm{End}(\EE)$ such that
$\deg{b}\hspace{-0.7mm} =\hspace{-0.7mm}
\deg{\overline{b}}\hspace{-0.7mm} =\hspace{-0.7mm} LR(\calU^k,
\calV,p\mathopen+\mathclose4r,
p\mathopen+\mathclose4r)\hspace{-0.7mm} =\hspace{-0.7mm} \hat{n},
\deg{\hat{u}}\hspace{-0.7mm} =\hspace{-0.7mm} \deg{u}$ and \bgroup
\arraycolsep=2pt
\begin{eqnarray*}
u_1\hspace{-0.7mm} \circ\hspace{-0.7mm} u_2\hspace{-0.7mm} \circ \cdots \circ\hspace{-0.7mm} u_{p\mathopen+\mathclose4r\mathopen-\mathclose1}\hspace{-0.7mm} &=&\hspace{-0.7mm} a\hspace{-0.7mm} \circ\hspace{-0.7mm} b, \\
b\hspace{-0.7mm} \circ\hspace{-0.7mm} u_{p\mathopen+\mathclose
4r}\hspace{-0.7mm} &=&\hspace{-0.7mm} \hat{u}\hspace{-0.7mm}
\circ\hspace{-0.7mm} \overline{b}.
\end{eqnarray*}
\egroup Because $\sigma_{\calU^k,\calV}(r\mathopen+\mathclose
m)\hspace{-0.7mm}
>\hspace{-0.7mm} \sigma_{\calU^k,\calV}(m) \hspace{-0.7mm}
>\hspace{-0.7mm} e\hspace{-0.7mm} \ge\hspace{-0.7mm} \sigma_{\calU^k,\calV}(4r\mathopen+\mathclose p)$ we have $\deg{b}\hspace{-0.7mm} =\hspace{-0.7mm} \hat{n}\hspace{-0.7mm} \ge\hspace{-0.7mm}
4$ and because $b\hspace{-0.7mm} \circ\hspace{-0.7mm}
u_{p\mathopen+\mathclose 4r}\hspace{-0.7mm} =\hspace{-0.7mm}
\hat{u}\hspace{-0.7mm} \circ\hspace{-0.7mm} \overline{b}$ is a
generalized Ritt relation, Corollary \ref{all are elliptic} implies
that $b$ is elliptic. If we write $\bar{n}\hspace{-0.7mm}
=\hspace{-0.7mm} LR(\calU^k, \calV, p\mathopen+\mathclose 2r,
p\mathopen+\mathclose 4r)$, $h\hspace{-0.7mm} =\hspace{-0.7mm}
u_1\hspace{-0.7mm} \circ\hspace{-0.7mm} u_2\hspace{-0.7mm} \circ
\cdots \circ\hspace{-0.7mm}
u_{p\mathopen+\mathclose2r\mathopen-\mathclose1}$ and
$g\hspace{-0.7mm} =\hspace{-0.7mm}
u_{p\mathopen+\mathclose2r}\hspace{-0.7mm} \circ\hspace{-0.7mm}
u_{p\mathopen+\mathclose2r\mathopen+\mathclose1}\hspace{-0.7mm}
\circ \cdots \circ\hspace{-0.7mm}
u_{p\mathopen+\mathclose4r\mathopen-\mathclose1}$ then apparently
\begin{eqnarray*}
a\hspace{-0.7mm} \circ\hspace{-0.7mm} b\hspace{-0.7mm}
=\hspace{-0.7mm} h\hspace{-0.7mm} \circ\hspace{-0.7mm} g,
\end{eqnarray*}
and for the same reason to that for $\hat{n}$ we have
$\bar{n}\hspace{-0.7mm} \ge\hspace{-0.7mm} 4$. By Proposition
\ref{rigidity of Blaschke} there exist $\{\hat{b}, \hat{g}, k, e,
\hat{a}, \hat{h}\}\hspace{-0.7mm}
\subset\hspace{-0.7mm}\mathrm{End}(\EE)$ with
$\deg{k}\hspace{-0.7mm} =\hspace{-0.7mm} (\hat{n}\hspace{-0.7mm}
=\hspace{-0.7mm} \deg{b}, \deg{g}), \deg{e}\hspace{-0.7mm}
=\hspace{-0.7mm} (\deg{a}, \deg{h})$ and \bgroup \arraycolsep=2pt
\begin{eqnarray*}
 \hat{b}\hspace{-0.7mm} \circ\hspace{-0.7mm} k\hspace{-0.7mm} &=&\hspace{-0.7mm} b, \ \ \ \hat{g}\hspace{-0.7mm} \circ\hspace{-0.7mm} k\hspace{-0.7mm} =\hspace{-0.7mm} g, \\
e\hspace{-0.7mm} \circ\hspace{-0.7mm} \hat{a}\hspace{-0.7mm} &=&\hspace{-0.7mm} a, \ \ \ e\hspace{-0.7mm} \circ\hspace{-0.7mm} \hat{h}\hspace{-0.7mm} =\hspace{-0.7mm} h ,\\
\hat{a}\hspace{-0.7mm} \circ\hspace{-0.7mm} \hat{b}\hspace{-0.7mm}
&=&\hspace{-0.7mm} \hat{h}\hspace{-0.7mm} \circ\hspace{-0.7mm}
\hat{g}.
\end{eqnarray*}
\egroup We denote $\deg{k}$ by $s$ and consider the generalized Ritt
relation $\hat{h}\hspace{-0.7mm} \circ\hspace{-0.7mm}
\hat{g}\hspace{-0.7mm} =\hspace{-0.7mm} \hat{a}\hspace{-0.7mm}
\circ\hspace{-0.7mm} \hat{b}$. Because
\begin{eqnarray*}
\hat{n}/\bar{n} = \prod_{p\mathopen+\mathclose 2r \leq i \leq
p\mathopen+\mathclose 4r\mathopen-\mathclose1, \,
\sigma_{\calU^k,\calV}(i)
> \sigma_{\calU^k,\calV}(p\mathopen+\mathclose 4r)} \deg{u_i}
\end{eqnarray*}
and because for all $p\mathopen +\mathclose 2r\hspace{-0.7mm}
\leq\hspace{-0.7mm} i\hspace{-0.7mm} \leq\hspace{-0.7mm} p\mathopen
+\mathclose 4r\mathopen -\mathclose 1$
\begin{eqnarray*}
(\deg{u_i}, \bar{n})\hspace{-0.7mm} >\hspace{-0.7mm} 1 \Rightarrow
\sigma_{\calU^k,\calV}(i)\hspace{-0.7mm} >\hspace{-0.7mm}
\sigma_{\calU^k,\calV}(p\mathopen+\mathclose 4r),
\end{eqnarray*}
we have $ (\deg{g}/(\hat{n}/\bar{n}), \bar{n})\hspace{-0.7mm}
=\hspace{-0.7mm} 1, $ and therefore
$s\hspace{-0.7mm}=\hspace{-0.7mm}(\hat{n},
\deg{g})\hspace{-0.7mm}=\hspace{-0.7mm}\hat{n}/\bar{n}$ or
equivalently
\begin{eqnarray*}
\deg{k} = \prod_{p+2r \leq i \leq p+4r-1, \,
\sigma_{\calU^k,\calV}(i) > \sigma_{\calU^k,\calV}(p+4r)} \deg{u_i}.
\end{eqnarray*}
Because
$\sigma_{\calU^k,\calV}(p\mathopen+\mathclose2r)\hspace{-0.7mm}
<\hspace{-0.7mm}\sigma_{\calU^k,\calV}(p\mathopen+\mathclose3r)\hspace{-0.7mm}
<\hspace{-0.7mm} \sigma_{\calU^k,\calV}(p\mathopen+\mathclose4r)$
the above equality leads to $\deg{\hat{g}}\hspace{-0.7mm}
\ge\hspace{-0.7mm} 4$. The ellipticity  of $b$ implies that of
$\hat{b}$. Noticing that $\deg{\hat{b}}\hspace{-0.7mm} =
\hspace{-0.7mm} \deg{b}/\deg{k} \hspace{-0.7mm} = \hspace{-0.7mm}
\hat{n}/\deg{k}$ and $\deg{k}\hspace{-0.7mm}=\hspace{-0.7mm}
\hat{n}/\bar{n}$, we have
$\deg{\hat{b}}\hspace{-0.7mm}=\hspace{-0.7mm} \bar{n}\hspace{-0.7mm}
\ge \hspace{-0.7mm} 4$.   Considering the generalized Ritt relation
$\hat{h}\hspace{-0.7mm} \circ\hspace{-0.7mm} \hat{g}\hspace{-0.7mm}
=\hspace{-0.7mm} \hat{a}\hspace{-0.7mm} \circ\hspace{-0.7mm}
\hat{b}$  Corollary \ref{all are elliptic} implies the ellipticity
of $\hat{g}$. We now examine
\begin{eqnarray*}
g\hspace{-0.7mm} =\hspace{-0.7mm} u_{p\mathopen+\mathclose
2r}\hspace{-0.7mm} \circ\hspace{-0.7mm} u_{p\mathopen+\mathclose
2r\mathopen+\mathclose 1}\hspace{-0.7mm} \circ \cdots
\circ\hspace{-0.7mm} u_{p\mathopen+\mathclose 4r\mathopen-\mathclose
1}\hspace{-0.7mm} =\hspace{-0.7mm} \hat{g}\hspace{-0.7mm}
\circ\hspace{-0.7mm} k
\end{eqnarray*}
and we write $\overline{\calU}\hspace{-0.7mm} =\hspace{-0.7mm}
(\overline{u}_{1}\hspace{-0.7mm} =\hspace{-0.7mm}
u_{p\mathopen+\mathclose 2r}, \ldots,
\overline{u}_{2r}\hspace{-0.7mm} =\hspace{-0.7mm}
u_{p\mathopen+\mathclose 4r\mathopen-\mathclose 1})$ which is a
complete presentation of $g$. If $\overline{\calV}\hspace{-0.7mm}
=\hspace{-0.7mm} (v_1, \ldots, v_{2r})$ is a complete presentation
of $g$ for which $\hat{g}\hspace{-0.7mm} =\hspace{-0.7mm}
v_1\hspace{-0.7mm} \circ\hspace{-0.7mm} v_2 \cdots
\circ\hspace{-0.7mm} v_{o}$ and $k\hspace{-0.7mm} =\hspace{-0.7mm}
v_{o\mathopen+\mathclose 1}\hspace{-0.7mm} \circ \cdots
\circ\hspace{-0.7mm} v_{2r}$ then $\sigma_{\overline{\calU},
\overline{\calV}}(1)\hspace{-0.7mm} \leq\hspace{-0.7mm} o$ and
$\sigma_{\overline{\calU}, \overline{\calV}}(1\mathopen+\mathclose
r)\hspace{-0.7mm} \leq\hspace{-0.7mm} o$. Lemma \ref{heigh of
chebyshev2} gives
\begin{eqnarray*}
h(v_{\sigma_{\overline{\calU},
\overline{\calV}}(1)})\chi(v_{\sigma_{\overline{\calU},
\overline{\calV}}(1)}) = h(v_{\sigma_{\overline{\calU},
\overline{\calV}}(1+r)})\chi(v_{\sigma_{\overline{\calU},
\overline{\calV}}(1+r)}),
\end{eqnarray*}
then we apply Lemma \ref{heigh of chebyshev} and have
\begin{eqnarray*}
h(\overline{u}_{1})\chi(\overline{u}_{1}) =
h(\overline{u}_{1+r})\chi(\overline{u}_{1+r}).
\end{eqnarray*}
This is impossible since $\chi(\overline{u}_{1})\hspace{-0.7mm}
=\hspace{-0.7mm} \chi(\overline{u}_{1\mathopen+\mathclose
r})\hspace{-0.7mm} =\hspace{-0.7mm} \chi(u_p)$ and
$h(\overline{u}_{1})\hspace{-0.7mm} <\hspace{-0.7mm}
h(\overline{u}_{r\mathopen+\mathclose 1})$.

Similar arguments apply to the case that
$\sigma_{\calU^k,\calV}(4r\mathopen+\mathclose p)\hspace{-0.7mm}
>\hspace{-0.7mm} e$.
\end{proofof}

\begin{corollary}\label{Zieve-Muller, corallary}
Let $f\hspace{-0.7mm} \in\hspace{-0.7mm} \mathrm{End}(\EE)\mathopen
\setminus\mathclose \mathrm{Aut}(\EE), \{a, b\} \hspace{-0.7mm}
\subset \hspace{-0.7mm} \mathrm{End}(\EE)$  and $l\hspace{-0.7mm}
\ge \hspace{-0.7mm} 1$ that satisfy $a\hspace{-0.7mm}
\circ\hspace{-0.7mm} b\hspace{-0.7mm} =\hspace{-0.7mm} f^{l}$ and
there exist no $\iota\hspace{-0.7mm} \in\hspace{-0.7mm}
\mathrm{Aut}{(\EE)}$ for which $\iota\hspace{-0.7mm}
\circ\hspace{-0.7mm} f\hspace{-0.7mm} \circ\hspace{-0.7mm}
\iota\!^{-\!1}\hspace{-0.7mm}=\hspace{-0.7mm}z^{\deg{f}}$. Then
there exist $\{\overline{a\bPh}, \overline{b}\}\hspace{-0.7mm}
\subset\hspace{-0.7mm} \mathrm{End}(\EE)$ and nonnegative integers
$k\hspace{-0.7mm} \leq\hspace{-0.7mm} \max{(8, 2\mathopen
+\mathclose 2\log_{2}{\deg{f}})}, i, j$ such that
\begin{eqnarray*}
a\hspace{-0.7mm} =\hspace{-0.7mm} f^{i}\hspace{-0.7mm}
\circ\hspace{-0.7mm} \overline{a\bPh},  \ \ \ \ \ \ b\hspace{-0.7mm}
=\hspace{-0.7mm} \overline{b}\hspace{-0.7mm} \circ\hspace{-0.7mm}
f^{j}, \ \ \
 \ \ \ \overline{a\bPh}\hspace{-0.7mm} \circ\hspace{-0.7mm} \overline{b}\hspace{-0.7mm} =\hspace{-0.7mm} f^{k}.
\end{eqnarray*}
\end{corollary}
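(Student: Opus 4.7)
The plan is to derive the corollary from Theorem \ref{Zieve-Muller} by an iterative ``greedy'' extraction: as long as we can write $a=f\circ g$ or $b=g\circ f$, we pull an $f$ out and reduce the exponent on the right-hand side. Once the process stalls, the remaining factorization satisfies exactly the hypotheses of Theorem \ref{Zieve-Muller}, which produces the required bound on $k$.

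First I would check that any non-constant holomorphic self-map $\phi$ of $\EE$ admits both left and right cancellation in $(\mathrm{End}(\EE),\circ)$. Right cancellation is immediate because $\phi$ is surjective. Left cancellation follows because $\phi$ has discrete fibres and is locally biholomorphic off the finite critical set, so the equality $\phi\circ g_1=\phi\circ g_2$ of holomorphic maps $\EE\to\EE$ extends by analytic continuation on the connected domain $\EE$ from any non-critical point, forcing $g_1=g_2$.

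Next I run the extraction procedure. Set $(a_0,b_0,l_0)=(a,b,l)$ and $i_0=j_0=0$. At step $t$, if there exists $g\in\mathrm{End}(\EE)$ with $a_t=f\circ g$, then $f\circ g\circ b_t=f^{l_t}=f\circ f^{l_t-1}$, and by left cancellation $g\circ b_t=f^{l_t-1}$; set $(a_{t+1},b_{t+1},l_{t+1})=(g,b_t,l_t-1)$, $i_{t+1}=i_t+1$. Symmetrically if $b_t=g\circ f$ for some $g$, then by right cancellation $a_t\circ g=f^{l_t-1}$; set $(a_{t+1},b_{t+1},l_{t+1})=(a_t,g,l_t-1)$, $j_{t+1}=j_t+1$. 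If neither extraction is available, stop. Since $l_t$ decreases strictly at every step, the loop terminates in at most $l$ steps; let $(\overline{a\bPh},\overline{b},k)$ and $(i,j)$ be the final tuple. By construction $a=f^{i}\circ\overline{a\bPh}$, $b=\overline{b}\circ f^{j}$ and $\overline{a\bPh}\circ\overline{b}=f^{k}$ with $i+j+k=l$.

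Finally, the terminal triple $(\overline{a\bPh},\overline{b},k)$ satisfies every hypothesis of Theorem \ref{Zieve-Muller}: $\deg f\ge 2$ since $f\notin\mathrm{Aut}(\EE)$, $f$ is not conjugate to $z^{\deg f}$ by the hypothesis of the corollary, and the two non-extraction conditions hold by the termination criterion. Theorem \ref{Zieve-Muller} then delivers $k\le\max\{8,\,2+2\log_{2}\deg f\}$. The degenerate case $k=0$ (where $\overline{a\bPh}$ and $\overline{b}$ are mutually inverse elements of $\mathrm{Aut}(\EE)$) is trivially compatible with the bound. The only delicate point in the whole argument is the cancellation property used to justify the extraction; once that is in hand, the result is essentially a bookkeeping reduction to Theorem \ref{Zieve-Muller}.
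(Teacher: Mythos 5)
There is a genuine gap: left cancellation in $(\mathrm{End}(\EE),\circ)$ is false, and it is exactly the step your extraction procedure leans on. Take $\phi(z)=z^{2}$, $g_1(z)=z$, $g_2(z)=-z$: then $\phi\circ g_1=\phi\circ g_2$ but $g_1\neq g_2$. Your analytic-continuation argument breaks down because from $\phi\circ g_1=\phi\circ g_2$ you only know that $g_1(z)$ and $g_2(z)$ lie in the same fibre of $\phi$ at every $z$; to invert $\phi$ locally and conclude $g_1=g_2$ near some point you would already need $g_1$ and $g_2$ to take the same value there, which is precisely what is in question (in the example they agree only at the critical point). Consequently, in your extraction step the identity $f\circ(g\circ b_t)=f\circ f^{\,l_t-1}$ does not yield $g\circ b_t=f^{\,l_t-1}$; whenever $f$ has a nontrivial ``deck'' automorphism $\iota$ with $f\circ\iota=f$ (which is not excluded by the hypothesis that $f$ is not conjugate to $z^{\deg f}$, e.g.\ $f=B(z^{2})$ for a suitable Blaschke product $B$), you can only conclude $g\circ b_t=\iota\circ f^{\,l_t-1}$ for some such $\iota$.

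The repair is available and is what the paper's own proof uses: since $\deg(g\circ b_t)=\deg f^{\,l_t-1}$, apply Corollary \ref{simple rigidity} (which rests on the monodromy rigidity of Proposition \ref{rigidity of Blaschke}, not on cancellation) to the two factorizations of $f^{\,l_t}$ to get $\iota\in\mathrm{Aut}(\EE)$ with $f\circ\iota^{-1}=f$ and $g\circ b_t=\iota\circ f^{\,l_t-1}$, and then replace $g$ by $\iota^{-1}\circ g$, which leaves $a_t=f\circ g$ intact and restores $g\circ b_t=f^{\,l_t-1}$. With that substitution your greedy loop (whose right-cancellation step is fine, since finite Blaschke products are surjective on $\EE$) becomes correct and is essentially equivalent to the paper's argument, which extracts the maximal powers $f^{i}$ and $f^{j}$ in one step and performs the same automorphism adjustment via Corollary \ref{simple rigidity} before invoking Theorem \ref{Zieve-Muller}. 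As written, however, the proposal's key justification is invalid.
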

\begin{proof}
Let $i$\,(resp.\,$j$) be the maximal nonnegative integer that
$a\hspace{-0.7mm} =\hspace{-0.7mm} f^{i}\hspace{-0.7mm}
\circ\hspace{-0.7mm} \overline{a\bPh}$\,(resp.\,$b\hspace{-0.7mm}
=\hspace{-0.7mm} \overline{b}\hspace{-0.7mm} \circ\hspace{-0.7mm}
f^{j}$) for some $\overline{a\bPh}$\,(resp.\,$\overline{b}$) in
$\mathrm{End}(\EE)$. We have $f^{i}\hspace{-0.7mm}
\circ\hspace{-0.7mm} \overline{a\bPh} \hspace{-0.7mm} \circ
\hspace{-0.7mm} \overline{b}\hspace{-0.7mm} \circ\hspace{-0.7mm}
f^{j} \hspace{-0.7mm} = \hspace{-0.7mm} f^l$ and therefore
$f^{i}\hspace{-0.7mm} \circ\hspace{-0.7mm} \overline{a\bPh}
\hspace{-0.7mm} \circ \hspace{-0.7mm} \overline{b}\hspace{-0.7mm} =
\hspace{-0.7mm} f^{l\mathopen-\mathclose j}$. This together with
Corollary \ref{simple rigidity} implies that there exists
$\epsilon\hspace{-0.7mm} \in\hspace{-0.7mm} \mathrm{Aut}(\EE)$ for
which
\begin{eqnarray*}
f^i \hspace{-0.7mm} = \hspace{-0.7mm} f^i \hspace{-0.7mm} \circ
\hspace{-0.7mm} \epsilon^{-1}, \ \ \ \overline{a\bPh}
\hspace{-0.7mm} \circ \hspace{-0.7mm} \overline{b}\hspace{-0.7mm} =
\hspace{-0.7mm} \epsilon \hspace{-0.7mm} \circ \hspace{-0.7mm}
f^{l\mathopen -\mathclose i\mathopen -\mathclose j}.
\end{eqnarray*}
Replacing $\overline{a}$ by $\epsilon^{-1}\hspace{-0.7mm} \circ
\hspace{-0.3mm} \overline{a} $ we have $a\hspace{-0.7mm}
=\hspace{-0.7mm} f^{i}\hspace{-0.7mm} \circ\hspace{-0.7mm}
\overline{a\bPh},  b\hspace{-0.7mm} =\hspace{-0.7mm}
\overline{b}\hspace{-0.7mm} \circ\hspace{-0.7mm} f^{j}$ and
$\overline{a\bPh}\hspace{-0.7mm} \circ\hspace{-0.7mm}
\overline{b}\hspace{-0.7mm} =\hspace{-0.7mm} f^{k}$. The maximality
of $i, j$ together with Theorem \ref{Zieve-Muller} leads to
$k\hspace{-0.7mm} \leq\hspace{-0.7mm} \max{(8, 2\mathopen
+\mathclose 2\log_{2}{\deg{f}})}$.
\end{proof}

As a further corollary we have
\begin{corollary}\label{corollary of rigidity}
Let $f\hspace{-0.7mm} \in\hspace{-0.7mm} \mathrm{End}(\EE)\mathopen
\setminus\mathclose \mathrm{Aut}(\EE)$ that there exists no
$\iota\hspace{-0.7mm} \in\hspace{-0.7mm} \mathrm{Aut}{(\EE)}$ for
which $\iota\hspace{-0.7mm} \circ\hspace{-0.7mm} f\hspace{-0.7mm}
\circ\hspace{-0.7mm} \iota\!^{-\!1}\hspace{-0.7mm} =\hspace{-0.7mm}
z^{\deg{f}}$. Then there is a finite subset $\calS$ such that if two
finite Blaschke products $r$ and $s$ satisfy $r\hspace{-0.7mm}
\circ\hspace{-0.7mm} s\hspace{-0.7mm} =\hspace{-0.7mm} f^{d}$ then
the following assertions
\begin{enumerate}
\item[(i)] either there exists $h\hspace{-0.7mm} \in\hspace{-0.7mm}\mathrm{End}(\EE)$ for
which $r\hspace{-0.7mm} =\hspace{-0.7mm} f\hspace{-0.7mm}
\circ\hspace{-0.7mm} h$ or there exists $\iota\hspace{-0.7mm}
\in\hspace{-0.7mm} \mathrm{Aut}{(\EE)}$ for which $r\hspace{-0.7mm}
\circ\hspace{-0.7mm} \iota \in \calS$;

\item[(ii)] either there exists $h\hspace{-0.7mm} \in\hspace{-0.7mm}\mathrm{End}(\EE)$ for
which $s\hspace{-0.7mm} =\hspace{-0.7mm} h\hspace{-0.7mm}
\circ\hspace{-0.7mm} f$ or there exists $\iota\hspace{-0.7mm}
\in\hspace{-0.7mm} \mathrm{Aut}{(\EE)}$ for which
$\iota\hspace{-0.7mm} \circ\hspace{-0.7mm} s \in \calS$.
\end{enumerate}
are satisfied.
\end{corollary}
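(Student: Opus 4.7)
The plan is to bootstrap directly off Corollary~\ref{Zieve-Muller, corallary}, which does the heavy lifting, and then to package its conclusion via a finite list of representative factorizations of $f^k$ for bounded $k$. Given any decomposition $r\circ s = f^d$, that corollary produces $\overline{a}, \overline{b} \in \mathrm{End}(\EE)$ and nonnegative integers $i, j, k$ with $k \leq K := \max(8, 2 + 2\log_2 \deg f)$ such that $r = f^i \circ \overline{a}$, $s = \overline{b} \circ f^j$, and $\overline{a} \circ \overline{b} = f^k$.

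First I would dispose of the cases where $f$ appears on the appropriate side. If $i \geq 1$ then $r = f \circ (f^{i-1} \circ \overline{a})$, yielding the first alternative of (i); likewise if $j \geq 1$ then $s = (\overline{b} \circ f^{j-1}) \circ f$, yielding the first alternative of (ii). Thus it remains to treat (i) when $i = 0$ and (ii) when $j = 0$, in which cases $\overline{a} = r$ or $\overline{b} = s$, and crucially the degree of $r$ (respectively $s$) is bounded by $(\deg f)^K$.

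Next I would construct $\calS$ as a finite set of representatives. For each $0 \leq k \leq K$ and each divisor $e$ of $(\deg f)^k$ admitting a factorization $f^k = A_{k,e} \circ B_{k,e}$ with $\deg A_{k,e} = e$, fix one such pair, and let $\calS$ be the union of all the $A_{k,e}$'s and $B_{k,e}$'s; finiteness is immediate since there are only finitely many pairs $(k,e)$. For any factorization $\overline{a} \circ \overline{b} = f^k$ with $\deg \overline{a} = e$, I would apply Corollary~\ref{simple rigidity} to the two factorizations $A_{k,e} \circ B_{k,e} = \overline{a} \circ \overline{b}$. The required monodromy hypothesis is satisfied because any loop closely surrounding the unit circle acts transitively, as already used in the proof of Proposition~\ref{rigidity of Blaschke}. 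This supplies $\iota \in \mathrm{Aut}(\EE)$ with $\overline{a} = A_{k,e} \circ \iota^{-1}$ and $\overline{b} = \iota \circ B_{k,e}$, so that $r \circ \iota = A_{k,e} \in \calS$ in the case $i = 0$, and $\iota^{-1} \circ s = B_{k,e} \in \calS$ in the case $j = 0$.

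The argument is essentially a bookkeeping exercise once Corollaries~\ref{Zieve-Muller, corallary} and~\ref{simple rigidity} are in hand, so no serious obstacle is expected. The only point worth being careful about is verifying that the monodromy hypothesis of Corollary~\ref{simple rigidity} genuinely transfers to factorizations of $f^k$ inside $\mathrm{End}(\EE)$, but this has already been addressed in Section~\ref{sectionrigidity}, so the plan should go through without extra work.
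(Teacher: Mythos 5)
Your proposal is correct and follows essentially the same route as the paper: invoke Corollary~\ref{Zieve-Muller, corallary} to reduce to the case where $r$ (resp.\ $s$) is a left (resp.\ right) factor of $f^k$ with $k$ bounded by $\max(8,\,2+2\log_2\deg f)$, and then observe that such factors form a finite set up to association. The only difference is that you spell out the paper's closing remark (``up to associations there are only finitely many such factors'') explicitly, by fixing representatives $A_{k,e}, B_{k,e}$ and applying Corollary~\ref{simple rigidity}, which is exactly the justification the paper leaves implicit.
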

\begin{proof}
We only prove the first assertion as a similar argument applies to
the second one. If there exists no $h\hspace{-0.7mm}
\in\hspace{-0.7mm} \mathrm{End}(\EE)$ for which  $r\hspace{-0.7mm}
=\hspace{-0.7mm} f\hspace{-0.7mm} \circ\hspace{-0.7mm} h$, then
Corollary \ref{Zieve-Muller, corallary} implies that $r$ is a left
factor of $f^k$ for some $k\hspace{-0.7mm} \leq\hspace{-0.7mm}
\max{(8, 2\mathopen +\mathclose 2\log_{2}{\deg{f}})}$. Up to
associations there are only finitely many such factors.
\end{proof}

\section{Speciality of monoid factorizations}\label{sectionbilutichy}

If the fiber product $\PP^1\times_{f, g}\PP^1$ admits special
arithmetical or geometric properties for rational functions $f$ and
$g$, then $f$ and $g$ tend to have very special factorizations in
$(\mathrm{End}(\PP^1), \circ)$. We shall call this sort of facts the
speciality of monoid factorizations.  The goal of this section is to
obtain speciality of factorizations of $(\mathrm{End}(\EE), \circ)$,
under assumptions of finiteness of rational points. We begin with
recalling the complex analytic version of famous Bilu-Tichy
criterion\,(cf.\,\cite{BT00}).

\begin{theorem}[Bilu-Tichy]\label{bilutichy}
Let $f$ and $g$ be nonlinear polynomials that
$\CC\mathopen\times_{f, \, g}\mathclose\CC$ has a Siegel factor.
Then $f$ and $g$ admit the following factorizations
\begin{eqnarray*}
f\hspace{-0.7mm} =\hspace{-0.7mm} e\hspace{-0.7mm}
\circ\hspace{-0.7mm} f_1\hspace{-0.7mm} \circ\hspace{-0.7mm}
\varepsilon, \ \ \  g\hspace{-0.7mm} =\hspace{-0.7mm}
e\hspace{-0.7mm} \circ\hspace{-0.7mm} g_1\hspace{-0.7mm}
\circ\hspace{-0.7mm} \epsilon
\end{eqnarray*}
in $(\mathrm{End}(\CC), \circ)$ where $\{\varepsilon,
\epsilon\}\hspace{-0.7mm} \subset\hspace{-0.7mm} \mathrm{Aut}(\CC)$
and there exist $\{m,n\}\hspace{-0.7mm} \subset\hspace{-0.7mm} \NN$
together with $p \hspace{-0.7mm} \in\hspace{-0.7mm}
\CC[z]\mathopen\setminus\mathclose\{0\}$ such that $\{f_1, g_1\}$
falls into one of the following cases:
\begin{enumerate}
\item[(i)] $\{z^m, z^rp(z)^m\}$ with $r \ge 1$ and $(r, m) = 1;$

\item[(ii)] $\{z^2, (z^2 + 1) p(x)^2\};$

\item[(iii)] $\{T_m, T_n\}$ with $m \ge 3, n \ge 3$ and $(m, n) = 1;$

\item[(iv)] $\{T_m, -T_n\}$ with $m \ge 3, n \ge 3$ and $(m, n) > 1;$

\item[(v)] $\{(z^2\hspace{-0.5mm} -\hspace{-0.5mm} 1)^3,
3z^4\hspace{-0.5mm} -\hspace{-0.5mm} 4 z^3\}.$
\end{enumerate}
\end{theorem}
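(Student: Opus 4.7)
The plan is to begin from the assumption that $\CC \times_{f,g} \CC$ contains an affine Siegel factor $C$, which by Theorem \ref{Siegel Faltings} is a rational curve with $\nu \in \{1, 2\}$ points at infinity. Letting $\tilde C \cong \PP^1$ be the smooth completion of $C$ and writing $\pi_1, \pi_2 \colon \tilde C \to \PP^1$ for the two projections, which satisfy $f \circ \pi_1 = g \circ \pi_2$, I would first exploit the polynomial hypothesis: since $f, g \in \CC[z]$, the fibers $f^{-1}(\infty)$ and $g^{-1}(\infty)$ each consist of a single totally ramified point, so every point of $\tilde C \setminus C$ must sit above $\infty$ under both projections. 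The value $\nu$ then counts how the unique place over $\infty$ splits in $\tilde C$, giving two very restrictive regimes.

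Next I would apply Riemann--Hurwitz to both $\pi_i$. Writing $N = \deg \pi_1$ and $M = \deg \pi_2$, we have
\[ 2N - 2 = \sum_{P \in \tilde C}(e_P(\pi_1) - 1), \qquad 2M - 2 = \sum_{P \in \tilde C}(e_P(\pi_2) - 1). \]
Combined with the commutativity $f \circ \pi_1 = g \circ \pi_2$ and the rigid information at infinity, this forces the critical value sets of $f$ and $g$ to be extremely restricted, with only a small number of finite branch points jointly accounting for the entire Euler characteristic budget. Extracting a maximal common left composition factor $e$ via the rigidity of polynomial decompositions (Ritt's theorem, the polynomial analogue of Proposition \ref{rigidity of Blaschke}), I reduce to the case that $f_1$ and $g_1$ share no nontrivial common left factor, at which point the admissible ramification profiles become essentially enumerable.

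The case analysis then matches each admissible profile against the standard families in Ritt's decomposition theory. Ritt's first type (coprime exponents in a monomial-like composition) with a common single finite critical value gives case (i); the degenerate $m=2$ sub-case, where the second factor acquires an extra totally ramified branch point, produces (ii); Ritt's second type (Chebyshev pair with dihedral monodromy) in the coprime-degree regime produces (iii), and the non-coprime-degree regime with the sign twist $T_n(-z) = (-1)^n T_n(z)$ yields (iv); finally the unique sporadic platonic configuration in degrees $(4, 6)$ produces (v).

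The main obstacle is proving \emph{completeness} of this list. This reduces to the classification of doubly transitive monodromy groups that arise as Galois groups of $f(x) - t$ over $\CC(t)$ for indecomposable polynomials $f$, worked out in definitive form by Fried, M\"uller and Turnwald: one must show that apart from the cyclic and dihedral families, the only admissible groups compatible with the Riemann--Hurwitz budget dictated by a genus-zero cover with $\le 2$ points at infinity are a small handful of sporadic cases, of which only one survives the joint constraint $f \circ \pi_1 = g \circ \pi_2$. Combined with Riemann's existence theorem (Theorem \ref{monodromy}), this rules out every configuration outside (i)--(v). Case (v) in particular demands an explicit verification that the degree $(4, 6)$ platonic profile is realized uniquely (up to linear equivalence on either side) by $\{(z^2 - 1)^3,\, 3z^4 - 4z^3\}$, which is the deepest individual input to the theorem.
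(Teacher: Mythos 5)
The paper does not prove Theorem \ref{bilutichy} at all: it is imported verbatim from Bilu--Tichy \cite{BT00} (``we begin with recalling\dots''), so there is no internal argument to compare yours with; the only question is whether your sketch would itself constitute a proof, and as it stands it does not. The first genuine gap is in your Riemann--Hurwitz step: it is false that the hypothesis forces ``the critical value sets of $f$ and $g$'' to be restricted, because the common left factor $e$ is completely arbitrary --- for $f=e\circ f_1$, $g=e\circ g_1$ with $(f_1,g_1)$ any standard pair, all of the (arbitrarily plentiful) ramification of $e$ cancels in the fibre product, so no constraint on $f$, $g$ themselves can come out of the genus budget. The constraint lives only on the inner pair, and the reduction to it is exactly the hard point: your proposal extracts a \emph{maximal} common left compositional factor by Ritt-type rigidity (the polynomial analogue of Proposition \ref{rigidity of Blaschke}), but that rigidity statement does not tell you that a Siegel factor of $\CC\times_{f,g}\CC$ descends to, or is controlled by, a component of $\CC\times_{f_1,g_1}\CC$ with the same type $(0,\nu)$, $\nu\le 2$. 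In \cite{BT00} this reduction rests on a theorem of Fried producing decompositions $f=e\circ f_1$, $g=e\circ g_1$ for which $f_1(x)-g_1(y)$ is irreducible, together with a separate treatment (Bilu's quadratic-factors analysis) of the degree-two components; none of this is supplied or replaced by your argument.

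The second gap is the completeness claim itself, which is the substance of the theorem. Reducing it to ``the classification of doubly transitive monodromy groups of $f(x)-t$ for indecomposable $f$'' is not accurate and would not suffice even if granted: the standard pairs in (i), (ii) and (v) involve decomposable polynomials (e.g.\ $(z^2-1)^3=z^3\circ(z^2-1)$ and $z^rp(z)^m$), the Fried--M\"uller--Turnwald circle of results addresses reducibility/primitivity questions rather than the ``genus $0$ with at most two points at infinity'' condition, and after any such group-theoretic input one must still classify the admissible \emph{decomposable} inner pairs, which in \cite{BT00} is carried by Ritt's second theorem plus explicit genus and points-at-infinity computations for each candidate family. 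As written, your case analysis asserts the list (i)--(v) rather than deriving it, and the ``only one sporadic case survives'' step (which you correctly identify as the deepest input, yielding (v)) is left entirely unverified. So the proposal outlines a plausible shape of argument but leaves the two central steps --- Fried's reduction and the exhaustiveness of the standard pairs --- unproved.
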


In this section we shall prove

\begin{theorem}\label{Faltings factor}
If the curve $\PP^1\mathopen \times_{f,\, g}\mathclose \PP^1$
defined by $\{f, g\} \hspace{-0.7mm} \subset\hspace{-0.7mm}
\mathrm{End}(\EE)$ has a Faltings factor then $f$ and $g$ admit the
following factorizations
\begin{eqnarray*}
f\hspace{-0.7mm} =\hspace{-0.7mm}e\hspace{-0.7mm}
\circ\hspace{-0.7mm} f_1\hspace{-0.7mm} \circ\hspace{-0.7mm}
\varepsilon, \ \ \ g\hspace{-0.7mm} =\hspace{-0.7mm}
e\hspace{-0.7mm} \circ\hspace{-0.7mm} g_1\hspace{-0.7mm}
\circ\hspace{-0.7mm} \epsilon
\end{eqnarray*}
in $(\mathrm{End}(\EE), \circ)$ where $\{\varepsilon, \epsilon\}
\hspace{-0.7mm} \subset\hspace{-0.7mm}\mathrm{Aut}(\EE)$ and there
exist positive integers $m, n$ and $p\hspace{-0.7mm}
\in\hspace{-0.7mm}\mathrm{End}(\EE)\hspace{-0.7mm}
\cup\hspace{-0.7mm}\{1\}$ such that $\{f_1, g_1\}$ falls into one of
the following cases:
\begin{enumerate}
\item[(i)] $\{z^m, z^rp(z)^m\}$ with $r \ge 1$ and $(r, m) = 1;$
\item[(ii)] $\{z^2, z (z\hspace{-0.5mm} -\hspace{-0.5mm}
a)/(1\hspace{-0.5mm} -\hspace{-0.5mm} \overline{a}z)p(z)^2\}$ with
$a\hspace{-0.7mm}  \in\hspace{-0.7mm}
\EE\mathopen\setminus\mathclose\{0\};$
\item[(iii)] $\{\calT_{m,\, nt}, \calT_{n,\, mt}\}$ with
$t\hspace{-0.5mm}
>\hspace{-0.5mm} 0,$ $m \ge 3, n \ge 3$ and $(m, n)\hspace{-0.5mm}
=\hspace{-0.5mm} 1;$
\item[(iv)] $\{\calT_{m, \, nt}, -\hspace{-0.5mm}\calT_{n,\, mt}\}$
with $t\hspace{-0.5mm} >\hspace{-0.5mm} 0, $ $m \ge 3, n \ge 3$ and
$(m, n)\hspace{-0.5mm}
>\hspace{-0.5mm} 1;$
\item[(v)] $\{((z^2\hspace{-0.5mm} -\hspace{-0.5mm}
a^2)/(1\hspace{-0.5mm} -\hspace{-0.5mm} \overline{a}^2z^2))^3, z^3
(z\hspace{-0.5mm} -\hspace{-0.5mm} b)/(1\hspace{-0.5mm}
-\hspace{-0.5mm} \overline{b}z)\}$ where $a, b$ are points in $\EE$
and $a, b,  \ol{a\bPh}, \overline{b}$ satisfy an algebraic relation.
\end{enumerate}
\end{theorem}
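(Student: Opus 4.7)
The plan is to reduce Theorem~\ref{Faltings factor} to the polynomial Bilu--Tichy criterion (Theorem~\ref{bilutichy}) via the Main Lemma, and then lift the resulting factorizations from $(\mathrm{End}(\CC),\circ)$ back to $(\mathrm{End}(\EE),\circ)$. Fix a topological embedding $i\in\mathrm{Hom}_{\mathscr{C}_t}(\EE,\CC)$ and choose $f$- and $g$-liftings $j_1,j_2$ of $i$; the descents $F:=(j_1,i)_{*}f$ and $G:=(j_2,i)_{*}g$ are finite self-maps of $\CC$, hence polynomials. A Faltings factor of $X^{\vee}=\PP^1\times_{f,g}\PP^1$ corresponds, by the Main Lemma, to a Siegel factor of $X_{*}=\CC\times_{F,G}\CC$, so Theorem~\ref{bilutichy} supplies $F=e\circ F_1\circ\varepsilon$ and $G=e\circ G_1\circ\epsilon$ with $\{F_1,G_1\}$ falling into one of its five cases.

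\textbf{Lifting through the descent.} Each polynomial factor must then be transported back to a Blaschke product. Any factorization $F=a\circ b$ in $(\mathrm{End}(\CC),\circ)$ produces an intermediate subgroup of the monodromy of $F$, which by Lemma~\ref{S} applied to the same monodromy data for $f$ yields an intermediate covering $\EE\to\ast\to\EE$ of $f$; Lemma~\ref{disk and complex plane} identifies the intermediate surface with $\EE$, so we obtain a Blaschke factorization $f=a'\circ b'$. Applying this to both $F=e\circ F_1$ and $G=e\circ G_1$, the uniqueness in Corollary~\ref{deformation, uniqueness} guarantees that the lifts of the common polynomial left factor $e$ coincide, producing $f=e'\circ f_1\circ\varepsilon'$ and $g=e'\circ g_1\circ\epsilon'$ in $(\mathrm{End}(\EE),\circ)$, where $f_1,g_1$ are Blaschke liftings of $F_1,G_1$.

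\textbf{Identifying the five cases.} Cases (i)--(ii) are straightforward: a Blaschke lifting of $z^m$ is totally ramified, and after absorbing an automorphism of $\EE$ into $\varepsilon'$ it becomes $z^m$ itself, which forces the claimed form on the partner (in case (ii) the degree-$2$ polynomial $z^2+1$ has two critical values, and its Blaschke lift is a degree-$2$ Blaschke product, normalized to $z(z-a)/(1-\overline{a}z)$). Cases (iii)--(iv) proceed by matching monodromies: $T_m, T_n$ and the Chebyshev--Blaschke products $\mathcal{T}_{m,s}, \mathcal{T}_{n,s'}$ both realise the Chebyshev representation $\rho_T$, so Theorem~\ref{monodromy} together with Proposition~\ref{nomalized Chebyshev-Blaschke products,uniqueness} identifies the Blaschke liftings with some $\mathcal{T}_{m,s}, \mathcal{T}_{n,s'}$. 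The existence of the common left factor then pins $s=nt$ and $s'=mt$ via the nesting Proposition~\ref{nesting general} and the moduli uniqueness of Corollary~\ref{linearequivalence}; the sign in case (iv) passes through unchanged from Bilu--Tichy.

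\textbf{Main obstacle.} The technical heart of the proof will be case (v). The polynomials $(z^2-1)^3$ and $3z^4-4z^3$ share the critical-value set $\{0,-1\}$, and parametrising all Blaschke liftings with the same ramification profile yields families $((z^2-a^2)/(1-\overline{a}^2 z^2))^3$ and $z^3(z-b)/(1-\overline{b}z)$ with $a,b\in\EE$. The algebraic relation between $a,\overline{a},b,\overline{b}$ asserted in the statement is precisely the condition that these two lifts share a common left Blaschke factor, equivalently that the critical values they determine on $\TT$ arrange into compatible pairs under $\mathrm{Aut}(\EE)$. Extracting this explicit relation through careful Möbius-geometric analysis, and verifying throughout that the Bilu--Tichy common factor $e$ on $\CC$ descends to a genuine element of $\mathrm{End}(\EE)$ rather than merely of $\mathrm{End}(\CC)$, will be the most delicate point.
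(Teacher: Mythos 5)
Your plan is essentially the paper's own proof: the Main Lemma (Lemma \ref{Siegel-Faltings}) converts the Faltings factor into a Siegel factor of $\CC\mathopen\times_{\overline{f},\,\overline{g\bPh}}\mathclose\CC$, Bilu--Tichy (Theorem \ref{bilutichy}) is applied there, the factorizations are carried back to $(\mathrm{End}(\EE),\circ)$ by the lifting/descent machinery of Section \ref{section descent} (the paper uses Proposition \ref{deformation of one factorization, converse} with one fixed lifting of $i^{-1}$, which makes the common left factor $e$ automatic, while your route through Lemma \ref{S}, Lemma \ref{disk and complex plane} and Corollary \ref{deformation, uniqueness} yields it only up to an automorphism of $\EE$, harmlessly absorbed in the normalization step), and the five cases are then identified from local ramification data, the Chebyshev monodromy together with Proposition \ref{nomalized Chebyshev-Blaschke products,uniqueness}, and in case (v) the coincidence of the remaining critical value, exactly as you describe. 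The one cosmetic slip is in cases (iii)--(iv): writing $f_1\sim\mathcal{T}_{m,\,s}$ and $g_1\sim\mathcal{T}_{n,\,s'}$, the constraint $ms=ns'$ (hence $s=nt$, $s'=mt$) comes from the fact that both lifts share the same pair of critical values and $\gamma$ is injective, not from the nesting property of Proposition \ref{nesting general}; this is the same normalization the paper performs via (\ref{replacement}).
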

\begin{proof}
Follow the notation used before Lemma \ref{Siegel-Faltings} and
write $\overline{f}\hspace{-0.7mm} :=\hspace{-0.7mm}(j_1,\, i)_{*}f,
\, \overline{g}\hspace{-0.7mm} :=\hspace{-0.7mm}(j_2,\, i)_{*}g$. By
definition we have $ f\hspace{-0.7mm} =\hspace{-0.5mm} i\!^{-\!1}
\hspace{-0.7mm}\circ\hspace{-0.5mm} \overline{f}\hspace{-0.7mm}
\circ\hspace{-0.5mm} j_1,\, g\hspace{-0.7mm} =\hspace{-0.5mm}
i\!^{-\!1}\hspace{-0.7mm} \circ\hspace{-0.5mm}
\overline{g}\hspace{-0.7mm} \circ\hspace{-0.5mm} j_2 $, which also
means that $j\hspace{-0.1mm}^{-\!1}_1$ is a $\overline{f}$-lifting
of $i\!^{-\!1}$. If $\PP^1\mathopen \times_{f,\, g}\mathclose \PP^1$
has a Faltings factor then by Lemma \ref{Siegel-Faltings} the curve
$\CC\mathopen \times_{\overline{f}, \, \overline{g\bPh}}\,\mathclose
\CC$ has a Siegel factor. By Bilu-Tichy's Criterion there exist
$\{{\displaystyle \overline{\varepsilon}, \overline{\epsilon}}\}
\hspace{-0.7mm} \subset\hspace{-0.7mm} \mathrm{Aut}(\CC)$ such that
$\overline{f}, \, \overline{g\bPh}$ admit one of the following
factorizations in $(\mathrm{End}(\CC),\circ)$:

{\it (i)}  $\overline{f} \hspace{-0.7mm} =\hspace{-0.7mm}
\overline{e\bPh}\hspace{-0.7mm} \circ z^m\hspace{-0.7mm}
\circ\hspace{-0.7mm} \overline{\varepsilon\bPh}, \ \ \
\overline{g\bPh}\hspace{-0.7mm} =\hspace{-0.7mm}
\overline{e\bPh}\hspace{-0.7mm} \circ\hspace{-0.7mm}
z^r\overline{p\bPh}(z)^m\hspace{-0.7mm} \circ\hspace{-0.7mm}
\overline{\epsilon\bPh}$.

Let $i_1$ be a $\overline{e}$-lifting of $i\!^{-\!1}$ and $i_2$ a
$z^m$-lifting of $i_1$. By Proposition \ref{deformation of one
factorization, converse} and an induction argument,
$j\hspace{-0.1mm}^{-\!1}_1$ is a $\overline{\varepsilon}$-lifting of
$i_2$, and then $ f\mathopen =\mathclose e\mathopen \circ\mathclose
f_1\mathopen \circ\mathclose \varepsilon $ is a relation in
$(\mathrm{End}(\EE),\circ)$ where $e, f_1$ and $\varepsilon$ are
obtained by the following commutative diagram.
\begin{displaymath}
\xymatrix{
\CC \ar@/^1pc/[rrr]^{\overline{f}} \ar[d]^{j\hspace{-0.1mm}^{-\!1}_1}  \ar[r]_{\overline{\varepsilon}}& \CC \ar[d]^{i_2} \ar[r]_{z^m} &  \CC \ar[d]^{i_1} \ar[r]_{\overline{e}} &  \CC \ar[d]^{i\!^{-\!1}}\\
\EE \ar@/_1pc/[rrr]_{f}   \ar[r]^{\varepsilon}    & \EE \ar[r]^{f_1}
& \EE  \ar[r]^{e} & \EE}
\end{displaymath}
Similarly if $i_2'$ is a $z^r\overline{p}(z)^m$-lifting of $i_1$
then $ g \mathopen =\mathclose e\mathopen \circ\mathclose
g_1\mathopen \circ\mathclose \epsilon $ is also a relation in
$(\mathrm{End}(\EE),\circ)$ according to the following commutative
diagram.
\begin{displaymath}
\xymatrix{
\CC \ar@/^1pc/[rrr]^{\overline{g}} \ar[d]^{j\hspace{-0.1mm}^{-\!1}_2}  \ar[r]_{\overline{\epsilon}}& \CC \ar[d]^{i_2'} \ar[r]_{z^r\overline{p}(z)^m} &  \CC \ar[d]^{i_1} \ar[r]_{\overline{e}} &  \CC \ar[d]^{i\!^{-\!1}}\\
\EE \ar@/_1pc/[rrr]_{g}   \ar[r]^{\epsilon}    & \EE \ar[r]^{g_1} &
\EE  \ar[r]^{e} & \EE}
\end{displaymath}
Write $\mathfrak{p}\hspace{-0.6mm} =\hspace{-0.6mm} i_1(0),
\mathfrak{r}\hspace{-0.6mm} =\hspace{-0.6mm} i_2(0)$ and
$\mathfrak{q}\hspace{-0.6mm} =\hspace{-0.6mm} i_2'(0)$. The map
$f_1$ is totally ramified over $\mathfrak{p}$ with $\mathfrak{r}$
above, and $ (g_1)_{\mathfrak{p}}\hspace{-0.6mm} \equiv
\hspace{-0.6mm} r (\mathfrak{q}) \pmod{m} $. Choosing suitable
$\iota_i$ in $\mathrm{Aut}(\EE)$ and substituting \bgroup
\arraycolsep=1pt
\begin{eqnarray}\label{replacement}
\nonumber e &\mapsto& e\hspace{-0.7mm} \circ\hspace{-0.7mm}
\iota\hspace{-0.1mm}^{-\!1}_1\hspace{-0.7mm},\\
\nonumber  \varepsilon &\mapsto& \iota_2\hspace{-0.7mm}
\circ\hspace{-0.7mm}
\varepsilon,\\
\epsilon \, &\mapsto& \iota_3\hspace{-0.7mm} \circ\hspace{-0.7mm}
\epsilon,\\
\nonumber  f\hspace{-0.6mm}_1 &\mapsto&
\iota\hspace{-0.15mm}_1\hspace{-0.7mm} \circ\hspace{-0.7mm}
f_1\hspace{-0.7mm} \circ\hspace{-0.7mm}
\iota\hspace{-0.1mm}^{-\!1}_2\hspace{-0.7mm},\\
\nonumber  g\hspace{-0.2mm}_1 &\mapsto&
\iota\hspace{-0.15mm}_1\hspace{-0.7mm} \circ\hspace{-0.7mm}
g_1\hspace{-0.7mm} \circ\hspace{-0.7mm}
\iota\hspace{-0.1mm}^{-\!1}_3\hspace{-0.7mm}
\end{eqnarray}
\egroup \noindent we may assume that $\mathfrak{p}\hspace{-0.7mm}
=\hspace{-0.7mm} \mathfrak{r} \hspace{-0.7mm} =\hspace{-0.7mm}
\mathfrak{q}\hspace{-0.7mm} =\hspace{-0.7mm} 0$, and this leads to
the desired assertion.

{\it (ii)} $\overline{f}\hspace{-0.7mm} =\hspace{-0.7mm}
\overline{e}\hspace{-0.7mm} \circ\hspace{-0.6mm} z^2\hspace{-0.7mm}
\circ\hspace{-0.7mm} \overline{\varepsilon}, \ \ \
\overline{g}\hspace{-0.7mm} =\hspace{-0.6mm}
\overline{e}\hspace{-0.7mm} \circ\hspace{-0.7mm}
(z^2\hspace{-0.7mm}+\hspace{-0.7mm}1)p(z)^2\hspace{-0.7mm}
\circ\hspace{-0.7mm} \overline{\epsilon}.$

By arguments similar to that in the proof of previous case we obtain
the following relations $f\hspace{-0.7mm} =\hspace{-0.7mm}
e\hspace{-0.7mm} \circ\hspace{-0.7mm} f_1\hspace{-0.7mm}
\circ\hspace{-0.7mm} \varepsilon$, $g\hspace{-0.7mm}
=\hspace{-0.7mm} e\hspace{-0.7mm} \circ\hspace{-0.7mm}
g_1\hspace{-0.7mm} \circ\hspace{-0.7mm} \epsilon$ in
$(\mathrm{End}(\EE),\circ)$ in which $f_1$ is totally ramified over
some $\mathfrak{p}$ and $ (g_1)_{\mathfrak{p}}\hspace{-0.7mm}
\equiv\hspace{-0.7mm} (\mathfrak{q})\hspace{-0.7mm} +\hspace{-0.7mm}
(\mathfrak{r}) \pmod{2} $ for some distinct points $\mathfrak{q},
\mathfrak{r}$ in $\EE.$ Choosing suitable $\iota_i$ in
$\mathrm{Aut}(\EE)$ and substituting as in (\ref{replacement}) we
may assume that $\mathfrak{p} \hspace{-0.7mm} =\hspace{-0.7mm}
\mathfrak{q}\hspace{-0.7mm} =\hspace{-0.7mm} 0,
\mathfrak{r}\hspace{-0.7mm} =\hspace{-0.7mm} a$, and this implies
our desired assertion.

{\it (iii)} $\overline{f}\hspace{-0.7mm} =\hspace{-0.7mm}
\overline{e} \hspace{-0.7mm}\circ\hspace{-0.7mm} T_m\hspace{-0.7mm}
\circ\hspace{-0.7mm} \overline{\varepsilon}, \ \ \
\overline{g}\hspace{-0.7mm} =\hspace{-0.7mm}
\overline{e}\hspace{-0.7mm} \circ\hspace{-0.7mm} T_n\hspace{-0.7mm}
\circ\hspace{-0.7mm} \overline{\epsilon}$ with $(m,
n)\hspace{-0.7mm} =\hspace{-0.7mm} 1.$

By arguments similar to that in the proof of case {\it (i)} we may
obtain the following relations $f\hspace{-0.7mm} =\hspace{-0.7mm}
e\hspace{-0.7mm} \circ\hspace{-0.7mm} f_1\hspace{-0.7mm}
\circ\hspace{-0.7mm} \varepsilon$ and $g\hspace{-0.7mm}
=\hspace{-0.7mm} e\hspace{-0.7mm} \circ\hspace{-0.7mm}
g_1\hspace{-0.7mm} \circ\hspace{-0.7mm} \epsilon$ in
$(\mathrm{End}(\EE),\circ)$ where $f_1, g_1$ are both unramified
outside $\{\mathfrak{p}, \mathfrak{q}\}$ for some distinct points
$\mathfrak{p}, \mathfrak{q}$ in $\EE$ and their monodromy are
Chebyshev representation. By Proposition \ref{nomalized
Chebyshev-Blaschke products,uniqueness}, after substituting as in
(\ref{replacement}) for suitable $\iota_i$ chosen from
$\mathrm{Aut}(\EE)$ we will have $f_1\hspace{-0.7mm}
=\hspace{-0.7mm} \mathcal{T}_{m, \, nt}$ and $g_1\hspace{-0.7mm}
=\hspace{-0.7mm} \mathcal{T}_{n, \, mt}$ as desired.

{\it (iv)} $\overline{f}\hspace{-0.7mm} =\hspace{-0.7mm}
\overline{e}\hspace{-0.7mm} \circ\hspace{-0.7mm} T_m\hspace{-0.7mm}
\circ\hspace{-0.7mm} \overline{\varepsilon}, \ \ \
\overline{g}\hspace{-0.7mm} =\hspace{-0.7mm}
\overline{e}\hspace{-0.7mm} \circ\hspace{-0.7mm}
-\!T_n\hspace{-0.6mm} \circ\hspace{-0.7mm} \overline{\epsilon}$ with
$(m, n)\hspace{-0.6mm}
>\hspace{-0.6mm} 1.$

We may apply arguments similar to that in the proof of Case {\it
(iii)}.

{\it (v)} $\overline{f}\hspace{-0.7mm} =\hspace{-0.7mm}
\overline{e}\hspace{-0.7mm} \circ\hspace{-0.7mm} (z^2\hspace{-0.5mm}
-\hspace{-0.7mm} 1)^3\hspace{-0.7mm} \circ\hspace{-0.7mm}
\overline{\varepsilon}, \ \ \ \overline{g}\hspace{-0.7mm}
=\hspace{-0.7mm} \overline{e}\hspace{-0.7mm} \circ\hspace{-0.7mm}
(3z^4\hspace{-0.7mm} -\hspace{-0.7mm} 4z^3)\hspace{-0.7mm}
\circ\hspace{-0.7mm} \overline{\epsilon}$.

We first notice that $(z^2\hspace{-0.7mm} -\hspace{-0.7mm} 1)^3$
takes $-1$ and $0$ as critical values, $\pm 1$ over 0 and $0$ over
$-\!1$ with ramification index $e_{\pm1}\hspace{-0.7mm}
=\hspace{-0.7mm} 3$ and $e_0\hspace{-0.7mm} =\hspace{-0.7mm} 2$.
Moreover $3z^4\hspace{-0.7mm} -\hspace{-0.7mm} 4z^3$ takes also $-1$
and $0$ as critical values, 0 over 0 and 1 over $-\!1$ with
$e_0\hspace{-0.5mm} =\hspace{-0.5mm} 3$ and $e_1\hspace{-0.5mm}
=\hspace{-0.5mm} 2$. By arguments similar to that in the proof of
case {\it (i)} we obtain the following relations $f\hspace{-0.7mm}
=\hspace{-0.7mm} e\hspace{-0.7mm} \circ\hspace{-0.7mm}
f_1\hspace{-0.7mm} \circ\hspace{-0.7mm} \varepsilon$,
$g\hspace{-0.7mm} =\hspace{-0.7mm} e\hspace{-0.7mm}
\circ\hspace{-0.7mm} g_1\hspace{-0.7mm} \circ\hspace{-0.7mm}
\epsilon$ in $(\mathrm{End}(\EE),\circ)$, where $f_1$ admits two
points $\mathfrak{q}, \mathfrak{r}$ ramified over some point
$\mathfrak{p}$ with $e_{\mathfrak{q}}\hspace{-0.7mm}
=\hspace{-0.7mm} e_{\mathfrak{r}}\hspace{-0.7mm} =\hspace{-0.7mm} 3$
and $g_1$ admits a point $\mathfrak{s}$ ramified over $\mathfrak{p}$
with $e_{\mathfrak{s}}\hspace{-0.7mm} =\hspace{-0.7mm}3$. Making a
replacement as in (\ref{replacement}) for well-chosen $\iota_i$ in
$\mathrm{Aut}(\EE)$ we may assume that $\mathfrak{p}\hspace{-0.7mm}
=\hspace{-0.7mm} \mathfrak{s}\hspace{-0.7mm} =\hspace{-0.7mm} 0,
\mathfrak{q}\hspace{-0.7mm} =\hspace{-0.7mm} -\! \mathfrak{r}$ which
gives the desired $f_1$ and $g_1$. The algebraic relation is given
by the coincidence of another critical value of $f_1$ and $g_1$ .
\end{proof}

In case{\it (i)} if $g_1$ is totally ramified with $m
\hspace{-0.7mm} \ge \hspace{-0.7mm} 2$ then one checks readily that
$g_1$ is ramified over $0$. After modifying $\epsilon$ we can assume
$\{f_1, g_1\} \hspace{-0.7mm} = \hspace{-0.7mm} \{z^m, z^t\}$.

\section{A result on heights}\label{section heights}
In this section we shall prove Theorem \ref{equal degree, general
case} by comparing the logarithmic naive height and Call-Siverman's
canonical height. The key ingredient of the proof is a recent
theorem of M. Baker \cite{B09}.

Given a global field $E$ we write $M_{E}$ for the set of normalized
absolute values. Because the Picard group of $\PP^1$ is $\ZZ$, it is
clear that for any $\iota \in \mathrm{Aut}_{\overline{E}}(\PP^1)$
there exists a positive constant $c$ such that for all $x$ in
$\PP^1(\overline{E})$ we have $| h(\iota(x)) - h(x) | \leq c$.

Given $f\in \mathrm{End}(\PP^1)$ that is defined over $E$ the
canonical height $\hat{h}_f(z)$ satisfies $\hat{h}_{f}(f^k(z)) =
(\deg{f})^k\, \hat{h}_f(z)$, $|h(z) - \hat{h}_f(z)|$ is uniformly
bounded and in case $E$ is a number field then $z$ is preperiodic if
and only if $\hat{h}_f(z) = 0.$

Let $E$ be a function field. We call $g\hspace{-0.7mm}
\in\hspace{-0.7mm} E(x)$ {\it isotrivial} if there is a finite
extension $E'$ of $E$ and $\iota\hspace{-0.7mm} \in\hspace{-0.7mm}
\mathrm{Aut}_{E'}(\PP)$ such that $\iota\hspace{-0.7mm}
\circ\hspace{-0.7mm} g\hspace{-0.7mm} \circ\hspace{-0.7mm}
\iota\!^{-\!1}$ is defined over the field of constants.

\begin{theorem}[M. Baker]
If $E$ is a function field and if $f\hspace{-0.7mm}
\in\hspace{-0.7mm}E(\PP^1)\mathopen\setminus\mathclose
\mathrm{Aut}_{E}(\PP^1)$ is non-isotrivial then a point
$z\hspace{-0.7mm} \in\hspace{-0.7mm}\PP^1(\overline{E})$ is
preperiodic if and only if $\hat{h}_f(z) \hspace{-0.7mm}
=\hspace{-0.7mm} 0$.
\end{theorem}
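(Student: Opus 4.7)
The plan is to prove the two directions separately. The ``only if'' direction is formal and does not need the non-isotriviality hypothesis: if $z$ is preperiodic, then the forward orbit $\{f^n(z)\}_{n \ge 0}$ is finite, so $\{\hat{h}_f(f^n(z))\}$ is bounded; combining this with the functional equation $\hat{h}_f(f^n(z)) = (\deg f)^n \hat{h}_f(z)$ and $\deg f \ge 2$ forces $\hat{h}_f(z) = 0$.

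For the ``if'' direction, I would work place by place. Using the standard construction of the canonical height over a global field, decompose
\[
\hat{h}_f(z) \;=\; \sum_{v \in M_E} N_v \, \hat{\lambda}_{f,v}(z),
\]
where each $\hat{\lambda}_{f,v}$ is a local canonical height on the Berkovich projective line $\mathbf{P}^{1,\mathrm{an}}_{E_v}$ over the completion $E_v$. These local heights are the potentials of the dynamical equilibrium measures $\mu_{f,v}$ and, up to a uniform normalization, are nonnegative. Hence $\hat{h}_f(z) = 0$ forces the adelic orbit of $z$ to lie, at every place $v$, in the support of $\mu_{f,v}$.

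Next I would argue by contradiction: suppose $z \in \mathbf{P}^1(\overline{E})$ has $\hat{h}_f(z) = 0$ but is \emph{not} preperiodic. Then the $\mathrm{Gal}(\overline{E}/E)$-orbit of $z$ is infinite, and one extracts an infinite sequence of distinct Galois-conjugates $\{z_i\}$ with $\hat{h}_f(z_i) = 0$. Apply the arithmetic equidistribution theorem on $\mathbf{P}^{1,\mathrm{an}}_{E_v}$ (Baker--Rumely, Chambert-Loir, Favre--Rivera-Letelier): the uniform probability measures on the Galois orbits of $z_i$ converge weakly to $\mu_{f,v}$ at every place $v$ simultaneously. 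This imposes a very rigid simultaneous support condition.

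Finally, invoke non-isotriviality to contradict this condition. The key point is that when $f$ is not conjugate over $\overline{E}$ to a map defined over the field of constants, there exists a place $v$ of ``bad reduction'' at which $\mu_{f,v}$ is \emph{not} the Dirac mass at the Gauss point of $\mathbf{P}^{1,\mathrm{an}}_{E_v}$: the Julia set genuinely ``spreads out'' at $v$. Combined with the equidistribution statement, this is incompatible with a single Galois orbit of small points (which lives over a single type-II point coming from a model of $\mathbf{P}^1_{E}$ above $v$). The contradiction forces the Galois orbit of $z$ to be finite, and since $f$ is defined over $E$, the forward orbit $\{f^n(z)\}$ is then finite as well, i.e. $z$ is preperiodic.

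The main obstacle is the last step: extracting from mere non-isotriviality a concrete and usable analytic statement about $\mu_{f,v}$ at some place $v$. Showing that non-isotriviality is detected by the support of the equilibrium measure on the Berkovich line --- and in a manner quantitative enough to obstruct equidistribution of height-zero orbits --- is the core technical contribution, and relies on the full machinery of Berkovich dynamics and potential theory that is absent for isotrivial maps (where constants in the fiber provide infinitely many height-zero non-preperiodic points, e.g.\ any non-root-of-unity constant under $z \mapsto z^2$ over $k(t)$).
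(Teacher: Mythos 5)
You should first note that the paper contains no proof of this statement: it is quoted as M.~Baker's theorem from \cite{B09} and used as a black box in Section~\ref{section heights}, so your proposal can only be measured against Baker's published argument, not against anything internal to the paper. Measured that way, there are two genuine gaps. The first is the step ``$z$ not preperiodic $\Rightarrow$ the $\mathrm{Gal}(\overline{E}/E)$-orbit of $z$ is infinite,'' which is simply false: a non-preperiodic point may lie in $\PP^1(E)$ and have trivial Galois orbit, and ruling out exactly such points is the heart of the theorem. The correct source of infinitely many distinct points of canonical height zero is the forward orbit $\{f^n(z)\}_{n\ge 0}$, since $\hat{h}_f(f^n(z))=(\deg f)^n\,\hat{h}_f(z)=0$ and these points are pairwise distinct precisely when $z$ is not preperiodic; this is also why the statement for $\overline{E}$-points reduces to a finiteness statement for points rational over the finite extension $E(z)$.

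The second and more serious gap is the final step, which you yourself flag as open: knowing that at some place $v$ the equilibrium measure $\mu_{f,v}$ is not a point mass at the Gauss point does not by itself contradict equidistribution of Galois orbits of height-zero points, and the assertion that such a Galois orbit ``lives over a single type-II point coming from a model'' is unjustified --- Galois orbits of algebraic (type-I) points can perfectly well equidistribute to a spread-out measure. So as written the argument proves nothing beyond the trivial direction. Baker's actual proof is not an equidistribution argument: he establishes a quantitative finiteness theorem --- for non-isotrivial $f$ there exists $\varepsilon>0$ such that $\{P\in\PP^1(E):\hat{h}_f(P)\le\varepsilon\}$ is finite --- by extracting from non-isotriviality a place of genuinely bad (not potentially good) reduction and running a non-archimedean potential-theoretic argument (capacities/transfinite diameters of filled Julia sets on the Berkovich line) together with the product formula; the biconditional then follows by applying this finiteness over $E(z)$ to the forward orbit of $z$. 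To repair your sketch you would either have to supply a genuine mechanism turning ``$\mu_{f,v}$ is not a point mass'' plus equidistribution into a contradiction, or switch to Baker's capacity-theoretic route. Finally, your claim that the local canonical heights are nonnegative ``up to a uniform normalization,'' so that $\hat{h}_f(z)=0$ pins the orbit into the support of every $\mu_{f,v}$, is not correct as stated for general rational maps and would itself need an argument; only the easy direction (preperiodic $\Rightarrow$ height zero) of your proposal is complete.
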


This theorem is crucial for the proof of the following

\begin{theorem}\label{equal degree, general case}
Let $\{f, g\} \hspace{-0.7mm} \subset\hspace{-0.7mm}\CC(z)$  and let
$\{x_0, y_0\}\hspace{-0.7mm} \subset\hspace{-0.7mm}\PP^1$. If
$\calO_{f\mathopen \times\mathclose g}(x_0, y_0)$ has infinitely
many points on the diagonal of $\PP^1\mathopen\times\mathclose\PP^1$
then $\deg{f}\hspace{-0.7mm} =\hspace{-0.7mm} \deg{g}$.
\end{theorem}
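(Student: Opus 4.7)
The plan is to compare the growth of $h(f^n(x_0))$ and $h(g^n(y_0))$ on the infinite index set $S := \{n \in \NN : f^n(x_0) = g^n(y_0)\}$, via canonical heights. Suppose for contradiction that $d_1 := \deg f \neq d_2 := \deg g$, say $d_1 > d_2$, so $d_1 \ge 2$. Fix a finitely generated subfield $E \subset \CC$ containing the coefficients of $f, g$ and the coordinates of $x_0, y_0$, and write $z_n := f^n(x_0) = g^n(y_0)$ for $n \in S$. The hypothesis that $\calO_{f\times g}(x_0,y_0)$ meets the diagonal in infinitely many points means $\{z_n\}_{n\in S}$ is infinite, so neither $\calO_f(x_0)$ nor $\calO_g(y_0)$ is finite.

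First I would handle the case $d_2 \ge 2$. Applying the functional equations for canonical heights yields
$$
\hat h_f(z_n) = d_1^{\,n}\, \hat h_f(x_0), \qquad \hat h_g(z_n) = d_2^{\,n}\, \hat h_g(y_0),
$$
while $|\hat h_f - h|$ and $|\hat h_g - h|$ are globally bounded on $\PP^1(\overline E)$. The next step is to show $\hat h_f(x_0) > 0$ and $\hat h_g(y_0) > 0$: if, say, $\hat h_f(x_0) = 0$, then by Northcott's theorem when $E$ is a number field, and by M.~Baker's theorem cited above when $E$ is a function field and $f$ is non-isotrivial, $x_0$ is $f$-preperiodic, so $\calO_f(x_0)$ is finite, a contradiction. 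With both canonical heights strictly positive, combining the two expressions for $z_n$ with the bounded-difference property gives
$$
\bigl| d_1^{\,n}\, \hat h_f(x_0) - d_2^{\,n}\, \hat h_g(y_0) \bigr| = O(1),
$$
which is absurd as $n \to \infty$ through $S$ since $d_1 > d_2$.

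When $d_2 = 1$, $g$ is a Möbius transformation, so $h(g^n(y_0))$ is bounded in the number-field case (and grows only linearly in $n$ in the function-field case), whereas the same preperiodicity dichotomy forces $\hat h_f(x_0) > 0$ and makes $h(f^n(x_0)) \sim d_1^{\,n}\, \hat h_f(x_0)$ grow exponentially; equating these heights along $n \in S$ again contradicts $d_1 \ge 2$.

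The main obstacle is the isotrivial function-field case, where Baker's theorem does not apply directly. The intended remedy is to pass to a finite extension of $E$ and conjugate $f$ by an automorphism $\iota$ of $\PP^1$ so that $\iota \circ f \circ \iota^{-1}$ is defined over the field of constants of $E$; since $f^n(x_0) = \iota^{-1} \bigl((\iota \circ f \circ \iota^{-1})^n(\iota(x_0))\bigr)$, the orbit problem descends to a constant-coefficient dynamical system, reducing the analysis to the number-field regime where classical Northcott suffices. Applying the same descent to $g$ when needed, the argument above then runs uniformly and completes the proof.
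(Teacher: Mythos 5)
Your overall height-comparison framework matches the paper's: reduce to a finitely generated field of definition, use the dichotomy ``canonical height zero iff preperiodic'' (Northcott over number fields, Baker over function fields for non-isotrivial maps), and contradict $\deg f \neq \deg g$ by comparing exponential growth rates along the indices where the orbit meets the diagonal. Two small remarks on that part: for a M\"obius $g$ over a number field the height $h(g^n(y_0))$ is in general only $O(n)$, not bounded (e.g.\ $g(z)=z+1$), though linear growth still suffices; and you only need positivity of the canonical height of the \emph{larger-degree} map, not of both, which matters because it is exactly the map that may be isotrivial.

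The genuine gap is the isotrivial function-field case, which is where essentially all the work in the paper's proof lies, and your proposed remedy does not work as stated. Conjugating the isotrivial map so that it is defined over the field of constants does not make ``the orbit problem descend to a constant-coefficient dynamical system'': the other map and the points $x_0, y_0$ remain, in general, genuinely non-constant over the function field, there is no single conjugation rendering both maps (let alone the points) constant, and the constant field need not be a number field, so ``classical Northcott'' is not available there either. The paper instead structures the entire proof as an induction on the transcendence degree of a field of definition $k$ over $\QQ$: it writes $k$ as the function field of a curve $X$ over a subfield $k'$ with $\mathrm{tr.deg}(k/k')=1$, applies Baker's theorem when the larger-degree map is non-isotrivial, and in the isotrivial case (after conjugating that map to have constant coefficients) splits according to whether the common point $x$ lies in $\overline{k'}$. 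If $x \in \overline{k'}$, it specializes the other map at a point $\alpha \in X(\overline{k'})$ of good reduction; non-preperiodicity of $x$ for the constant map guarantees the specialized orbit still meets the diagonal infinitely often, and the induction hypothesis finishes. If $x \notin \overline{k'}$, it either compares degrees of $f^m(x)$ and $g^m(x)$ as functions on $X$ (one grows like $d\deg^m$ of the constant map, the other is at most of order $(d+em)$ times $\deg^m$ of the smaller-degree map), or specializes at $\alpha$ with $x_\alpha$ a non-preperiodic constant and again invokes induction. Some specialization-plus-induction argument of this kind (or an equivalent substitute) is required; as written, your final paragraph asserts the conclusion of the hard case rather than proving it.
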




In \cite[p.478]{GTZ08} the authors announced that they can prove
Theorem \ref{equal degree, general case} for polynomials via
Benedetto's theorem together with many other results from polynomial
dynamics. Based on some idea of Ghioca-Tucker-Zieve we invoke only
M. Baker's theorem to prove the above theorem by induction on the
transcendental degree of a field of definition of $f, g, x_0, y_0$
over $\QQ$. We start with

\begin{lemma}\label{equal degree, number field case}
Let $k$ be a number field, $\{f, g\}\hspace{-0.7mm}
\subset\hspace{-0.7mm}k(\PP^1)$ and $\{x_0, y_0\}\hspace{-0.7mm}
\subset\hspace{-0.7mm}\PP^1(k)$. If the orbit $\calO_{f\mathopen
\times\mathclose g}(x_0,\, y_0)$ has infinitely many points on the
diagonal then $\deg{f}\hspace{-0.7mm} =\hspace{-0.7mm} \deg{g}$.
\end{lemma}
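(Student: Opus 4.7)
The plan is to use the Call--Silverman canonical heights $\hat{h}_f$ and $\hat{h}_g$ on $\PP^1(\overline{k})$ together with the Northcott property. Set $d = \deg f$, $e = \deg g$ and
\[
S = \{\, n \in \NN : f^{n}(x_0) = g^{n}(y_0)\,\}.
\]
The hypothesis says that $\{(f^n(x_0), g^n(y_0)) : n \in S\}$ contains infinitely many \emph{distinct} points of $\PP^1$. Consequently, the set $\{f^{n}(x_0) : n \in S\}$ is infinite, so the full $f$-orbit of $x_0$ is infinite; in particular $x_0$ is not $f$-preperiodic. Similarly $y_0$ is not $g$-preperiodic. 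Using the Northcott/Call--Silverman theorem (the number-field case of the result quoted just above for function fields), this forces
\[
\hat{h}_f(x_0) > 0, \qquad \hat{h}_g(y_0) > 0.
\]

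Next, I would combine the functional equations $\hat{h}_f(f^n(x_0)) = d^n \hat{h}_f(x_0)$ and $\hat{h}_g(g^n(y_0)) = e^n \hat{h}_g(y_0)$ with the standard estimate $|\hat{h}_f(z) - h(z)| \leq C_f$ and $|\hat{h}_g(z) - h(z)| \leq C_g$ for all $z \in \PP^1(\overline{k})$. Setting $z_n = f^n(x_0) = g^n(y_0)$ for $n \in S$ and applying the triangle inequality gives
\[
\bigl| \hat{h}_f(z_n) - \hat{h}_g(z_n) \bigr| \leq C_f + C_g,
\]
that is,
\[
\bigl| d^{n} \hat{h}_f(x_0) - e^{n} \hat{h}_g(y_0) \bigr| \leq C_f + C_g \qquad \text{for all } n \in S.
\]

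Finally, I would derive $d = e$ by a growth comparison. Suppose for contradiction that $d \neq e$, say $d > e$. Since $\hat{h}_f(x_0) > 0$ and $\hat{h}_g(y_0) > 0$, the quantity $d^{n} \hat{h}_f(x_0) - e^{n} \hat{h}_g(y_0)$ tends to $+\infty$ as $n \to \infty$, contradicting the uniform bound along the infinite set $S$. Hence $d = e$, proving the lemma.

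The only real subtlety is the first step, which is why it is important that $k$ is a \emph{number field}: we need the Northcott property to convert "infinite orbit" into "positive canonical height". Once this is in hand, the rest of the argument is an elementary comparison of exponential growth rates, and this is precisely the input that will be promoted to the function-field setting, via M.~Baker's theorem, in the proof of Theorem \ref{equal degree, general case}.
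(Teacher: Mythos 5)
Your argument follows the same strategy as the paper's proof in its main case: infinitely many diagonal points force the starting points to have infinite orbits, hence (by Northcott over the number field) positive canonical height, and then one compares exponential growth of heights along the common values. For $\deg f,\deg g\ge 2$ your write-up is complete, and in fact slightly cleaner than the paper's, since you work directly with the pair $(x_0,y_0)$ and spell out why non-preperiodicity follows from the hypothesis.

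There is, however, a gap when one of the two maps has degree one. The lemma places no degree restriction on $f,g\in k(\PP^1)$, and the paper's own proof devotes a separate estimate to the case $f\in\mathrm{Aut}_k(\PP^1)$, namely $h(f^m(x))\ll_m m$. Your proof invokes, for \emph{both} maps, the Call--Silverman canonical height together with the functional equation, the bound $|\hat h-h|\le C$, and the equivalence ``preperiodic $\Leftrightarrow$ canonical height zero''. All of these require degree at least two: for an automorphism such as $z\mapsto z+1$ no $f$-invariant function can stay within bounded distance of the naive height (which is unbounded along the orbit of $0$), so $\hat h_f$ does not exist, and the step ``$x_0$ not preperiodic $\Rightarrow \hat h_f(x_0)>0$'' has no meaning. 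Concretely, if $\deg f\ge 2$ and $\deg g=1$, your displayed inequality $\bigl|d^n\hat h_f(x_0)-e^n\hat h_g(y_0)\bigr|\le C_f+C_g$ is not available. The repair is exactly the paper's extra case: for a degree-one map bound the naive height of iterates directly, $h(g^n(y_0))\ll n$ (the entries of the $n$-th matrix power of a M\"obius transformation have height $O(n)$), and compare with the exponential lower bound $h(f^n(x_0))\gg (\deg f)^n$ coming from $\hat h_f(x_0)>0$; note that, as in the paper, positivity of the canonical height is only ever needed for the map of larger degree.
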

\begin{proof}
Otherwise we assume that $\deg{f}\hspace{-0.7mm} <\hspace{-0.7mm}
\deg{g}$ and that there exists $x$ in $k$ for which
$\calO_{f\mathopen \times\mathclose g}(x,\, x)$ has infinitely many
points on the diagonal. Note that $x$ is not a preperiodic point of
$g$, as otherwise $\calO_{f\mathopen \times\mathclose g}(x,\, x)$
has at most finitely many points on the diagonal. This leads to
$\hat{h}_g(x)\hspace{-0.7mm} >\hspace{-0.7mm} 0$. By properties of
heights we have
\begin{eqnarray*}
h(g^m(x)) \gg_m  \deg^m{g}.
\end{eqnarray*}
If $f\hspace{-0.7mm} \not \in\hspace{-0.7mm}
\mathrm{Aut}_{k}(\PP^1)$ then
\begin{eqnarray*}
h(f^m(x)) \ll_m \deg^m{f},
\end{eqnarray*}
and if $f$ is in $\mathrm{Aut}_{k}(\PP^1)$ then
\begin{eqnarray*}
h(f^m(x))  \ll_m \, m. \ \ \  \ \ \
\end{eqnarray*}
 By comparing the heights of $f^m(x)$ and
$g^m(x)$ we conclude that there are only finitely many $m$ for which
$f^m(x)\hspace{-0.7mm} =\hspace{-0.7mm} g^m(x)$. This contradicts
our assumption.
\end{proof}

To prove Theorem \ref{equal degree, general case} we use Lemma
\ref{equal degree, number field case} and the technique of
specialization.

\begin{proofof}{Proof of Theorem \ref{equal degree, general
case}} For the same reason as in the proof of Lemma \ref{equal
degree, number field case} we may assume $x_0\hspace{-0.7mm}
=\hspace{-0.7mm} y_0\hspace{-0.7mm} =\hspace{-0.7mm} x$,
$\deg{f}\hspace{-0.7mm} <\hspace{-0.7mm} \deg{g}$ and $x$ is
preperiodic for neither  $f$ nor $g$. Objets $f, g$ and $x$ are all
defined over a field $k$ of finite type over $\QQ$ and we continue
with the proof by induction on $tr. deg(k/\QQ)$.  If $tr.
deg(k/\QQ)\hspace{-0.7mm} =\hspace{-0.7mm} 0$ then it reduces to
Lemma \ref{equal degree, number field case}. Let $s$ be a positive
integer greater that the claim holds as long as $tr.
deg(K/\QQ)\hspace{-0.7mm} \leq\hspace{-0.7mm} s\mathopen -\mathclose
1$, then we will prove it for $tr. deg(K/\QQ)\hspace{-0.7mm}
=\hspace{-0.7mm} s$. Choose a subfield $k'$ of $k$ such that $tr.
deg(k/k')\hspace{-0.7mm} =\hspace{-0.7mm} 1$ and then $k$ is the
function field of a curve $X$ defined over $k'$. Now we restrict our
attention to $k\mathopen \times_{k'}\mathclose {\overline{k'} /
\overline{k'}}$ instead of $k / k'$. If $g$ is not isotrivial then
we also have $\hat{h}_g(x)\hspace{-0.7mm}
>\hspace{-0.7mm} 0$ by M. Baker's theorem and the argument in the proof of Lemma \ref{equal degree,
number field case} still works. Now we assume $g$ is isotrivial.
After a conjugation by a linear fractional transformation we may
assume $g$ is defined over $\overline{k'}$. Now we fall into one of
the following two cases:

\noindent {\it Case {\it (i)}}, $x\hspace{-0.7mm} \in\hspace{-0.7mm}
\overline{k'}$.

We choose $\alpha$ in $X(\overline{k'})$ at which $f$ has good
reduction and consider the reduction triple $f_{\alpha},\,
g_{\alpha}\hspace{-0.7mm} =\hspace{-0.7mm} g,
x_{\alpha}\hspace{-0.7mm} =\hspace{-0.7mm} x$. By assumption $x$ is
not preperiodic for $g$ and therefore $x_{\alpha}$ is not
preperiodic for $g_{\alpha}$. This means that
$\calO_{f_{\alpha}\mathopen \times\mathclose g_{\alpha}}(x_{\alpha},
x_{\alpha})$ has infinitely many points on the diagonal and we are
done by the induction assumption.

\noindent {\it Case {\it (ii)}}, $x\hspace{-0.7mm}
\not\in\hspace{-0.7mm} \overline{k'}$.

We will give two alternative arguments. For the first proof we
notice that $x$ is a function of positive degree $d$ on
$X(\overline{k'})$ and therefore $g^m(x)$ is a function of degree $d
\deg^m{g}$ on $X$. Moreover by induction it follows easily that
there exists a natural number $e$ such that for all positive integer
$m$ the function $f^{m}(x)$ is of degree at most $d
\deg^m{f}\hspace{-0.7mm} +\hspace{-0.7mm} em \deg^m{f}$. We obtain a
contradiction by comparing the degrees of $f^{m}(x)$ and of
$g^{m}(x)$. For the second proof we notice that $g$ is a function in
$\overline{k'}(z)$ and there exists $q$ in $\overline{k'}$ such that
$q$ is not preperiodic for $g$. Let $\alpha$ be a point in
$X(\overline{k'})$ for which $x_{\alpha}$ equals $q$. We do the
reduction at $\alpha$ and then we complete the proof by the
induction assumption.
\end{proofof}

\section{Proof of Theorem \ref{disk}}\label{section proof}
The comparison of rigidity with speciality of monoid factorizations
is implicitly one major originality of \cite{GTZ08} and
\cite{GTZ10}, where the authors used the rigidity\,(cf.\,Ritt
\cite{R22}) and the speciality\,(cf.\,Bilu-Tichy \cite{BT00}) of
factorizations of $(\mathrm{End}(\CC), \circ)$ to study the dynamics
of polynomials. We have obtained the rigidity of factorizations of
$(\mathrm{End}(\EE), \circ)$ in Theorem \ref{NgWangmonoid} and
Proposition \ref{rigidity of Blaschke}\,(\,based on a joint work
with Ng), as well as the corresponding speciality result in Theorem
\ref{Faltings factor}. The former relies on action of fundamental
groups, while the latter is governed by finiteness of rational
points. In this section we shall adopt the strategy of
Ghioca-Tucker-Zieve and work on $(\mathrm{End}(\EE), \circ)$. One
more difficulty in carrying their method in our context is the
management of elliptic rational functions.

\begin{proposition}\label{finiteness}
If at least one of $\{f, g\}\hspace{-0.7mm} \subset\hspace{-0.7mm}
\mathrm{End}(\EE)$ is not totally ramified then the equation
$\epsilon\hspace{-0.7mm} \circ\hspace{-0.7mm} f\hspace{-0.7mm}  =
\hspace{-0.7mm} g\hspace{-0.7mm} \circ\hspace{-0.7mm}  \varepsilon$
has only finitely many solutions $\{\epsilon,
\varepsilon\}\hspace{-0.7mm}
\subset\hspace{-0.7mm}\mathrm{Aut}(\EE)$.
\end{proposition}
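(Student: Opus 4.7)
The plan is to reduce the statement to the rigidity of Möbius transformations via the critical-divisor correspondence. Any solution $(\epsilon,\varepsilon)$ to $\epsilon\circ f = g\circ \varepsilon$ must intertwine the critical-point structures of $f$ and $g$, and if these have enough points then $\varepsilon$ is pinned down by finitely many combinatorial choices.

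First I would establish the key numerical lemma: if $f\in \mathrm{End}(\EE)$ of degree $n\ge 2$ is not totally ramified, then $|\mathfrak{D}_f|\ge 3$ as a subset of $\PP^1$. Viewing $f$ as a self-map of $\PP^1$, Riemann--Hurwitz gives $\sum_{p\in|\mathfrak{D}_f|}(e_p-1)=2n-2$; combined with the trivial bound $e_p\le n$, having $|\mathfrak{D}_f|\le 2$ forces $|\mathfrak{D}_f|=2$ with both ramification indices equal to $n$, which means $f\sim z^n$ in $(\mathrm{End}(\EE),\circ)$, contradicting the hypothesis.

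Next, given any solution $(\epsilon,\varepsilon)$, I would extend $\epsilon,\varepsilon$ to Möbius transformations on $\PP^1$. Since automorphisms are everywhere unramified, the relation $\epsilon\circ f=g\circ \varepsilon$ forces a multiplicity-preserving bijection $\varepsilon\colon \mathfrak{D}_f\to \mathfrak{D}_g$ between the critical divisors (and also $\deg f=\deg g$). Under the hypothesis that at least one of $f,g$ is not totally ramified, the previous step together with this bijection gives $|\mathfrak{D}_f|=|\mathfrak{D}_g|\ge 3$. Since a Möbius transformation of $\PP^1$ is uniquely determined by its action on three distinct points, and there are at most $|\mathfrak{D}_f|!$ multiplicity-preserving bijections between the two finite sets, there are only finitely many candidates for $\varepsilon$.

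Finally, for each such $\varepsilon$, the identity $\epsilon(f(z))=g(\varepsilon(z))$ together with surjectivity of the finite map $f\colon \EE\to \EE$ (each finite Blaschke product is surjective) determines $\epsilon$ uniquely on all of $\EE$, so the pair $(\epsilon,\varepsilon)$ is determined. This yields the desired finiteness. The only technical point I expect to address is the passage between $\mathrm{Aut}(\EE)$ and $\mathrm{Aut}(\PP^1)$, but since $\mathrm{Aut}(\EE)\subset \mathrm{Aut}(\PP^1)$ (each disk automorphism extends uniquely to a Möbius transformation preserving $\EE$), finiteness among the latter immediately implies finiteness among the former. No serious obstacle arises; the argument is essentially Riemann--Hurwitz plus rigidity of Möbius transformations.
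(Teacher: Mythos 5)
Your argument is correct and is essentially the paper's proof: in both cases a solution pair is forced to induce bijections of finite critical data, and rigidity of automorphisms determined by finitely many points yields finiteness. The only cosmetic difference is that the paper injects $(\epsilon,\varepsilon)$ into $\mathrm{Hom}(\mathfrak{d}_f,\mathfrak{d}_g)\times\mathrm{Hom}(|\mathfrak{D}_f|,|\mathfrak{D}_g|)$ and uses that a disk automorphism agreeing with another at two points of $\EE$ must coincide with it, whereas you use three-point rigidity of M\"obius transformations on $\PP^1$ for $\varepsilon$ (after the Riemann--Hurwitz count showing $|\mathfrak{D}_f|\ge 3$) and then recover $\epsilon$ from the surjectivity of $f$.
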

\begin{proof}
We assume that $f$ is not totally ramified and then the degree of
$|\mathfrak{D}_f|$ and of $\mathfrak{d}_f$ are at least two. If
$\{\epsilon, \varepsilon\}\hspace{-0.7mm}
\subset\hspace{-0.7mm}\mathrm{Aut}(\EE)$ gives a solution to
$\epsilon\hspace{-0.7mm} \circ\hspace{-0.7mm} f\hspace{-0.7mm}
=\hspace{-0.7mm} g\hspace{-0.7mm} \circ\hspace{-0.7mm} \varepsilon$
then $\epsilon$\,(resp.\,$\varepsilon$) induces a bijection from
$\mathfrak{d}_f$ to $\mathfrak{d}_g$\,(resp.\,from
$|\mathfrak{D}_f|$ to $|\mathfrak{D}_g|$). The natural map
\begin{eqnarray*}
\{(\epsilon, \varepsilon)\,|\, \epsilon\hspace{-0.7mm}
\circ\hspace{-0.7mm}  f\hspace{-0.7mm}  =\hspace{-0.7mm}
g\hspace{-0.7mm}  \circ\hspace{-0.7mm}  \varepsilon\} \rightarrow
\mathrm{Hom}(\mathfrak{d}_f, \mathfrak{d}_g)\mathopen
\times\mathclose \mathrm{Hom}(|\mathfrak{D}_f|,  |\mathfrak{D}_g|)
\end{eqnarray*}
is injective since the only element in $\mathrm{Aut}(\EE)$ that
fixes at least two points must be the identity map. This leads to
the desired assertion.
\end{proof}

Finite Blaschke products $f, g$ are called {\it commensurable} in
$(\mathrm{End}(\EE),\circ)$ if for any positive integer $m$ there
exist $\{h_1, h_2\}
\hspace{-0.7mm}\subset\hspace{-0.7mm}\mathrm{End}(\EE)$ and positive
integer $n$ such that
\begin{eqnarray*} f^{n}\hspace{-0.7mm} =\hspace{-0.7mm} g^{m}\hspace{-0.7mm} \circ\hspace{-0.7mm} h_1, \ \ \
g^{n}\hspace{-0.7mm} =\hspace{-0.7mm} f^{m}\hspace{-0.7mm}
\circ\hspace{-0.7mm} h_2.
\end{eqnarray*}
\begin{lemma}\label{degf=degg}
Let $f\hspace{-0.7mm} \in\hspace{-0.7mm} \mathrm{End}(\EE)\mathopen
\setminus\mathclose \mathrm{Aut}(\EE)$ and $\iota\hspace{-0.7mm}
\in\hspace{-0.7mm} \mathrm{Aut}(\EE)$. If there are infinitely many
$n$ such that $f^{n}\hspace{-0.7mm} =\hspace{-0.7mm}
(f\hspace{-0.7mm}\circ\hspace{-0.7mm}\iota)^{n}
\hspace{-0.7mm}\circ\hspace{-0.7mm} \iota_n$ for some
$\iota_n\hspace{-0.7mm}\in\hspace{-0.7mm} \mathrm{Aut}(\EE)$ then
one of the following assertions
\begin{enumerate}
\item[(i)] there exists $k\hspace{-0.7mm}
\in\hspace{-0.7mm}\NN$ for which $ f^{k}\hspace{-0.7mm}
=\hspace{-0.7mm} (f\hspace{-0.7mm}\circ\hspace{-0.7mm}\iota)^{k} $.
\item[(ii)]
there exist $\{\mu, \rho\}\hspace{-0.7mm} \subset\hspace{-0.7mm}
\TT$ and $\epsilon\hspace{-0.7mm}
\in\hspace{-0.7mm}\mathrm{Aut}(\EE)$ such that $ f\hspace{-0.7mm}
=\hspace{-0.7mm} \epsilon\hspace{-0.7mm}\circ\hspace{-0.7mm}\mu\,
z^d\hspace{-0.7mm}\circ\hspace{-0.7mm}\epsilon^{-1}$ and
$\iota\hspace{-0.7mm} =\hspace{-0.7mm}
\epsilon\hspace{-0.7mm}\circ\hspace{-0.7mm}\rho\, z
\circ\hspace{-0.7mm}\epsilon^{-1}. $
\end{enumerate}
is satisfied.
\end{lemma}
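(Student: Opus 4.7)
The plan is to set $g = f \circ \iota$, let $N \subset \NN$ denote the infinite set of $n$ for which such an $\iota_n$ exists, and dichotomize on whether $g = f \circ \iota$ is totally ramified.

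If $g$ is totally ramified, then each $g^n$ is totally ramified, hence so is $f^n = g^n \circ \iota_n$ for every $n \in N$. Choosing any $n \in N$ with $n \geq 2$, Corollary \ref{special ramified} forces $f = \iota_p \circ \mu z^d \circ \iota_{-p}$ for some $p \in \EE$ and $\mu \in \TT$, with $\mathfrak{d}_f = |\mathfrak{D}_f| = \{p\}$. Writing $g = \iota_q \circ \nu z^d \circ \iota_{-q}$, a chain-rule computation from $f = g \circ \iota^{-1}$ yields $\mathfrak{d}_f = \{q\}$ and $|\mathfrak{D}_f| = \{\iota(q)\}$; comparing the two descriptions forces $p = q = \iota(q)$, so $\iota$ fixes $p \in \EE$ and consequently $\iota = \iota_p \circ \rho z \circ \iota_{-p}$ for some $\rho \in \TT$. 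Taking $\epsilon = \iota_p$ delivers the form required in case (ii).

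If $g$ is not totally ramified, fix $m = \min N$ (which is at least $1$), and for each $n \in N$ with $n > m$ substitute $g^m = f^m \circ \iota_m^{-1}$ into $f^n = g^{n-m} \circ g^m \circ \iota_n$ to obtain
\begin{eqnarray*}
f^{n-m} \circ f^m \;=\; g^{n-m} \circ (f^m \circ R_n), \qquad R_n := \iota_m^{-1} \circ \iota_n.
\end{eqnarray*}
Both factorizations of $f^n$ share outer degree $d^{n-m}$, so Corollary \ref{simple rigidity} (applicable since finite Blaschke products satisfy the monodromy condition of Proposition \ref{rigidity}) produces $\iota_n' \in \mathrm{Aut}(\EE)$ with $(\iota_n')^{-1} \circ f^m = f^m \circ R_n$. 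Because $f^m$ is not totally ramified, Proposition \ref{finiteness} bounds the set of pairs $((\iota_n')^{-1}, R_n)$ to be finite, so $\iota_n = \iota_m \circ R_n$ takes only finitely many values as $n$ ranges in $N$. The pigeonhole principle furnishes $n_1 < n_2$ in $N$ with $\iota_{n_1} = \iota_{n_2}$; substituting $f^{n_1} = g^{n_1} \circ \iota_{n_1}$ into $f^{n_2} = f^{n_2 - n_1} \circ f^{n_1}$ and comparing with $f^{n_2} = g^{n_2 - n_1} \circ g^{n_1} \circ \iota_{n_1}$, then cancelling the non-constant Blaschke product $g^{n_1} \circ \iota_{n_1}$ on the right (its image contains an open set, so the identity principle applies), we conclude $f^{n_2 - n_1} = g^{n_2 - n_1}$, giving case (i) with $k = n_2 - n_1$.

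The main technical obstacle is the non-totally-ramified case: one must identify the correct pair of factorizations of $f^n$ so that Corollary \ref{simple rigidity} converts the hypothesis into a self-equation of $f^m$ to which Proposition \ref{finiteness} delivers a \emph{uniform} finiteness bound on the Mobius parameters $R_n$. The totally-ramified case, by contrast, is essentially critical-point bookkeeping via Corollary \ref{special ramified}.
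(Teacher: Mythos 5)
Your second case (where $g=f\circ\iota$, equivalently $f$, is not totally ramified) is correct and is essentially the paper's argument: rigidity (Corollary \ref{simple rigidity}) turns the hypothesis into an equation between automorphism pairs, Proposition \ref{finiteness} makes the set of such pairs finite, and pigeonhole plus right-cancellation of the surjective map $f^{n_1}$ yields conclusion (i). The paper extracts the equation $\epsilon_n\circ f=(f\circ\iota)\circ\iota_n$ directly from one splitting of $f^n$, while you route through $g^m=f^m\circ\iota_m^{-1}$; both are fine.

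The first case, however, contains a genuine gap. You assert that if $g$ is totally ramified then every $g^n$ is totally ramified; this is false. For nonlinear Blaschke products, $a\circ b$ is totally ramified only when $a$ and $b$ are and, in addition, $|\mathfrak{D}_a|=\mathfrak{d}_b$; so $g^2$ is totally ramified only when the critical point of $g$ coincides with its critical value. For the same reason you are not entitled to write $g=\iota_q\circ\nu z^d\circ\iota_{-q}$: total ramification only gives $g=\alpha\circ\nu z^d\circ\beta$ with unrelated $\alpha,\beta\in\mathrm{Aut}(\EE)$. Worse, the conclusion you aim for in this case is simply not forced by the case hypothesis: take $f$ totally ramified with critical point different from critical value and $\iota=\mathrm{id}$; the hypothesis of the lemma holds with $\iota_n=\mathrm{id}$, conclusion (i) holds, but conclusion (ii) fails, and your case-1 argument breaks exactly at the claim that $f^n=g^n\circ\iota_n$ is totally ramified. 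What is missing is the intermediate regime that the paper isolates: when $f$ is totally ramified but $f^2$ (equivalently $f\circ\iota\circ f$, up to units) is not, one must rerun the finiteness--pigeonhole argument at the level of the degree-$d^2$ splitting $\varepsilon_n\circ f^2=(f\circ\iota)^2\circ\iota_n$ to again reach conclusion (i); only when $f$, $f^2$ and $f\circ\iota\circ f$ are all totally ramified do the identities $|\mathfrak{D}_f|=\mathfrak{d}_f=\mathfrak{p}$ and $\iota(\mathfrak{p})=\mathfrak{p}$ follow, giving conclusion (ii). Your two-way dichotomy therefore needs to be replaced by (or refined into) this three-case analysis.
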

\begin{proof}
By Corollary \ref{simple rigidity} applied to $f^{n}\hspace{-0.7mm}
=\hspace{-0.7mm} (f\hspace{-0.7mm} \circ\hspace{-0.7mm}
\iota)^{n}\hspace{-0.7mm} \circ\hspace{-0.7mm} \iota_n$, there exist
$\{\epsilon_n, \varepsilon_n\}\hspace{-0.7mm}
\subset\hspace{-0.7mm}\mathrm{Aut}(\EE)$ such that $
f\hspace{-0.7mm} \circ\hspace{-0.7mm} \iota\hspace{-0.7mm}
\circ\hspace{-0.7mm} \iota_n\hspace{-0.7mm} =\hspace{-0.7mm}
\epsilon_n\hspace{-0.7mm} \circ\hspace{-0.7mm} f$ and
$f\hspace{-0.7mm} \circ\hspace{-0.7mm} \iota\hspace{-0.7mm}
\circ\hspace{-0.7mm} f\hspace{-0.7mm} \circ\hspace{-0.7mm}
\iota\hspace{-0.7mm} \circ\hspace{-0.7mm} \iota_n\hspace{-0.7mm}
=\hspace{-0.7mm} \varepsilon_n\hspace{-0.7mm} \circ\hspace{-0.7mm}
f^{2}. $

\noindent {\it Case {\it (i)}}, $f$ is not totally ramified. By
Proposition \ref{finiteness} there exists $n\hspace{-0.7mm}
<\hspace{-0.7mm} m$ such that $\iota_n\hspace{-0.7mm}
=\hspace{-0.7mm} \iota_m$. This gives $ f^{m}\hspace{-0.7mm}
=\hspace{-0.7mm} (f\hspace{-0.7mm} \circ\hspace{-0.7mm}
\iota)^{m}\hspace{-0.7mm} \circ\hspace{-0.7mm}
\iota_n\hspace{-0.7mm} =\hspace{-0.7mm} (f\hspace{-0.7mm}
\circ\hspace{-0.7mm} \iota)^{m\mathopen -\mathclose
n}\hspace{-0.7mm} \circ\hspace{-0.7mm} (f\hspace{-0.7mm}
\circ\hspace{-0.7mm} \iota)^{n}\hspace{-0.7mm} \circ\hspace{-0.7mm}
\iota_n\hspace{-0.7mm} =\hspace{-0.7mm} (f\hspace{-0.7mm}
\circ\hspace{-0.7mm} \iota)^{m\mathopen -\mathclose
n}\hspace{-0.7mm} \circ\hspace{-0.7mm} f^{n} $ and therefore
$f^{m\mathopen-\mathclose n}\hspace{-0.7mm} =\hspace{-0.7mm}
(f\hspace{-0.7mm} \circ\hspace{-0.7mm} \iota)^{m\mathopen
-\mathclose n}$.

\noindent {\it Case {\it (ii)}}, $f^{2}$\,(equivalently
$f\hspace{-0.7mm} \circ\hspace{-0.7mm} \iota\hspace{-0.7mm}
\circ\hspace{-0.7mm} f$) is not totally ramified, a similar argument
works.

\noindent {\it Case {\it (iii)}}, $f, f^{2}$ and $f\hspace{-0.7mm}
\circ\hspace{-0.7mm} \iota\hspace{-0.7mm} \circ\hspace{-0.7mm} f$
are all totally ramified. Write  $\mathfrak{q}\hspace{-0.7mm}
=\hspace{-0.7mm} |\mathfrak{D}_f|$ and $\mathfrak{p}\hspace{-0.7mm}
=\hspace{-0.7mm} \mathfrak{d}_{f}$. Because
$f^2$\,(resp.\,$f\hspace{-0.7mm} \circ\hspace{-0.7mm}
\iota\hspace{-0.7mm} \circ\hspace{-0.7mm} f$) is totally ramified we
have $ \mathfrak{q}\hspace{-0.7mm} =\hspace{-0.7mm}
\mathfrak{p}$\,(resp.\,$\iota(\mathfrak{p})\hspace{-0.7mm}
=\hspace{-0.7mm} \mathfrak{p}$). Consequently there exist $\{\mu,
\rho\}\hspace{-0.7mm} \subset\hspace{-0.7mm}\TT$ such that
$f\hspace{-0.7mm} =\hspace{-0.7mm}
\iota_{\mathfrak{p}}\hspace{-0.7mm} \circ\hspace{-0.7mm} \mu\, z^d
\hspace{-0.7mm}\circ\hspace{-0.7mm} \iota_{-\mathfrak{p}}$ and
$\iota\hspace{-0.7mm} =\hspace{-0.7mm}
\iota_{\mathfrak{p}}\hspace{-0.7mm} \circ\hspace{-0.7mm} \rho\, z
\hspace{-0.7mm} \circ\hspace{-0.7mm} \iota_{-\mathfrak{p}}$.
\end{proof}

We first prove

\begin{proposition}\label{commensurable}
If $\{f, g\} \hspace{-0.7mm} \subset\hspace{-0.7mm}
\mathrm{End}(\EE)\mathopen \setminus\mathclose \mathrm{Aut}(\EE)$
are commensurable then either $f$ and $g$ have common iterations or
there exist $\iota\hspace{-0.7mm} \in\hspace{-0.7mm}
\mathrm{Aut}(\EE)$ and $\mu\hspace{-0.7mm} \in\hspace{-0.7mm}\TT$
such that
\begin{eqnarray*}
\iota\hspace{-0.7mm} \circ\hspace{-0.7mm} f\hspace{-0.7mm}
\circ\hspace{-0.7mm} \iota\!^{-\!1}\hspace{-0.7mm} =\hspace{-0.7mm}
\mu z^r, \ \ \iota\hspace{-0.7mm} \circ\hspace{-0.7mm}
g\hspace{-0.7mm} \circ\hspace{-0.7mm} \iota\!^{-\!1}\hspace{-0.7mm}
=\hspace{-0.7mm} z^s
\end{eqnarray*}
where $r\hspace{-0.7mm} =\hspace{-0.7mm} \deg{f}$ and
$s\hspace{-0.7mm} =\hspace{-0.7mm} \deg{g}$.
\end{proposition}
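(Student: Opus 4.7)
The plan is to split into two cases according to whether at least one of $f, g$ fails to be conjugate to a power map $z^{\deg f}$. The first case produces a common iteration via the rigidity machinery of Section \ref{sectionrigidity} and Lemma \ref{degf=degg}; the second yields case (ii) by a direct analysis of totally ramified structure.

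\emph{First case.} Assume $f$ is not conjugate to $z^{\deg f}$. For each $m$, commensurability yields $f^{n(m)} = g^m \circ h_1$, and Corollary \ref{Zieve-Muller, corallary} applied to this identity produces $g^m = f^{i(m)} \circ \bar{a}_m$ with $\deg \bar{a}_m$ bounded uniformly in $m$. Corollary \ref{corollary of rigidity} then confines $\bar{a}_m$, up to right-composition with an element of $\mathrm{Aut}(\EE)$, to a fixed finite set; a pigeonhole argument extracts an infinite $M = \{m_1 < m_2 < \cdots\}$ on which $\bar{a}_{m_j} = s \circ \iota_j^{-1}$ for a common $s$. Expanding $g^{m_j} = g^{m_j - m_1} \circ g^{m_1}$ using these factorizations, with $T = f^{i(m_1)} \circ s$, $d_j = i(m_j) - i(m_1)$, $e_j = m_j - m_1$, and $\tau_j = \iota_1^{-1} \circ \iota_j$, I obtain
\[
f^{d_j} \circ T = g^{e_j} \circ (T \circ \tau_j).
\]
The identity $(\deg f)^{d_j} = (\deg g)^{e_j}$ matches the left-factor degrees, so Corollary \ref{simple rigidity} yields $\sigma_j \in \mathrm{Aut}(\EE)$ with $f^{d_j} = g^{e_j} \circ \sigma_j^{-1}$. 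Letting $(\alpha, \beta)$ be the minimal positive pair with $(\deg f)^\alpha = (\deg g)^\beta$ and setting $\tilde{f} = f^\alpha$, $\tilde{g} = g^\beta$, this reads $\tilde{f}^{k_j} = \tilde{g}^{k_j} \circ \sigma_j^{-1}$ for $k_j = e_j/\beta$. The set $K = \{k \geq 1 : \tilde{f}^k = \tilde{g}^k \circ \sigma \text{ for some } \sigma \in \mathrm{Aut}(\EE)\}$ is infinite, and another application of Corollary \ref{simple rigidity} to $\tilde{f}^{k_2} = \tilde{f}^{k_2 - k_1} \circ \tilde{f}^{k_1}$ (valid since $\deg \tilde{f} = \deg \tilde{g}$) shows $K$ is closed under positive differences, hence $K = d\mathbb{Z}_{>0}$ for $d = \gcd K$. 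With $\tilde{f}' = \tilde{f}^d$ and $\tilde{g}' = \tilde{g}^d$, we have $\tilde{f}' = \tilde{g}' \circ \iota_0$ for some $\iota_0$, and $\tilde{g}'^N = (\tilde{g}' \circ \iota_0)^N \circ \sigma_{Nd}^{-1}$ for every $N \geq 1$. Lemma \ref{degf=degg} applied to $\tilde{g}'$ and $\iota_0$ yields either $\tilde{g}'^{N_0} = \tilde{f}'^{N_0}$, the common iteration $g^{\beta d N_0} = f^{\alpha d N_0}$, or else $\tilde{g}' = g^{\beta d}$ is conjugate to a power map. In the latter case, Corollary \ref{special ramified} makes $g$ conjugate to a power map, whereupon $\tilde{f}' = \tilde{g}' \circ \iota_0$ is also conjugate to a power map, and a second application of Corollary \ref{special ramified} makes $f$ conjugate to a power map, contradicting the assumption.

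\emph{Second case.} Suppose both $f = \epsilon \circ \mu z^r \circ \epsilon^{-1}$ and $g = \delta \circ \nu z^s \circ \delta^{-1}$, and let $p = \epsilon(0)$, $q = \delta(0)$ be the interior totally ramified fixed points of $f, g$. In the commensurability identity $f^{n(1)} = g \circ h_1$, the left-hand side is totally ramified at $p$ with $f^{n(1)}(p) = p$; since $q$ is the unique interior ramified point of $g$, the right-hand side forces $h_1(p) = q$, and then $p = f^{n(1)}(p) = g(h_1(p)) = g(q) = q$. Conjugating by $\iota_{-p}$ thus reduces $f$ to $\mu z^r$ and $g$ to $\nu z^s$, and a further conjugation by a rotation $\lambda z$ with $\lambda^{s-1} = \nu$ normalises $g$ to $z^s$ while turning $f$ into $\mu' z^r$; this is case (ii).

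\emph{Main obstacle.} The delicate step is the first case's compression of the noncanonical family $(g^m = f^{i(m)} \circ \bar{a}_m)_m$ into the rigid family $(\tilde{f}^k = \tilde{g}^k \circ \sigma_k)_k$ in a form amenable to Lemma \ref{degf=degg}. This requires the combined use of Corollary \ref{Zieve-Muller, corallary} (to bound $\deg \bar{a}_m$), Corollary \ref{corollary of rigidity} (to place $\bar{a}_m$ in a finite set up to automorphism), pigeonhole, and two applications of Corollary \ref{simple rigidity} (one to extract a single automorphism from each individual factorization, the other to establish the closure of $K$ under positive differences); Corollary \ref{special ramified} then excludes the exceptional alternative of Lemma \ref{degf=degg}.
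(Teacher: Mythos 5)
Your proof is correct in substance, but it is organized differently from the paper's. The paper splits according to whether a multiplicative relation $(\deg f)^k=(\deg g)^t$ holds and whether $g$ is conjugate to $z^{\deg g}$: when the relation holds, Proposition \ref{rigidity of Blaschke} yields $g^{mt}=f^{mk}\circ\iota_m$ for every $m$ and Lemma \ref{degf=degg} finishes; when $g$ is not power-conjugate, minimality of $n_m$ together with Corollary \ref{corollary of rigidity} and pigeonhole manufacture such a relation; when $g$ is power-conjugate, rigidity forces $f^m\sim z^{r^m}$ and Corollary \ref{special ramified} applies. You instead split on whether one of $f,g$ fails to be power-conjugate: in that case you feed the commensurability identity into Corollary \ref{Zieve-Muller, corallary} to bound the cofactors $\bar a_m$ in $g^m=f^{i(m)}\circ\bar a_m$, pigeonhole, apply Corollary \ref{simple rigidity} twice, and use a gcd/difference-set argument to reach the hypothesis of Lemma \ref{degf=degg}, with Corollary \ref{special ramified} excluding the rotation-power alternative; in the remaining case (both conjugate to rotation-powers) a direct local-degree computation identifies the two totally ramified fixed points and yields conclusion (ii). Your treatment of the power-power case is more elementary than the paper's third case, at the cost of an implicit WLOG (by symmetry of commensurability the same argument runs with $g$ in place of $f$ when $g$ is the non-power map), which you should state. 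Two overstatements, neither fatal: first, closure of $K$ under positive differences gives $\gcd K\in K$ and that every element of $K$ is a multiple of $\gcd K$, which is all Lemma \ref{degf=degg} needs; it does not give $K=d\,\ZZ_{>0}$, so the clause ``for every $N\ge1$'' should read ``for infinitely many $N$''. Second, to apply Corollary \ref{corollary of rigidity} to $\bar a_m\circ\bar b_m=f^{k_m}$ you must exclude $\bar a_m=f\circ h$, which requires taking $i(m)$ maximal as in the proof of Corollary \ref{Zieve-Muller, corallary}; alternatively this whole step can be dropped, since your later computation only uses that $\deg\bar a_m$ is constant along an infinite set of $m$, and that already follows from the uniform bound $\deg\bar a_m\le(\deg f)^{\max(8,\,2+2\log_2\deg f)}$ by pigeonhole.
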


\begin{proof} By the commensurability  assumption, for $m\hspace{-0.7mm}
\in\hspace{-0.7mm}\NN$ there exists $n\hspace{-0.7mm}
\in\hspace{-0.7mm}\NN$ and $\{h_1, h_2\}\hspace{-0.7mm}
\subset\hspace{-0.7mm}\mathrm{End}(\EE)$ such that
\begin{eqnarray*}
f^{n}\hspace{-0.7mm} =\hspace{-0.7mm} g^{m}\hspace{-0.7mm}
\circ\hspace{-0.7mm} h_1, \ \ \ \  g^{n}\hspace{-0.7mm}
=\hspace{-0.7mm} f^{m}\hspace{-0.7mm} \circ\hspace{-0.7mm} h_2.
\end{eqnarray*}
\noindent {\it Case {\it (i)}},  there exist $\{k,
t\}\hspace{-0.7mm} \subset\hspace{-0.7mm}\NN$ such that
$r^k\hspace{-0.7mm} =\hspace{-0.7mm} s^t$. For any $m\hspace{-0.7mm}
\in\hspace{-0.7mm}\NN$ we choose $n_{m}\hspace{-0.7mm}
\in\hspace{-0.7mm}\NN$  and $\varepsilon_m\hspace{-0.7mm}
\in\hspace{-0.7mm} \mathrm{End}(\EE)$ for which
$f^{mk}\hspace{-0.7mm} \circ\hspace{-0.7mm}
\varepsilon_m\hspace{-0.7mm} =\hspace{-0.7mm} g^{n_m}$ or
equivalently $ f^{mk}\hspace{-0.7mm} \circ\hspace{-0.7mm}
\varepsilon_m\hspace{-0.7mm} =\hspace{-0.7mm} g^{mt}\hspace{-0.7mm}
\circ\hspace{-0.7mm} g^{n_m\mathopen -\mathclose mt}. $ The
condition $r^k\hspace{-0.7mm} =\hspace{-0.7mm} s^t$ leads to
$\deg{f^{mk}}\hspace{-0.7mm} =\hspace{-0.7mm} \deg{g^{mt}}$, and
then Proposition \ref{rigidity of Blaschke} implies that there
exists $\iota_m\hspace{-0.7mm} \in\hspace{-0.7mm}\mathrm{Aut}(\EE)$
for which $g^{mt}\hspace{-0.7mm} =\hspace{-0.7mm}
f^{mk}\hspace{-0.7mm} \circ\hspace{-0.7mm} \iota_m$. Consequently
for all positive integer $m$ we have $(f^k)^m\hspace{-0.7mm}
=\hspace{-0.7mm} (f^k\hspace{-0.7mm} \circ\hspace{-0.7mm}
\iota_1)^m\hspace{-0.7mm} \circ\hspace{-0.7mm} \iota_m^{-\!1}$, and
this reduces to Lemma \ref{degf=degg}.

\noindent {\it Case {\it (ii)}}, there exists no
$\iota\hspace{-0.7mm} \in\hspace{-0.7mm}\mathrm{Aut}(\EE)$ for which
$\iota\hspace{-0.7mm} \circ\hspace{-0.7mm} g\hspace{-0.7mm}
\circ\hspace{-0.7mm} \iota^{-\!1}\hspace{-0.7mm} =\hspace{-0.7mm}
z^s$. For any $m\hspace{-0.7mm} \in\hspace{-0.7mm}\NN$ we denote by
$n_{m}$ the minimal integer that $g^{n_m}\hspace{-0.7mm}
=\hspace{-0.7mm} f^{m}\hspace{-0.7mm} \circ\hspace{-0.7mm}
\varepsilon_m$ for some $\varepsilon_m\hspace{-0.7mm}
\in\hspace{-0.7mm}\mathrm{End}(\EE)$. The minimality of $n_m$ forces
that there exists no $t\hspace{-0.7mm}
\subset\hspace{-0.7mm}\mathrm{End}(\EE)$ for which
$\varepsilon_m\hspace{-0.7mm} =\hspace{-0.7mm} t\hspace{-0.7mm}
\circ\hspace{-0.7mm} g$, and therefore by Corollary \ref{corollary
of rigidity} there exist positive integers $m\hspace{-0.7mm}
<\hspace{-0.7mm} p$ such that $\deg{\varepsilon_m}\hspace{-0.7mm}
=\hspace{-0.7mm} \deg{\varepsilon_p}$. This leads to
$s^{n_p\mathopen -\mathclose n_m}\hspace{-0.7mm} =\hspace{-0.7mm}
r^{p\mathopen -\mathclose m}$ which reduces the problem to the
previous case.

\noindent {\it Case {\it (iii)}}, there exists $\iota\hspace{-0.7mm}
\in\hspace{-0.7mm}\mathrm{Aut}(\EE)$ such that $\iota\hspace{-0.7mm}
\circ\hspace{-0.7mm} g\hspace{-0.7mm} \circ\hspace{-0.7mm}
\iota^{-1}\hspace{-0.7mm} =\hspace{-0.7mm} z^s.$ For
$m\hspace{-0.7mm} \in\hspace{-0.7mm}\NN$ there exist
$\varepsilon_m\hspace{-0.7mm} \in\hspace{-0.7mm}\mathrm{End}(\EE)$
and $n_m\hspace{-0.7mm} \in\hspace{-0.7mm}\NN$ such that
$f^{m}\hspace{-0.7mm} \circ\hspace{-0.7mm}
\varepsilon_m\hspace{-0.7mm} =\hspace{-0.7mm} g^{n_m}$ or
equivalently
\begin{eqnarray*}
f^{m}\hspace{-0.7mm} \circ\hspace{-0.7mm}
\varepsilon_m\hspace{-0.7mm} \circ\hspace{-0.7mm}
\iota\!^{-1}\hspace{-0.7mm} =\hspace{-0.7mm}
\iota\!^{-1}\hspace{-0.7mm} \circ\hspace{-0.7mm}
z^{r^m}\hspace{-0.7mm} \circ\hspace{-0.7mm} z^{s^{n_m}/r^m}.
\end{eqnarray*}
Proposition \ref{rigidity of Blaschke} implies that
$f^{m}\hspace{-0.7mm} \sim \hspace{-0.7mm} z^{r^m}$, and then Lemma
\ref{special ramified} applies.
\end{proof}

We then prove
\begin{proposition}\label{noncommensurable}
If $\{f, g\} \hspace{-0.7mm} \subset\hspace{-0.7mm}
\mathrm{End}(\EE)\mathopen \setminus\mathclose \mathrm{Aut}(\EE)$
are non-commensurable and if for all $\{m,n\}\hspace{-0.7mm}
\subset\hspace{-0.7mm}\NN$ the curve $ \PP^1\mathopen
\times_{f^n,\,g^m}\mathclose \PP^1$ admits a Faltings factor then
there exist $\iota\hspace{-0.7mm} \in\hspace{-0.7mm}
\mathrm{Aut}(\EE)$ and $\mu\hspace{-0.7mm} \in\hspace{-0.7mm} \TT$
such that
\begin{eqnarray*}
\iota\hspace{-0.7mm} \circ\hspace{-0.7mm} f\hspace{-0.7mm}
\circ\hspace{-0.7mm} \iota^{-1}\hspace{-0.7mm} =\hspace{-0.7mm} z^r,
\ \ \ \iota\hspace{-0.7mm} \circ\hspace{-0.7mm} g\hspace{-0.7mm}
\circ\hspace{-0.7mm} \iota^{-1}\hspace{-0.7mm} =\hspace{-0.7mm} \mu
z^s
\end{eqnarray*}
where $r\hspace{-0.7mm} =\hspace{-0.7mm} \deg{f}$ and
$s\hspace{-0.7mm} =\hspace{-0.7mm} \deg{g}$.
\end{proposition}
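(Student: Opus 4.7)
The plan is to apply Theorem \ref{Faltings factor} to $(f^n, g^m)$ for all $(n,m) \in \NN^2$, rule out four of the five resulting cases for large $(n,m)$, and then use the rigidity results of Section \ref{sectionrigidity}---particularly Corollary \ref{special ramified} and Corollary \ref{Zieve-Muller, corallary}---to conclude that $f$ and $g$ are simultaneously conjugate to power maps.

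For each $(n, m)$, Theorem \ref{Faltings factor} produces a common-left-factor decomposition $f^n = e_{n,m} \circ f_1 \circ \varepsilon_{n,m}$ and $g^m = e_{n,m} \circ g_1 \circ \epsilon_{n,m}$ with $\{f_1, g_1\}$ in one of the five listed cases. First I would show that cases (ii)--(v) occur only for boundedly many $(n,m)$. Cases (iii), (iv) force $f_1$ and $g_1$ to be Chebyshev-Blaschke products of coprime degrees at least $3$, so the ratio $r^n/s^m = \deg f_1 / \deg g_1$ lies in a thin set; moreover, the ellipticity of $f_1$ combined with Corollary \ref{elliptic factor elliptic}, Corollary \ref{special cheby}, and the bounded-factor rigidity of Corollary \ref{Zieve-Muller, corallary} (invoked provisionally under the assumption that $f$ is not conjugate to $z^r$, which we ultimately aim to contradict) yields a contradiction for $n \ge 3$. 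Case (v) is handled by the same strategy with the fixed ratio $3/2$, and case (ii) is handled analogously using $\deg f_1 = 2$.

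Thus for all sufficiently large $(n, m)$ we are in case (i), with $f_1 = z^{m'}$ and $g_1 = z^{r'} p(z)^{m'}$ satisfying $(r', m') = 1$. I would argue that $p$ must be trivial: if $p$ were nontrivial for arbitrarily large $(n,m)$, Corollary \ref{corollary of rigidity} applied to the resulting decompositions of $g^m$ that all share the common left factor $e_{n,m}$ would propagate the nontrivial $p$ into a cross-relation between iterates of $f$ and $g$, producing commensurability and contradicting our hypothesis. Hence for large $(n,m)$ we fall into the reduced subcase $\{f_1, g_1\} = \{z^{m'}, z^t\}$ with $(m', t) = 1$ recorded in the remark after Theorem \ref{Faltings factor}. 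To exclude $m' = 1$ or $t = 1$ (which would again produce a commensurability relation), we may choose $(n, m)$ so that both $m', t \ge 2$. The decomposition $f^n = e_{n,m} \circ z^{m'} \circ \varepsilon_{n,m}$ then exhibits $f^n$ with a totally ramified right factor of degree $\ge 2$; combining this over several such $(n,m)$ with Corollary \ref{simple rigidity} forces some iterate of $f$ to be totally ramified, whence Corollary \ref{special ramified} gives $f = \iota_{p_0} \circ \rho_f z^r \circ \iota_{-p_0}$, and analogously $g = \iota_{q_0} \circ \rho_g z^s \circ \iota_{-q_0}$. Compatibility of these forms through the shared $e_{n,m}$ forces $p_0 = q_0$; a final conjugation by $\alpha z$ with $\alpha^{r-1} = \rho_f^{-1}$ in $\TT$ normalizes $f$ to $z^r$ and leaves $g$ in the form $\mu z^s$ for the residual $\mu \in \TT$.

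The main obstacle will be the elimination of the nontrivial-$p$ subcase of case (i): extracting a commensurability relation from the shared left factor $e_{n,m}$ across many $(n, m)$ requires a careful application of Corollary \ref{Zieve-Muller, corallary} and bookkeeping of the bounded-factor structure. A secondary technical point is the compatibility argument forcing $p_0 = q_0$ and yielding a single simultaneous conjugator $\iota$.
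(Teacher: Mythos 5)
Your overall target (force both $f$ and $g$ to be simultaneously conjugate to power maps) is the right one, but the proposal has two concrete gaps at exactly the points where the paper has to work hardest. First, your elimination of the elliptic cases (iii)--(iv) does not go through as stated: from $f^n=e\circ f_1\circ\varepsilon$ with $f_1$ a Chebyshev--Blaschke product you cannot conclude that $f^n$ itself is elliptic, so Corollary \ref{special cheby} (which says $f^n$ is not elliptic for $n\ge 3$) has nothing to bite on --- Corollary \ref{elliptic factor elliptic} runs in the opposite direction (elliptic composite $\Rightarrow$ elliptic factors), and an elliptic inner factor with an arbitrary outer factor $e$ is perfectly possible. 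The paper never rules these cases out ``for large $(n,m)$'' in the abstract; instead it splits on whether the set $\calS$ of degrees of the \emph{common left factors} $a_{ij}$ is finite or infinite, and only in the finite-$\calS$, $(r,s)\neq 1$ subcase does case (iv) arise, where it is killed by comparing two decompositions of $g^{n}$ and $g^{m}$ sharing left factors of equal degree, applying Corollary \ref{simple rigidity} to get $c_{p,n}\circ\varepsilon_{p,n}\circ g^{m-n}\sim c_{p,m}$, and then using Corollary \ref{elliptic factor elliptic} to make $g^{m-n}$ elliptic, contradicting Corollary \ref{special cheby}. Second, your passage from ``$f^n$ has a totally ramified right factor $z^{m'}$ of degree $\ge 2$'' to ``some iterate of $f$ is totally ramified'' is unsubstantiated: a decomposition $f^n=e\circ z^{m'}\circ\varepsilon$ with $e$ arbitrary says nothing about total ramification of $f^n$, and ``combining over several $(n,m)$ with Corollary \ref{simple rigidity}'' is not a mechanism, since that corollary needs two decompositions with matched degrees. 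In the paper this is precisely where non-commensurability enters: it supplies a fixed $t$ with $g^{n}\neq f^{t}\circ h$ for all $n,h$, hence the left factors $a_{ij}$ never absorb $f^{t}$, so when $\calS$ is infinite Corollary \ref{corollary of rigidity} forces its hypothesis to fail, i.e.\ $f^{t}$ (and then, via Corollary \ref{special ramified}, $f$) is conjugate to a power map; total ramification is then transferred to $g^{j}$ through the critical-point identities of the decomposition of $(z^{rt})^{i}$, not extracted from the middle factor alone.

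Two smaller points: your claim that a nontrivial $p$ in case (i) ``produces commensurability'' is also unsupported --- commensurability requires two-sided factorization relations for every $m$, and nothing in your sketch manufactures these; the paper uses non-commensurability only through the fixed $t$ above, not by deriving a commensurability relation from the shape of $g_1$. And your final compatibility step ($p_0=q_0$, a single conjugator $\iota$) is asserted rather than proved; in the paper it comes out of explicit critical-value bookkeeping, e.g.\ $\mathfrak{d}_{g^{j}}=\mathfrak{d}_{a_{ij}}=\mathfrak{d}_{z^{tri}}=0$ after normalizing $f=z^{r}$ in the infinite-$\calS$ case, and $\mathfrak{d}_{f}=\mathfrak{d}_{g}=a_{ij}(0)$ in the finite-$\calS$, $(r,s)=1$ case.
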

\begin{proof}
By the assumption that $f$ and $g$ are non-commensurable there
exists $t\hspace{-0.7mm} \in\hspace{-0.7mm}\NN$ such that for any
$n\hspace{-0.7mm} \in\hspace{-0.7mm}\NN$ and for any
$h\hspace{-0.7mm} \in\hspace{-0.7mm}\mathrm{End}(\EE)$
\begin{eqnarray}\label{neq}
g^{n}\hspace{-0.7mm} \neq\hspace{-0.7mm} f^{t}\hspace{-0.7mm}
\circ\hspace{-0.7mm} h.
\end{eqnarray}
Given $\{i,j\}\hspace{-0.7mm} \subset\hspace{-0.7mm}\NN$, by the
existence of Faltings factor of $\PP^1\mathopen \times_{(f^{t})^i,\,
g^{j}}\mathclose \PP^1$ and by Theorem \ref{Faltings factor}  there
exist $\{a_{ij}, b_{ij}, c_{ij}\} \hspace{-0.7mm}
\subset\hspace{-0.7mm} \mathrm{End}(\EE)$ and $\{\epsilon_{ij},
\varepsilon_{ij}\}\hspace{-0.7mm}
\subset\hspace{-0.7mm}\mathrm{Aut}(\EE)$ such that
\begin{eqnarray*}
(f^{t})^i\hspace{-0.7mm} =\hspace{-0.7mm} a_{ij}\hspace{-0.7mm}
\circ\hspace{-0.7mm} b_{ij}\hspace{-0.7mm} \circ\hspace{-0.7mm}
\epsilon_{ij}, \ \ \ g^{j}\hspace{-0.7mm} =\hspace{-0.7mm}
a_{ij}\hspace{-0.7mm} \circ\hspace{-0.7mm} c_{ij}\hspace{-0.7mm}
\circ\hspace{-0.7mm} \varepsilon_{ij},
\end{eqnarray*}
where the set $\{b_{ij}, c_{ij}\}$ is described in Theorem
\ref{Faltings factor}. We write $\mathcal{S}\hspace{-0.7mm}
=\hspace{-0.7mm} \{ \deg{a_{ij}}: (i, j) \hspace{-0.7mm}
\in\hspace{-0.7mm} \NN \mathopen \times \mathclose \NN \}$ and
consider the following two cases.

\noindent {\it Case {\it (i)}}, the cardinality of $\calS$ is
infinite.

Given any $h\hspace{-0.7mm} \in\hspace{-0.7mm}\mathrm{End}(\EE)$ and
any pair $\{i, j\}\hspace{-0.7mm} \subset\hspace{-0.7mm}\NN$ we have
\begin{eqnarray}\label{reason}
a_{ij}\hspace{-0.7mm} \neq\hspace{-0.7mm} f^{t}\hspace{-0.7mm}
\circ\hspace{-0.7mm} h.
\end{eqnarray}
Otherwise $g^{j}\hspace{-0.7mm} =\hspace{-0.7mm}
a_{ij}\hspace{-0.7mm} \circ\hspace{-0.7mm} c_{ij}\hspace{-0.7mm}
\circ\hspace{-0.7mm} \varepsilon_{ij}\hspace{-0.7mm}
=\hspace{-0.7mm} f^{t}\hspace{-0.7mm} \circ\hspace{-0.7mm}
(h\hspace{-0.7mm} \circ\hspace{-0.7mm} c_{ij}\hspace{-0.7mm}
\circ\hspace{-0.7mm} \varepsilon_{ij})$, a contradiction to
(\ref{neq}). Because the cardinality of $\calS$ is infinite,
Corollary \ref{corollary of rigidity} and (\ref{reason}) applied to
$a_{ij}\hspace{-0.7mm} \circ\hspace{-0.7mm} (b_{ij}\hspace{-0.7mm}
\circ\hspace{-0.7mm} \epsilon_{ij}) \hspace{-0.7mm} =\hspace{-0.7mm}
(f^{t})^{i} $ shows that $\iota\hspace{-0.7mm} \circ
f^t\hspace{-0.7mm} \circ\hspace{-0.7mm} \iota^{-1}\hspace{-0.7mm}
=\hspace{-0.7mm} z^{rt}$ for some $\iota\hspace{-0.7mm}
\in\hspace{-0.7mm}\mathrm{Aut}(\EE)$. In particular $f^t$ is totally
ramified, and this together with Lemma \ref{special ramified} leads
to the existence of $\sigma\hspace{-0.7mm} \in\hspace{-0.7mm}
\mathrm{Aut}(\EE)$ for which $\sigma\hspace{-0.7mm}
\circ\hspace{-0.7mm} f\hspace{-0.7mm} \circ\hspace{-0.7mm}
\sigma^{-1}\hspace{-0.7mm} =\hspace{-0.7mm} z^{r}$. Neither the
hypothesis nor the conclusion are affected under
\begin{eqnarray*}
f \mapsto \sigma\hspace{-0.7mm} \circ\hspace{-0.7mm}
f\hspace{-0.7mm} \circ\hspace{-0.7mm} \sigma^{-1}, \ \ \ g \mapsto
\sigma\hspace{-0.7mm} \circ\hspace{-0.7mm} g\hspace{-0.7mm}
\circ\hspace{-0.7mm} \sigma^{-1}.
\end{eqnarray*}
We may assume $f(z)\hspace{-0.7mm} =\hspace{-0.7mm} z^r$ and then
$b_{ij}$, as a factor of $z^{rti}$, is totally ramified. Henceforth
we always assume that $i$ is so large compared to $j$ that
$\deg{b_{ij}}\hspace{-0.7mm} >\hspace{-0.7mm} \deg{c_{ij}}$, which
forces $\{ b_{ij}, c_{ij} \}$ falling into case{\it(i)} of Theorem
\ref{Faltings factor} and $c_{ij}$ being totally ramified. By the
remark after Theorem \ref{Faltings factor} we assume $\{b_{ij},
c_{ij}\}\hspace{-0.7mm} =\hspace{-0.7mm} \{z^{\hat{m}},
z^{\hat{r}}\}$, namely
\begin{eqnarray}\label{abcd}
(z^{tr})^i\hspace{-0.7mm} =\hspace{-0.7mm} a_{ij}\hspace{-0.7mm}
\circ\hspace{-0.7mm} b_{ij}\hspace{-0.7mm} \circ\hspace{-0.7mm}
\epsilon_{ij}, \ \ \ g^j\hspace{-0.7mm} =\hspace{-0.7mm}
a_{ij}\hspace{-0.7mm} \circ\hspace{-0.7mm} c_{ij}\hspace{-0.7mm}
\circ\hspace{-0.7mm} \varepsilon_{ij}
\end{eqnarray}
with $b_{ij}\hspace{-0.7mm} =\hspace{-0.7mm} z^{\hat{m}},
c_{ij}\hspace{-0.7mm} =\hspace{-0.7mm} z^{\hat{r}}$ and
$\{\epsilon_{ij}, \varepsilon_{ij}\}\hspace{-0.7mm}
\subset\hspace{-0.7mm} \mathrm{Aut}(\EE).$ It is clear that for
nonlinear $a$ and $b$, $a\hspace{-0.7mm} \circ\hspace{-0.7mm} b$ is
totally ramified if and only if  $a$ and $b$ are and
$|\mathfrak{D}_{a}|\hspace{-0.7mm} =\hspace{-0.7mm}
\mathfrak{d}_{b}$. Our factorization of $(z^{tr})^i$ implies that
$a_{ij}$ is totally ramified  and
$|\mathfrak{D}_{a_{ij}}|\hspace{-0.7mm} =\hspace{-0.7mm}
\mathfrak{d}_{b_{ij}}$. This together with $b_{ij}\hspace{-0.7mm}
=\hspace{-0.7mm} z^{\hat{m}}$ and $c_{ij}\hspace{-0.7mm}
=\hspace{-0.7mm} z^{\hat{r}}$ leads to
$|\mathfrak{D}_{a_{ij}}|\hspace{-0.7mm} =\hspace{-0.7mm}
\mathfrak{d}_{c_{ij}}$ and therefore $g^j$, which equals
$a_{ij}\hspace{-0.7mm} \circ\hspace{-0.7mm} c_{ij}\hspace{-0.7mm}
\circ\hspace{-0.7mm} \varepsilon_{ij}$, is also totally ramified. By
Corollary \ref{special ramified} applied to $g^j$ there exists
$\overline{\iota}\hspace{-0.7mm}
\in\hspace{-0.7mm}\mathrm{Aut}(\EE)$ such that
$\overline{\iota}\hspace{-0.7mm} \circ\hspace{-0.7mm}
g\hspace{-0.7mm} \circ\hspace{-0.7mm}
\overline{\iota}^{-1}\hspace{-0.7mm} =\hspace{-0.7mm} z^{s}$. It is
clear from (\ref{abcd}) that $\mathfrak{d}_{g^j}\hspace{-0.7mm}
=\hspace{-0.7mm} \mathfrak{d}_{a_{ij}}\hspace{-0.7mm}
=\hspace{-0.7mm} \mathfrak{d}_{z^{tri}} \hspace{-0.7mm}
=\hspace{-0.7mm} 0$ and therefore $\mathfrak{d}_{g}\hspace{-0.7mm}
=\hspace{-0.7mm} 0$. This together with
$\overline{\iota}\hspace{-0.7mm} \circ\hspace{-0.7mm}
g\hspace{-0.7mm} \circ\hspace{-0.7mm}
\overline{\iota}^{-1}\hspace{-0.7mm} =\hspace{-0.7mm} z^{s}$ implies
that $g(z)\hspace{-0.7mm} =\hspace{-0.7mm} \mu z^s$ for some
$\mu\hspace{-0.7mm} \in\hspace{-0.7mm} \TT$ .

\noindent {\it Case {\it (ii)}}, the cardinality of $\calS$ is
finite.

If $(r, s)\hspace{-0.7mm} =\hspace{-0.7mm} 1$ then  we always have
$\deg{a_{ij}}\hspace{-0.7mm} =\hspace{-0.7mm} 1$. We only consider
the case that $i$ and $j$ are at least three. By Corollary
\ref{special cheby} neither $f^{ti}$ nor $g^{j}$ is elliptic, and
therefore $\{b_{ij}, c_{ij}\}$ falls into case{\it (i)} of Theorem
\ref{Faltings factor}. The one of $\{b_{ij}, c_{ij}\}$ with smaller
degree must be totally ramified, and so is $\{f^{ti}, g^j\}$ as
$\deg{a_{ij}}\hspace{-0.7mm} =\hspace{-0.7mm} 1$.  Choose either $i$
or $j$ to be arbitrary large, we deduce that $f^{3t}$ and $g^3$ are
both totally ramified.  By Lemma \ref{special ramified} there exist
$\{\epsilon, \varepsilon\}\hspace{-0.7mm}
\subset\hspace{-0.7mm}\mathrm{Aut}(\EE)$ such that
$\epsilon\hspace{-0.7mm} \circ\hspace{-0.7mm} f\hspace{-0.7mm}
\circ\hspace{-0.7mm} \epsilon^{-1}\hspace{-0.7mm} =\hspace{-0.7mm}
z^r$ and $\varepsilon\hspace{-0.7mm} \circ\hspace{-0.7mm}
g\hspace{-0.7mm} \circ\hspace{-0.7mm}
\varepsilon^{-1}\hspace{-0.7mm} =\hspace{-0.7mm} z^s$. It remains to
show $\mathfrak{d}_{f}\hspace{-0.7mm} =\hspace{-0.7mm}
\mathfrak{d}_{g}$. Indeed by remark made at the end of section
\ref{sectionbilutichy} we have $\mathfrak{d}_{b_{ij}}
\hspace{-0.7mm} =\hspace{-0.7mm}
\mathfrak{d}_{c_{ij}}\hspace{-0.7mm} =\hspace{-0.7mm}0$ and
therefore $\mathfrak{d}_{f} \hspace{-0.7mm} =\hspace{-0.7mm}
\mathfrak{d}_{g} \hspace{-0.7mm} =\hspace{-0.7mm} a_{ij}(0)$.

If $(r, s)\hspace{-0.7mm} \neq\hspace{-0.7mm} 1$ then by the
finiteness of $|\calS|$ we have $\min{\{\deg{b_{ij}},
\deg{c_{ij}}\}} \rightarrow \infty $ as  $\min{\{i, j\}} \rightarrow
\infty$, and hence for large $i$ and $j$ the pair $\{f^{ti},
g^{j}\}$ falls into the case{\it (iv)} of Theorem \ref{Faltings
factor}. If we choose sufficiently large positive integers $p$, $n$
and $m$ with $n\mathopen +\mathclose 3\hspace{-0.7mm}
\leq\hspace{-0.7mm} m$ such that $\deg{a_{p,n}}\hspace{-0.7mm}
=\hspace{-0.7mm} \deg{a_{p, m}}$ then
\begin{eqnarray*}
g^{n}\hspace{-0.7mm} =\hspace{-0.7mm} a_{p,\, n}\hspace{-0.7mm}
\circ\hspace{-0.7mm} c_{p,\, n}\hspace{-0.7mm} \circ\hspace{-0.7mm}
\varepsilon_{p,\, n}, \ \ \ g^{m}\hspace{-0.7mm} =\hspace{-0.7mm}
a_{p,\, m}\hspace{-0.7mm} \circ\hspace{-0.7mm} c_{p,\,
m}\hspace{-0.7mm} \circ\hspace{-0.7mm} \varepsilon_{p,\, m}
\end{eqnarray*}
where $c_{p,\, n}, c_{p,\, m}$ are elliptic. This gives
\begin{eqnarray*}
a_{p,\, n}\hspace{-0.7mm} \circ\hspace{-0.7mm} \big(c_{p,\,
n}\hspace{-0.7mm} \circ\hspace{-0.7mm} \varepsilon_{p,\,
n}\hspace{-0.7mm} \circ\hspace{-0.7mm} g^{m\mathopen-\mathclose
n}\big)\hspace{-0.7mm} =\hspace{-0.7mm} a_{p,\, m}\hspace{-0.7mm}
\circ\hspace{-0.7mm} \big(c_{p,\, m}\hspace{-0.7mm}
\circ\hspace{-0.7mm} \varepsilon_{p,\, m}\big).
\end{eqnarray*}
Lemma \ref{simple rigidity} gives $c_{p,\, n}\hspace{-0.7mm}
\circ\hspace{-0.7mm} \varepsilon_{p,\, n}\hspace{-0.7mm}
\circ\hspace{-0.7mm} g^{m\mathopen-\mathclose
n}\hspace{-0.7mm}\sim\hspace{-0.7mm}c_{p,\, m}$ and then Corollary
\ref{elliptic factor elliptic} implies that $g^{m-n}$ is elliptic, a
contradiction to Corollary \ref{special cheby}.
\end{proof}

Combining previous results, we readily prove the main theorem.

\begin{proofof}{Proof of Theorem \ref{disk}}
The infiniteness of $\calO_{f}(x)\hspace{-0.7mm} \cap\hspace{-0.7mm}
\calO_{g}(y)$ implies that for all positive integers $m, n$ there
are infinitely many rational points on $\PP^1\mathopen
\times_{f^{n}, \, g^{m}}\mathclose \PP^1$ over the absolute field
$k$ generated by all coefficients of $f, g$ and $x$, $y$. Indeed by
assumption for $i\hspace{-0.7mm} \ge\hspace{-0.7mm} 1$ there exist
pairwise distinct points $p_i\hspace{-0.7mm} \in
\hspace{-0.7mm}\PP^1$ and $\{n_i,
m_i\}\hspace{-0.7mm}\subset\hspace{-0.7mm}\NN$ such that
\begin{eqnarray*}
f^{n_i}(x) \hspace{-0.7mm} = \hspace{-0.7mm}p_i, \ \ \ g^{m_i}(y) =
\hspace{-0.7mm} p_i.
\end{eqnarray*}
It is clear that $n_i$\,(resp.\,$m_i$) are also pairwise distinct
and therefore tends to infinity as $i$ goes to infinity. Therefore
for $i$ sufficiently large, $(f^{n_i\mathopen -\mathclose n}(x),
g^{m_i\mathopen -\mathclose m}(y))$ are $k$-rational points of
$\PP^1\mathopen \times_{f^{n}, \, g^{m}}\mathclose \PP^1$. These
points are pairwise distinct, as otherwise $x$ would be preperiodic
for $f$ which contradicts to the infiniteness assumption.

By Faltings' theorem the curve $\PP^1\mathopen \times_{f^{n}, \,
g^{m}}\mathclose \PP^1$ has a Faltings factor. If $f$ and $g$ have
no common iteration then by Proposition \ref{commensurable} and
Proposition \ref{noncommensurable} we may assume that
$f\hspace{-0.7mm} =\hspace{-0.7mm}  z^r, g\hspace{-0.7mm}
=\hspace{-0.7mm}  \mu z^s$ with $\mu\hspace{-0.7mm}
\in\hspace{-0.7mm} \TT$. This case is already discussed in
\cite{GTZ10}.
\end{proofof}

It is crucial to require that $f, g\hspace{-0.7mm} \not
\in\hspace{-0.7mm} \mathrm{Aut}(\EE)$ in Theorem \ref{disk}. For an
example we consider $\HH$ instead of $\EE$ and simply take
$f(z)\hspace{-0.7mm} =\hspace{-0.7mm} z\hspace{-0.7mm}
+\hspace{-0.7mm}  1$, $g(z)\hspace{-0.7mm} =\hspace{-0.7mm}  2 z$
and $x\hspace{-0.7mm} =\hspace{-0.7mm} y\hspace{-0.7mm}
=\hspace{-0.7mm}  1$.

\smallskip

\end{document}